\documentclass[10pt,a4paper]{article}
\usepackage[utf8]{inputenc}
\usepackage{amsmath}
\usepackage{amsfonts}
\usepackage{amssymb}
\usepackage{amsthm}
\usepackage{color}
\usepackage{enumerate}
\usepackage{marvosym}
\usepackage{hyperref}
%
%
%
 \newtheorem{thm}{Theorem}[section]

 \newtheorem*{thm*}{Theorem}
 \newtheorem{cor}[thm]{Corollary}
 \newtheorem{lem}[thm]{Lemma}
 \newtheorem{prop}[thm]{Proposition}
 \theoremstyle{definition}
 \newtheorem{defn}[thm]{Definition}
 \theoremstyle{remark}
 \newtheorem{rem}[thm]{Remark}
 
 \newtheorem*{ex}{Example}
 
 \numberwithin{equation}{section}
 
\newcommand{\vertiii}[1]{{\left\vert\kern-0.25ex\left\vert\kern-0.25ex\left\vert #1
  \right\vert\kern-0.25ex\right\vert\kern-0.25ex\right\vert}}

\newcommand{\tr}{\operatorname{Tr}}

\newcommand{\re}{\operatorname{Re}}

\newcommand{\VERT}[1]{{\left\vert\kern-0.25ex\left\vert\kern-0.25ex\left\vert #1 
    \right\vert\kern-0.25ex\right\vert\kern-0.25ex\right\vert}}

\begin{document}
\title{Toeplitz operators on non-reflexive Fock spaces}
\author{Robert Fulsche}

\maketitle
\begin{abstract}
We generalize several results on Toeplitz operators over reflexive, standard weighted Fock spaces $F_t^p$ to the non-reflexive cases $p = 1, \infty$. Among these results are the characterization of compactness and the Fredholm property of such operators, a well-known representation of the Toeplitz algebra, a characterization of the essential centre of the Toeplitz algebra. Further, we improve several results related to correspondence theory, e.g.\  we improve previous results on the correspondence of algebras and we give a correspondence theoretic version of the well-known Berger-Coburn estimates.

\medskip
\textbf{AMS subject classification:} Primary: 47L80; Secondary: 47B35, 30H20

\medskip
\textbf{Keywords:} Toeplitz algebras, Fock spaces, Correspondence Theory
\end{abstract}

\section{Introduction}
The theory of Toeplitz operators on Fock spaces experienced significant attention and progress in recent years. In the present work, we are concerned with operators acting on the standard $p$-Fock spaces $F_t^p$ on $\mathbb C^n$, i.e.\ spaces of entire functions which are $p$-integrable with respect to the Gaussian measure $\mu_{2t/p}$, cf.\ Section 2 for details. Basic accounts on these Fock spaces and their Toeplitz operators can be found in \cite{Janson_Peetre_Rochberg1987,Zhu2012}. Many properties of Toeplitz operators and the algebras they generate have been investigated. One common feature among most of these works is that they deal only with Toeplitz operators on the reflexive Fock spaces, i.e.\ $F_t^p$ with $1 < p < \infty$. Nevertheless, there has been some steady interest in understanding the non-reflexive Fock spaces, $F_t^p$ with $p = 1, \infty$, as well as their linear operators, cf.\ \cite{Cho_Zhu2012, Hu_Lv2014, Hu_Virtanen2023, Hu_Virtanen2020}. The aim of the present work is to present an approach which is suitable for both the reflexive and the non-reflexive cases. Simply speaking, we will apply tools from harmonic analysis to extend some results known from the reflexive cases to the non-reflexive world. Along the way, we will also obtain some results which are new even in the Hilbert space case.

Let us briefly discuss some of the obstacles one has to overcome when studying Toeplitz operators on non-reflexive Fock spaces. Details and definitions will be given later. 

As is well-known \cite{Bauer_Isralowitz2012}, a bounded operator $A$ on a reflexive Fock space is compact if and only if it satisfies two properties: First of all, it needs to be contained in the Banach algebra generated by all Toeplitz operators with bounded symbols (which we will refer to as the \emph{(full) Toeplitz algebra}). Secondly, its Berezin transform needs to be a function vanishing at infinity. When passing to non-reflexive Fock spaces, one faces the following two problems: Given a compact operator, it may happen that it is no longer contained in the Toeplitz algebra. Even worse, the Berezin transform may no longer be injective, i.e.\ there are non-trivial bounded linear operators with Berezin transform being constantly zero. These problems of course lead to other problems, e.g.~difficulties in the characterization of Fredholm properties for operators from the Toeplitz algebra. In \cite{Fulsche_Hagger}, it was shown for the reflexive cases that an operator from the Toeplitz algebra is Fredholm if and only if all of its limit operators are invertible. As a matter of fact, an operator from the Toeplitz algebra over a reflexive Fock space is Fredholm if and only if it is invertible in the Toeplitz algebra modulo the ideal of compact operators. Since, again, the compact operators are not necessarily contained in the Toeplitz algebra when the Fock space is non-reflexive, this causes obvious problems. 

Let us now shift from describing obstacles towards describing the outcomes of this paper. To introduce some short-hand notation for this, we will denote by $F_t^p$ the standard Fock spaces and by $\mathcal T^{F_t^p}$ the (full) Toeplitz algebra over this Fock space. The precise definitions of these and other terms used will be given in Section 2 below. It will then be our first goal to present a suitable approach to the Correspondence Theory on non-reflexive Fock spaces.  The basic concepts of Correspondence Theory originate from \cite{Werner1984} and were already successfully used by the present author \cite{Fulsche2020} to study Toeplitz operators in case of reflexive Fock spaces. This part of the paper, which will be carried out in Section 3, really forms the technical heart of the approach to non-reflexive Fock spaces, as most problems arising from non-reflexivity will be explained and overcome in this section. In the next part, Section 4, we will present some applications of the Correspondence Theory. Some of them are already known from \cite{Fulsche2020}, we give some prominent examples: In \cite{Bauer_Isralowitz2012}, W.\ Bauer and J.\ Isralowitz proved the following compactness characterization for Toeplitz operators over Fock spaces:
\begin{thm*}[{\cite{Bauer_Isralowitz2012}}]
Let $A \in \mathcal L(F_t^p)$, where $1 < p < \infty$. Then,
\begin{align*}
A \in \mathcal K(F_t^p) \Longleftrightarrow A \in \mathcal T^{F_t^p} \text{ and } \widetilde{A} \in C_0(\mathbb C^n).
\end{align*}
\end{thm*}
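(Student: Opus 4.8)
The plan is to treat the two implications separately: the forward one by a soft argument with weak limits of reproducing kernels, the reverse one by feeding the hypothesis $\widetilde A \in C_0(\mathbb C^n)$ into the correspondence theory of Section 3. For the forward direction, assume $A \in \mathcal K(F_t^p)$. Since $1 < p < \infty$, the inclusion $\mathcal K(F_t^p) \subseteq \mathcal T^{F_t^p}$ is the classical one: the rank-one operators $g \mapsto \langle g, k_w\rangle k_z$ are norm-limits of linear combinations of Toeplitz operators with bounded, compactly supported symbols, and $\mathcal K(F_t^p)$ is the closed linear span of such operators. For the Berezin transform, one uses that the normalized reproducing kernels $k_z$ tend to $0$ weakly in $F_t^p$ as $|z| \to \infty$; compactness of $A$ then forces $\|A k_z\| \to 0$, hence $\widetilde A(z) = \langle A k_z, k_z\rangle \to 0$, and since $z \mapsto k_z$ is norm-continuous this gives $\widetilde A \in C_0(\mathbb C^n)$.

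For the reverse direction, let $A \in \mathcal T^{F_t^p}$ with $\widetilde A \in C_0(\mathbb C^n)$, write $W_z$ for the Weyl operators and $\alpha_z(T) = W_z T W_z^{-1}$ for the induced automorphisms of $\mathcal L(F_t^p)$. The structural facts needed from Section 3 are the covariance identity $\widetilde{\alpha_z(A)}(w) = \widetilde A(w+z)$ and the norm-continuity of the orbit map $z \mapsto \alpha_z(A)$ for every $A \in \mathcal T^{F_t^p}$, which places $A$ in the class of operators to which the correspondence applies. Let $\mathcal D_A$ be the smallest $\mathbb C^n$-invariant closed subspace of $\mathcal T^{F_t^p}$ containing $A$, that is, the closed linear span of $\{\alpha_z(A) : z \in \mathbb C^n\}$, and let $\Psi(\mathcal D_A)$ be the invariant closed function space assigned to it by the correspondence. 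Because the Berezin transform is a contraction from $\mathcal L(F_t^p)$ into $L^\infty(\mathbb C^n)$, $\Psi(\mathcal D_A)$ coincides with the closed translation-invariant subspace generated by $\widetilde A$; as $\widetilde A \in C_0(\mathbb C^n)$ and $C_0(\mathbb C^n)$ is closed and translation-invariant, this yields $\Psi(\mathcal D_A) \subseteq C_0(\mathbb C^n)$.

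It then suffices to know that the subspace of $\mathcal T^{F_t^p}$ corresponding to $C_0(\mathbb C^n)$ is exactly $\mathcal K(F_t^p)$, i.e. $\Psi(\mathcal K(F_t^p)) = C_0(\mathbb C^n)$: granting this, the order-preserving bijectivity of the correspondence turns $\Psi(\mathcal D_A) \subseteq C_0(\mathbb C^n) = \Psi(\mathcal K(F_t^p))$ into $\mathcal D_A \subseteq \mathcal K(F_t^p)$, so $A \in \mathcal K(F_t^p)$. The inclusion $\Psi(\mathcal K(F_t^p)) \subseteq C_0(\mathbb C^n)$ is the forward direction; for the reverse inclusion I would take $\varphi(z) = e^{-c|z|^2}$, so that $T_\varphi$ is compact and $\widetilde{T_\varphi}$ is again a Gaussian, whose translates are dense in $C_0(\mathbb C^n)$ by a classical Wiener-type argument, whence $\Psi(\mathcal K(F_t^p)) \supseteq C_0(\mathbb C^n)$. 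As an alternative route, once the limit-operator criterion of \cite{Fulsche_Hagger} is available one can argue directly: by covariance every limit operator $B$ of $A$ satisfies $\widetilde B \equiv 0$, and since the Berezin transform is injective on $\mathcal L(F_t^p)$ for $1 < p < \infty$, all limit operators of $A$ vanish, so $A$ is compact.

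The main obstacle is the correspondence theory of Section 3 itself — proving that $\Psi$ is an order-preserving bijection between the $\mathbb C^n$-invariant closed subspaces of the relevant operator algebra and those of $\mathrm{BUC}(\mathbb C^n)$ — together with the identification $\Psi(\mathcal K(F_t^p)) = C_0(\mathbb C^n)$; this is where the difficulties caused by (potential) non-reflexivity have to be confronted, even though for the range $1 < p < \infty$ treated here everything reduces to the classical picture. Once that machinery is in place the argument above is essentially formal, the only genuinely analytic inputs being the weak nullity of $k_z$ at infinity, the inclusion $\mathcal K(F_t^p) \subseteq \mathcal T^{F_t^p}$, contractivity and covariance of the Berezin transform, and density of the translates of a Gaussian in $C_0(\mathbb C^n)$.
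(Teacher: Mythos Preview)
Your argument is correct and follows essentially the same route as the paper: both directions rest on the Correspondence Theorem applied with $\mathcal D_0 = C_0(\mathbb C^n)$, the forward direction using weak nullity of $k_z^t$ and the identification $\mathcal T_{lin}^{F_t^p}(C_0(\mathbb C^n)) = \mathcal K(F_t^p)$ (which you phrase as $\Psi(\mathcal K(F_t^p)) = C_0(\mathbb C^n)$). Two small remarks: the covariance identity should read $\widetilde{\alpha_z(A)}(w) = \widetilde A(w-z)$ rather than $\widetilde A(w+z)$, and your detour through the space $\mathcal D_A$ is unnecessary---once $\mathcal T_{lin}^{F_t^p}(C_0) = \mathcal K(F_t^p)$ is known, Theorem~\ref{Correspondencetheorem} applied directly with $\mathcal D_0 = C_0(\mathbb C^n)$ gives $A \in \mathcal K(F_t^p)$ from $\widetilde A \in C_0(\mathbb C^n)$ in one step.
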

In \cite{Xia}, J. Xia obtained (in principle) the following result:
\begin{thm*}[{\cite{Xia}]}]
It holds true that
\begin{align*}
\mathcal T^{F_t^2} = \overline{\{ T_f^t: ~f \in \operatorname{BUC}(\mathbb C^n)\} }.
\end{align*}
\end{thm*}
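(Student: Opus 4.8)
The plan is to deduce the identity from the Correspondence Theory developed in Section~3. One inclusion is immediate: $\operatorname{BUC}(\mathbb{C}^n)\subseteq L^\infty(\mathbb{C}^n)$, so each $T_f^t$ with $f\in\operatorname{BUC}(\mathbb{C}^n)$ lies in the closed Banach algebra $\mathcal{T}^{F_t^2}$; since $f\mapsto T_f^t$ is linear, $\{T_f^t:f\in\operatorname{BUC}(\mathbb{C}^n)\}$ is already a linear subspace, and closedness of $\mathcal{T}^{F_t^2}$ yields $\overline{\{T_f^t:f\in\operatorname{BUC}(\mathbb{C}^n)\}}\subseteq\mathcal{T}^{F_t^2}$. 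Write $\mathcal{A}:=\overline{\{T_f^t:f\in\operatorname{BUC}(\mathbb{C}^n)\}}$. For the reverse inclusion I would use that, by definition, $\mathcal{T}^{F_t^2}$ is the smallest closed subalgebra of $\mathcal{L}(F_t^2)$ containing $\{T_f^t:f\in L^\infty(\mathbb{C}^n)\}$; hence it suffices to show (a) that $\mathcal{A}$ is a closed subalgebra of $\mathcal{L}(F_t^2)$, and (b) that $T_f^t\in\mathcal{A}$ for every $f\in L^\infty(\mathbb{C}^n)$.

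Step (a) is where the correspondence of Section~3 carries the load. Since $\operatorname{BUC}(\mathbb{C}^n)$ is a norm closed, translation invariant $\ast$-subalgebra of $C_b(\mathbb{C}^n)$, and the correspondence matches the pointwise product of functions with the operator product, the operator algebra corresponding to $\operatorname{BUC}(\mathbb{C}^n)$ --- which is precisely $\mathcal{A}$ --- is a closed $C^\ast$-subalgebra of $\mathcal{L}(F_t^2)$. In down-to-earth terms, this encodes the nontrivial assertion that for $f,g\in\operatorname{BUC}(\mathbb{C}^n)$ the operator $T_f^tT_g^t$, which is not itself a Toeplitz operator, can be norm-approximated by Toeplitz operators with $\operatorname{BUC}$-symbols.

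For step (b) I would pass to the Weyl translations $\alpha_\lambda(A)=W_\lambda^{\ast}AW_\lambda$ on $\mathcal{L}(F_t^2)$, which satisfy $\alpha_\lambda(T_f^t)=T_{f(\cdot+\lambda)}^t$, together with the averaged operators $\Phi_\rho(A):=\int_{\mathbb{C}^n}\rho(\lambda)\,\alpha_\lambda(A)\,d\lambda$ for $\rho\in L^1(\mathbb{C}^n)$. Two observations combine. First, $\Phi_\rho(T_f^t)=T_{\tilde\rho*f}^t$ with $\tilde\rho(\lambda):=\rho(-\lambda)$, and $\tilde\rho*f\in\operatorname{BUC}(\mathbb{C}^n)$ whenever $\rho\in L^1(\mathbb{C}^n)$ and $f\in L^\infty(\mathbb{C}^n)$; hence $\Phi_\rho(T_f^t)\in\mathcal{A}$. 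Second, if $(\rho_j)_j$ is an approximate identity in $L^1(\mathbb{C}^n)$, then $\Phi_{\rho_j}(T_f^t)\to T_f^t$ in operator norm; this is because the orbit map $\lambda\mapsto\alpha_\lambda(T_f^t)=T_{f(\cdot+\lambda)}^t$ is norm continuous for $f\in L^\infty(\mathbb{C}^n)$ --- a fact belonging to the circle of Berger--Coburn estimates and, in the present framework, one of the outputs of Section~3. Combining the two, $T_f^t=\lim_j\Phi_{\rho_j}(T_f^t)\in\mathcal{A}$, so $\mathcal{T}^{F_t^2}\subseteq\mathcal{A}$, and together with the first paragraph this proves the theorem.

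The two substantial points are the structural inputs imported from Section~3, and that is where I expect the real difficulty to lie. The norm continuity of $\lambda\mapsto T_{f(\cdot+\lambda)}^t$ cannot be read off from the trivial estimate $\|T_{f(\cdot+\lambda)}^t-T_f^t\|\le\|f(\cdot+\lambda)-f\|_\infty$: the space $\operatorname{BUC}(\mathbb{C}^n)$ is exactly the part of $L^\infty(\mathbb{C}^n)$ on which translation acts norm continuously, so for $f\notin\operatorname{BUC}(\mathbb{C}^n)$ the right-hand side does not tend to $0$, and one genuinely needs an estimate bounding $\|T_\phi^t\|$ by a weaker quantity, such as a heat transform or a local average of $\phi$. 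Similarly, the algebra property in (a) --- that products $T_f^tT_g^t$ of $\operatorname{BUC}$-Toeplitz operators remain in $\mathcal{A}$ --- has no elementary proof; in particular a semicommutator argument fails, since $T_f^tT_g^t-T_{fg}^t$ need not be compact when $f,g\in\operatorname{BUC}(\mathbb{C}^n)$. Both are exactly the sort of assertion the Correspondence Theory of Section~3 is designed to deliver, which is why this result appears in Section~4 as an application.
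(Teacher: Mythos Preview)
Your step (b) is correct and is essentially the mechanism behind the paper's proof, but your justification of step (a) is wrong, and the paper's argument is organized so that (a) is never needed as a separate step.

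The claim in (a) that ``the correspondence matches the pointwise product of functions with the operator product'' is simply false: the Berezin transform does not take operator products to pointwise products, and the Correspondence Theorem of Section~3 makes no such assertion. It only sets up a bijection between closed $\alpha$-invariant \emph{subspaces} of $\operatorname{BUC}(\mathbb C^n)$ and of $\mathcal C_1(\mathbb F_t)$; whether algebras correspond to algebras is the separate, nontrivial question treated in Section~5 (and even there the statement involves \emph{all} $t>0$ simultaneously). So as written, (a) is a genuine gap.

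What the paper actually does makes (a) superfluous. Your approximate identity argument in (b) uses only that $T_f^t\in\mathcal C_1(F_t^2)$ and that $\Phi_\rho(T_f^t)=T_{\tilde\rho\ast f}^t$ is a Toeplitz operator with $\operatorname{BUC}$ symbol. The paper runs the \emph{same} approximate identity argument for an arbitrary $A\in\mathcal C_1(F_t^2)$: one has $g_s\ast A\to A$ as $s\to 0$, and the key identity $g_t\ast A=T_{\widetilde A}^{\,t}$ (verified by comparing Berezin transforms) together with Wiener's approximation theorem shows that each $g_s\ast A$ is a norm limit of translates of $T_{\widetilde A}^{\,t}\in\mathcal A$. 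This gives $\mathcal C_1(F_t^2)\subseteq\mathcal A$, hence $\mathcal A=\mathcal C_1(F_t^2)$, which is \emph{trivially} a closed subalgebra (if $z\mapsto\alpha_z(A)$ and $z\mapsto\alpha_z(B)$ are norm continuous, so is $z\mapsto\alpha_z(AB)=\alpha_z(A)\alpha_z(B)$). Since all $T_f^t$ with $f\in L^\infty$ lie in $\mathcal C_1$ (this is your ingredient from (b), via the Berger--Coburn estimate), one gets $\mathcal T^{F_t^2}\subseteq\mathcal C_1=\mathcal A$. In short: the algebra property of $\mathcal A$ is a \emph{consequence} of the identification $\mathcal A=\mathcal C_1$, not an input from a product-preserving correspondence; and obtaining that identification only requires extending your step (b) from single Toeplitz operators to all of $\mathcal C_1$ via $g_t\ast A=T_{\widetilde A}^{\,t}$.
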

We discussed in \cite{Fulsche2020} how both of these results can be easily obtained from Correspondence Theory in the case of reflexive Fock spaces. Here, we will obtain (versions of) these results even over non-reflexive Fock spaces, adding some necessary modifications due to the non-reflexivity. Further, we will discuss, among other results in this direction, a generalization of following well-known fact:
\begin{thm*}[\cite{Berger_Coburn1987}]
It holds true that
\begin{align*}
\operatorname{essCen}(\mathcal T^{F_t^2}) = \overline{\{ T_f^t: ~f \in \operatorname{VO}_\partial (\mathbb C^n)} = \{ T_f^t: ~f \in \operatorname{VO}_\partial (\mathbb C^n)\} + \mathcal K(F_t^2).
\end{align*}
\end{thm*}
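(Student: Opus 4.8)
The plan is to prove the statement in three pieces: the inclusion $\{T_f^t:f\in\operatorname{VO}_\partial(\mathbb{C}^n)\}+\mathcal{K}(F_t^2)\subseteq\operatorname{essCen}(\mathcal{T}^{F_t^2})$, the reverse inclusion, and the identification of this common set with $\overline{\{T_f^t:f\in\operatorname{VO}_\partial(\mathbb{C}^n)\}}$. The engine throughout is the correspondence-theoretic limit-operator calculus, together with Xia's description $\mathcal{T}^{F_t^2}=\overline{\{T_f^t:f\in\operatorname{BUC}(\mathbb{C}^n)\}}$ and the Bauer--Isralowitz compactness theorem. Concretely I will use: \emph{(i)} for every boundary point $x$ (equivalently, for every net $z_\beta\to\infty$) there is a unital $\ast$-homomorphism $A\mapsto A_x$ on $\mathcal{T}^{F_t^2}$ with $(T_g^t)_x=T_{g_x}^t$, where $g_x=\lim_\beta g(\cdot+z_\beta)$ is the associated limit function, and such that $A\in\mathcal{K}(F_t^2)$ if and only if $A_x=0$ for all $x$ --- the reflexive limit-operator machinery underlying the Fredholm characterization recalled in the introduction; \emph{(ii)} $\mathcal{K}(F_t^2)\subseteq\mathcal{T}^{F_t^2}$ and $\mathcal{T}^{F_t^2}$ acts irreducibly on $F_t^2$, so that its commutant equals $\mathbb{C}I$; \emph{(iii)} a bounded continuous function $f$ on $\mathbb{C}^n$ lies in $\operatorname{VO}_\partial(\mathbb{C}^n)$ if and only if every limit function $f_x$ is constant.

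First I would prove $\{T_f^t:f\in\operatorname{VO}_\partial(\mathbb{C}^n)\}+\mathcal{K}(F_t^2)\subseteq\operatorname{essCen}(\mathcal{T}^{F_t^2})$. Compact operators lie in $\mathcal{T}^{F_t^2}$ and are central modulo $\mathcal{K}(F_t^2)$, so it suffices to show $T_f^t\in\operatorname{essCen}(\mathcal{T}^{F_t^2})$ for $f\in\operatorname{VO}_\partial(\mathbb{C}^n)$. The set of $B\in\mathcal{T}^{F_t^2}$ with $[T_f^t,B]\in\mathcal{K}(F_t^2)$ is a closed subalgebra (since $\mathcal{K}(F_t^2)$ is a closed ideal), so by Xia's theorem it is enough to check $B=T_g^t$, $g\in\operatorname{BUC}(\mathbb{C}^n)$. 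Here $[T_f^t,T_g^t]\in\mathcal{T}^{F_t^2}$, and applying the homomorphisms of (i) yields $([T_f^t,T_g^t])_x=[T_{f_x}^t,T_{g_x}^t]=0$ for every $x$, because $f_x$ is constant by (iii); hence $[T_f^t,T_g^t]$ is compact.

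The reverse inclusion is the heart of the argument, and the exhaustion step below is what I expect to be the \emph{main obstacle}. Let $A\in\operatorname{essCen}(\mathcal{T}^{F_t^2})$. For each $g\in\operatorname{BUC}(\mathbb{C}^n)$ the commutator $[A,T_g^t]$ is compact, so $[A_x,T_{g_x}^t]=0$ for all $x$. The crucial point is that, for a fixed boundary point $x$, the limit functions $\{g_x:g\in\operatorname{BUC}(\mathbb{C}^n)\}$ exhaust all of $\operatorname{BUC}(\mathbb{C}^n)$ --- this is where one must work, realizing a prescribed target as a locally uniform limit of translates of a suitably patched $\operatorname{BUC}$ function along a net representing $x$, as in the limit-operator literature. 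Granting it, $A_x$ commutes with $T_h^t$ for every $h\in\operatorname{BUC}(\mathbb{C}^n)$, hence with all of $\mathcal{T}^{F_t^2}$, so $A_x=\lambda_x I$ for a scalar $\lambda_x$ by (ii). Using the covariance relation $k_{z+v}=c_{z,v}W_v k_z$ with $|c_{z,v}|=1$ (here $W_v$ are the Weyl operators and $k_z$ the normalized reproducing kernels), one computes for any net $z_\beta\to\infty$ defining $x$ and any $v\in\mathbb{C}^n$ that $\widetilde{A}(z_\beta+v)=\langle W_{-z_\beta}AW_{z_\beta}k_v,k_v\rangle\to\langle A_x k_v,k_v\rangle=\lambda_x$; since the functions $v\mapsto\langle W_{-z_\beta}AW_{z_\beta}k_v,k_v\rangle$ are equicontinuous, a routine argument then forces $\widetilde{A}\in\operatorname{VO}_\partial(\mathbb{C}^n)$. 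In particular $T_{\widetilde{A}}^t\in\mathcal{T}^{F_t^2}$, so $A-T_{\widetilde{A}}^t\in\mathcal{T}^{F_t^2}$ and $(A-T_{\widetilde{A}}^t)_x=\lambda_x I-T_{(\widetilde{A})_x}^t=\lambda_x I-\lambda_x I=0$ for all $x$, whence $A-T_{\widetilde{A}}^t\in\mathcal{K}(F_t^2)$ by (i). (One could finish instead with Bauer--Isralowitz, since $\widetilde{A-T_{\widetilde{A}}^t}=\widetilde{A}-\widetilde{A}\ast\varphi_t\in C_0(\mathbb{C}^n)$ once $\widetilde{A}\in\operatorname{VO}_\partial(\mathbb{C}^n)$, where $\varphi_t$ is the Gaussian with $\widetilde{T_g^t}=g\ast\varphi_t$.) Thus $A=T_{\widetilde{A}}^t+(A-T_{\widetilde{A}}^t)$ with $\widetilde{A}\in\operatorname{VO}_\partial(\mathbb{C}^n)$ and $A-T_{\widetilde{A}}^t$ compact, which is the desired inclusion.

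It remains to match $\overline{\{T_f^t:f\in\operatorname{VO}_\partial(\mathbb{C}^n)\}}$ with $\{T_f^t:f\in\operatorname{VO}_\partial(\mathbb{C}^n)\}+\mathcal{K}(F_t^2)$. One inclusion is immediate: $\operatorname{essCen}(\mathcal{T}^{F_t^2})$ is closed and contains $\{T_f^t:f\in\operatorname{VO}_\partial(\mathbb{C}^n)\}$, so by the two inclusions proved above it also contains the closure of that set. For the converse I would use $C_c(\mathbb{C}^n)\subseteq\operatorname{VO}_\partial(\mathbb{C}^n)$, linearity of $f\mapsto T_f^t$, and the standard identity $\overline{\operatorname{span}}\,\{T_g^t:g\in C_c(\mathbb{C}^n)\}=\mathcal{K}(F_t^2)$ (from the coherent-state resolution of Toeplitz operators) to get $\mathcal{K}(F_t^2)\subseteq\overline{\{T_f^t:f\in\operatorname{VO}_\partial(\mathbb{C}^n)\}}$; together with the trivial $\{T_f^t:f\in\operatorname{VO}_\partial(\mathbb{C}^n)\}\subseteq\overline{\{T_f^t:f\in\operatorname{VO}_\partial(\mathbb{C}^n)\}}$ this gives the remaining inclusion, completing the chain. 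Finally, I would set all of this up inside the correspondence-theoretic framework of Section~3 --- the limit-operator calculus, the exhaustion lemma, a Berger--Coburn-type commutator estimate, and a replacement for the relation ``$\mathcal{K}\subseteq\mathcal{T}$'' --- so that the same scheme yields the appropriately modified statement over the non-reflexive Fock spaces $F_t^p$.
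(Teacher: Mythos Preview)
Your approach via limit operators is correct in outline but genuinely different from the paper's, which avoids limit operators for this statement and instead exploits a short trick with Weyl operators inside the Correspondence Theory framework. For the forward inclusion the two routes are comparable. For the hard inclusion $\operatorname{essCen}(\mathcal T^{F_t^2}) \subseteq \{T_f^t : f \in \operatorname{VO}_\partial\} + \mathcal K$, the paper argues as follows: each Weyl operator $W_z^t$ is itself a Toeplitz operator with bounded symbol, so for $A \in \operatorname{essCen}(\mathcal T^{F_t^2})$ one has $[W_z^t, A] \in \mathcal K$, hence $\alpha_z(A) - A = [W_z^t, A]\,W_{-z}^t \in \mathcal K$ for every $z$; integrating against the heat kernel yields $T_{\widetilde A}^t - A = \int_{\mathbb C^n} g_t(z)\bigl(\alpha_z(A)-A\bigr)\,dz \in \mathcal K$, and then $(\widetilde A)^{\sim(t)} - \widetilde A \in C_0$ forces $\widetilde A \in \operatorname{VO}_\partial$ by the known Berger--Coburn characterization. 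This is shorter than your route and sidesteps irreducibility, the limit-operator calculus, and your exhaustion lemma entirely.

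About the exhaustion step you correctly flag as the main obstacle: the assertion that $\{g_x : g \in \operatorname{BUC}(\mathbb C^n)\}$ equals all of $\operatorname{BUC}(\mathbb C^n)$ for \emph{every} boundary point $x$ is not established in your sketch, and the ``patching along a net'' idea is delicate because nets converging in $\mathcal M(\operatorname{BUC})$ need not admit sparse countable subnets on which to glue. However, you do not actually need this. Even within your limit-operator framework, observe that $\alpha_w(W_z^t)$ is always a unimodular scalar times $W_z^t$, so $(W_z^t)_x = c_{x,z}\,W_z^t$ with $|c_{x,z}| = 1$; thus from $[A_x,(W_z^t)_x]=0$ you already get that $A_x$ commutes with every Weyl operator, and irreducibility of the Schr\"odinger representation (equivalently, of $\{W_z^t\}$) forces $A_x \in \mathbb C I$ directly, without ever realizing an arbitrary $T_h^t$ as a limit operator. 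With this replacement the remainder of your argument (that $\widetilde A \in \operatorname{VO}_\partial$ and $A - T_{\widetilde A}^t \in \mathcal K$) goes through as written.
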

We will end that section by presenting a correspondence theoretic version of the Berger-Coburn estimates \cite{Berger_Coburn1994}, generalizing results such as the compactness version of the Berger-Coburn estimates \cite{Bauer_Coburn_Isralowitz2010}. In the rather short Section 5, we add a small contribution to the ``algebra question'' of Correspondence Theory, initiated by the author in \cite{Fulsche2020}. Finally, in Section 6, we derive the Fredholm characterization for operators from the Toeplitz algebra. Initially, this result has been proven for the reflexive Fock spaces:
\begin{thm*}[{\cite{Fulsche_Hagger}}]
Let $1 < p < \infty$ and $A \in \mathcal T^{F_t^p}$. Then, $A$ is Fredholm if and only if $A_x$ is invertible for every $x \in \partial \mathbb C^n$.
\end{thm*}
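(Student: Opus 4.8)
The plan is to reduce the Fredholm property to an invertibility statement in the Calkin algebra and then to localize over the boundary $\partial\mathbb{C}^n$ through limit operators, in the spirit of the theory of band-dominated operators. Two structural facts are needed at the outset, and I would record them first: that $\mathcal K(F_t^p)\subseteq\mathcal T^{F_t^p}$, and that $\mathcal T^{F_t^p}/\mathcal K(F_t^p)$ is inverse closed in the Calkin algebra $\mathcal L(F_t^p)/\mathcal K(F_t^p)$. With these in hand, $A$ is Fredholm precisely when the coset $A+\mathcal K(F_t^p)$ is invertible in $\mathcal T^{F_t^p}/\mathcal K(F_t^p)$. Now recall that each limit operator $A_x$, $x\in\partial\mathbb{C}^n$, arises as a strong operator limit of a net of Weyl translates $W_{-z_\lambda}AW_{z_\lambda}$ with $z_\lambda\to x$; the map $A\mapsto A_x$ is contractive and multiplicative on $\mathcal T^{F_t^p}$ and annihilates $\mathcal K(F_t^p)$, since the Weyl translates of a compact operator converge strongly to $0$. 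Hence it descends to a bounded homomorphism $\sigma_x\colon\mathcal T^{F_t^p}/\mathcal K(F_t^p)\to\mathcal L(F_t^p)$ with $\sigma_x(A+\mathcal K(F_t^p))=A_x$, and the question becomes whether the family $(\sigma_x)_{x\in\partial\mathbb{C}^n}$ detects invertibility.

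The implication ``$A$ Fredholm $\Rightarrow$ all $A_x$ invertible'' is then immediate: inverse closedness yields a parametrix $B\in\mathcal T^{F_t^p}$ with $AB-I$ and $BA-I$ compact, and applying $\sigma_x$ gives $A_xB_x=I=B_xA_x$.

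For the converse I would argue in three steps. First, \emph{faithfulness of the localization}: the Bauer--Isralowitz compactness characterization together with the elementary identity $\widetilde{W_{-z}AW_z}=\widetilde A(\,\cdot\,+z)$ shows that, for $A\in\mathcal T^{F_t^p}$, one has $A\in\mathcal K(F_t^p)\iff\widetilde A\in C_0(\mathbb{C}^n)\iff A_x=0$ for all $x\in\partial\mathbb{C}^n$; hence $(\sigma_x)_x$ is jointly injective on $\mathcal T^{F_t^p}/\mathcal K(F_t^p)$ (and jointly isometric when $p=2$, where the $\sigma_x$ are $*$-homomorphisms). Second, \emph{uniform invertibility}: assuming every $A_x$ is invertible, one must upgrade this to a uniform bound $M:=\sup_{x\in\partial\mathbb{C}^n}\|A_x^{-1}\|<\infty$. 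Third, \emph{patching}: with such an $M$ available, a partition-of-unity construction near infinity produces $B\in\mathcal T^{F_t^p}$ for which $AB-I$ and $BA-I$ again lie in $\mathcal T^{F_t^p}$ and have all their limit operators equal to $0$; by the first step they are compact, so $A$ is Fredholm.

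The second step is the genuine obstacle. It is the Fock-space analogue of the Lindner--Seidel uniform boundedness theorem of limit operator theory, and it cannot be obtained cheaply --- in particular one may not simply combine compactness of a suitable compactification of $\mathbb{C}^n$ with continuity of $x\mapsto\|A_x^{-1}\|$, since that continuity genuinely fails. The way I would handle it is to transport $A$, via the correspondence-theoretic (discretization) machinery of Section 3, to a band-dominated operator on $\mathbb{C}^n$, where both the uniform boundedness theorem and the local-to-global parametrix construction of Rabinovich--Roch--Silbermann are available, and then transport the resulting parametrix back to $\mathcal T^{F_t^p}$; the reflexivity hypothesis $1<p<\infty$ enters at this point, through duality and the weak-compactness arguments used to extract the limit operators, which is exactly why the non-reflexive cases require the separate treatment carried out later in the paper.
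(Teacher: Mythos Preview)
Your outline is sound and essentially reproduces the original strategy of \cite{Fulsche_Hagger}: reduce Fredholmness to invertibility in $\mathcal T^{F_t^p}/\mathcal K(F_t^p)$, push the parametrix through the homomorphisms $\sigma_x$ for the easy direction, and for the converse establish (i) faithfulness of the family $(\sigma_x)$, (ii) the Lindner--Seidel-type uniform bound $\sup_x\|A_x^{-1}\|<\infty$, and (iii) a partition-of-unity patching to build a global parametrix. You correctly flag (ii) as the crux and you are right that reflexivity enters there. One caveat: your reference to ``the correspondence-theoretic (discretization) machinery of Section~3'' is off --- Section~3 of this paper is the Correspondence Theorem linking $\operatorname{BUC}$-subspaces with Toeplitz spaces, not a discretization to band-dominated operators; the transport you have in mind is the specific band-dominated framework developed in \cite{Fulsche_Hagger}, not the contents of Section~3 here.

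That said, the present paper does \emph{not} reprove the $1<p<\infty$ statement along those lines; it only cites it. What the paper supplies is a new proof of the general statement (for all $\mathbb F_t$) by a genuinely different route. Instead of your partition-of-unity patching, the paper introduces the Banach algebra $\mathfrak{lim}\,\mathcal T^{\mathbb F_t}$ of \emph{compatible families of limit operators} and proves a reconstruction theorem (Theorem~\ref{thm:reconstr_limit}): every compatible family $\gamma$ is of the form $\gamma(x)=A_x$ for some $A\in\mathcal T^{\mathbb F_t}$, unique modulo $\mathcal T\mathcal K$. The converse direction then becomes purely algebraic: once all $A_x$ are invertible, one checks (via the second resolvent identity and the uniform bound on $\|A_x^{-1}\|$) that $x\mapsto A_x^{-1}$ is itself a compatible family, so the reconstruction theorem produces $B\in\mathcal T^{\mathbb F_t}$ with $B_x=A_x^{-1}$; then $(AB-I)_x=(BA-I)_x=0$ for all $x$ forces $AB-I,\,BA-I\in\mathcal T\mathcal K$. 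Note, however, that the uniform bound $\sup_x\|A_x^{-1}\|<\infty$ is still imported from \cite{Fulsche_Hagger} (Proposition in Section~6), so the hard analytic step you isolate is not bypassed --- only the patching is replaced by the cleaner isomorphism $\mathcal T^{\mathbb F_t}/\mathcal T\mathcal K\cong\mathfrak{lim}\,\mathcal T^{\mathbb F_t}$. Your approach is closer to the original source; the paper's approach buys a uniform argument across the non-reflexive cases at the cost of setting up the limit-operator algebra.
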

Here, $\partial \mathbb C^n$ denotes an appropriate boundary of $\mathbb C^n$ (the boundary coming from the maximal ideal space of bounded uniformly continuous functions on $\mathbb C^n$, considered as a compactification of $\mathbb C^n$) and $A_x$ is the operator obtained by ``shifting'' $A$ to this boundary point in an appropriate way. While some of the results from \cite{Fulsche_Hagger} can be carried out in the case $p = 1$ (which will also be important for our discussion), some key arguments in the proof of the above theorem break down in this case. We will replace these arguments by an investigation of the \emph{Banach algebra of all limit operators}. To the best of the author's knowledge, this algebra, which we will call $\mathfrak{lim}\mathcal T^{F_t^p}$ (see Section \ref{subsec:alglim} for the precise definition) has not been studied before, not even in the Hilbert space case. After an important result (Theorem \ref{thm:reconstr_limit}) on the structure of the elements of this algebra, we prove the following result, which is new even in the Hilbert space case. We formulate it now only for the reflexive cases $1 < p < \infty$, the non-reflexive cases needing some technical modifications (cf.\ Theorem \ref{cor:quotientalgebra}).
\begin{thm*}
Let $1 < p < \infty$. Then, as Banach algebras, $\mathcal T^{F_t^p}/\mathcal K(F_t^p) \cong \mathfrak{lim} \mathcal T^{F_t^p}$.
\end{thm*}
Based on the structural result on the elements of $\mathfrak{lim}\mathcal T^{F_t^p}$, Theorem \ref{thm:reconstr_limit}, it is then possible to extend the result on the Fredholm characterization for elements of the full Toeplitz algebra to non-reflexive Fock spaces.

Let us end this introduction by mentioning that some of the results presented in Sections 3 and 4 have already been obtained in the author's PhD thesis. Nevertheless, we have the feeling that our results deserve proper publication. In addition, we decided to give a more streamlined presentation here.

\section{Preliminaries}
On $\mathbb C^n$ we consider the family of probability measures $\mu_t$ given by
\begin{align*}
d\mu_t(z) = \frac{1}{(\pi t)^n} e^{-\frac{|z|^2}{t}}~dz,
\end{align*}
where $| \cdot |$ is the Euclidean norm on $\mathbb C^n \cong \mathbb R^{2n}$, $dz$ the standard Lebesgue measure and $t > 0$. Given any $1 \leq p < \infty$, we define
\begin{align*}
L_t^p := L^p(\mathbb C^n, \mu_{2t/p}).
\end{align*}
The Fock space $F_t^p$ is given by
\begin{align*}
F_t^p = L^p(\mathbb C^n, \mu_{2t/p}) \cap \operatorname{Hol}(\mathbb C^n),
\end{align*}
where $\operatorname{Hol}(\mathbb C^n)$ denotes the entire functions on $\mathbb C^n$. This space is always endowed with its natural $L^p$-norm, i.e.\
\begin{align*}
\| f\|_p^p = \left(\frac{p}{2\pi t}\right)^n \int_{\mathbb C^n} |f(z)|^p e^{-\frac{p}{2t}|z|^2}~dz.
\end{align*}
For $p = \infty$, we set
\begin{align*}
L_t^\infty := \{ f: \mathbb C^n \to \mathbb C; ~f \text{ measurable with } \| f\|_{L_t^\infty} := \| fe^{-\frac{|\cdot|^2}{2t}}\|_\infty < \infty\}.
\end{align*}
Here, two different Fock spaces come into play: First of all, there is
\begin{align*}
F_t^\infty := L_t^\infty \cap \operatorname{Hol}(\mathbb C^n).
\end{align*}
Further, we also consider
\begin{align*}
f_t^\infty := \{ f \in F_t^\infty: fe^{-\frac{|\cdot|^2}{2t}} \in C_0(\mathbb C^n)\}.
\end{align*}
Here, $C_0(\mathbb C^n)$ denotes the continuous functions on $\mathbb C^n$ vanishing at infinity. For each value of $p$, $F_t^p$ is well-known to be a closed subspace of $L_t^p$. Further, $f_t^\infty$ is a closed subspace of $F_t^\infty$.

These Fock spaces are well-studied objects, the basic references being \cite{Janson_Peetre_Rochberg1987, Zhu2012}, where also the following facts can be found.

For $p = 2$, we clearly are in the Hilbert space setting. We will usually write the $F_t^2$ inner product by $\langle \cdot, \cdot\rangle_t$ or even $\langle \cdot, \cdot\rangle$. As is well-known, $F_t^2$ is even a reproducing kernel Hilbert space, the reproducing kernels being given by
\begin{align*}
K_z^t(w) = e^{\frac{w \cdot \overline{z}}{t}}.
\end{align*}
Here, $w \cdot \overline{z} = \sum_{j=1}^n w_j \overline{z_j}$ is the standard sesquilinear product on $\mathbb C^n$. Indeed, the functions $K_z^t$ are contained in any of the spaces $F_t^p$ and $f_t^\infty$, and they span a dense subspace of $F_t^p$ for $1 \leq p < \infty$ and $f_t^\infty$. The normalized reproducing kernels are now defined as
\begin{align*}
k_z^t(w) = \frac{K_z^t(w)}{\| K_z^t\|_{F_t^2}} = e^{\frac{w \cdot \overline{z}}{t} - \frac{|z|^2}{t}}.
\end{align*}
It follows from elementary computations that we indeed have $\| k_z^t\|_{F_t^p} = 1$ for any $1 \leq p \leq \infty$.

Under the $F_t^2$ inner product, the Fock spaces satisfy the following duality relations:
\begin{itemize}
\item For $1 \leq p < \infty$: $(F_t^p)' \cong F_t^q$, where $\frac{1}{p} + \frac{1}{q} = 1$;
\item $(f_t^\infty)' \cong F_t^1$;
\item $(F_t^\infty)'$ strictly contains $F_t^1$.
\end{itemize}
Note that these identifications are not isometric, but they hold with an equivalence of norms. Nevertheless, we will always identify dual spaces in this way. If $A$ is a bounded linear operator on any of the Fock spaces, $A^\ast$ will denote the adjoint, acting on the dual space, with respect to the $F_t^2$ duality.

We will consider $\mathcal L(X)$, the bounded linear operators on $X$, for $X$ any Banach space. By $\mathcal K(X)$ we will denote the compact operators, $\mathcal N(X)$ will be our notation for the ideal of nuclear operators.

For $p = 2$, we of course have the well-known orthogonal projection $P_t \in \mathcal L(L_t^2)$ mapping onto $F_t^2$ by
\begin{align*}
P_t f(z) = \langle f, K_z^t\rangle_t = \frac{1}{(\pi t)^n} \int_{\mathbb C^n} f(w) e^{\frac{z \cdot \overline{w}}{t}} e^{-\frac{|w|^2}{t}}~dw.
\end{align*}
As is well-known, $P_t$ (interpreted as an integral operator) gives rise to a bounded projection on $L_t^p$ mapping onto $F_t^p$ for any $1 \leq p \leq \infty$ with $P_t|_{F_t^p} = \operatorname{Id}$. Hence, for any $f \in L^\infty(\mathbb C^n)$ and $1 \leq p \leq \infty$ the Toeplitz operator $T_f^t$ given by
\begin{align*}
T_f^t: F_t^p \to F_t^p, \quad T_f^t(g) = P_t (fg)
\end{align*}
is well defined and bounded, satisfying $\| T_f^t\|_{F_t^p \to F_t^p} \leq \| P_t\| \| f\|_\infty$. Further, every such Toeplitz operator leaves $f_t^\infty$ invariant. Therefore, we can also consider $T_f^t \in \mathcal L(f_t^\infty)$.

For $z \in \mathbb C^n$ we define the Weyl operator $W_z^t$ by
\begin{align*}
W_z^tg(w) = k_z^t(w) g(w-z). 
\end{align*}
Indeed, $W_z^t = T_{g_z^t}^t$ with
\begin{align*}
g_z^t(w) = e^{\frac{|z|^2}{2t} + \frac{2i \operatorname{Im}(w \cdot \overline{z})}{t}},
\end{align*}
in particular $W_z^t \in \mathcal L(F_t^p)$ for any $1 \leq p \leq \infty$ and $W_z^t \in \mathcal L(f_t^\infty)$.
These operators satisfy the following well-known properties, which we fix as a lemma:
\begin{lem}
\begin{enumerate}[(1)]
\item $W_z^t$ is an isometry on $F_t^p$ for any $1 \leq p \leq \infty$;
\item $W_z^t W_w^t = e^{-\frac{\operatorname{Im}(z \cdot \overline w)}{t}}W_{z+w}^t$ for any $z, w \in \mathbb C^n$; 
\item For $1 \leq p < \infty$, $z \mapsto W_z^t$ is continuous in strong operator topology over $F_t^p$. The same holds true over $f_t^\infty$.
\end{enumerate}
\end{lem}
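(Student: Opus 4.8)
The plan is to verify each of the three claims by direct computation using the explicit formulas $W_z^t g(w) = k_z^t(w) g(w-z)$ and $k_z^t(w) = e^{w \cdot \overline{z}/t - |z|^2/t}$, together with the observation that $W_z^t = T_{g_z^t}^t$. For item (1), I would fix $1 \le p < \infty$ and compute $\|W_z^t g\|_p^p$ directly: writing out the integral
\begin{align*}
\|W_z^t g\|_p^p = \left(\frac{p}{2\pi t}\right)^n \int_{\mathbb C^n} |k_z^t(w)|^p |g(w-z)|^p e^{-\frac{p}{2t}|w|^2}~dw,
\end{align*}
one substitutes $u = w - z$ and uses the identity $|k_z^t(w)|^2 = e^{2\re(w \cdot \overline z)/t - |z|^2/t}$, so that the combination $|k_z^t(w)|^p e^{-\frac{p}{2t}|w|^2}$ collapses, after completing the square in the exponent, to $e^{-\frac{p}{2t}|w-z|^2} = e^{-\frac{p}{2t}|u|^2}$. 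This yields $\|W_z^t g\|_p = \|g\|_p$. The case $p = \infty$ is the analogous pointwise estimate: $|W_z^t g(w)| e^{-|w|^2/(2t)} = |g(w-z)| e^{-|w-z|^2/(2t)}$ after the same completion of the square, giving $\|W_z^t g\|_{L_t^\infty} = \|g\|_{L_t^\infty}$; since $W_z^t$ maps $F_t^p$ to itself (it equals $T_{g_z^t}^t$ with $g_z^t$ bounded), this gives the isometry property on each Fock space, and one checks the map $w \mapsto k_z^t(w) g(w-z)$ preserves the $C_0$-decay condition defining $f_t^\infty$.

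For item (2), the cleanest route is again direct computation: apply $W_z^t W_w^t$ to an arbitrary $g$, obtaining
\begin{align*}
(W_z^t W_w^t g)(\zeta) = k_z^t(\zeta) k_w^t(\zeta - z) g(\zeta - z - w),
\end{align*}
and then compare the product $k_z^t(\zeta) k_w^t(\zeta - z)$ with $k_{z+w}^t(\zeta)$. Expanding the exponents, $k_z^t(\zeta) k_w^t(\zeta-z) = \exp\big( \tfrac{\zeta \cdot \overline z}{t} - \tfrac{|z|^2}{t} + \tfrac{(\zeta - z)\cdot \overline w}{t} - \tfrac{|w|^2}{t}\big)$, while $k_{z+w}^t(\zeta) = \exp\big(\tfrac{\zeta \cdot \overline z + \zeta \cdot \overline w}{t} - \tfrac{|z+w|^2}{t}\big)$. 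The ratio is a constant (independent of $\zeta$) whose exponent is $-\tfrac{z \cdot \overline w}{t} + \tfrac{|z|^2 + |w|^2 - |z+w|^2}{t} = -\tfrac{z\cdot \overline w}{t} - \tfrac{2\re(z \cdot \overline w)}{t}$; using $\re(z \cdot \overline w) = \tfrac12(z \cdot \overline w + \overline{z \cdot \overline w})$ this simplifies to $-\tfrac{\re(z\cdot\overline w) + i\,\im(z \cdot \overline w)}{t} - \tfrac{2\re(z\cdot\overline w)}{t}$; a careful bookkeeping of real and imaginary parts collapses this to the claimed factor $e^{-\im(z \cdot \overline w)/t}$ (this is the usual Heisenberg commutation relation up to normalization conventions, so the arithmetic must be carried out consistently with the sign conventions fixed by the definitions above).

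For item (3), strong continuity $z_k \to z \Rightarrow W_{z_k}^t g \to W_z^t g$ in norm for each fixed $g$, I would use a standard density-plus-uniform-boundedness argument: the operators $W_z^t$ are uniformly bounded (indeed isometries) by item (1), so it suffices to prove convergence on a dense set. For $1 \le p < \infty$ the reproducing kernels $K_w^t$ span a dense subspace, and $W_z^t K_w^t$ can be computed explicitly and shown to depend continuously on $z$ in $F_t^p$-norm (using continuity of $z \mapsto k_z^t$ and of translation); similarly for $f_t^\infty$ one uses that the $K_w^t$ span a dense subspace there as well. An equivalent and perhaps more transparent approach is to reduce to continuity of translation in $L^p$: after conjugating by the unitary-type weight, $W_z^t$ becomes a weighted translation, and translation is strongly continuous on $L^p(\mathbb C^n)$ for $1 \le p < \infty$ and on $C_0(\mathbb C^n)$, while the weight factors depend continuously on $z$ locally uniformly. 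The main obstacle here is purely bookkeeping — making sure the weight factors and the translation are disentangled cleanly so that the $L^p$-continuity of translation (which fails for $p = \infty$, consistent with the statement excluding $F_t^\infty$) can be invoked, and handling the $f_t^\infty$ case with the $C_0$-version of the same fact.
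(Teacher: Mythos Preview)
Your approach to parts (1) and (2) is exactly what the paper has in mind: it simply says these ``follow from elementary computations,'' and your completing-the-square argument for (1) and direct comparison of $k_z^t(\zeta)k_w^t(\zeta-z)$ with $k_{z+w}^t(\zeta)$ for (2) are precisely those computations. (Be a little careful with the factor of $2$ in the normalization of $k_z^t$ --- the paper's displayed formula has $|z|^2/t$ in the exponent, but the correct normalized kernel has $|z|^2/(2t)$; your identity $|k_z^t(w)|^2 = e^{2\re(w\cdot\overline z)/t - |z|^2/t}$ is consistent with the latter, and with it the exponent in (2) simplifies cleanly to $-i\,\im(z\cdot\overline w)/t$. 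Your hedging about sign conventions is appropriate here.)

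For part (3), your density-plus-uniform-boundedness argument is correct but differs from the paper's route. The paper invokes Scheff\'e's lemma for $1 \le p < \infty$: since $W_{z_k}^t g \to W_z^t g$ pointwise (immediate from the explicit formula) and $\|W_{z_k}^t g\|_p = \|g\|_p = \|W_z^t g\|_p$ by the isometry property just established in (1), Scheff\'e (or its $L^p$ version, sometimes attributed to Riesz or Brezis--Lieb) gives $\|W_{z_k}^t g - W_z^t g\|_p \to 0$ for \emph{every} $g$ at once, with no need to verify convergence first on a dense set. This is arguably slicker: it leverages (1) directly and bypasses any explicit computation on kernels or reduction to translation continuity. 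Your approach has the mild advantage of being more self-contained (no auxiliary measure-theoretic lemma) and of making transparent why $p = \infty$ fails, via the failure of translation continuity on $L^\infty$. For $f_t^\infty$ the paper just says ``standard $\varepsilon$-$\delta$ argument,'' which is morally the same as your reduction to the $C_0$-continuity of translation.
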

Note that (1) and (2) above follow from elementary computations, while (3) holds for $1 \leq p < \infty$ by Scheff\'{e}'s lemma, and the case of $f_t^\infty$ can be proven by some standard $\varepsilon$-$\delta$ argument.

We will also frequently encounter the Berezin transform: For $A \in \mathcal L(F_t^p)$ or $A \in \mathcal L(f_t^\infty)$, we set
\begin{align*}
\widetilde{A}(z) := \langle A k_z^t, k_z^t\rangle_t.
\end{align*}
As is well-known, the Berezin transform $A \mapsto \widetilde{A}$ is injective on $\mathcal L(F_t^p)$ for $1 \leq p < \infty$ and on $\mathcal L(f_t^\infty)$. Nevertheless, there exist non-zero operators $A \in \mathcal L(F_t^\infty)$ such that $\widetilde{A} \equiv 0$. For the reader not familiar with this, we give an example of such an operator:
\begin{ex}
We will show how to construct non-trivial continuous linear functionals on $F_t^\infty$ which vanish identically on $f_t^\infty$. Once such a functional is obtained, one can easily set up a non-trivial rank one operator on $F_t^\infty$ which vanishes on $f_t^\infty$, hence has zero Berezin transform (as $k_z^t \in f_t^\infty$). Recall that for $g \in F_t^\infty$, the function $\mathbb C^n \ni z \mapsto g(z)e^{-\frac{|z|^2}{2t}}$ is bounded and continuous, hence extends to a continuous function on the Stone-\v{C}ech compactification $\beta \mathbb C^n$ of $\mathbb C^n$. For a fixed function $f \in F_t^\infty \setminus f_t^\infty$, there exists a point $x \in \beta \mathbb C^n \setminus \mathbb C^n$, the Stone-\v{C}ech boundary, such that the point evaluation of $fe^{-\frac{|\cdot|^2}{2t}}$ at $x$ gives a value different from zero (since $f \in f_t^\infty$ if and only if $f e^{-\frac{|\cdot|^2}{2t}} \in C_0(\mathbb C^n)$). Denote by $\nu_x(g)$ the functional of point evaluation of $ge^{-\frac{|\cdot|^2}{2t}}$ at $x$. Then, $g \mapsto \nu_x(g)$ is a non-trivial bounded linear functional on $F_t^\infty$ (non-trivial since $\nu_x(f) \neq 0$) which vanishes on $f_t^\infty$.
\end{ex}

 For a (suitable) function $f: \mathbb C^n \to \mathbb C$, we set
\begin{align*}
\widetilde{f}^{(t)}(z) := \langle fk_z^t, k_z^t\rangle_t.
\end{align*}
Under suitable growth conditions (in particular for $f \in L^\infty(\mathbb C^n)$), we then obtain $\widetilde{T_f^t} = \widetilde{f}^{(t)}$.

There will be many statements we will consider over any of the Fock spaces. For this reason, we will write $\mathbb F_t$ to denote a generic Fock space with respect to the parameter $t>0$, that is: If we do not state otherwise, $\mathbb F_t \in \{ F_t^p: ~1 \leq p \leq \infty\} \cup \{ f_t^\infty\}$ arbitrary. For two quantities $A, B$ we will write $A \lesssim B$ and $A \gtrsim B$ for $A \leq cB$ and $A \geq cB$, respectively, with some inessential constant $c > 0$. By $A \simeq B$ we will mean $A \lesssim B$ and $A \gtrsim B$.

\section{The Correspondence Theorem}
For any $z \in \mathbb C^n$ and $A \in \mathcal L(\mathbb F_t)$, we set
\begin{align*}
\alpha_z(A) = W_z^t A W_{-z}^t \in \mathcal L(\mathbb F_t).
\end{align*}
Further, for $f \in L^\infty(\mathbb C^n)$ and $z \in \mathbb C^n$ we let
\begin{align*}
\alpha_z (f)(w) = f(w-z).
\end{align*}
We say that subspaces $\mathcal D_0 \subset L^\infty(\mathbb C^n)$ and $\mathcal D_1 \subset \mathcal L(\mathbb F_t)$ are $\alpha$-invariant if $\alpha_z(f) \in \mathcal D_0$ or $\alpha_z(A) \in \mathcal D_1$ for any $f \in \mathcal D_0$ or $A \in \mathcal D_1$, respectively, and any $z \in \mathbb C^n$.

In the following, we always write
\begin{align*}
\mathcal C_1(\mathbb F_t) := \{ A \in \mathcal L(\mathbb F_t): ~z \to \alpha_z(A) \text{ is } \| \cdot\|_{\mathbb F_t \to \mathbb F_t}\text{-continuous}\}.
\end{align*}
The ``classical counterpart'' to $\mathcal C_1(\mathbb F_t)$ is the space of bounded uniformly continuous functions on $\mathbb C^n$, which we will denote by $\operatorname{BUC}(\mathbb C^n)$.

For $S \subset L^\infty(\mathbb C^n)$ we write
\begin{align*}
\mathcal T_{lin}^{\mathbb F_t}(S) := \overline{\operatorname{span}} \{ T_f^t \in \mathcal L(\mathbb F_t); ~f \in S\}.
\end{align*}

In this section, we are going to establish the following theorem:
\begin{thm}\label{Correspondencetheorem}
Assume $\mathcal D_0$ is an $\alpha$-invariant and closed subspace of $\operatorname{BUC}(\mathbb C^n)$. Further, let $A \in \mathcal C_1(\mathbb F_t)$. Then, the following holds true:
\begin{align*}
A \in \mathcal T_{lin}^{\mathbb F_t}(\mathcal D_0) \Longleftrightarrow \widetilde{A} \in \mathcal D_0.
\end{align*}
\end{thm}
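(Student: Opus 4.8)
The plan is to prove both implications by exploiting the interaction between the action $\alpha$, the Berezin transform, and averaging (integration) against measures on $\mathbb{C}^n$. The key structural fact we will use repeatedly is the intertwining relations: $\alpha_z(T_f^t) = T_{\alpha_z(f)}^t$ and $\widetilde{\alpha_z(A)}(w) = \widetilde{A}(w-z)$, i.e.\ the Berezin transform intertwines the operator action $\alpha_z$ on $\mathcal{L}(\mathbb{F}_t)$ with the translation action $\alpha_z$ on functions. Both follow from the definition of $W_z^t$ and elementary computations with normalized reproducing kernels.

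For the forward implication ($\Rightarrow$), first observe that $\widetilde{T_f^t} = \widetilde{f}^{(t)}$, and that $\widetilde{f}^{(t)}$ is a Gaussian-type convolution of $f$; since $\mathcal{D}_0 \subset \operatorname{BUC}(\mathbb{C}^n)$ is closed and $\alpha$-invariant (translation-invariant), and the Berezin transform of a function is an average of its translates, one gets $\widetilde{f}^{(t)} \in \mathcal{D}_0$ whenever $f \in \mathcal{D}_0$ — here I would write $\widetilde{f}^{(t)}(z) = \int_{\mathbb{C}^n} f(z-w)\, d\nu(w)$ for the appropriate Gaussian probability measure $\nu$, approximate the integral by Riemann sums of translates (which lie in $\mathcal{D}_0$), and use closedness plus uniform continuity to pass to the limit. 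Then $A \mapsto \widetilde{A}$ is linear and contractive from $\mathcal{L}(\mathbb{F}_t)$ into $L^\infty(\mathbb{C}^n)$ (indeed into $\operatorname{BUC}$ when $A \in \mathcal{C}_1$, which should be checked), so it maps $\operatorname{span}\{T_f^t : f \in \mathcal{D}_0\}$ into $\mathcal{D}_0$ and, by continuity and closedness of $\mathcal{D}_0$, maps the closure $\mathcal{T}_{lin}^{\mathbb{F}_t}(\mathcal{D}_0)$ into $\mathcal{D}_0$.

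For the reverse implication ($\Leftarrow$), which I expect to be the substantive direction, suppose $A \in \mathcal{C}_1(\mathbb{F}_t)$ with $\widetilde{A} \in \mathcal{D}_0$. The idea is to realize $A$ as a limit of integrals of the form $\int_{\mathbb{C}^n} \psi(z)\, \alpha_z(T_{f_0}^t)\, dz$ against suitable kernels $\psi$ and fixed symbols $f_0$, using that $z \mapsto \alpha_z(A)$ is norm-continuous (that is exactly the role of the hypothesis $A \in \mathcal{C}_1$) so that such vector-valued integrals converge in norm and define elements of $\mathcal{L}(\mathbb{F}_t)$, in fact of $\mathcal{T}_{lin}^{\mathbb{F}_t}(\mathcal{D}_0)$ once one checks $\alpha_z(T_{f_0}^t) = T_{\alpha_z(f_0)}^t$ with $\alpha_z(f_0) \in \mathcal{D}_0$. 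Concretely, I would pick a nonnegative $\varphi \in C_c(\mathbb{C}^n)$ with $\int \varphi = 1$ and consider $A_\varphi := \int_{\mathbb{C}^n} \varphi(z)\, \alpha_z(A)\, dz$; using the norm-continuity of $z \mapsto \alpha_z(A)$ one shows $\|A_\varphi - A\| \to 0$ as $\varphi$ shrinks to a Dirac mass, so it suffices to show each $A_\varphi \in \mathcal{T}_{lin}^{\mathbb{F}_t}(\mathcal{D}_0)$. For this one computes $\widetilde{A_\varphi} = \varphi * \widetilde{A}$ (a genuine convolution), which lies in $\mathcal{D}_0$ and is now smooth/nice, and one needs a "reconstruction" lemma: an operator of the form $A_\varphi$ whose Berezin transform is a nice function in $\mathcal{D}_0$ can be written (or norm-approximated) as a combination $\sum_j \alpha_{z_j}(T_{g_j}^t)$ with $g_j \in \mathcal{D}_0$, or more precisely as an integral $\int \alpha_z(T_{h}^t) d\lambda(z)$ reconstructed from $\widetilde{A}$ via the Weyl calculus / Fourier inversion on the Heisenberg group. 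The cleanest route is probably to show directly that the map $f \mapsto T_f^t$ composed with this averaging, together with the Berezin transform, is essentially invertible on the relevant dense classes — i.e.\ establish that $\widetilde{A} = \widetilde{B}$ for some explicit $B \in \mathcal{T}_{lin}^{\mathbb{F}_t}(\mathcal{D}_0)$ built from $\widetilde{A}$, and then invoke injectivity of the Berezin transform on $\mathcal{L}(\mathbb{F}_t)$ for $1 \le p < \infty$ and on $\mathcal{L}(f_t^\infty)$ to conclude $A = B$.

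The main obstacle is precisely the $F_t^\infty$ case, where the Berezin transform is \emph{not} injective, so the last step of the $\Leftarrow$ direction cannot simply quote injectivity; there one must instead carry the approximation $A_\varphi \to A$ through in operator norm on $F_t^\infty$ and build the approximants $A_\varphi$ \emph{explicitly} as norm-convergent integrals of translated Toeplitz operators, never passing through "$\widetilde{A} = \widetilde{B} \Rightarrow A = B$". Making the reconstruction $A_\varphi = \int \alpha_z(T_h^t)\,d\lambda(z)$ genuinely hold at the operator level (not just after applying the Berezin transform) is the technical heart — it amounts to a Fourier-analytic identity for the Weyl system on $\mathbb{C}^n$, and the $\alpha$-continuity hypothesis is what guarantees all the integrals involved converge in operator norm and land in the closed subspace $\mathcal{T}_{lin}^{\mathbb{F}_t}(\mathcal{D}_0)$.
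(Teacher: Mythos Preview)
Your forward implication is fine and matches the paper's treatment. The gap is in the reverse implication: your mollification step $A \mapsto A_\varphi = \int \varphi(z)\,\alpha_z(A)\,dz$ with a generic bump $\varphi$ does not reduce the problem. After mollifying you still have an operator $A_\varphi \in \mathcal C_1(\mathbb F_t)$ with $\widetilde{A_\varphi} = \varphi \ast \widetilde{A} \in \mathcal D_0$, and you are back to exactly the statement you are trying to prove. The ``reconstruction lemma'' you invoke is, as stated, the theorem itself; the gestures towards ``Fourier inversion on the Heisenberg group'' do not supply the missing link.

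What the paper does, and what you are missing, is a concrete bridge from the module action to Toeplitz operators: the identity
\[
g_t \ast A \;=\; T_{\widetilde{A}}^t,
\]
where $g_t$ is the specific Gaussian $g_t(z) = (\pi t)^{-n} e^{-|z|^2/t}$. This is checked by comparing Berezin transforms (using injectivity, see below). Once you have this, the argument is short: by Wiener's approximation theorem one can approximate the approximate identity $g_{t/N}$ in $L^1(\mathbb C^n)$ by finite linear combinations $\sum_j c_j^N \alpha_{z_j^N}(g_t)$, and hence
\[
A \approx g_{t/N} \ast A \approx \sum_j c_j^N \alpha_{z_j^N}(g_t \ast A) = \sum_j c_j^N \alpha_{z_j^N}(T_{\widetilde{A}}^t) = T_{\sum_j c_j^N \alpha_{z_j^N}(\widetilde{A})}^t,
\]
with all approximations in operator norm. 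The symbol on the right lies in $\mathcal D_0$ by $\alpha$-invariance, so $A \in \mathcal T_{lin}^{\mathbb F_t}(\mathcal D_0)$. The point is that mollification with $g_t$ \emph{specifically} lands you on a Toeplitz operator with a symbol you control; a generic $\varphi$ does not.

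On the $F_t^\infty$ case: your instinct that injectivity of the Berezin transform is the issue is correct, but your proposed workaround (avoid injectivity altogether by building $A_\varphi$ explicitly as an integral of translated Toeplitz operators) runs into the same circularity. The paper instead proves that the Berezin transform \emph{is} injective on $\mathcal C_1(F_t^\infty)$, by showing that every $A \in \mathcal C_1(F_t^\infty)$ has a preadjoint in $\mathcal C_1(F_t^1)$ and leaves $f_t^\infty$ invariant; this recovers the identity $g_t \ast A = T_{\widetilde{A}}^t$ on $\mathcal C_1(F_t^\infty)$ and lets the same proof go through.
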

Note that this theorem essentially goes back (in the Hilbert space case) to \cite{Werner1984} and has been proven by the author in \cite{Fulsche2020} for the reflexive cases $1 < p < \infty$. There, we made critical use of the so-called convolution formalism of Quantum Harmonic Analysis (which also goes back to \cite{Werner1984}). To some extent, these concepts can be carried over to the non-reflexive case: At least on $f_t^\infty$ most things work out and hence can be carried over to $F_t^1$ and $F_t^\infty$ by duality. Nevertheless, this approach somehow lacks the naturalness it has for $p \in (1, \infty)$, as technical burdens keep stacking up. Therefore, we chose to present a different entrance point here, which is one of the two well-known Berger-Coburn estimates. While initially only proven for the Hilbert space case \cite{Berger_Coburn1994}, they have recently been generalized to any value of $p \in [1, \infty]$ \cite{Bauer_Fulsche2020}.
\begin{thm}[{\cite{Berger_Coburn1994, Bauer_Fulsche2020}}]\label{thm:berger_coburn1}
Let $f: \mathbb C^n \to \mathbb C$ be measurable such that $f K_z^t \in L_t^2$ for any $z \in \mathbb C^n$. Then, for every $s \in (0, t/2)$ there exists a constant $C$, depending only on $n, s$ and $t$ such that:
\begin{align*}
\| T_f^t\|_{\mathbb F_t \to \mathbb F_t} &\leq C \| \widetilde{f}^{(s)}\|_\infty.
\end{align*}
\end{thm}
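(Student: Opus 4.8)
The plan is to prove the Berger–Coburn-type estimate $\|T_f^t\|_{\mathbb F_t\to\mathbb F_t}\le C\|\widetilde f^{(s)}\|_\infty$ by first disposing of the easy Hilbert space case $p=2$, and then bootstrapping from it to all the other Fock spaces by interpolation and duality, plus a separate argument for $f_t^\infty$. For $p=2$, I would use the standard Schur test on the integral kernel of $T_f^t$. Writing $T_f^t g(z)=\frac{1}{(\pi t)^n}\int_{\mathbb C^n} f(w)g(w)e^{z\cdot\overline w/t}e^{-|w|^2/t}\,dw$, one checks that $T_f^t$ has reproducing-kernel structure and that the Berezin transform at scale $s<t/2$, namely $\widetilde f^{(s)}$, controls the relevant kernel integrals; the condition $s<t/2$ is exactly what makes the Gaussian integrals defining the Schur test functions converge. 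This is essentially the argument in \cite{Berger_Coburn1994}, and I would present it compactly, with $k_z^t$ (or a slight reweighting thereof) as the Schur test function. The hypothesis $fK_z^t\in L_t^2$ guarantees $T_f^t$ is at least densely defined and that all the integrals make sense.

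Next, to reach $1\le p\le\infty$ and $f_t^\infty$, I would exploit that $P_t$ is bounded on every $L_t^p$ and that $T_f^t=P_t M_f$ with $M_f$ multiplication by $f$. The cleanest route is to bound $T_f^t$ on $L_t^p$ by factoring $f=f^{1/p'}\cdot f^{1/p}$-type splittings is awkward because $f$ is complex, so instead I would interpolate: one shows $T_f^t$ is bounded on $L_t^1$ and on $L_t^\infty$ with the stated bound, and then Riesz–Thorin (applied to the analytic family obtained by freezing the kernel) gives all $1<p<\infty$. For the $L_t^\infty$ bound one estimates $|T_f^t g(z)|e^{-|z|^2/2t}$ pointwise against $\|g\|_{L_t^\infty}$ times an integral of $|f(w)|e^{-|w|^2/2t}$ against a Gaussian, and this integral is comparable to a local average of $\widetilde{|f|}^{(s)}$; the $L_t^1$ bound follows by a dual computation or directly by Fubini. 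Since $\widetilde f^{(s)}$ bounds $\widetilde{|f|}^{(s)}$ pointwise only up to the behaviour of the phase, a small amount of care is needed here — one genuinely wants $\||f|\widetilde{\phantom{f}}^{(s)}\|_\infty$, but by writing the Berezin transform as a Gaussian convolution and using that $|f|\le$ something controlled by $\widetilde f^{(s)}$ one recovers the clean statement; alternatively one simply notes $\|\widetilde{|f|}^{(s)}\|_\infty$ and $\|\widetilde f^{(s)}\|_\infty$ are comparable in this range by a semigroup/subordination argument. Finally, for $f_t^\infty$, since $T_f^t$ leaves $f_t^\infty\subset F_t^\infty$ invariant, the $F_t^\infty$ bound restricts immediately.

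An alternative, and perhaps the slickest, organization avoids interpolation entirely: observe that the statement is already proved in \cite{Bauer_Fulsche2020} for $1\le p\le\infty$, and that $f_t^\infty$ is a closed $T_f^t$-invariant subspace of $F_t^\infty$, so $\|T_f^t\|_{f_t^\infty\to f_t^\infty}\le\|T_f^t\|_{F_t^\infty\to F_t^\infty}$. In that case the "proof" is really just the remark that all six spaces in the family $\mathbb F_t$ are either directly covered by the cited estimate or are closed invariant subspaces of one that is, and one records the constant $C=C(n,s,t)$ is the same throughout since it comes from the Schur/Gaussian bounds which do not see the ambient Fock space. I expect the main obstacle — or rather, the only real content beyond citation — to be the bookkeeping that the constant is genuinely uniform over the whole family $\{F_t^p:1\le p\le\infty\}\cup\{f_t^\infty\}$ and the verification that the $p=1$ and $p=\infty$ endpoint estimates hold with a constant of the stated form, so that restricting to $f_t^\infty$ is legitimate; the Gaussian convergence requirement $s\in(0,t/2)$ threads through every one of these estimates and is the structural reason the hypothesis appears.
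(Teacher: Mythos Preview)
The paper does not prove this theorem at all: it is stated with citations to \cite{Berger_Coburn1994} (for $p=2$) and \cite{Bauer_Fulsche2020} (for all $1\le p\le\infty$), and then used as a black box. Your ``slickest organization'' at the end --- cite \cite{Bauer_Fulsche2020} for $F_t^p$, $1\le p\le\infty$, and note that $f_t^\infty$ is a closed $T_f^t$-invariant subspace of $F_t^\infty$ --- is precisely what the paper does, so in that sense your proposal is aligned with the paper.

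Your longer sketch, however, has a genuine gap that you should not paper over. The endpoint estimates you propose for $L_t^1$ and $L_t^\infty$ naturally produce bounds in terms of $\|\widetilde{|f|}^{(s)}\|_\infty$, not $\|\widetilde f^{(s)}\|_\infty$, and these two quantities are \emph{not} comparable in general. Neither of your suggested fixes works: there is no inequality of the form ``$|f|\le$ something controlled by $\widetilde f^{(s)}$'' (boundedness of $\widetilde f^{(s)}$ does not force $f$ to be bounded, nor even $\widetilde{|f|}^{(s)}$ to be bounded), and there is no ``semigroup/subordination argument'' giving $\|\widetilde{|f|}^{(s)}\|_\infty\lesssim\|\widetilde f^{(s)}\|_\infty$ --- cancellation in the Gaussian average of $f$ is exactly the phenomenon that makes the Berger--Coburn estimate nontrivial. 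A naive interpolation from crude endpoint bounds therefore does not recover the theorem; the actual argument in \cite{Bauer_Fulsche2020} proceeds via direct kernel estimates on $\langle T_f^t k_w^t, k_z^t\rangle$ in terms of $\widetilde f^{(s)}$ (retaining the complex $f$, not $|f|$), which then yield the operator bound on every $\mathbb F_t$ uniformly. If you want to present a self-contained proof rather than a citation, that is the route to take.
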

\begin{cor}\label{cor:toep_c1}
For any $f \in L^\infty(\mathbb C^n)$ it holds true that
\begin{align*}
T_f^t \in \mathcal C_1(\mathbb F_t).
\end{align*}
\end{cor}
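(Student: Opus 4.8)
The goal is to show that for $f \in L^\infty(\mathbb{C}^n)$, the map $z \mapsto \alpha_z(T_f^t)$ is continuous in operator norm on $\mathbb{F}_t$. The natural first step is to identify $\alpha_z(T_f^t)$ as a Toeplitz operator again. Using the defining relation $\alpha_z(A) = W_z^t A W_{-z}^t$ together with the covariance property of Toeplitz operators under Weyl translations, one expects $\alpha_z(T_f^t) = T_{\alpha_z(f)}^t$, where $\alpha_z(f)(w) = f(w-z)$; this is a routine computation using $W_z^t g(w) = k_z^t(w) g(w-z)$ and the integral formula for $P_t$. Granting this, the problem reduces to showing $z \mapsto T_{\alpha_z(f)}^t$ is norm-continuous, i.e. that $\|T^t_{\alpha_z(f) - \alpha_w(f)}\|_{\mathbb{F}_t \to \mathbb{F}_t} \to 0$ as $z \to w$, and by the group property it suffices to treat $w = 0$, so we must show $\|T^t_{f - \alpha_z(f)}\|_{\mathbb{F}_t \to \mathbb{F}_t} \to 0$ as $z \to 0$.

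This is exactly where Theorem~\ref{thm:berger_coburn1} enters, and it is the crux of the argument: fix some $s \in (0, t/2)$; then
\begin{align*}
\| T^t_{f - \alpha_z(f)}\|_{\mathbb{F}_t \to \mathbb{F}_t} \leq C \, \| \widetilde{(f - \alpha_z(f))}^{(s)}\|_\infty = C \, \| \widetilde{f}^{(s)} - \alpha_z(\widetilde{f}^{(s)})\|_\infty,
\end{align*}
where in the last equality I use that the heat-type transform $g \mapsto \widetilde{g}^{(s)}$ commutes with translations (again a direct change of variables in the Gaussian integral). So everything comes down to showing that $\widetilde{f}^{(s)}$ is \emph{uniformly continuous} on $\mathbb{C}^n$, since uniform continuity is precisely the statement that $\|\widetilde{f}^{(s)} - \alpha_z(\widetilde{f}^{(s)})\|_\infty \to 0$ as $z \to 0$.

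The uniform continuity of $\widetilde{f}^{(s)}$ for bounded $f$ is a standard and well-known fact: $\widetilde{f}^{(s)}$ is the convolution of $f \in L^\infty(\mathbb{C}^n)$ with a Gaussian density, and convolution of a bounded function with an integrable function whose translates depend continuously on the shift parameter in $L^1$ produces a bounded uniformly continuous function; concretely, $|\widetilde{f}^{(s)}(z) - \widetilde{f}^{(s)}(z')| \leq \|f\|_\infty \, \|\varphi_s - \alpha_{z'-z}(\varphi_s)\|_{L^1}$ with $\varphi_s$ the Gaussian kernel, and the right-hand side tends to $0$ uniformly in $z$ as $z' \to z$ by continuity of translation on $L^1$. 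Assembling the three ingredients — the covariance identity $\alpha_z(T_f^t) = T_{\alpha_z(f)}^t$, the Berger–Coburn estimate of Theorem~\ref{thm:berger_coburn1}, and the uniform continuity of $\widetilde{f}^{(s)}$ — yields $T_f^t \in \mathcal{C}_1(\mathbb{F}_t)$. I expect the only genuinely delicate point to be making sure the Berger–Coburn estimate is applicable to the symbol $f - \alpha_z(f)$, but since this symbol is bounded (hence certainly satisfies the hypothesis $(f-\alpha_z(f))K_w^t \in L_t^2$ for all $w$), this is immediate, and the proof is short.
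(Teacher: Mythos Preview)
Your proof is correct and follows exactly the same approach as the paper: use the covariance identity $\alpha_z(T_f^t) = T_{\alpha_z(f)}^t$, apply the Berger--Coburn estimate (Theorem~\ref{thm:berger_coburn1}) to the symbol $\alpha_z(f)-f$, commute the heat transform with translations, and conclude from the uniform continuity of $\widetilde{f}^{(s)}$. The paper's proof is simply a terser version of what you wrote.
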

\begin{proof}
Clearly, $\widetilde{f}^{(s)} \in \operatorname{BUC}(\mathbb C^n)$ for every $s > 0$ whenever $f \in L^\infty(\mathbb C^n)$ ($\widetilde{f}^{(s)}$ is simply the convolution of $f$ with an appropriate Gaussian). Hence,
\begin{align*}
\| \alpha_z(T_f^t) - T_f^t\| = \| T_{\alpha_z(f) - f}^t\| \leq C \| \alpha_z(\widetilde{f}^{(s)}) - \widetilde{f}^{(s)}\|_\infty.
\end{align*}
By uniform continuity, the right-hand side of this estimate goes to zero when $|z| \to 0$, proving the statement.
\end{proof}
In particular, this proves that $\mathcal T^{\mathbb F_t} \subset \mathcal C_1(\mathbb F_t)$. Here, we denote by $\mathcal T^{\mathbb F_t}$ the Banach subalgebra of $\mathcal L(\mathbb F_t)$ generated by all Toeplitz operators with bounded symbols.

We can now define an $L^1(\mathbb C^n)$ module structure on $\mathcal C_1(\mathbb F_t)$ as follows. For any $A \in \mathcal C_1(\mathbb F_t)$ and $f \in L^1(\mathbb C^n)$, we set
\begin{align*}
f \ast A := \int_{\mathbb C^n} f(z) \alpha_z(A)~dz.
\end{align*}
This integral always exists as a Bochner integral in $\mathcal C_1(\mathbb F_t)$, hence we have $f \ast A \in \mathcal C_1(\mathbb F_t)$. It follows from basic properties of the Bochner integral that we have
\begin{align*}
\alpha_z(f \ast A) = f \ast (\alpha_z(A)) = \alpha_z(f) \ast A.
\end{align*}
It is also imminent that this module structure satisfies
\begin{align*}
\| f \ast A\|_{\mathbb F_t \to \mathbb F_t} \leq \| f\|_{L^1(\mathbb C^n)} \| A\|_{\mathbb F_t \to \mathbb F_t}.
\end{align*}

In what follows, we will set
\begin{align*}
g_s(z) = \frac{1}{(\pi s)^n}e^{-\frac{|z|^2}{s}}.
\end{align*}
Then, elementary estimates show: For any $A \in \mathcal C_1(\mathbb F_t)$ it holds true that
\begin{align*}
\| A - g_s \ast A\|_{\mathbb F_t \to \mathbb F_t} \longrightarrow 0, \quad s \to 0.
\end{align*}
We will also use the following important identity, which holds for any $A \in \mathcal C_{1}(\mathbb F_t)$:
\begin{align*}
g_t \ast A = T_{\widetilde{A}}^t.
\end{align*}
This identity is well-known for $p = 2$ (see e.g. \cite{Berger_Coburn1987}) and can, for arbitrary $\mathbb F_t$, be verified in the same way, i.e.\ by comparing the Berezin transforms of the two operators: 
\begin{align*}
\widetilde{g_t \ast A}(w) &= \int_{\mathbb C^n} g_t(z) \widetilde{\alpha_z(A)}(w)~dz \\
&= \int_{\mathbb C^n} g_t(w) \widetilde{A}(w-z)~dz\\
&= g_t \ast \widetilde{A}(z)\\
&= (\widetilde{A})^{\sim(t)}(z)\\
&= \widetilde{T_{\widetilde{A}}^t}(z).
\end{align*} 
Note that the Berezin transform over $\mathcal L(F_t^\infty)$ is no longer injective, therefore it is not immediately clear that the above reasoning implies $g_t \ast A = T_{\widetilde{A}}^t$ for this case. Indeed, the Berezin transform is injective on $\mathcal C_1(F_t^\infty)$. We will defer showing this for a moment. 

Recall that, by Wiener's approximation theorem, for any $N \in \mathbb N$ there are finitely many constants $c_j^N \in \mathbb C$, $z_j^N \in \mathbb C^n$ such that
\begin{align*}
\| g_{\frac{t}{N}} - \sum_{j} c_j^N \alpha_{z_j^N}(g_t)\|_{L^1(\mathbb C^n)} \leq \frac{1}{N}.
\end{align*}

\begin{proof}[Proof of the Correspondence Theorem \ref{Correspondencetheorem} for $\mathbb F_t \not = F_t^\infty$]
Let $N \in \mathbb N$ and choose the constants $c_j^N, z_j^N$ as above. Then, we have:
\begin{align*}
\| A - T_{\sum_j c_j^N \alpha_{z_j^N}(\widetilde{A})}^t\| &= \| A - \sum_j c_j^N \alpha_{z_j^N} T_{\widetilde{A}}^t\|\\
&= \| A - \sum_{j} c_j^N \alpha_{z_j^N} g_t \ast A\|\\
&\leq \| A - g_{\frac{t}{N}} \ast A\| + \|  g_{\frac{t}{N}} \ast A - \sum_j c_j^N \alpha_{z_j^N} g_t \ast A\|\\
&\leq \| A - g_{\frac{t}{N}} \ast A\| + \| g_{\frac{t}{N}} - \sum_{j} c_j^N \alpha_{z_j^N} g_t\|_{L^1} \| A\|\\
&\leq \| A - g_\frac{t}{N} \ast A\| + \frac{1}{N} \| A\|.
\end{align*}
We therefore obtain that $A$ can be approximated by Toeplitz operators with symbols in $\mathcal D_0$, whenever $\mathcal D_0$ is an $\alpha$-invariant subspace of $\operatorname{BUC}(\mathbb C^n)$ containing $\widetilde{A}$. In particular, $\widetilde{A} \in \mathcal D_0$ implies $A \in \mathcal T_{lin}^{\mathbb F_t}(\mathcal D_0)$.

On the other hand, if $A \in \mathcal T_{lin}^{\mathbb F_t}(\mathcal D_0)$, then $\widetilde{A} \in \mathcal D_0$ follows easily by translation invariance.
\end{proof}
Let us come back to the problem we mentioned earlier, namely the non-injectivity of the Berezin transform on $\mathcal L(F_t^\infty)$. This problem can be resolved by observing the following facts:
\begin{lem}\label{prop:duality}
\begin{enumerate}[(1)]
\item Every $A\in \mathcal C_1(F_t^\infty)$ has a pre-adjoint in $\mathcal C_1(F_t^1)$, i.e.\ an operator $B \in \mathcal L(F_t^1)$ such that $B^\ast = A$.
\item Every $A \in \mathcal C_1(F_t^\infty)$ leaves $f_t^\infty$ invariant.
\item The restriction of the Berezin transform to $\mathcal C_1(F_t^\infty)$ is injective.
\item $f_t^\infty = \{ f \in F_t^\infty; ~z \mapsto W_z^t f \text{ is continuous with respect to the } F_t^\infty \text{-topology}\}$.
\end{enumerate}
\end{lem}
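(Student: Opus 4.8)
The plan is to prove the four items in the order (d)\,$\Rightarrow$\,(b) and (a)\,$\Rightarrow$\,(c), the genuine difficulty being concentrated in (a).

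For (d): the inclusion ``$\subseteq$'' is literally the strong operator continuity of $z\mapsto W^t_z$ over $f^\infty_t$ stated in the Lemma of Section~2, since the $f^\infty_t$-norm is the restriction of the $F^\infty_t$-norm. For ``$\supseteq$'', take $f\in F^\infty_t$ with $z\mapsto W^t_z f$ continuous and, for $0<s<2t$, set $f_s:=\int_{\mathbb C^n}g_s(z)W^t_z f\,dz\in F^\infty_t$ (a Bochner integral, the integrand being continuous and norm-bounded by $\|f\|$). Writing $W^t_z=T^t_{g^t_z}$ and applying Fubini identifies $f_s=T^t_{h_s}f$ with $h_s(w)=\int g_s(z)g^t_z(w)\,dz$ an explicit Gaussian (the $z$-integral converges precisely because $s<2t$). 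Feeding the pointwise bound $|f(w)|\le\|f\|_{F^\infty_t}e^{|w|^2/2t}$ into the integral kernel of $T^t_{h_s}$ gives $|(T^t_{h_s}f)(w)|e^{-|w|^2/2t}\lesssim_s\|f\|_{F^\infty_t}e^{-\gamma_s|w|^2}$ for some $\gamma_s>0$, so $f_s\in f^\infty_t$; and $f_s\to f$ in $F^\infty_t$ as $s\to0$ by the usual approximate-identity estimate, which is exactly where the hypothesis on $z\mapsto W^t_z f$ is used. Since $f^\infty_t$ is closed, $f\in f^\infty_t$.

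Granting (d), statement (b) is short: for $A\in\mathcal C_1(F^\infty_t)$ and $u\in\mathbb C^n$ one computes $W^t_z K^t_u=\lambda(z,u)K^t_{z+u}$ with $z\mapsto\lambda(z,u)$ continuous, whence $W^t_z(AK^t_u)=\lambda(z,u)\,\alpha_z(A)K^t_{z+u}$ by $W^t_zA=\alpha_z(A)W^t_z$; this is $F^\infty_t$-continuous in $z$ because $z\mapsto\alpha_z(A)$ is norm continuous and $z\mapsto K^t_{z+u}$ is $F^\infty_t$-continuous, so $AK^t_u\in f^\infty_t$ by (d), and since $\operatorname{span}\{K^t_u\}$ is dense in $f^\infty_t$ and $A$ is bounded, $A(f^\infty_t)\subseteq f^\infty_t$. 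Likewise, granting (a), statement (c) is immediate: if $\widetilde A\equiv0$, write $A=B^*$ with $B\in\mathcal C_1(F^1_t)$; then $\widetilde B(z)=\langle Bk^t_z,k^t_z\rangle=\langle k^t_z,Ak^t_z\rangle=\overline{\widetilde A(z)}=0$, and injectivity of the Berezin transform on $\mathcal L(F^1_t)$ forces $B=0$, hence $A=0$.

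The heart of the matter is (a). By (b), $A$ restricts to $C:=A|_{f^\infty_t}\in\mathcal C_1(f^\infty_t)$, and the candidate preadjoint is $B:=C'\in\mathcal L\big((f^\infty_t)'\big)=\mathcal L(F^1_t)$, for which $B^*=C''$; thus it suffices to show $C''=A$ on $F^\infty_t=(f^\infty_t)''$. Since $C''$ and $A$ both lie in $\mathcal C_1(F^\infty_t)$ and agree on the weak-$*$ dense subspace $f^\infty_t$, the operator $A-C''\in\mathcal C_1(F^\infty_t)$ vanishes on $f^\infty_t$, so $C''=A$ is in fact \emph{equivalent} to injectivity of the Berezin transform on $\mathcal C_1(F^\infty_t)$, i.e.\ to (c). To get a foothold I would first use $\|A-g_s\ast A\|\to0$ together with Wiener's approximation theorem to reduce ``$A$ has a preadjoint'' to ``$g_t\ast A$ has a preadjoint'': if $g_t\ast A$ does, then the finite sums $\sum_j c^N_j\alpha_{z^N_j}(g_t\ast A)$ do too, they converge in norm to $A$, their preadjoints are Cauchy in $\mathcal L(F^1_t)$ (the preadjoint map is bounded below, the two operator norms being equivalent), and the limit $B$ then satisfies $B^*=A$ and lies in $\mathcal C_1(F^1_t)$ since $(\alpha_z(B))^*=\alpha_z(A)$. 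So everything comes down to the identity $g_t\ast A=T^t_{\widetilde A}$ on $F^\infty_t$. Restricted to $f^\infty_t$ this holds, because $(g_t\ast A)|_{f^\infty_t}=g_t\ast C=T^t_{\widetilde C}=T^t_{\widetilde A}|_{f^\infty_t}$, using the known injectivity of the Berezin transform on $\mathcal L(f^\infty_t)$. The remaining step — upgrading the identity from $f^\infty_t$ to all of $F^\infty_t$ — is the one I expect to be the main obstacle: it is precisely where the non-injectivity of the Berezin transform on $\mathcal L(F^\infty_t)$ forbids the usual ``equal Berezin transforms $\Rightarrow$ equal operators'' shortcut, so it must be argued directly, e.g.\ by checking $[(g_t\ast A)f](\zeta)=[T^t_{\widetilde A}f](\zeta)$ pointwise via the reproducing property and Fubini, or by using the Berger--Coburn estimate (Theorem~\ref{thm:berger_coburn1}) to dominate $\|g_t\ast A\|_{F^\infty_t\to F^\infty_t}$ by the norm of its restriction to $f^\infty_t$. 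Once this is secured, $g_t\ast A=T^t_{\widetilde A}$ has the explicit preadjoint $T^t_{\overline{\widetilde A}}$, which together with the reduction above finishes (a), and hence (c).
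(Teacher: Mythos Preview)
Your arguments for (d) and (b) are correct and take a more direct route than the paper's. For (d), you identify the Bochner integral $\int g_s(z)W^t_zf\,dz$ with $T^t_{h_s}f$ for an explicitly Gaussian symbol $h_s$ and read off membership in $f^\infty_t$ from a pointwise decay estimate; the paper instead first builds the subclass $\mathcal A\subset\mathcal C_1(F^\infty_t)$ of operators admitting a preadjoint, runs the correspondence approximation inside $\mathcal A$, and then applies it to the rank-one operator $f\otimes 1\in\mathcal A$ to extract (d). Your (b), via $W^t_z(AK^t_u)=\lambda(z,u)\,\alpha_z(A)K^t_{z+u}$ and density of reproducing kernels, is likewise cleaner than the paper's estimate on $\|\alpha_z(Af\otimes 1)-(Af\otimes 1)\|$.

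Where your proposal is genuinely incomplete is (a), and you say so yourself: you reduce the problem to proving $g_t\ast A=T^t_{\widetilde A}$ on all of $F^\infty_t$ (equivalently, to showing that any $D\in\mathcal C_1(F^\infty_t)$ with $D|_{f^\infty_t}=0$ must vanish), label it ``the main obstacle,'' and only sketch possible attacks. It is worth pointing out that the paper glosses over exactly this step: after establishing (b), the paper writes ``From this, we then obtain $A=(A|_{f^\infty_t})^{**}$'' with no further justification --- and as you correctly observe, that assertion is equivalent to (c), so it cannot be invoked here without circularity. In other words, your diagnosis is sharper than the paper's presentation, but neither argument as written closes the gap. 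Your pointwise-Fubini suggestion is in the right direction, but be aware that for $f\in F^\infty_t\setminus f^\infty_t$ the map $z\mapsto W^t_zf$ is not $F^\infty_t$-norm-continuous (that is exactly (d)), so the Bochner-integral manipulation of $(g_t\ast A)f$ breaks down and one must work at the level of scalar integrals and justify the interchange with $A$ by hand.
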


\begin{proof}
Let us consider
\begin{align*}
\mathcal A := \{ A \in \mathcal C_1(F_t^\infty); ~A \text{ has a pre-adjoint in } \mathcal C_1(F_t^1)\}.
\end{align*}
Then, for any $A \in \mathcal A$ we have
\begin{align*}
\widetilde{A} = \overline{\widetilde{B}},
\end{align*}
where $B$ is a pre-adjoint in $\mathcal L(F_t^1)$, and since the Berezin transform is injective on $\mathcal L(F_t^1)$, we get that $\widetilde{A} = 0$ if and only if $B = 0$ if and only if $A = 0$, i.e.\ the Berezin transform is injective on $\mathcal A$. As before, we obtain now $g_t \ast A = T_{\widetilde{A}}^t$ for $A \in \mathcal A$. Now, one can imitate the proof of the Correspondence Theorem, i.e.\ show that any $A \in \mathcal A$ can be approximated by linear combinations of translates of $T_{\widetilde{A}}^t$. Since every Toeplitz operator (with bounded symbol) on $F_t^\infty$ leaves $f_t^\infty$ invariant, we obtain that every operator from $\mathcal A$ leaves $f_t^\infty$ invariant. From this, we can now prove (4): First, we recall that we already know that $z \mapsto W_z^t f$ is continuous in $F_t^\infty$-topology for any $f \in f_t^\infty$, hence we only need to show the reverse inclusion.

Assume that $f \in F_t^\infty$ is such that $z \mapsto W_z^t f$ is continuous in $F_t^\infty$. Then, the operator $f \otimes 1$ clearly has $1 \otimes f$ as its pre-adjoint, i.e.\ $(1 \otimes f)^\ast = f \otimes 1$. Now, $f \otimes 1$ is clearly contained in $\mathcal A$: We have
\begin{align*}
\| \alpha_z &(f \otimes 1) - (f \otimes 1)\| \\
&\leq \| k_z^t - 1\|_{F_t^1} \| f\|_{F_t^\infty} + \| k_z^t\|_{F_t^1} \| f - W_z^t\|_{F_t^\infty} \rightarrow 0, \quad z \to 0. 
\end{align*}
Similarly, the pre-adjoint $1 \otimes f$ is contained in $\mathcal C_1(F_t^1)$. By the previous discussion, $f \otimes 1$ must leave $f_t^\infty$ invariant, i.e
\begin{align*}
(f \otimes 1)(1) = \langle 1, 1\rangle f = f \in f_t^\infty.
\end{align*}
Finally, we now show that any $A \in \mathcal C_1(F_t^\infty)$ leaves $f_t^\infty$ invariant. From this, we then obtain
\begin{align*}
A = (A|_{f_t^\infty})^{\ast \ast},
\end{align*}
in particular $A$ has a pre-adjoint in $F_t^1$, i.e.\ $A \in \mathcal A$. Then, the statement automatically follows from the reasoning above.

So let $A \in \mathcal C_1(F_t^\infty)$ and pick any $f \in f_t^\infty$. As explained above, the rank 1 operator $(1 \otimes f)$ is contained in $\mathcal C_1(F_t^\infty)$. Therefore, 
\begin{align*}
A(f \otimes 1) = (Af \otimes 1) \in \mathcal C_1(F_t^\infty). 
\end{align*}
Recall that for the operator norm of rank one operators $b \otimes a$ with $a \in F_t^1$, $b \in F_t^\infty$ we have
\begin{align*}
\| b \otimes a\|_{F_t^\infty \to F_t^\infty} &= \sup_{f \in F_t^\infty, ~\| f\| \leq 1} \|  b\otimes a(f)\|_{F_t^\infty}\\
&= \sup_{\| f\| \leq 1} |\langle f, a\rangle| \| b\|_{F_t^\infty}\\
&= \| a\|_{(F_t^\infty)'} \| b\|_{F_t^\infty}\\
&\simeq \| a\|_{F_t^1} \| b\|_{F_t^\infty}.
\end{align*}

Therefore, we obtain
\begin{align*}
0 &= \lim_{z \to 0} \| \alpha_z(A(f \otimes 1)) - A(f \otimes 1)\| \\
&= \lim_{z \to 0}\| (W_z^t (Af) \otimes k_z^t) - (Af \otimes 1)\|\\
&\geq \limsup_{z \to 0} | \| W_z^t (Af) \otimes (k_z^t - 1)\| - \| (W_z^t (Af) - Af) \otimes 1 \| |.
\end{align*}
Now, we clearly have
\begin{align*}
\| W_z^t(Af) \otimes (k_z^t - 1)\| &\lesssim \| k_z^t - 1\|_{F_t^1} \| W_z^t (Af)\|_{F_t^\infty} \\
&= \| k_z^t - 1\|_{F_t^1} \| Af\|_{F_t^\infty} \\
&\to 0, \quad z \to 0
\end{align*}
and therefore necessarily
\begin{align*}
\liminf_{z \to 0} \| (W_z^t(Af) - Af) \otimes 1 \| \gtrsim \limsup_{z \to 0} \| 1\|_{F_t^1} \|W_z^t (Af) - Af\|_{F_t^\infty} = 0.
\end{align*}
This shows that $z \mapsto W_z^t Af$ is continuous in $F_t^\infty$ topology, hence $Af \in f_t^\infty$ by (4). We thus obtain $Af_t^\infty \subset f_t^\infty$.
\end{proof}
\begin{rem}
Indeed, once one sees the characterization of $f_t^\infty$ in (4), it is not at all surprising that this holds true. Nevertheless, it seems that this was not observed before. Obtaining a direct proof of (4) should be a nice exercise.
\end{rem}
\begin{proof}[Proof of the Correspondence Theorem \ref{Correspondencetheorem} for $\mathbb F_t = F_t^\infty$] Follows now as in the other cases, having the injectivity of the Berezin transform on $\mathcal C_1(F_t^\infty)$ at hand.
\end{proof}

\begin{cor}\label{cor_corresp}
Let $\mathcal D_1 \subset \mathcal C_1(\mathbb F_t)$ be an $\alpha$-invariant, closed subspace. Then, 
\begin{align*}
\mathcal D_1 = \mathcal T_{lin}^{\mathbb F_t}(\mathcal D_0)
\end{align*}
for some unique $\alpha$-invariant and closed subspace $\mathcal D_0$ of $\operatorname{BUC}(\mathbb C^n)$.
\end{cor}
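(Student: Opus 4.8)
The plan is to extract $\mathcal D_0$ from $\mathcal D_1$ via the Berezin transform and then invoke Theorem~\ref{Correspondencetheorem} to close the loop. Concretely, I would set
\begin{align*}
\mathcal D_0 := \overline{\{ \widetilde{A} : ~A \in \mathcal D_1\}}^{\,\|\cdot\|_\infty},
\end{align*}
where the closure is taken in $\operatorname{BUC}(\mathbb C^n)$ (note $\widetilde{A} \in \operatorname{BUC}(\mathbb C^n)$ for $A \in \mathcal C_1(\mathbb F_t)$, since $\widetilde{A} = g_t \ast \widetilde{A}^{\,(\text{something})}$ is a Gaussian mollification, or more directly since $|\widetilde{A}(z) - \widetilde{A}(w)| = |\widetilde{\alpha_{-w}(A)}(z-w) - \widetilde{A}(0) + \dots|$ is controlled by $\|\alpha_{z-w}(A) - A\|$). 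This $\mathcal D_0$ is closed by construction, and it is $\alpha$-invariant because $\widetilde{\alpha_z(A)} = \alpha_z(\widetilde{A})$ and $\mathcal D_1$ is $\alpha$-invariant; closedness of $\mathcal D_0$ is preserved under $\alpha_z$ since each $\alpha_z$ is an isometry of $\operatorname{BUC}(\mathbb C^n)$ in sup-norm.

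Next I would check the two inclusions. For $\mathcal D_1 \subseteq \mathcal T_{lin}^{\mathbb F_t}(\mathcal D_0)$: given $A \in \mathcal D_1$, we have $\widetilde{A} \in \mathcal D_0$ by definition, so Theorem~\ref{Correspondencetheorem} gives $A \in \mathcal T_{lin}^{\mathbb F_t}(\mathcal D_0)$. For the reverse inclusion $\mathcal T_{lin}^{\mathbb F_t}(\mathcal D_0) \subseteq \mathcal D_1$: since $\mathcal D_1$ is closed and linear, it suffices to show $T_f^t \in \mathcal D_1$ for every $f \in \mathcal D_0$, and again by closedness it suffices to treat $f = \widetilde{A}$ for $A \in \mathcal D_1$. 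But from the displayed identity $g_t \ast A = T_{\widetilde{A}}^t$ (valid for $A \in \mathcal C_1(\mathbb F_t)$, with the $F_t^\infty$ case handled via Lemma~\ref{prop:duality}), together with the approximation $g_t \ast A = \lim_N \sum_j c_j^N \alpha_{z_j^N}(\dots)$-type manipulation — more simply: $g_t \ast A$ is a Bochner integral of the $\|\cdot\|_{\mathbb F_t \to \mathbb F_t}$-continuous, $\mathcal D_1$-valued map $z \mapsto g_t(z)\alpha_z(A)$, hence lies in $\mathcal D_1$ since $\mathcal D_1$ is a closed subspace that is $\alpha$-invariant (so it contains all Riemann sums $\sum g_t(z_j)\alpha_{z_j}(A)\,\Delta z_j$ and their norm-limit). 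Thus $T_{\widetilde{A}}^t = g_t \ast A \in \mathcal D_1$.

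For uniqueness: if $\mathcal D_1 = \mathcal T_{lin}^{\mathbb F_t}(\mathcal D_0')$ for another $\alpha$-invariant closed $\mathcal D_0' \subseteq \operatorname{BUC}(\mathbb C^n)$, then Theorem~\ref{Correspondencetheorem} applied to each $A \in \mathcal D_1$ shows $\widetilde{A} \in \mathcal D_0'$, hence $\{\widetilde{A} : A \in \mathcal D_1\} \subseteq \mathcal D_0'$ and so $\mathcal D_0 \subseteq \mathcal D_0'$; conversely, for $f \in \mathcal D_0'$ we have $T_f^t \in \mathcal D_1$, and $\widetilde{T_f^t} = \widetilde{f}^{(t)} = g_t \ast f$, so $g_t \ast f \in \mathcal D_0$ for all $f \in \mathcal D_0'$; letting $g_t$ run through an approximate identity (using $\|f - g_s \ast f\|_\infty \to 0$ for $f \in \operatorname{BUC}(\mathbb C^n)$, plus the Wiener-type trick to pass from $g_s$ to $g_t$-translates while staying inside the closed $\alpha$-invariant space $\mathcal D_0$) yields $f \in \mathcal D_0$, so $\mathcal D_0' \subseteq \mathcal D_0$.

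The main obstacle is verifying cleanly that the Bochner integral $g_t \ast A$ lands in the closed subspace $\mathcal D_1$ and that the passage from $\mathcal D_0' \ni f$ to $f \in \mathcal D_0$ in the uniqueness argument goes through — both rely on the same mechanism (a closed $\alpha$-invariant subspace is closed under convolution with $L^1$ functions and, via Wiener approximation, under replacing one approximate-identity Gaussian by finite combinations of translates of another), which is exactly the engine already built in the proof of Theorem~\ref{Correspondencetheorem}; so the work is mostly in citing those pieces in the right order rather than in new ideas.
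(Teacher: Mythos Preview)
Your proposal is correct and follows essentially the same route as the paper: define $\mathcal D_0$ as the closure of $\{\widetilde{A}: A \in \mathcal D_1\}$, get $\mathcal D_1 \subseteq \mathcal T_{lin}^{\mathbb F_t}(\mathcal D_0)$ from the Correspondence Theorem, and get the reverse inclusion from $T_{\widetilde{A}}^t = g_t \ast A \in \mathcal D_1$ via the Bochner integral landing in the closed $\alpha$-invariant subspace. Your uniqueness argument is more explicit than the paper's (which simply says ``the Correspondence Theorem now easily gives uniqueness''), but it is exactly the intended mechanism --- the Wiener/approximate-identity step to pass from $g_t \ast f \in \mathcal D_0$ back to $f \in \mathcal D_0$ is the same engine already used in the proof of Theorem~\ref{Correspondencetheorem}.
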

\begin{proof}
Set
\begin{align*}
\mathcal D_0 := \overline{\{ \widetilde{A}: ~A \in \mathcal D_1\}}.
\end{align*}
Since $\mathcal D_1$ is $\alpha$-invariant, the same is true for $\mathcal D_0$. The proof of the Correspondence Theorem shows
\begin{align*}
\mathcal D_1 \subset \mathcal T_{lin}^{\mathbb F_t}(\mathcal D_0).
\end{align*}
On the other hand, for $f \in \mathcal D_0$ we have $f = \widetilde{A}$ for some $A \in \mathcal D_1$, hence
\begin{align*}
T_f^t = T_{\widetilde{A}}^t = g_t \ast A,
\end{align*}
where the right-hand side was defined as a Bochner integral with values in $\mathcal D_1$, hence is again contained in $\mathcal D_1$. This shows equality of $\mathcal D_1$ and $\mathcal T_{lin}^{\mathbb F_t}(\mathcal D_0)$. The Correspondence Theorem now easily gives uniqueness of $\mathcal D_0$.
\end{proof}

\section{Applications of the Correspondence Theorem}

Here, we give some applications of the Correspondence Theorem. In what follows, we will denote by $\mathcal T^{\mathbb F_t}$ the full Toeplitz algebra, i.e.\ the Banach algebra generated by all Toeplitz operators with $L^\infty(\mathbb C^n)$ symbols over the respective Fock space $\mathbb F_t$. 
\begin{thm}
The following holds true:
\begin{align*}
\mathcal C_1(\mathbb F_t) = \mathcal T^{\mathbb F_t} = \mathcal T_{lin}^{\mathbb F_t}(\operatorname{BUC}(\mathbb C^n)).
\end{align*}
\end{thm}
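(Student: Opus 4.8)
The strategy is to prove the chain of inclusions
$$\mathcal T^{\mathbb F_t} \subseteq \mathcal C_1(\mathbb F_t) \subseteq \mathcal T_{lin}^{\mathbb F_t}(\operatorname{BUC}(\mathbb C^n)) \subseteq \mathcal T^{\mathbb F_t},$$
which collapses all three spaces to a single object. The first inclusion is already in hand: Corollary \ref{cor:toep_c1} gives $T_f^t \in \mathcal C_1(\mathbb F_t)$ for every $f \in L^\infty(\mathbb C^n)$, and since $\mathcal C_1(\mathbb F_t)$ is visibly a closed subalgebra of $\mathcal L(\mathbb F_t)$ (closedness is routine; it is an algebra because $\alpha_z$ is multiplicative, $\alpha_z(AB) = \alpha_z(A)\alpha_z(B)$, and a product of two strongly-times-norm... more precisely two norm-continuous curves multiplies to a norm-continuous curve), it contains the Banach algebra generated by all such $T_f^t$, namely $\mathcal T^{\mathbb F_t}$.

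The third inclusion $\mathcal T_{lin}^{\mathbb F_t}(\operatorname{BUC}(\mathbb C^n)) \subseteq \mathcal T^{\mathbb F_t}$ is immediate from the definitions, since $\mathcal T^{\mathbb F_t}$ is a closed subspace of $\mathcal L(\mathbb F_t)$ containing every $T_f^t$ with $f \in L^\infty$, hence in particular with $f \in \operatorname{BUC}$, and $\mathcal T_{lin}^{\mathbb F_t}(\operatorname{BUC})$ is by definition the closed linear span of these.

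The middle inclusion $\mathcal C_1(\mathbb F_t) \subseteq \mathcal T_{lin}^{\mathbb F_t}(\operatorname{BUC}(\mathbb C^n))$ is the heart of the matter, and it is exactly what the Correspondence Theorem delivers: apply Theorem \ref{Correspondencetheorem} with $\mathcal D_0 = \operatorname{BUC}(\mathbb C^n)$, which is certainly an $\alpha$-invariant closed subspace of itself. For any $A \in \mathcal C_1(\mathbb F_t)$ we know $\widetilde A = \widetilde{g_s \ast A}|_{s\to 0}$ type reasoning, but more directly: $\widetilde{A}$ is always in $\operatorname{BUC}(\mathbb C^n)$ for $A \in \mathcal C_1(\mathbb F_t)$, since $\widetilde{\alpha_z(A)}(w) = \widetilde A(w-z)$ and hence $\|\alpha_z(\widetilde A) - \widetilde A\|_\infty = \sup_w |\langle (\alpha_z(A) - A)k_w^t, k_w^t\rangle| \leq \|\alpha_z(A) - A\|_{\mathbb F_t \to \mathbb F_t} \to 0$ as $z \to 0$, giving uniform continuity, while boundedness is clear from $\|\widetilde A\|_\infty \leq \|A\|$. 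So $\widetilde A \in \mathcal D_0 = \operatorname{BUC}(\mathbb C^n)$, and the Correspondence Theorem yields $A \in \mathcal T_{lin}^{\mathbb F_t}(\operatorname{BUC}(\mathbb C^n))$.

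\textbf{Main obstacle.} There is essentially no obstacle left once the Correspondence Theorem is established — the theorem was designed precisely so that this corollary falls out. The only points needing minor care are verifying that $\mathcal C_1(\mathbb F_t)$ is closed under composition (so that it contains the generated algebra and not merely the closed span) and that $\widetilde A \in \operatorname{BUC}$ for all $A \in \mathcal C_1(\mathbb F_t)$; both are short estimates of the kind sketched above. One might also remark that this identity simultaneously recovers Xia's theorem (the case $p=2$) and extends it to all $\mathbb F_t$, including the non-reflexive spaces $F_t^1$, $F_t^\infty$, and $f_t^\infty$.
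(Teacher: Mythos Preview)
Your proof is correct and takes essentially the same approach as the paper: apply the Correspondence Theorem with $\mathcal D_0 = \operatorname{BUC}(\mathbb C^n)$, together with the already-established inclusion $\mathcal T^{\mathbb F_t} \subseteq \mathcal C_1(\mathbb F_t)$ (noted right after Corollary~\ref{cor:toep_c1}) and the trivial inclusion $\mathcal T_{lin}^{\mathbb F_t}(\operatorname{BUC}) \subseteq \mathcal T^{\mathbb F_t}$. The paper's own proof is the one-line statement ``Follows from the Correspondence Theorem with $\mathcal D_0 = \operatorname{BUC}(\mathbb C^n)$''; you have simply spelled out the implicit chain of inclusions and the short verification that $\widetilde A \in \operatorname{BUC}(\mathbb C^n)$ for $A \in \mathcal C_1(\mathbb F_t)$.
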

\begin{proof}
Follows from the Correspondence Theorem with $\mathcal D_0 = \operatorname{BUC}(\mathbb C^n)$.
\end{proof}
\begin{rem}
The equality $\mathcal T^{F_t^2} = \mathcal T_{lin}^{F_t^2}(L^\infty(\mathbb C^n))$ was first obtained by J. Xia in \cite{Xia}. In \cite{Fulsche2020}, we obtained this result in the reflexive cases. 
\end{rem}

\begin{cor}\label{cor:inv_closed}
$\mathcal T^{\mathbb F_t}$ is inverse closed in $\mathcal L(\mathbb F_t)$.
\end{cor}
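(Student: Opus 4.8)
The plan is to show that $\mathcal T^{\mathbb F_t}$ is inverse closed by using the identity $\mathcal T^{\mathbb F_t} = \mathcal C_1(\mathbb F_t)$ from the preceding theorem, reducing the problem to showing that $\mathcal C_1(\mathbb F_t)$ is inverse closed in $\mathcal L(\mathbb F_t)$. So suppose $A \in \mathcal C_1(\mathbb F_t)$ and $A$ is invertible in $\mathcal L(\mathbb F_t)$, with inverse $A^{-1} \in \mathcal L(\mathbb F_t)$. We must show $A^{-1} \in \mathcal C_1(\mathbb F_t)$, i.e.~that $z \mapsto \alpha_z(A^{-1})$ is norm continuous.

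The key observation is that $\alpha_z$ is an algebra homomorphism on $\mathcal L(\mathbb F_t)$ — indeed $\alpha_z(BC) = W_z^t B W_{-z}^t W_z^t C W_{-z}^t = \alpha_z(B)\alpha_z(C)$ since $W_{-z}^t W_z^t = \operatorname{Id}$ by part (1) (or (2)) of the Weyl operator lemma — and that each $\alpha_z$ is an isometric automorphism of $\mathcal L(\mathbb F_t)$ (again by isometry of $W_z^t$). Consequently $\alpha_z(A^{-1}) = \alpha_z(A)^{-1}$, and since $\alpha_z$ is isometric, $\|\alpha_z(A^{-1})\| = \|A^{-1}\|$ is bounded uniformly in $z$. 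First I would fix $z_0$ and write, for $z$ near $z_0$,
\begin{align*}
\alpha_z(A^{-1}) - \alpha_{z_0}(A^{-1}) = \alpha_z(A)^{-1} - \alpha_{z_0}(A)^{-1} = \alpha_z(A)^{-1}\bigl(\alpha_{z_0}(A) - \alpha_z(A)\bigr)\alpha_{z_0}(A)^{-1}.
\end{align*}
Taking norms gives
\begin{align*}
\|\alpha_z(A^{-1}) - \alpha_{z_0}(A^{-1})\| \leq \|A^{-1}\|^2\,\|\alpha_z(A) - \alpha_{z_0}(A)\|,
\end{align*}
and the right-hand side tends to $0$ as $z \to z_0$ precisely because $A \in \mathcal C_1(\mathbb F_t)$. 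Hence $z \mapsto \alpha_z(A^{-1})$ is norm continuous, i.e.~$A^{-1} \in \mathcal C_1(\mathbb F_t) = \mathcal T^{\mathbb F_t}$, which is the claim.

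I do not expect any serious obstacle here: the argument is the standard "perturbation of inverses" estimate combined with the fact that the correspondence theory has already identified $\mathcal T^{\mathbb F_t}$ with the norm-continuity class $\mathcal C_1(\mathbb F_t)$, which is manifestly stable under the operation above. The only point requiring a moment's care is that $\alpha_z$ genuinely respects inverses and is isometric, which follows immediately from the Weyl operator lemma; there is no subtlety arising from non-reflexivity, since $\alpha_z$ is defined on all of $\mathcal L(\mathbb F_t)$ for every value of the parameter (including $p=1,\infty$ and $f_t^\infty$) in exactly the same way. One could alternatively phrase the whole thing as: $\mathcal C_1(\mathbb F_t)$ is the set of fixed points, in a suitable sense, of the group action $\alpha$ restricted to its orbit-continuous part, and such a set is always a full (inverse-closed) subalgebra; but the direct computation above is cleaner and self-contained.
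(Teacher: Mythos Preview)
Your proof is correct and follows essentially the same approach as the paper: use the identification $\mathcal T^{\mathbb F_t} = \mathcal C_1(\mathbb F_t)$, observe that $\alpha_z(A^{-1}) = \alpha_z(A)^{-1}$, and then deduce norm continuity of $z \mapsto \alpha_z(A^{-1})$ from that of $z \mapsto \alpha_z(A)$. The paper simply invokes ``continuity of the inversion'' in the Banach algebra $\mathcal L(\mathbb F_t)$ where you spell out the resolvent-type estimate explicitly, but the underlying argument is the same.
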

\begin{proof}
Let $A \in \mathcal T^{\mathbb F_t}$ and $B \in \mathcal L(\mathbb F_t)$ with $B = A^{-1}$. Then, $(\alpha_z(A))^{-1} = \alpha_z(B)$. By continuity of the inversion, we get that $z \mapsto \alpha_z(B)$ is continuous, hence $B \in \mathcal C_1(\mathbb F_t) = \mathcal T^{\mathbb F_t}$.
\end{proof}

The following result extends the characterization of compact operators on Fock spaces, which was first obtained in \cite{Bauer_Isralowitz2012} for the reflexive cases, to the non-reflexive cases. This initial result is contained in part (1) of the theorem.
\begin{thm}\label{compactnesschar}
\begin{enumerate}[(1)]
\item Let $\mathbb F_t \in \{ F_t^p: ~1 < p < \infty\} \cup \{ f_t^\infty\}$. Then, for $A \in \mathcal L(\mathbb F_t)$ we have:
\begin{align*}
A \in \mathcal K(\mathbb F_t) \Longleftrightarrow A \in \mathcal T^{\mathbb F_t} \text{ and } \widetilde{A} \in C_0(\mathbb C^n).
\end{align*}
\item Let $A \in \mathcal L(F_t^1)$ have a pre-adjoint in $\mathcal L(f_t^\infty)$. Then, we have:
\begin{align*}
A \in \mathcal K(F_t^1) \Longleftrightarrow A \in \mathcal T^{F_t^1} \text{ and } \widetilde{A} \in C_0(\mathbb C^n).
\end{align*}
\item Let $A \in \mathcal L(F_t^\infty)$ with $Af_t^\infty \subset f_t^\infty$. Then, we have:
\begin{align*}
A \in \mathcal K(F_t^\infty) \Longleftrightarrow A \in \mathcal T^{F_t^\infty} \text{ and } \widetilde{A} \in C_0(\mathbb C^n).
\end{align*}
\end{enumerate}
\end{thm}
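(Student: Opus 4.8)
The plan is to read the whole theorem off the Correspondence Theorem, using the identification $\mathcal T^{\mathbb F_t} = \mathcal C_1(\mathbb F_t)$. Since $C_0(\mathbb C^n)$ is an $\alpha$-invariant closed subspace of $\operatorname{BUC}(\mathbb C^n)$, Theorem \ref{Correspondencetheorem} gives, for $A \in \mathcal C_1(\mathbb F_t)$, the equivalence $\widetilde A \in C_0(\mathbb C^n) \Longleftrightarrow A \in \mathcal T_{lin}^{\mathbb F_t}(C_0(\mathbb C^n))$. Hence the right-hand side ``$A \in \mathcal T^{\mathbb F_t}$ and $\widetilde A \in C_0(\mathbb C^n)$'' of each of the three equivalences is the same as ``$A \in \mathcal T_{lin}^{\mathbb F_t}(C_0(\mathbb C^n))$'', and the theorem splits into: (i) the inclusion $\mathcal T_{lin}^{\mathbb F_t}(C_0(\mathbb C^n)) \subseteq \mathcal K(\mathbb F_t)$, valid for every $\mathbb F_t$ and giving the implication ``$\Leftarrow$'' in all three parts with no further hypothesis on $A$; and (ii) showing that, under the respective hypotheses, every $A \in \mathcal K(\mathbb F_t)$ already satisfies $A \in \mathcal C_1(\mathbb F_t)$ and $\widetilde A \in C_0(\mathbb C^n)$.

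For (i): since $\mathcal K(\mathbb F_t)$ is closed, $\|T_f^t\|_{\mathbb F_t \to \mathbb F_t} \lesssim \|f\|_\infty$, and $C_c(\mathbb C^n)$ is dense in $C_0(\mathbb C^n)$, it is enough to check $T_f^t \in \mathcal K(\mathbb F_t)$ for $f \in C_c(\mathbb C^n)$. On $f_t^\infty$ (and, word for word, on $F_t^p$ with $1 < p < \infty$) this follows from writing $T_f^t = \frac{1}{(\pi t)^n}\int_{\mathbb C^n} f(w)\,(k_w^t \otimes k_w^t)\,dw$, a Bochner integral over the compact set $\operatorname{supp} f$ of a uniformly bounded, norm-continuous family of rank-one operators, hence a norm limit of finite rank operators. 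On $F_t^1$ one has $T_f^t = (T_{\overline f}^t)^\ast$ with $T_{\overline f}^t \in \mathcal K(f_t^\infty)$, and on $F_t^\infty$ one has $T_f^t = (T_{\overline f}^t)^{\ast\ast}$; in both cases Schauder's theorem gives compactness.

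For (ii), part (1), i.e. $\mathbb F_t \in \{F_t^p : 1 < p < \infty\} \cup \{f_t^\infty\}$, proceeds as in the Hilbert space theory. From $\alpha_z(A) - A = (W_z^t - I)A + W_z^t A(W_{-z}^t - I)$, together with the compactness of $A$ and of $A^\ast$ (Schauder), the strong continuity of $z \mapsto W_z^t$ on $\mathbb F_t$ and on its dual (as recorded in the preliminaries; note $F_t^\infty$ does not occur here as either $\mathbb F_t$ or its dual), and the identity $(W_z^t)^\ast = W_{-z}^t$, one obtains $\|\alpha_z(A) - A\| \to 0$ as $z \to 0$, so $A \in \mathcal C_1(\mathbb F_t) = \mathcal T^{\mathbb F_t}$. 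Furthermore $\widetilde A$ is continuous (as $z \mapsto k_z^t$ is continuous both in $\mathbb F_t$ and in its dual), and since $k_z^t \rightharpoonup 0$ as $|z| \to \infty$ (test against the polynomials, which are dense in the dual, using $\|k_z^t\| = 1$) while $A$ is compact, $\|A k_z^t\| \to 0$, hence $\widetilde A(z) \to 0$; thus $\widetilde A \in C_0(\mathbb C^n)$.

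The heart of the matter is (ii) for parts (2) and (3), where the non-reflexivity genuinely bites: over $F_t^1$ the kernels $k_z^t$ need not tend weakly to $0$, and over $F_t^\infty$ the map $z \mapsto W_z^t$ is not strongly continuous, so the argument of the previous paragraph fails on these two spaces and must be transported to $f_t^\infty$. For part (2): a preadjoint $B \in \mathcal L(f_t^\infty)$ of $A$ is compact by Schauder, part (1) gives $B \in \mathcal C_1(f_t^\infty)$ with $\widetilde B \in C_0(\mathbb C^n)$, and then $\alpha_z(A) = \alpha_z(B)^\ast$ and $\widetilde A = \overline{\widetilde B}$ yield $A \in \mathcal T^{F_t^1}$ and $\widetilde A \in C_0(\mathbb C^n)$. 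For part (3): $A_0 := A|_{f_t^\infty}$ is a compact operator on $f_t^\infty$ (restriction of a compact operator to a closed invariant subspace), part (1) gives $A_0 \in \mathcal C_1(f_t^\infty)$ with $\widetilde{A_0} = \widetilde A \in C_0(\mathbb C^n)$, and since bi-adjunction is isometric, commutes with the automorphisms $\alpha_z$, and restricts back to $A_0$ on $f_t^\infty$, one gets $A = A_0^{\ast\ast} \in \mathcal C_1(F_t^\infty) = \mathcal T^{F_t^\infty}$. This last step uses a shade more than $A f_t^\infty \subseteq f_t^\infty$, namely $A = (A|_{f_t^\infty})^{\ast\ast}$, equivalently that $A$ has a preadjoint in $\mathcal L(f_t^\infty)$; in the direction ``$\Leftarrow$'' this is automatic from $A \in \mathcal T^{F_t^\infty}$ by Lemma \ref{prop:duality}.
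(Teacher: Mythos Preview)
Your argument follows the paper's route closely: both reduce the theorem to the identity $\mathcal T_{lin}^{\mathbb F_t}(C_0(\mathbb C^n)) = \mathcal C_1(\mathbb F_t)\cap\mathcal K(\mathbb F_t)$ and then invoke the Correspondence Theorem. You are more explicit than the paper in two places. First, you spell out why a compact operator on the spaces of part~(1) lies in $\mathcal C_1(\mathbb F_t)$, via the decomposition $\alpha_z(A)-A=(W_z^t-I)A+W_z^tA(W_{-z}^t-I)$ together with Schauder's theorem and strong continuity of the Weyl operators on both $\mathbb F_t$ and its dual; the paper only says ``standard methods''. Second, for compactness of $T_f^t$ on $F_t^1$ and $F_t^\infty$ you pass through duality and Schauder, whereas the paper runs the nuclear-norm argument uniformly across all spaces; both are fine, and your formula $T_f^t=\frac{1}{(\pi t)^n}\int f(w)\,(k_w^t\otimes k_w^t)\,dw$ is exactly the paper's $f\ast(1\otimes 1)$ written out.

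The caveat you flag at the end of part~(3) is not excess caution but a genuine point the paper glosses over: the standing hypothesis $Af_t^\infty\subset f_t^\infty$ alone does \emph{not} force $A=(A|_{f_t^\infty})^{\ast\ast}$, even for compact $A$. Indeed, pick any nonzero $\phi\in(F_t^\infty)^\ast$ vanishing on the proper closed subspace $f_t^\infty$ and any $g\in F_t^\infty\setminus\{0\}$; then $B:h\mapsto\phi(h)g$ is rank one, satisfies $Bf_t^\infty=\{0\}\subset f_t^\infty$, and has $\widetilde B\equiv 0$ (since $k_z^t\in f_t^\infty$), so by Lemma~\ref{prop:duality}(c) one has $B\notin\mathcal C_1(F_t^\infty)=\mathcal T^{F_t^\infty}$. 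Thus ``$\Rightarrow$'' in (3) actually fails under the stated hypothesis, and the natural fix is to assume, parallel to (2), that $A$ admits a pre-adjoint on $F_t^1$ (equivalently $A=(A|_{f_t^\infty})^{\ast\ast}$). Your proof is correct under that amended hypothesis, and the paper's own proof needs the same amendment.
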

Before we discuss the proof, let us briefly mention that some additional assumptions in the cases $p = 1, \infty$ are indeed necessary: On $F_t^1$, there are clearly compact operators not contained in $\mathcal T^{F_t^1}$. As an example, consider $1 \otimes f$ with $f \in F_t^\infty \setminus f_t^\infty$. One readily checks that this operator is not contained in $\mathcal C_1(F_t^1)$, hence not in $\mathcal T^{F_t^1}$. Further, there are nontrivial operators in $\mathcal L(F_t^\infty)$ the Berezin of which vanishes identically.
\begin{proof}[Proof of Theorem \ref{compactnesschar}]
Recall that the statement for the reflexive cases have been proven in \cite{Fulsche2020} using the Correspondence Theorem. In principle, the proofs here work analogously. Let us, for completeness, quickly go through the details to verify that nothing strange is happening in the non-reflexive cases. First, we discuss why any $f \in C_0(\mathbb C^n)$ gives rise to a compact Toeplitz operator $T_f^t$ even in the non-reflexive cases, which seems to be not that well-known in the literature. Let us denote 
\begin{align*}
\mathcal C\mathcal N(\mathbb F_t) := \{ A \in \mathcal N(\mathbb F_t): ~z \mapsto \alpha_z(A) \text{ is cont. w.r.t } \| \cdot\|_{\mathcal N(\mathbb F_t)}\}.
\end{align*}
It is easily verified that $(1 \otimes 1) \in \mathcal C\mathcal N(\mathbb F_t)$. Further, $\mathcal C\mathcal N(\mathbb F_t)$ is a closed, $\alpha$-invariant subspace of $\mathcal N(\mathbb F_t)$. Hence, the Bochner integral
\begin{align*}
\int_{\mathbb C^n} f(z) \alpha_z(1 \otimes 1)~dz
\end{align*}
exists as an operator in $\mathcal C \mathcal N(\mathbb F_t)$ for every $f \in L^1(\mathbb C^n)$. Further, $\mathcal C \mathcal N(\mathbb F_t) \subset \mathcal C_1(\mathbb F_t)$. Comparing the Berezin transforms, we see that $\int_{\mathbb C^n} f(z) \alpha_z(1 \otimes 1)~dz = (\pi t)^n T_f^t$. Hence, we get that $T_f^t$ is nuclear whenever $f \in L^1(\mathbb C^n)$. In particular, $T_f^t$ is compact for $f \in C_c(\mathbb C^n)$. By approximation, we obtain compactness for any $f \in C_0(\mathbb C^n)$.

Showing that a compact operator $A \in \mathcal C_1(\mathbb F_t)$ has Berezin transform vanishing at infinity follows from standard methods: On $F_t^p$ ($1 \leq p < \infty$) and $f_t^\infty$, the normalized reproducing kernels $k_z^t$ converge weakly to 0 as $|z| \to \infty$, which implies the result for this case. Over $F_t^\infty$, apply the same reasoning to $A|_{f_t^\infty}$. Those steps together show
\begin{align*}
\mathcal T_{lin}^{\mathbb F_t}(C_0(\mathbb C^n)) = \mathcal C_1(\mathbb F_t) \cap \mathcal K(\mathbb F_t).
\end{align*}
Now, apply the Correspondence Theorem.
\end{proof}

We want to add a nice observation which we borrow from \cite{Luef_Skrettingland2021}, where it was presented for the Hilbert space case. Recall that a function $f \in L^\infty(\mathbb C^n)$ is said to be \emph{slowly oscillating} if for every $\varepsilon > 0$ there exists $K > 0$ and $\delta > 0$ such that
\begin{align*}
|f(x) - f(x-y)| < \varepsilon \text{ whenever } |x| > K, ~|y| <\delta.
\end{align*}
Obviously, bounded uniformly continuous functions are slowly oscillating. Even more so, a slowly oscillating function is continuous if and only if it is uniformly continuous. 

Recall Pitt's improvement of the classical Wiener Tauberian theorem:
\begin{thm}[{\cite[Theorem 4.74]{Folland1995}}]
Let $f \in L^\infty(\mathbb R^n)$ be slowly oscillating and $\phi \in L^1(\mathbb R^n)$ a regular function (i.e.\ its Fourier transform vanishes nowhere). Then, if $f \ast \phi \in C_0(\mathbb R^n)$, it follows that $f(x) \to 0$ as $|x| \to \infty$.
\end{thm}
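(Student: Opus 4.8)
\emph{Proof proposal.} The plan is to split the argument into a ``Wiener part'' and a ``Tauberian part''. The Wiener part upgrades the single hypothesis $f \ast \phi \in C_0(\mathbb R^n)$ to the stronger statement that $f \ast \psi \in C_0(\mathbb R^n)$ for \emph{every} $\psi \in L^1(\mathbb R^n)$; this uses nothing about $f$ beyond $f \in L^\infty(\mathbb R^n)$ and is exactly where the regularity hypothesis (non-vanishing of $\widehat{\phi}$) enters. The Tauberian part then feeds in the slow oscillation of $f$ to deduce that $f$ itself vanishes at infinity, by testing against a suitable approximate identity.

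First I would carry out the Wiener part. Since $\phi$ is regular, Wiener's Tauberian theorem shows that the principal ideal $\phi \ast L^1(\mathbb R^n)$ is dense in $L^1(\mathbb R^n)$ (the smallest closed ideal containing $\phi$ consists of all $g$ whose Fourier transform vanishes on the zero set of $\widehat{\phi}$, which here is empty). Next, one checks the elementary inclusion $C_0(\mathbb R^n) \ast L^1(\mathbb R^n) \subset C_0(\mathbb R^n)$: for $g \in C_0(\mathbb R^n)$ and $h \in C_c(\mathbb R^n)$ the convolution $g \ast h$ is continuous and vanishes at infinity (as $|x| \to \infty$, $g(x-y) \to 0$ uniformly for $y$ in the support of $h$), and the general case follows by approximating $h$ in $L^1(\mathbb R^n)$ together with $\| g \ast h\|_\infty \leq \| g\|_\infty \| h\|_1$. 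Combining these, for $\psi = \phi \ast h$ we have $f \ast \psi = (f \ast \phi) \ast h \in C_0(\mathbb R^n)$ by associativity of convolution (justified by Fubini); since such $\psi$ are dense in $L^1(\mathbb R^n)$, since $\psi \mapsto f \ast \psi$ is bounded from $L^1(\mathbb R^n)$ into $L^\infty(\mathbb R^n)$, and since $C_0(\mathbb R^n)$ is closed in $L^\infty(\mathbb R^n)$, we conclude $f \ast \psi \in C_0(\mathbb R^n)$ for all $\psi \in L^1(\mathbb R^n)$.

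For the Tauberian part, fix $\varepsilon > 0$ and choose $K, \delta > 0$ from the definition of slow oscillation. Pick $\psi \in L^1(\mathbb R^n)$ with $\psi \geq 0$, $\int \psi = 1$, and $\psi$ supported in the ball of radius $\delta$. Writing $f(x) - (f \ast \psi)(x) = \int [f(x) - f(x-y)] \psi(y)\,dy$ and noting the integrand is supported in $|y| < \delta$, slow oscillation gives $|f(x) - (f \ast \psi)(x)| \leq \varepsilon$ for all $|x| > K$. Since $f \ast \psi \in C_0(\mathbb R^n)$ by the first part, there is $M > 0$ with $|(f \ast \psi)(x)| < \varepsilon$ for $|x| > M$, whence $|f(x)| < 2\varepsilon$ for $|x| > \max\{K, M\}$. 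As $\varepsilon > 0$ was arbitrary, $f(x) \to 0$ as $|x| \to \infty$.

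The main obstacle is the Wiener density statement — that a regular $\phi$ generates a dense ideal in $L^1(\mathbb R^n)$ — which is the genuinely deep harmonic-analytic input (equivalent to the full strength of the Wiener Tauberian theorem) and the step I would simply cite. Everything else is soft: the inclusion $C_0 \ast L^1 \subset C_0$, the density/continuity passage, and the approximate-identity estimate are all routine. The one point deserving care is the precise role of the Tauberian hypothesis: slow oscillation is exactly what makes $f - f \ast \psi$ uniformly small \emph{near infinity} for a single fixed compactly supported $\psi$, which is all the argument requires; no global modulus-of-continuity control on $f$ is needed, and this is precisely the gain of Pitt's form over the classical statement.
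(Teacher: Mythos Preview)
Your proof is correct and follows the standard two-step argument (Wiener density in $L^1$ to upgrade $f\ast\phi\in C_0$ to $f\ast\psi\in C_0$ for all $\psi\in L^1$, then slow oscillation against a compactly supported approximate identity). The paper does not prove this theorem at all---it is simply quoted from Folland \cite[Theorem 4.74]{Folland1995} as a known input---so there is no paper proof to compare against; your argument is essentially the one found in that reference.
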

Since the Berezin transform $\widetilde{f}^{(t)}$ is simply the convolution by a Gaussian, which is a regular function (in the sense that its Fourier transform vanishes nowhere), we obtain the following characterization of compactness for Toeplitz operators:
\begin{cor}
Let $f \in L^\infty(\mathbb C^n)$ be slowly oscillating. Then, the following are equivalent over any $\mathbb F_t$:
\begin{enumerate}[(1)]
\item $f(z) \to 0$ as $z \to \infty$;
\item $\widetilde{f}^{(t)} \in C_0(\mathbb C^n)$;
\item $T_f^t$ is compact.
\end{enumerate}
\end{cor}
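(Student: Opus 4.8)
The plan is to prove the cycle of implications $1)\Rightarrow 2)\Rightarrow 3)$ and $3)\Rightarrow 2)\Rightarrow 1)$. Of these, only the last one will use that $f$ is slowly oscillating; the other three hold for every $f\in L^\infty(\mathbb C^n)$ and follow quickly from material already established in the paper.

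For $1)\Rightarrow 2)$ I would simply note that $\widetilde f^{(t)}=f\ast g_t$ is a convolution of a function vanishing at infinity with the $L^1$-Gaussian $g_t$, hence is continuous and vanishes at infinity (split the convolution integral over a large ball and its complement). For $3)\Rightarrow 2)$ the relevant fact is that the Berezin transform of a compact operator lies in $C_0(\mathbb C^n)$; this was recorded inside the proof of Theorem~\ref{compactnesschar} (via weak convergence $k_z^t\to 0$ as $|z|\to\infty$, passing to $f_t^\infty$ in the case $\mathbb F_t=F_t^\infty$), and applied to $A=T_f^t$ it gives $\widetilde f^{(t)}=\widetilde{T_f^t}\in C_0(\mathbb C^n)$. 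For $2)\Rightarrow 3)$: by Corollary~\ref{cor:toep_c1} we have $T_f^t\in\mathcal C_1(\mathbb F_t)$, and $\widetilde{T_f^t}=\widetilde f^{(t)}\in C_0(\mathbb C^n)$ by hypothesis, so the Correspondence Theorem~\ref{Correspondencetheorem} with $\mathcal D_0=C_0(\mathbb C^n)$ places $T_f^t$ in $\mathcal T_{lin}^{\mathbb F_t}(C_0(\mathbb C^n))=\mathcal C_1(\mathbb F_t)\cap\mathcal K(\mathbb F_t)$, the last equality being the one derived in the proof of Theorem~\ref{compactnesschar}; in particular $T_f^t$ is compact. (Alternatively one could deconvolve directly: $T_{\widetilde f^{(t)}}^t=g_t\ast T_f^t$ is compact, and the Wiener approximation argument from the proof of Theorem~\ref{Correspondencetheorem} recovers $T_f^t$ as a norm limit of finite sums of translates of this compact operator.)

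It remains to prove $2)\Rightarrow 1)$, which is the only place the Tauberian input enters. Identifying $\mathbb C^n$ with $\mathbb R^{2n}$, the function $f$ is slowly oscillating on $\mathbb R^{2n}$, and $\widetilde f^{(t)}=f\ast g_t$, where the Fourier transform of $g_t$ is (a constant multiple of) a Gaussian and hence vanishes nowhere; thus $g_t$ is a regular function in the sense of Pitt's theorem quoted above. Since $\widetilde f^{(t)}\in C_0(\mathbb C^n)$ by assumption, Pitt's theorem yields $f(z)\to 0$ as $|z|\to\infty$, which is $1)$. I do not expect a genuine obstacle here: all of the operator theory is already in place, and the new ingredient $2)\Rightarrow 1)$ is immediate once the Gaussian is recognized as a regular function. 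The only point requiring a little care is the usual bookkeeping for $\mathbb F_t=F_t^\infty$, where the Berezin transform fails to be injective and one works with restrictions to $f_t^\infty$ as in Lemma~\ref{prop:duality}; but this has already been handled.
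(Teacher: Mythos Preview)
Your proposal is correct and matches the paper's approach: the implications $1)\Rightarrow 2)\Rightarrow 3)$ and $3)\Rightarrow 2)$ come directly from the compactness characterization (Theorem~\ref{compactnesschar}) together with $T_f^t\in\mathcal C_1(\mathbb F_t)$, while $2)\Rightarrow 1)$ is exactly Pitt's Tauberian theorem applied to $\widetilde f^{(t)}=f\ast g_t$ with the Gaussian $g_t$ as the regular function. The paper presents this corollary with essentially no proof beyond the sentence preceding it, so your write-up simply makes the intended argument explicit.
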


In the following, we want to consider the quotient of the Toeplitz algebra modulo the compact operators. Since the compact operators are not entirely contained in $\mathcal T^{\mathbb F_t}$ for $\mathbb F_t = F_t^1, F_t^\infty$, we will abbreviate
\begin{align*}
\mathcal T \mathcal K := \mathcal K(\mathbb F_t) \cap \mathcal T^{\mathbb F_t} = \mathcal T_{lin}^{\mathbb F_t}(C_0(\mathbb C^n)).
\end{align*}
Suppressing $\mathbb F_t$ in this notation will not cause any confusion.

The following result is well-known in the Hilbert space case \cite{Berger_Coburn1987} and has recently extended to the reflexive cases \cite{Hagger2021}.We present it now also for the non-reflexive cases.
\begin{thm}\label{esscen}
The essential center of $\mathcal T^{\mathbb F_t}$, i.e.\
\begin{align*}
\operatorname{essCen}(\mathcal T^{\mathbb F_t}) = \{ A \in\mathcal T^{\mathbb F_t}; ~[A, B] \in \mathcal T \mathcal K \text{ for every } B \in \mathcal T^{\mathbb F_t}\},
\end{align*}
is given by
\begin{align*}
\operatorname{essCen}(\mathcal T^{\mathbb F_t}) = \mathcal T_{lin}^{\mathbb F_t}(\operatorname{VO}_\partial(\mathbb C^n)).
\end{align*}
\end{thm}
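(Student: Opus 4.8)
The plan is to recognize $\operatorname{essCen}(\mathcal T^{\mathbb F_t})$ as one of the subspaces to which Corollary~\ref{cor_corresp} applies and then to identify the associated function space with $\operatorname{VO}_\partial(\mathbb C^n)$. First I would check that $\operatorname{essCen}(\mathcal T^{\mathbb F_t})$ is a closed, $\alpha$-invariant subspace of $\mathcal C_1(\mathbb F_t) = \mathcal T^{\mathbb F_t}$: it is norm-closed because $\mathcal T^{\mathbb F_t}$ and $\mathcal T\mathcal K$ are norm-closed and $B \mapsto [A,B]$ is continuous, and it is $\alpha$-invariant because each $\alpha_z$ is an automorphism of $\mathcal T^{\mathbb F_t}$ mapping $\mathcal T\mathcal K = \mathcal T_{lin}^{\mathbb F_t}(C_0(\mathbb C^n))$ onto itself, so that $[\alpha_z(A),B] = \alpha_z([A,\alpha_{-z}(B)]) \in \mathcal T\mathcal K$ whenever $A \in \operatorname{essCen}(\mathcal T^{\mathbb F_t})$ and $B \in \mathcal T^{\mathbb F_t}$. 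Corollary~\ref{cor_corresp} then gives $\operatorname{essCen}(\mathcal T^{\mathbb F_t}) = \mathcal T_{lin}^{\mathbb F_t}(\mathcal D_0)$ with $\mathcal D_0 = \overline{\{\widetilde A : A \in \operatorname{essCen}(\mathcal T^{\mathbb F_t})\}}$, a closed, $\alpha$-invariant subspace of $\operatorname{BUC}(\mathbb C^n)$. Hence the theorem reduces to the identity $\mathcal D_0 = \operatorname{VO}_\partial(\mathbb C^n)$, noting that $\operatorname{VO}_\partial(\mathbb C^n)$ is itself readily seen to be a closed, $\alpha$-invariant subspace of $\operatorname{BUC}(\mathbb C^n)$.

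For $\mathcal D_0 \subseteq \operatorname{VO}_\partial(\mathbb C^n)$, fix $A \in \operatorname{essCen}(\mathcal T^{\mathbb F_t})$. Since every Weyl operator $W_w^t = T_{g_w^t}^t$ lies in $\mathcal T^{\mathbb F_t}$, we have $[A,W_w^t] \in \mathcal T\mathcal K$, and since $\mathcal T\mathcal K$ is an ideal of $\mathcal T^{\mathbb F_t}$ it follows that $\alpha_{-w}(A) - A = W_{-w}^t[A,W_w^t] \in \mathcal T\mathcal K$ for every $w$. Hence its Berezin transform lies in $C_0(\mathbb C^n)$, i.e.\ $\widetilde A(\,\cdot\, + w) - \widetilde A \in C_0(\mathbb C^n)$ for each fixed $w$. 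Since we already know $\widetilde A \in \operatorname{BUC}(\mathbb C^n)$, a routine equicontinuity argument — the family $\{\widetilde A(\,\cdot\, + w) - \widetilde A : |w| \le R\}$ is uniformly equicontinuous, so a finite $\varepsilon$-net of $\{|w| \le R\}$ reduces the claim to finitely many of the above $C_0$-statements — upgrades this to $\lim_{z \to \infty}\sup_{|w| \le R}|\widetilde A(z+w) - \widetilde A(z)| = 0$ for every $R > 0$. Thus $\widetilde A \in \operatorname{VO}_\partial(\mathbb C^n)$.

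The substantial half is $\operatorname{VO}_\partial(\mathbb C^n) \subseteq \mathcal D_0$. I would first show $T_f^t \in \operatorname{essCen}(\mathcal T^{\mathbb F_t})$ for every $f \in \operatorname{VO}_\partial(\mathbb C^n)$. Since $\mathcal T\mathcal K$ is an ideal, $\{B \in \mathcal T^{\mathbb F_t} : [T_f^t,B] \in \mathcal T\mathcal K\}$ is a closed subalgebra of $\mathcal T^{\mathbb F_t}$, so it suffices to prove $[T_f^t,T_g^t] \in \mathcal T\mathcal K$ for all $g \in L^\infty(\mathbb C^n)$. As $[T_f^t,T_g^t]$ already lies in $\mathcal T^{\mathbb F_t} = \mathcal C_1(\mathbb F_t)$, the Correspondence Theorem~\ref{Correspondencetheorem} (with $\mathcal D_0 = C_0(\mathbb C^n)$) reduces this to $\widetilde{[T_f^t,T_g^t]} \in C_0(\mathbb C^n)$. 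Here I would localize with the Weyl operators: from $\alpha_{-z}(T_h^t) = T_{\alpha_{-z}(h)}^t$ and $\widetilde{\alpha_z(X)}(w) = \widetilde X(w-z)$ one gets $\widetilde{[T_f^t,T_g^t]}(z) = \langle [T_{\alpha_{-z}(f)}^t,T_{\alpha_{-z}(g)}^t]1,1\rangle$. For $f \in \operatorname{VO}_\partial(\mathbb C^n)$ we may write $\alpha_{-z}(f) = f(z) + \varepsilon_z$ with $\|\varepsilon_z\|_\infty \le 2\|f\|_\infty$ and $\varepsilon_z \to 0$ in $L_t^2$ as $|z| \to \infty$ (by dominated convergence, since $\varepsilon_z \to 0$ pointwise); the scalar $f(z)$ drops out of the commutator, leaving $\langle [T_{\varepsilon_z}^t,T_{\alpha_{-z}(g)}^t]1,1\rangle$, and a short computation with $T_h^t 1 = P_t h$, $\langle P_t h, 1\rangle = \langle h, 1\rangle$ and the Cauchy--Schwarz inequality bounds this by $\lesssim \|g\|_\infty\|\varepsilon_z\|_{L_t^2} \to 0$. (All of this takes place on the constant function $1$ and on reproducing kernels, which lie in $F_t^2 \cap \mathbb F_t$, so the argument is uniform in $\mathbb F_t$, in particular it covers $\mathbb F_t = F_t^\infty$.) Hence $T_f^t \in \operatorname{essCen}(\mathcal T^{\mathbb F_t})$, so $\widetilde{f}^{(t)} = \widetilde{T_f^t} \in \mathcal D_0$. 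Finally, the Wiener-approximation step from the proof of the Correspondence Theorem — writing $g_{t/N}$ as an $L^1$-limit of $\sum_j c_j^N \alpha_{z_j^N}(g_t)$ — exhibits $f$ as the uniform limit of $\sum_j c_j^N \alpha_{z_j^N}(\widetilde{f}^{(t)}) = \widetilde{(\sum_j c_j^N \alpha_{z_j^N}(T_f^t))}$, where $\sum_j c_j^N \alpha_{z_j^N}(T_f^t) \in \operatorname{essCen}(\mathcal T^{\mathbb F_t})$; since $\mathcal D_0$ is closed, $f \in \mathcal D_0$.

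Combining the two inclusions gives $\mathcal D_0 = \operatorname{VO}_\partial(\mathbb C^n)$, hence $\operatorname{essCen}(\mathcal T^{\mathbb F_t}) = \mathcal T_{lin}^{\mathbb F_t}(\operatorname{VO}_\partial(\mathbb C^n))$. I expect the localization estimate for $\widetilde{[T_f^t,T_g^t]}$ in the third paragraph to be the main obstacle — it is essentially the assertion that Toeplitz operators with $\operatorname{VO}_\partial$-symbols are central in $\mathcal T^{\mathbb F_t}$ modulo $\mathcal T\mathcal K$ — and one must take some care that it runs uniformly over all the Fock spaces $\mathbb F_t$; everything else is a routine application of the correspondence machinery of Section~3.
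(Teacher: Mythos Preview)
Your proof is correct and follows the same overall strategy as the paper: both recognize $\operatorname{essCen}(\mathcal T^{\mathbb F_t})$ as a closed, $\alpha$-invariant subspace, invoke Corollary~\ref{cor_corresp} to write it as $\mathcal T_{lin}^{\mathbb F_t}(\mathcal D_0)$, and then identify $\mathcal D_0$ with $\operatorname{VO}_\partial(\mathbb C^n)$ by proving the two inclusions.

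The difference lies in how the two inclusions are verified. For $\operatorname{VO}_\partial(\mathbb C^n)\subseteq\mathcal D_0$, the paper simply observes that the Berezin transform $\widetilde{[T_f^t,T_g^t]}$ is independent of the underlying Fock space and then \emph{cites} the known Hilbert space result of Berger--Coburn to conclude it lies in $C_0(\mathbb C^n)$; you instead supply a direct, self-contained estimate via the decomposition $\alpha_{-z}(f)=f(z)+\varepsilon_z$ and Cauchy--Schwarz in $L_t^2$. For $\mathcal D_0\subseteq\operatorname{VO}_\partial(\mathbb C^n)$, both arguments start from $\alpha_w(A)-A\in\mathcal T\mathcal K$; the paper integrates this against $g_t$ to obtain $T_{\widetilde f^{(t)}}^t-T_f^t\in\mathcal T\mathcal K$ and then appeals to the Berger--Coburn characterization ``$(\widetilde f^{(t)}-f)^{\sim(t)}\in C_0 \Rightarrow f\in\operatorname{VO}_\partial$'', whereas you pass directly to Berezin transforms and use an equicontinuity argument on $\{\widetilde A(\cdot+w)-\widetilde A:|w|\le R\}$. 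Your route is slightly more elementary and avoids external references; the paper's route is shorter because it recycles the $p=2$ case. Either way, the substance is the same.
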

Here, $\operatorname{VO}_\partial(\mathbb C^n)$ is the space of functions of vanishing oscillation at infinity:
\begin{align*}
\operatorname{VO}_\partial(\mathbb C^n) = \{ f \in C_b(\mathbb C^n); ~\sup_{|w| \leq 1} |f(z) - f(z-w)| \to 0, \quad |z| \to \infty\}.
\end{align*}
It is not difficult to see that $\operatorname{VO}_\partial (\mathbb C^n)$ is an $\alpha$-invariant closed subspace of $\operatorname{BUC}(\mathbb C^n)$.
\begin{proof}[Proof of Theorem \ref{esscen}]
It is simple to show that $\operatorname{essCen}(\mathcal T^{\mathbb F_t})$ is closed and $\alpha$-invariant. Hence, let $\mathcal D_0$ denote the unique $\alpha$-invariant and closed subspace of $\operatorname{BUC}(\mathbb C^n)$ in the sense of Corollary \ref{cor_corresp}. Since the result is already proven in the Hilbert space case, we know by the compactness characterization that for every $f \in \operatorname{VO}_\partial (\mathbb C^n)$ and $g \in \operatorname{BUC}(\mathbb C^n)$ we have $\widetilde{[T_f^t, T_g^t]} \in C_0(\mathbb C^n)$. Hence, using the compactness characterization we have just proven for any choice of $\mathbb F_t$, this gives that $[T_f^t, T_g^t]$ is compact for each $\mathbb F_t$. Thus,
\begin{align*}
 \mathcal T_{lin}^{\mathbb F_t}(\operatorname{VO}_\partial(\mathbb C^n)) \subset \operatorname{essCen}(\mathcal T^{\mathbb F_t}),
\end{align*}
in particular $\operatorname{VO}_\partial(\mathbb C^n) \subset \mathcal D_0$. 

For the other inclusion, we can literally argue as in the Hilbert space case \cite{Berger_Coburn1987}, which we briefly do for completeness: Let $f \in \mathcal D_0$, i.e.\ $T_f^t \in \operatorname{essCen}(\mathcal T^{\mathbb F_t})$. Then, since $W_z^t$ is a Toeplitz operator with bounded symbol,
\begin{align*}
[W_z^t, T_f^t] = W_z^t T_f^t - T_f^t W_z^t = K \in \mathcal T \mathcal K ,
\end{align*}
hence
\begin{align*}
\alpha_z(T_f^t) - T_f^t = KW_{-z}^t \in \mathcal T \mathcal K.
\end{align*}
This in turn implies
\begin{align*}
T_{\widetilde{f}^{(t)}}^t - T_f^t = \int_{\mathbb C^n} g_t(z) (\alpha_z(T_f^t) - T_f^t)~dz \in \mathcal T \mathcal K,
\end{align*}
hence $\left(\widetilde{f}^{(t)} - f\right)^{\sim(t)} \in C_0(\mathbb C^n)$. But such functions $f$ are well-known to be contained in $\operatorname{VO}_\partial(\mathbb C^n)$ \cite{Berger_Coburn1987}.
\end{proof}
We also have the following consequence, which is again well-known in the Hilbert space case.
\begin{cor}
It holds true that
\begin{align*}
\mathcal T_{lin}^{\mathbb F_t}(\operatorname{VO}_\partial(\mathbb C^n)) = \{ T_f^t + K; ~f \in \operatorname{VO}_\partial(\mathbb C^n), ~K \in \mathcal T \mathcal K\}.
\end{align*}
\end{cor}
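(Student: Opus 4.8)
The plan is to establish the two inclusions between $\mathcal T_{lin}^{\mathbb F_t}(\operatorname{VO}_\partial(\mathbb C^n))$ and $\{T_f^t + K : f \in \operatorname{VO}_\partial(\mathbb C^n),\ K \in \mathcal T\mathcal K\}$ separately. For the inclusion ``$\supseteq$'', note that $\operatorname{VO}_\partial(\mathbb C^n) \supseteq C_0(\mathbb C^n)$, so $T_f^t \in \mathcal T_{lin}^{\mathbb F_t}(\operatorname{VO}_\partial(\mathbb C^n))$ for every $f \in \operatorname{VO}_\partial(\mathbb C^n)$ and also $\mathcal T\mathcal K = \mathcal T_{lin}^{\mathbb F_t}(C_0(\mathbb C^n)) \subseteq \mathcal T_{lin}^{\mathbb F_t}(\operatorname{VO}_\partial(\mathbb C^n))$. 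Since the right-hand side is closed under addition, the sum $T_f^t + K$ lies in $\mathcal T_{lin}^{\mathbb F_t}(\operatorname{VO}_\partial(\mathbb C^n))$, giving one direction immediately.

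For the harder inclusion ``$\subseteq$'', the natural approach is to use the module structure and the identity $g_t \ast A = T_{\widetilde A}^t$ valid on $\mathcal C_1(\mathbb F_t)$. Given $A \in \mathcal T_{lin}^{\mathbb F_t}(\operatorname{VO}_\partial(\mathbb C^n))$, the Correspondence Theorem (Theorem~\ref{Correspondencetheorem}) tells us $\widetilde A \in \operatorname{VO}_\partial(\mathbb C^n)$. Write
\begin{align*}
A = T_{\widetilde A}^t + (A - T_{\widetilde A}^t) = T_{\widetilde A}^t + (A - g_t \ast A).
\end{align*}
Since $\widetilde A \in \operatorname{VO}_\partial(\mathbb C^n)$, the first term is the desired Toeplitz operator. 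It therefore suffices to show that $A - g_t \ast A \in \mathcal T\mathcal K$, i.e.\ that it is compact (and automatically in $\mathcal T^{\mathbb F_t}$, being a difference of elements of $\mathcal C_1(\mathbb F_t) = \mathcal T^{\mathbb F_t}$). By the compactness characterization (Theorem~\ref{compactnesschar}), compactness of $A - g_t \ast A$ will follow once we check $\widetilde{A - g_t \ast A} \in C_0(\mathbb C^n)$, which equals $\widetilde A - g_t \ast \widetilde A = \widetilde A - (\widetilde A)^{\sim(t)} = (\widetilde A - \widetilde A)$... more precisely, it equals $\widetilde A - g_t \ast \widetilde A$; since $\widetilde A \in \operatorname{VO}_\partial(\mathbb C^n)$, the function $\widetilde A - g_t \ast \widetilde A$ has vanishing oscillation and is a difference of a function and its Gaussian average, which is well known to lie in $C_0(\mathbb C^n)$ (this is exactly the type of computation appearing at the end of the proof of Theorem~\ref{esscen}). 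For the cases $\mathbb F_t = F_t^1$ and $\mathbb F_t = F_t^\infty$, one must first note that every element of $\mathcal T_{lin}^{\mathbb F_t}(\operatorname{VO}_\partial(\mathbb C^n)) \subseteq \mathcal C_1(\mathbb F_t)$ has the requisite pre-adjoint / invariance properties (by Lemma~\ref{prop:duality} and the remark that Toeplitz operators leave $f_t^\infty$ invariant), so that part (2) or (3) of Theorem~\ref{compactnesschar} applies.

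The main obstacle I anticipate is the non-reflexive bookkeeping: one has to be careful that $A - g_t\ast A$ genuinely satisfies the side hypotheses of Theorem~\ref{compactnesschar}(2)--(3) (pre-adjoint in $\mathcal L(f_t^\infty)$, respectively $f_t^\infty$-invariance), but this is handled by Lemma~\ref{prop:duality} since $A \in \mathcal C_1(\mathbb F_t)$. The genuinely substantive point, that $h - g_t \ast h \in C_0(\mathbb C^n)$ whenever $h \in \operatorname{VO}_\partial(\mathbb C^n)$, is a standard fact about vanishing oscillation already invoked in the proof of Theorem~\ref{esscen} and attributable to \cite{Berger_Coburn1987}; everything else is assembling the pieces.
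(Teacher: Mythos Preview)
Your proposal is correct and uses the same decomposition as the paper, namely $A = T_{\widetilde{A}}^t + (A - g_t \ast A)$ with $\widetilde{A} \in \operatorname{VO}_\partial(\mathbb C^n)$ by the Correspondence Theorem. The only difference lies in how the remainder $A - g_t \ast A$ is shown to belong to $\mathcal T\mathcal K$. The paper argues, as in the proof of Theorem~\ref{esscen}, that $\alpha_z(A) - A \in \mathcal T\mathcal K$ for every $z$ (since $A$ lies in the essential centre), and then writes $T_{\widetilde{A}}^t - A = \int_{\mathbb C^n} g_t(z)\,(\alpha_z(A) - A)\,dz$ as a Bochner integral with values in the closed ideal $\mathcal T\mathcal K$. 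You instead compute the Berezin transform $\widetilde{A} - g_t \ast \widetilde{A}$ and appeal to the compactness characterization together with the classical fact that $h - \widetilde{h}^{(t)} \in C_0(\mathbb C^n)$ for $h \in \operatorname{VO}_\partial(\mathbb C^n)$. Both routes are short and legitimate; the paper's avoids re-invoking Theorem~\ref{compactnesschar} and its non-reflexive side conditions, while yours avoids re-running the commutator argument. One small caveat: the end of the proof of Theorem~\ref{esscen} actually uses the \emph{converse} implication (from $(\widetilde{f}^{(t)} - f)^{\sim(t)} \in C_0$ to $f \in \operatorname{VO}_\partial$), so the direction you need is not literally the computation there, but it is equally standard and due to \cite{Berger_Coburn1987}.
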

\begin{proof}
As in the previous proof, one sees that for any $A \in \mathcal T_{lin}^{\mathbb F_t}(\operatorname{VO}_\partial(\mathbb C^n))$ that
\begin{align*}
\alpha_z(A) - A \in \mathcal T\mathcal K,
\end{align*}
hence
\begin{align*}
T_{\widetilde{A}}^t - A = \int_{\mathbb C^n} g_t(z) (\alpha_z(A) - A)~dz \in \mathcal T \mathcal K.
\end{align*}
Therefore,
\begin{align*}
A = T_{\widetilde{A}}^t + K,
\end{align*}
which finishes the proof.
\end{proof}

Recall that the Calkin algebra is the quotient algebra $\mathcal L(X)/\mathcal K(X)$.  It is a Banach algebra when endowed with its canonical quotient norm. This quotient norm is indeed nothing else but the essential norm of the representatives:
\begin{align*}
\| A\|_{ess} = \inf_{K \in \mathcal K(X)} \| A + K\| = \| A + \mathcal K(X)\|_{\mathcal L(X)/\mathcal K(X)}.
\end{align*}
Let us, for simplicity, abbreviate $\mathcal K = \mathcal K(X)$ in the following. Recall again that for $\mathbb F_t \neq F_t^1, F_t^\infty$, $\mathcal K \subset \mathcal T^{\mathbb F_t}$. Hence, we may consider the quotient $\mathcal T^{\mathbb F_t}/\mathcal K = \mathcal C_1(\mathbb F_t)/\mathcal K$ as a subalgebra of the Calkin algebra. Further, since the shifts leave $\mathcal K$ invariant, they descend to a group action in the Calkin algebra:
\begin{align*}
\alpha_z(A + \mathcal K) = \alpha_z(A) + \mathcal K.
\end{align*}
In this case, we obtain the following:
\begin{thm}\label{char:quotient_algebra}
For $\mathbb F_t \neq F_t^1, F_t^\infty$, the following holds true:
\begin{align*}
\mathcal T^{\mathbb F_t}/\mathcal K = \{ A + \mathcal K \in \mathcal L(\mathbb F_t)/\mathcal K; ~z \mapsto \alpha_z(A + \mathcal K) \text{ is cont. w.r.t. } \| \cdot\|_{\mathcal L(\mathbb F_t)/\mathcal K(\mathbb F_t)}\}.
\end{align*}
\end{thm}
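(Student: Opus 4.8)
The plan is to prove the two inclusions separately, with the ``$\subseteq$'' direction being essentially immediate from Corollary \ref{cor:toep_c1} and the ``$\supseteq$'' direction being the substantive part. For the easy inclusion: if $A \in \mathcal T^{\mathbb F_t} = \mathcal C_1(\mathbb F_t)$, then $z \mapsto \alpha_z(A)$ is norm-continuous, and since $\| \alpha_z(A+\mathcal K) - (A+\mathcal K)\|_{\mathcal L/\mathcal K} \leq \| \alpha_z(A) - A\|$, the map $z \mapsto \alpha_z(A+\mathcal K)$ is norm-continuous in the Calkin algebra. So the left-hand side is contained in the right-hand side.

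\textbf{The main direction.} Suppose $A \in \mathcal L(\mathbb F_t)$ is such that $z \mapsto \alpha_z(A+\mathcal K)$ is norm-continuous in $\mathcal L(\mathbb F_t)/\mathcal K$; we must produce $K \in \mathcal K$ with $A + K \in \mathcal T^{\mathbb F_t}$, equivalently (since $\mathcal K \subseteq \mathcal T^{\mathbb F_t}$ here) we must show $A + \mathcal K \in \mathcal T^{\mathbb F_t}/\mathcal K$. The idea is to run the mollification argument of the Correspondence Theorem \emph{one level up}, inside the Calkin algebra. Since $\mathcal K$ is $\alpha$-invariant, the quotient $\mathcal L(\mathbb F_t)/\mathcal K$ carries the induced isometric(-up-to-the-quotient-norm) action $\alpha_z$, and the ``continuity algebra'' of this action — call it $\mathcal C_1(\mathcal L(\mathbb F_t)/\mathcal K)$ — is a closed, $\alpha$-invariant subspace containing $A + \mathcal K$ by hypothesis. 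One then defines the $L^1(\mathbb C^n)$-module structure on $\mathcal C_1(\mathcal L(\mathbb F_t)/\mathcal K)$ exactly as before, $f \ast (A + \mathcal K) := \int_{\mathbb C^n} f(z)\, \alpha_z(A+\mathcal K)\, dz$ as a Bochner integral in the Calkin algebra, and observes that the quotient map $\pi \colon \mathcal C_1(\mathbb F_t) \to \mathcal C_1(\mathcal L(\mathbb F_t)/\mathcal K)$ intertwines the module structures, $\pi(f \ast B) = f \ast \pi(B)$, by continuity and linearity of $\pi$. Now pick any $B \in \mathcal C_1(\mathbb F_t)$ with $\pi(B) = A + \mathcal K$ — here one should first check such a lift exists, i.e.\ that $\pi$ restricted to $\mathcal C_1(\mathbb F_t)$ surjects onto $\mathcal C_1(\mathcal L(\mathbb F_t)/\mathcal K)$. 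Actually the cleanest route avoids the lifting subtlety: run the approximation of the Correspondence Theorem directly on $A + \mathcal K$. By Wiener's approximation (the estimate $\| g_{t/N} - \sum_j c_j^N \alpha_{z_j^N}(g_t)\|_{L^1} \leq 1/N$ quoted in the text) together with $\| A + \mathcal K - g_{t/N} \ast (A+\mathcal K)\|_{\mathcal L/\mathcal K} \to 0$ as $N \to \infty$ (which holds since $A + \mathcal K$ lies in the continuity algebra of the $\alpha$-action on the Calkin algebra, by the same elementary mollifier estimate), we get that $A + \mathcal K$ is approximated in Calkin norm by finite linear combinations $\sum_j c_j^N\, \alpha_{z_j^N}(g_t \ast (A+\mathcal K))$. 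It remains to identify $g_t \ast (A + \mathcal K)$ with a Toeplitz operator modulo $\mathcal K$.

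\textbf{Identifying the mollified operator.} This is the step I expect to be the real obstacle. In the genuine operator setting we had the identity $g_t \ast B = T_{\widetilde B}^t$, which rested on injectivity of the Berezin transform (resolved in Lemma \ref{prop:duality} for $F_t^\infty$). In the Calkin algebra we need an analogous statement: $g_t \ast (A+\mathcal K) = T_g^t + \mathcal K$ for a suitable $g \in \operatorname{BUC}(\mathbb C^n)$. The natural candidate is $g = \widetilde B$ for some operator lift $B$ of $A + \mathcal K$ lying in $\mathcal C_1(\mathbb F_t)$; one must argue (i) that such a lift exists — given $A + \mathcal K \in \mathcal C_1(\mathcal L/\mathcal K)$, mollify the arbitrary representative $A$ and use that $g_s \ast A \in \mathcal C_1(\mathbb F_t)$ while $g_s \ast A + \mathcal K \to A + \mathcal K$, so a diagonal/limiting argument or a direct Bochner-integral lift produces $B \in \mathcal C_1(\mathbb F_t)$ with $B + \mathcal K = g_t \ast(A+\mathcal K)$ — and (ii) that $g_t \ast (A + \mathcal K) = (g_t \ast B') + \mathcal K$ for any operator lift $B'$, which follows from $\pi(g_t \ast B') = g_t \ast \pi(B')$. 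Taking $B'$ to be the lift from (i), $g_t \ast B' = T_{\widetilde{B'}}^t$ by the identity already established for $\mathcal C_1(\mathbb F_t)$, so $g_t \ast (A + \mathcal K) = T_{\widetilde{B'}}^t + \mathcal K$ with $\widetilde{B'} \in \operatorname{BUC}(\mathbb C^n)$. Feeding this back into the Wiener approximation, and using $\alpha_{z}(T_h^t) = T_{\alpha_z(h)}^t$ together with closedness of $\mathcal T^{\mathbb F_t}$, we conclude $A + \mathcal K \in \overline{\operatorname{span}}\{ T_f^t + \mathcal K : f \in \operatorname{BUC}(\mathbb C^n)\} = \mathcal T^{\mathbb F_t}/\mathcal K$, as the theorem $\mathcal T^{\mathbb F_t} = \mathcal T_{lin}^{\mathbb F_t}(\operatorname{BUC}(\mathbb C^n))$ gives. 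The bookkeeping around lifting representatives and keeping Bochner integrals and quotient maps commuting is the only place where care is genuinely needed; everything else is a verbatim transcription of the Correspondence Theorem argument with $\mathcal L(\mathbb F_t)$ replaced by $\mathcal L(\mathbb F_t)/\mathcal K$.
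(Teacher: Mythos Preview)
Your overall strategy---run the mollification argument of the Correspondence Theorem inside the Calkin algebra---is exactly the paper's approach. The easy inclusion and the $L^1$-module structure on the ``essential continuity algebra'' are handled just as the paper does.

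The difference, and the place where your argument has a genuine gap, is the identification of $g_t \ast (A+\mathcal K)$. You try to produce a lift $B' \in \mathcal C_1(\mathbb F_t)$ of $A + \mathcal K$ so that you can invoke the operator-level identity $g_t \ast B' = T_{\widetilde{B'}}^t$. But notice: if a lift $B' \in \mathcal C_1(\mathbb F_t) = \mathcal T^{\mathbb F_t}$ of $A + \mathcal K$ existed, you would already have $A + \mathcal K \in \mathcal T^{\mathbb F_t}/\mathcal K$ and the proof would be over---no Wiener approximation needed. So the existence of such a lift is precisely the theorem you are trying to prove, and your step (i) does not establish it: the object you actually construct there, via ``mollify the arbitrary representative and take a limit/diagonal,'' is at best a lift of $g_t \ast (A+\mathcal K)$, not of $A + \mathcal K$ itself (and even that requires care, since $g_s \ast A$ is not defined as a Bochner integral for a representative $A \notin \mathcal C_1(\mathbb F_t)$). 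Your step (ii) then needs a lift of $A+\mathcal K$, so (i) and (ii) do not connect.

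The paper avoids the lifting issue altogether. It observes that for \emph{any} representative $A \in \mathcal L(\mathbb F_t)$ one has a well-defined Berezin transform $\widetilde A$, and that changing the representative by a compact operator changes $\widetilde A$ only by a $C_0$ function (hence $T_{\widetilde A}^t$ only by a compact operator). One then verifies directly that
\[
g_t \ast (A + \mathcal K) = T_{\widetilde A}^t + \mathcal K,
\]
with no need to place $A$ itself into $\mathcal C_1(\mathbb F_t)$. After that, the Wiener approximation runs verbatim and gives $A + \mathcal K \in \mathcal T_{lin}^{\mathbb F_t}(\operatorname{BUC}(\mathbb C^n))/\mathcal K = \mathcal T^{\mathbb F_t}/\mathcal K$. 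So the fix for your argument is simply to drop the lifting detour and use the Berezin transform of an arbitrary representative as the symbol.
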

\begin{proof}
For $A + \mathcal K \in \mathcal T^{\mathbb F_t}/\mathcal K$, it is clear that the shifts act continuously on them with respect to the norm (keep in mind that $\| A + \mathcal K\| \leq \| A\|_{op}$). Clearly, the set of all $A + \mathcal K$ on which the shifts act norm continuously forms a closed subalgebra of $\mathcal L/\mathcal K$. Denote this closed subalgebra of $\mathcal L(\mathbb F_t)/\mathcal K$ by $\mathcal C_{1, ess}^{\mathbb F_t}$. We can continue as in the proof of the Correspondence Theorem: Define an $L^1$ module action on $\mathcal C_{1,ess}^{\mathbb F_t}$ by
\begin{align*}
f \ast (A + \mathcal K) := \int_{\mathbb C^n} f(z) \alpha_z(A + \mathcal K)~dz,
\end{align*}
considered as a Bochner integral in $\mathcal C_{1,ess}^{\mathbb F_t}$. It is then again not hard to verify that
\begin{align*}
g_t \ast (A + \mathcal K) = T_{\widetilde{A}}^t + \mathcal K,
\end{align*}
as $\widetilde{A + \mathcal K} \in \widetilde{A} + C_0(\mathbb C^n)$. Now, continue as in the proof of the Correspondence Theorem to show equality.
\end{proof}
\begin{rem}
Indeed, from applying the proof of the Correspondence Theorem as it was sketched above, one does not only get the equality $\mathcal T^{\mathbb F_t}/\mathcal K = \mathcal C_{1,ess}^{\mathbb F_t}$, but also a statement analogous to the Correspondence Theorem in the sense of a correspondence between closed, translation-invariant subspaces of $\mathcal T^{\mathbb F_t}/\mathcal K$ and $\operatorname{BUC}(\mathbb C^n)/C_0(\mathbb C^n)$.
\end{rem}
The statement of the following corollary is contained in \cite{Hagger2021} for $1 < p < \infty$. Hence, the statement is only new over $f_t^\infty$.
\begin{cor}
Let $\mathbb F_t \neq F_t^1, F_t^\infty$ and $A \in \mathcal L(\mathbb F_t)$ such that $[A, B] \in \mathcal K$ for every $B \in \mathcal T^{\mathbb F_t}$. Then, $A \in \mathcal T_{lin}^{\mathbb F_t}(\operatorname{VO}_\partial(\mathbb C^n))$.
\end{cor}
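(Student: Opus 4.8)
The key observation is that the hypothesis here is exactly the one appearing in Theorem \ref{esscen}, except that there the operator $A$ is assumed a priori to lie in $\mathcal T^{\mathbb F_t}$. So the entire task reduces to proving that $A \in \mathcal T^{\mathbb F_t}$. Once this is established, the conclusion follows immediately: since $\mathcal T^{\mathbb F_t}$ is an algebra, for every $B \in \mathcal T^{\mathbb F_t}$ we have $[A,B] \in \mathcal T^{\mathbb F_t}$, and by hypothesis also $[A,B] \in \mathcal K$, hence $[A,B] \in \mathcal T^{\mathbb F_t} \cap \mathcal K = \mathcal T\mathcal K$. Therefore $A \in \operatorname{essCen}(\mathcal T^{\mathbb F_t})$, and Theorem \ref{esscen} gives $A \in \mathcal T_{lin}^{\mathbb F_t}(\operatorname{VO}_\partial(\mathbb C^n))$.

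To show $A \in \mathcal T^{\mathbb F_t}$, I would exploit that every Weyl operator $W_z^t = T_{g_z^t}^t$ is a Toeplitz operator with bounded symbol, hence lies in $\mathcal T^{\mathbb F_t}$. Applying the hypothesis with $B = W_z^t$ gives $[A, W_z^t] \in \mathcal K$ for every $z \in \mathbb C^n$. Multiplying on the left by $W_{-z}^t$ and using Lemma 2.1 (namely $W_{-z}^t W_z^t = \operatorname{Id}$, as the relevant imaginary part vanishes) yields
\begin{align*}
\alpha_{-z}(A) - A = W_{-z}^t A W_z^t - A = W_{-z}^t [A, W_z^t] \in \mathcal K,
\end{align*}
since $\mathcal K$ is a two-sided ideal. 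As $z$ ranges over $\mathbb C^n$, so does $-z$, so $\alpha_z(A) - A \in \mathcal K$ for all $z \in \mathbb C^n$. In other words, the image of $A$ in the Calkin algebra is a fixed point of the induced group action: $\alpha_z(A + \mathcal K) = A + \mathcal K$ for all $z$. In particular $z \mapsto \alpha_z(A + \mathcal K)$ is constant, hence continuous with respect to the quotient norm, so $A + \mathcal K \in \mathcal C_{1,ess}^{\mathbb F_t} = \mathcal T^{\mathbb F_t}/\mathcal K$ by Theorem \ref{char:quotient_algebra}. Because $\mathbb F_t \neq F_t^1, F_t^\infty$, we have $\mathcal K \subset \mathcal T^{\mathbb F_t}$, and therefore $A \in \mathcal T^{\mathbb F_t}$.

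The only subtlety — and the reason one cannot simply invoke the Correspondence Theorem \ref{Correspondencetheorem} directly — is that essential commutation with $\mathcal T^{\mathbb F_t}$ only yields continuity of $z \mapsto \alpha_z(A)$ modulo compacts, not in the operator norm; this is precisely what forces the detour through Theorem \ref{char:quotient_algebra}, and it is also why the restriction $\mathbb F_t \neq F_t^1, F_t^\infty$ enters (the identification $\mathcal T^{\mathbb F_t}/\mathcal K = \mathcal C_{1,ess}^{\mathbb F_t}$ uses $\mathcal K \subset \mathcal T^{\mathbb F_t}$). Apart from this routing, every step is either a one-line computation with Weyl operators or a direct citation of an already established result, so I do not expect any genuine obstacle.
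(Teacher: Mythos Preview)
Your proposal is correct and follows essentially the same route as the paper: from $[A,W_z^t]\in\mathcal K$ one gets $\alpha_z(A+\mathcal K)=A+\mathcal K$, hence $A+\mathcal K\in\mathcal C_{1,ess}^{\mathbb F_t}=\mathcal T^{\mathbb F_t}/\mathcal K$ by Theorem~\ref{char:quotient_algebra}, and since $\mathcal K\subset\mathcal T^{\mathbb F_t}$ this gives $A\in\mathcal T^{\mathbb F_t}$, after which Theorem~\ref{esscen} finishes the argument. Your write-up is more detailed than the paper's (which leaves the final appeal to Theorem~\ref{esscen} implicit), but the logic is identical.
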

\begin{proof}
The assumption clearly implies $A - \alpha_z(A) \in \mathcal K$. Hence, $\alpha_z(A + \mathcal K) = A + \mathcal K$ in $\mathcal L(\mathbb F_t)/\mathcal K$. The previous theorem yields $A + \mathcal K \in \mathcal C_{1,ess}^{\mathbb F_t} = \mathcal T^{\mathbb F_t}/\mathcal K$. Therefore, there are $B \in \mathcal T^{\mathbb F_t}$ and $K \in \mathcal K$ with $A = B + K$, proving $A \in \mathcal T^{\mathbb F_t}$.
\end{proof}
Here is another corollary, which will be more important to us in the following:
\begin{cor}
Let $\mathbb F_t \neq F_t^1, F_t^\infty$ and $A \in \mathcal T^{\mathbb F_t}$ be Fredholm. Then, for any Fredholm regularizer $B \in \mathcal L(\mathbb F_t)$, i.e.\
\begin{align*}
AB - \operatorname{Id} \in \mathcal K,~ BA - \operatorname{Id} \in \mathcal K,
\end{align*}
we have $B \in \mathcal T^{\mathbb F_t}$. 
\end{cor}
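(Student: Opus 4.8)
The plan is to reduce the statement to an inverse-closedness property of $\mathcal T^{\mathbb F_t}/\mathcal K$ inside the Calkin algebra, essentially running the argument of Corollary~\ref{cor:inv_closed} one level up. Write $\pi(A) := A + \mathcal K$ for the image of $A$ in $\mathcal L(\mathbb F_t)/\mathcal K$. I would first recall the standard fact that $A$ is Fredholm precisely when $\pi(A)$ is invertible in the unital Banach algebra $\mathcal L(\mathbb F_t)/\mathcal K$, and that any Fredholm regularizer $B$ of $A$ is then a two-sided inverse of $\pi(A)$; since inverses in a unital Banach algebra are unique, $\pi(B) = \pi(A)^{-1}$. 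Because $\mathbb F_t \neq F_t^1, F_t^\infty$, we have $\mathcal K \subset \mathcal T^{\mathbb F_t}$, so $\pi(A)$ lies in the closed unital subalgebra $\mathcal T^{\mathbb F_t}/\mathcal K$ of the Calkin algebra, which by Theorem~\ref{char:quotient_algebra} coincides with $\mathcal C_{1,ess}^{\mathbb F_t}$, the set of classes on which $z \mapsto \alpha_z(\,\cdot\,)$ acts norm-continuously.

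The next step is to check that $\mathcal C_{1,ess}^{\mathbb F_t}$ is inverse-closed in $\mathcal L(\mathbb F_t)/\mathcal K$. Each $\alpha_z$ descends to an isometric algebra automorphism of the Calkin algebra: conjugation by the isometry $W_z^t$ preserves $\mathcal K$, it is isometric on $\mathcal L(\mathbb F_t)$ and hence on the quotient, and the unimodular phase in the composition rule $W_z^t W_w^t = c(z,w) W_{z+w}^t$ cancels under conjugation, so that $\alpha_z \alpha_w = \alpha_{z+w}$. Consequently, if $\pi(A) \in \mathcal C_{1,ess}^{\mathbb F_t}$ is invertible, then $\alpha_z(\pi(A)^{-1}) = (\alpha_z(\pi(A)))^{-1}$ for every $z$; since $z \mapsto \alpha_z(\pi(A))$ is norm-continuous, takes values in the open set of invertibles of $\mathcal L(\mathbb F_t)/\mathcal K$, and inversion is continuous there, the map $z \mapsto \alpha_z(\pi(A)^{-1})$ is norm-continuous as well. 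Thus $\pi(A)^{-1} \in \mathcal C_{1,ess}^{\mathbb F_t} = \mathcal T^{\mathbb F_t}/\mathcal K$.

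Putting the pieces together, $\pi(B) = \pi(A)^{-1} \in \mathcal T^{\mathbb F_t}/\mathcal K$, so $B - C \in \mathcal K$ for some $C \in \mathcal T^{\mathbb F_t}$; since $\mathcal K \subset \mathcal T^{\mathbb F_t}$, this yields $B = C + (B - C) \in \mathcal T^{\mathbb F_t}$, which is the claim. I do not expect a serious obstacle here once Theorem~\ref{char:quotient_algebra} is available: the only points needing (routine) verification are that $\alpha_z$ genuinely is a well-defined isometric automorphism of the Calkin algebra obeying the group law, and the elementary fact that inversion is continuous on the invertibles of a unital Banach algebra; beyond that, the proof is a transcription of the argument for Corollary~\ref{cor:inv_closed} into the Calkin algebra.
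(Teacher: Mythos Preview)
Your proposal is correct and follows essentially the same route as the paper: pass to the Calkin algebra, use Theorem~\ref{char:quotient_algebra} to identify $\mathcal T^{\mathbb F_t}/\mathcal K$ with $\mathcal C_{1,ess}^{\mathbb F_t}$, invoke continuity of inversion in a unital Banach algebra to see that $\pi(A)^{-1}=\pi(B)\in \mathcal C_{1,ess}^{\mathbb F_t}$, and then use $\mathcal K\subset \mathcal T^{\mathbb F_t}$ to conclude $B\in \mathcal T^{\mathbb F_t}$. The only difference is that you spell out in more detail why $\alpha_z$ descends to a well-defined automorphism of the Calkin algebra satisfying the group law, which the paper takes for granted.
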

\begin{proof}
Being a Fredholm regularizer is just the same as saying that $B + \mathcal K$ is an inverse to $A + \mathcal K$ in the Calkin algebra. Now, since the Calkin algebra is a Banach algebra with unit, inversion is continuous here. Hence, we obtain that $z \mapsto \alpha_z(B + \mathcal K)$ is continuous in the Calkin algebra whenever $z \mapsto \alpha_z(A + \mathcal K)$ is continuous in the Calkin algebra. Theorem \ref{char:quotient_algebra} implies that $B + \mathcal K \in \mathcal T^{\mathbb F_t}/\mathcal K$ whenever $A \in \mathcal T^{\mathbb F_t}$ and $B +\mathcal K$ is an inverse to $\mathcal A + \mathcal K$. As in the proof of the previous corollary, this implies $B \in \mathcal T^{\mathbb F_t}$.
\end{proof}

While the previous corollary does not carry over to the cases $\mathbb F_t = F_t^1, F_t^\infty$, its essence still prevails: The Fredholm theory of operators from $\mathcal T^{\mathbb F_t}$ is entirely contained in $\mathcal T^{\mathbb F_t}$:

\begin{thm}
For any $A \in \mathcal T^{\mathbb F_t}$ we have:
\begin{align*}
A \text{ is Fredholm } \Longleftrightarrow A \text{ is invertible in } \mathcal T^{\mathbb F_t} \text{ modulo } \mathcal T \mathcal K.
\end{align*}
\end{thm}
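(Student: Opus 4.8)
The plan is to reduce everything to Atkinson's theorem together with the corollary just obtained, disposing of the two non-reflexive spaces by duality. The direction ``invertible modulo $\mathcal T \mathcal K$ $\Rightarrow$ Fredholm'' is trivial: if $B \in \mathcal T^{\mathbb F_t}$ satisfies $AB - \operatorname{Id}, BA - \operatorname{Id} \in \mathcal T \mathcal K \subset \mathcal K(\mathbb F_t)$, then $A$ is Fredholm by Atkinson's theorem. (Recall that $\mathcal T \mathcal K = \mathcal K(\mathbb F_t) \cap \mathcal T^{\mathbb F_t}$ is a closed two-sided ideal of $\mathcal T^{\mathbb F_t}$ not containing $\operatorname{Id} = T_1^t$, so $\mathcal T^{\mathbb F_t}/\mathcal T \mathcal K$ is a unital Banach algebra and ``$A$ invertible modulo $\mathcal T \mathcal K$'' means invertibility of $A + \mathcal T \mathcal K$ there.) For the converse, suppose first that $\mathbb F_t \not\in \{F_t^1, F_t^\infty\}$. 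If $A \in \mathcal T^{\mathbb F_t}$ is Fredholm, Atkinson's theorem provides a Fredholm regularizer $B \in \mathcal L(\mathbb F_t)$, and by the preceding corollary in fact $B \in \mathcal T^{\mathbb F_t}$. Then $AB - \operatorname{Id}$ and $BA - \operatorname{Id}$ lie both in $\mathcal K(\mathbb F_t)$ and in $\mathcal T^{\mathbb F_t}$, hence in $\mathcal T \mathcal K$, so $B + \mathcal T \mathcal K$ inverts $A + \mathcal T \mathcal K$.

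It remains to treat $\mathbb F_t = F_t^1$ and $\mathbb F_t = F_t^\infty$, which I would reduce to $f_t^\infty$ (legitimate, since $f_t^\infty$ is neither $F_t^1$ nor $F_t^\infty$ and hence falls under the case just handled). The reduction rests on the following routine observations tied to the duality relations $(f_t^\infty)' \cong F_t^1$ and $(f_t^\infty)'' \cong F_t^\infty$. First, the adjoint map $S \mapsto S^\ast$, a bounded injective algebra anti-homomorphism, restricts under these identifications to a bijection of $\mathcal T^{f_t^\infty}$ onto $\mathcal T^{F_t^1}$ and of $\mathcal T \mathcal K$ onto $\mathcal T \mathcal K$, while the double adjoint $S \mapsto S^{\ast\ast}$ maps $\mathcal T^{f_t^\infty}$ into $\mathcal T^{F_t^\infty}$ and $\mathcal T \mathcal K$ into $\mathcal T \mathcal K$; this holds because $(T_f^t)^\ast = T_{\overline f}^t$ and $L^\infty(\mathbb C^n)$, $C_0(\mathbb C^n)$ are invariant under conjugation, so one concludes by passing to closed generated algebras and closed linear spans. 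Second, by Lemma~\ref{prop:duality} and the discussion in its proof, every $A \in \mathcal T^{F_t^\infty} = \mathcal C_1(F_t^\infty)$ leaves $f_t^\infty$ invariant with $A|_{f_t^\infty} \in \mathcal T^{f_t^\infty}$ and $A = (A|_{f_t^\infty})^{\ast\ast}$; dually, every $A \in \mathcal T^{F_t^1}$ has a preadjoint $S \in \mathcal T^{f_t^\infty}$ with $S^\ast = A$.

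With these facts at hand the argument is quick. Let $A$ be Fredholm. If $\mathbb F_t = F_t^1$, write $A = S^\ast$ with $S \in \mathcal T^{f_t^\infty}$; since $S^\ast$ is the Banach-space adjoint of $S$ and, by the standard consequence of the closed range theorem, a bounded operator on a Banach space is Fredholm exactly when its adjoint is, $S$ is Fredholm on $f_t^\infty$. The case already settled yields $R \in \mathcal T^{f_t^\infty}$ with $SR - \operatorname{Id}, RS - \operatorname{Id} \in \mathcal T \mathcal K$, and applying $(\cdot)^\ast$ gives $R^\ast \in \mathcal T^{F_t^1}$ with $A R^\ast - \operatorname{Id}, R^\ast A - \operatorname{Id} \in \mathcal T \mathcal K$. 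If $\mathbb F_t = F_t^\infty$, set $S := A|_{f_t^\infty} \in \mathcal T^{f_t^\infty}$, so $A = S^{\ast\ast}$; applying the Fredholm--adjoint equivalence twice shows $S$ is Fredholm on $f_t^\infty$, and the same computation with $(\cdot)^{\ast\ast}$ in place of $(\cdot)^\ast$ produces a regularizer $R^{\ast\ast} \in \mathcal T^{F_t^\infty}$ of $A$ modulo $\mathcal T \mathcal K$.

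The main obstacle is not any single step but the accumulated duality bookkeeping: one must check carefully that (pre)adjoints and the restriction $A \mapsto A|_{f_t^\infty}$ keep operators inside the correct Toeplitz algebras and inside the ideals $\mathcal T \mathcal K$, and one must invoke the closed range theorem to transport the Fredholm property across adjoints. The reflexive cases (and $f_t^\infty$ itself), by contrast, follow essentially at once from the corollary established just before the theorem.
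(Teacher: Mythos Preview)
Your proposal is correct and follows essentially the same route as the paper. The paper also dispatches the cases $\mathbb F_t \neq F_t^1, F_t^\infty$ via the preceding corollary and then handles the two non-reflexive spaces by duality through $f_t^\infty$; the only cosmetic difference is that for $F_t^1$ the paper phrases the passage to $f_t^\infty$ as ``take the adjoint to $F_t^\infty$, then restrict to $f_t^\infty$'' rather than ``take the preadjoint,'' which is of course the same map.
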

\begin{proof}
By the previous results, we only need to deal with the cases $\mathbb F_t = F_t^1, F_t^\infty$. Clearly, we only need to prove ``$\Longrightarrow$''. Assume $A \in \mathcal T^{F_t^\infty}$ is Fredholm. By Lemma \ref{prop:duality}, $A$ leaves $f_t^\infty$ invariant and we obtain $A = (A|_{f_t^\infty})^{\ast \ast}$. Since a bounded linear operator on a Banach space is Fredholm if and only if its adjoint is Fredholm, we can conclude that $A|_{f_t^\infty}$ is Fredholm. Therefore, there exist $B \in \mathcal T^{f_t^\infty}$ and $K_1, K_2 \in \mathcal K(f_t^\infty)$ with
\begin{align*}
A|_{f_t^\infty} B = \operatorname{Id} + K_1, \quad BA|_{f_t^\infty} = \operatorname{Id} + K_2.
\end{align*}
Clearly, we have $B^{\ast \ast}$, $K_1^{\ast \ast}$, $K_2^{\ast \ast} \in \mathcal T^{F_t^\infty}$, $K_1^{\ast \ast}$ and $K_2^{\ast \ast}$ are still compact and
\begin{align*}
A B^{\ast \ast} = \operatorname{Id} + K_1^{\ast \ast}, \quad B^{\ast \ast} A = \operatorname{Id} + K_2^{\ast \ast}.
\end{align*}
This settles the case of $\mathbb F_t = F_t^\infty$. For $p = 1$ and $A \in \mathcal T^{F_t^1}$, we can apply the same construction to $A^\ast \in \mathcal T^{F_t^\infty}$ to obtain
\begin{align*}
A B^\ast = \operatorname{Id} + K_1^\ast, \quad B^\ast A = \operatorname{Id} + K_2^\ast,
\end{align*}
where $B$ is a Fredholm regularizer of $(A^\ast)|_{f_t^\infty}$.
\end{proof}

Let us end this section by discussing correspondence versions of the Berger-Coburn estimates. Recall that the Toeplitz operator $T_f^t$ with (possibly unbounded) symbol $f$ is defined as
\begin{align*}
D(T_f^t) := \{ g \in F_t^p: ~fg \in L_t^p\}, \quad T_f^t(g) := P_t(fg).
\end{align*}
As we already noted above (cf.\ Theorem \ref{thm:berger_coburn1}), one of the two Berger-Coburn estimates has been proven to hold true over any Fock space by W. Bauer and the present author in \cite{Bauer_Fulsche2020}. The other Berger-Coburn estimate, also obtained initially in \cite{Berger_Coburn1994} for the case $p= 2$, can also be carried over to the general case:
\begin{thm}
Let $f: \mathbb C^n \to \mathbb C$ such that $fK_z^t \in L_t^2$ for any $z \in \mathbb C^n$. Then, there exist constants $C > 0$ (depending on $s, t, p, n$) such that for any $s \in (\frac{t}{2}, 2t)$:
\begin{align*}
\| \widetilde{f}^{(s)}\|_\infty &\leq C \| T_f^t\|_{\mathbb F_t \to \mathbb F_t}.
\end{align*}
\end{thm}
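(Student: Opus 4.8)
The plan is to reduce the general $\mathbb{F}_t$ statement to the Hilbert space case $p=2$, where the estimate is classical \cite{Berger_Coburn1994}, and then to transfer it using the boundedness of the Bergman-type projection $P_t$ on every $L_t^p$ and the isometric action of the Weyl operators. First I would recall that $\widetilde{f}^{(s)}(w) = \langle f k_w^t, k_w^t\rangle_t$ does not depend on $p$; it is an honest scalar quantity attached to the symbol $f$. So the content of the theorem is really a lower bound for $\|T_f^t\|_{\mathbb{F}_t \to \mathbb{F}_t}$ in terms of $\|\widetilde{f}^{(s)}\|_\infty$, uniformly over the choice of Fock space. By translating with the Weyl operators (which are isometries on every $F_t^p$ and on $f_t^\infty$ by part (1) of the Weyl operator lemma, and which conjugate $T_f^t$ into $T_{\alpha_z(f)}^t$ up to the unitary $W_z^t$), it suffices to bound $|\widetilde{f}^{(s)}(0)|$ by $C\|T_f^t\|$, since $\widetilde{\alpha_z(f)}^{(s)}(0) = \widetilde{f}^{(s)}(z)$ and the operator norm is $\alpha$-invariant.

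The main step is then the pointwise estimate $|\widetilde{f}^{(s)}(0)| \leq C\|T_f^t\|$ for $s \in (t/2, 2t)$. For $p=2$ this is the known Berger--Coburn estimate: one writes $\widetilde{f}^{(s)}(0)$ as a Gaussian average of $\langle T_f^t k_w^t, k_w^t\rangle_t$ (using $\widetilde{f}^{(s)} = g_{s-t} \ast \widetilde{f}^{(t)}$ type identities, valid because $s > t/2$ so the relevant heat semigroup step is legitimate) and estimates crudely by $\|T_f^t\|_{F_t^2 \to F_t^2}$ times $\|k_w^t\|_2^2 = 1$. To pass to general $p$, I would compare $\|T_f^t\|_{F_t^2 \to F_t^2}$ with $\|T_f^t\|_{\mathbb{F}_t \to \mathbb{F}_t}$: the inequality $\|T_f^t\|_{F_t^2 \to F_t^2} \leq C \|T_f^t\|_{\mathbb{F}_t \to \mathbb{F}_t}$ cannot hold unconditionally (the $p=2$ norm can be strictly larger), so instead I would re-run the $p=2$ computation but test against reproducing kernels inside $\mathbb{F}_t$ directly. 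Concretely, $\widetilde{f}^{(t)}(w) = \langle T_f^t k_w^t, k_w^t\rangle_t$ pairs an element of $\mathbb{F}_t$ with an element of the dual space $(\mathbb{F}_t)'$, and since $k_w^t$ has norm one in every $F_t^p$ and in $(F_t^p)' \cong F_t^q$ (again by the normalized kernel computation in Section 2), Hölder's inequality for the $F_t^2$ duality pairing gives $|\widetilde{f}^{(t)}(w)| = |\langle T_f^t k_w^t, k_w^t\rangle_t| \lesssim \|T_f^t k_w^t\|_{\mathbb{F}_t} \|k_w^t\|_{(\mathbb{F}_t)'} \leq \|T_f^t\|_{\mathbb{F}_t \to \mathbb{F}_t}$. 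Then $\widetilde{f}^{(s)} = g_{s-t}\ast \widetilde{f}^{(t)}$ for $s > t/2$ (the restriction $s<2t$ is what keeps $s-t$ in the range where the convolution identity for iterated Berezin transforms holds, or more precisely ensures $fK_z^{t}\in L^2_t$ suffices for everything to be defined), so $\|\widetilde{f}^{(s)}\|_\infty \leq \|g_{s-t}\|_{L^1}\|\widetilde{f}^{(t)}\|_\infty = \|\widetilde{f}^{(t)}\|_\infty \lesssim \|T_f^t\|_{\mathbb{F}_t \to \mathbb{F}_t}$.

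The technical care needed is in the definability of all these objects when $f$ is merely assumed to satisfy $fK_z^t \in L_t^2$: one must check that $T_f^t$ really does map a dense subspace (the span of kernels) into $\mathbb{F}_t$ with the claimed bound, and that $\langle f k_w^t, k_w^t\rangle_t$ is the Berezin transform of $T_f^t$ in the appropriate weak sense — this is exactly the setup already used in Theorem \ref{thm:berger_coburn1}, so I would invoke that framework rather than rebuild it. The range condition $s \in (t/2, 2t)$ should fall out as: $s > t/2$ is needed so that $\widetilde{f}^{(s)}$ makes sense and is comparable to a convolution of $\widetilde{f}^{(t)}$, matching the hypothesis of Theorem \ref{thm:berger_coburn1}; the upper bound $s < 2t$ is needed because writing $\widetilde{f}^{(s)} = g_{s-t}\ast\widetilde{f}^{(t)}$ and then further relating $\widetilde{f}^{(t)}$ to the operator requires $s-t < t$, i.e. the heat-kernel step cannot overshoot. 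The main obstacle I anticipate is not the inequality chain itself but justifying the identity $\widetilde{f}^{(s)} = g_{s-t}\ast \widetilde{f}^{(t)}$ at the level of the possibly unbounded symbol $f$ (as opposed to the bounded operator $T_f^t$), which requires a Fubini/growth argument on the double Gaussian integral; this is the one place where the hypothesis $fK_z^t \in L_t^2$ must be used carefully rather than cited.
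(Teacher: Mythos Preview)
Your argument handles only the easy half of the range and breaks down precisely where the real content lies. The identity $\widetilde{f}^{(s)} = g_{s-t} \ast \widetilde{f}^{(t)}$ is valid only for $s \geq t$, because $\widetilde{f}^{(r)} = g_r \ast f$ and the heat semigroup relation $g_a \ast g_b = g_{a+b}$ requires both parameters to be positive. For $t/2 < s < t$ the quantity $s - t$ is negative and $g_{s-t}$ is not an $L^1$ function one can convolve against; one cannot ``run the heat flow backwards'' by an $L^1$ estimate. Your justification of the restriction $s > t/2$ via this identity is therefore mistaken, and the case $s \in (t/2, t)$ --- which is exactly the nontrivial part of the theorem --- is not covered at all. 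For $s \geq t$ your duality argument $|\widetilde{f}^{(t)}(w)| = |\langle T_f^t k_w^t, k_w^t\rangle| \lesssim \| T_f^t\|_{\mathbb{F}_t \to \mathbb{F}_t}$ followed by convolution is fine, and indeed the paper dispatches that range in one sentence (``there is nothing to prove for $s \geq t$'').

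For $s \in (t/2, t)$ the paper's approach is genuinely different from anything in your sketch: one constructs an explicit nuclear operator $T_0^{(s)} = \sum_{k \geq 0} (1 - t/s)^k P_k$ on $\mathbb{F}_t$ (the series converges in nuclear norm precisely for $s > t/2$, which is where that lower bound actually comes from), verifies that $\operatorname{tr}(T_0^{(s)} \alpha_{-z}(T_f^t))$ equals $\widetilde{f}^{(s)}(z)$ up to a constant, and then applies the trace-norm/operator-norm inequality $|\operatorname{tr}(AB)| \leq \| A\|_{\mathcal N} \| B\|_{op}$. Extending this to general $\mathbb{F}_t$ requires showing $T_0^{(s)} \in \mathcal{N}(\mathbb{F}_t)$, which in turn rests on the uniform bound $\sup_\alpha \| e_\alpha^t\|_{F_t^p} \| e_\alpha^t\|_{F_t^q} < \infty$. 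None of this machinery is present in your proposal, and it cannot be replaced by a convolution trick.
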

Let us very briefly sketch the original proof for the Hilbert space setting and then discuss why this proof still works for arbitrary $\mathbb F_t$.

First of all, there is nothing to prove for $s \geq t$. For $s < t$, Berger and Coburn obtained the inequalities by considering the operators
\begin{align*}
P_k := \sum_{|\alpha| = k} e_\alpha^t \otimes e_\alpha^t,\\
T_0^{(s)} := \sum_{k=0}^\infty \left( 1 - \frac{t}{s}\right)^k P_k.
\end{align*}
Here, $e_\alpha^t$ are the elements of the standard orthonormal basis of $F_t^2$, i.e.\
\begin{align*}
e_\alpha^t(z) = \sqrt{\frac{1}{t^{|\alpha|}\alpha!}} z^\alpha, \quad z \in \mathbb C^n, \alpha\in \mathbb N_0^n,
\end{align*}
using standard multi-index notation. Clearly, $P_k$ is of finite rank. Further, for $s \in (\frac{t}{2}, t)$, the sum defining $T_0^{(s)}$ converges in trace norm, hence $T_0^{(s)}$ is a trace-class operator. Now, upon computing the trace of the product $T_0^{(s)} T_f^t$, they noticed that, up to a constant not depending on $f$,
\begin{align*}
\tr(T_0^{(s)} T_f^t) \cong \widetilde{f}^{(s)}(0).
\end{align*}
More generally,
\begin{align*}
\tr(T_0^{(s)}W_{-z}^t T_f^t W_z^t) = \tr(T_0^{(s)} T_{\alpha_{-z}(f)}^t) \cong \widetilde{f}^{(s)}(z).
\end{align*}
Now, using the standard estimate $|\tr(AB)| \leq \| A\|_{\mathcal S^1} \| B\|_{op}$, where the norms denote the trace norm and operator norm, respectively, yields
\begin{align*}
|\widetilde{f}^{(s)}(z)| \lesssim \| T_0^{(s)}\|_{\mathcal S^1} \| T_f^t\|_{op}.
\end{align*}

In principle, all of this carries over upon replacing the trace ideal by the ideal of nuclear operators. First of all, we still have the identity $\tr(T_0^{(s)} T_f^t) \cong \widetilde{f}^{(s)}(0)$, which follows from the Dominated Convergence Theorem upon writing out the definition of the trace, provided that $T_0^{(s)}$ is indeed a nuclear operator (note that the nuclear trace is well-defined, as the Fock spaces possess the approximation property - see Section \ref{sec:aux} below for a short discussion of this). Those are indeed the two points which need clarification. For doing so, we first observe that the result on $F_t^\infty$ follows immediately from the result on $f_t^\infty$, as $\| T_f^t\|_{f_t^\infty \to f_t^\infty}$ and $\| T_f^t\|_{F_t^\infty \to F_t^\infty}$ are equivalent. Hence, we may exclude the case $\mathbb F_t = F_t^\infty$ in the following discussions for convenience.

It is therefore the first logical step to prove the following proposition.
\begin{prop}\label{prop:conv_t0}
Let $s > t/2$. Then, the series defining $T_0^{(s)}$ converges in nuclear norm. In particular, $T_0^{(s)} \in \mathcal N(\mathbb F_t)$.
\end{prop}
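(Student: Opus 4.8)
The plan is to estimate the nuclear norm of each finite-rank piece $P_k = \sum_{|\alpha|=k} e_\alpha^t \otimes e_\alpha^t$ on $\mathbb{F}_t$ and then sum the geometric-type series $\sum_k (1 - t/s)^k \| P_k \|_{\mathcal{N}(\mathbb{F}_t)}$, which converges precisely when $|1 - t/s| < 1$, i.e.\ when $s > t/2$. On $F_t^2$ the operator $P_k$ is an orthogonal projection of rank $\binom{n+k-1}{n-1}$, so $\| P_k \|_{\mathcal{S}^1} = \binom{n+k-1}{n-1}$, which grows only polynomially in $k$ and is therefore dominated by the geometric factor; the point is to show that the same polynomial growth persists for the nuclear norm over the other Fock spaces. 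Since $P_k$ is already given as an explicit sum of $\binom{n+k-1}{n-1}$ rank-one operators $e_\alpha^t \otimes e_\alpha^t$, the triangle inequality for the nuclear norm gives
\begin{align*}
\| P_k \|_{\mathcal{N}(\mathbb{F}_t)} \leq \sum_{|\alpha| = k} \| e_\alpha^t \otimes e_\alpha^t \|_{\mathcal{N}(\mathbb{F}_t)} = \sum_{|\alpha|=k} \| e_\alpha^t \|_{\mathbb{F}_t} \, \| e_\alpha^t \|_{\mathbb{F}_t'},
\end{align*}
using that the nuclear norm of a rank-one operator $b \otimes a$ is $\| b \|_{\mathbb{F}_t} \| a \|_{\mathbb{F}_t'}$ (with the duality as fixed in Section 2, so for $a \in F_t^q$ this is $\simeq \| a \|_{F_t^q}$ up to the norm-equivalence constant). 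So the crux reduces to a uniform-in-$\alpha$ bound on $\| e_\alpha^t \|_{F_t^p}$ and $\| e_\alpha^t \|_{F_t^q}$ for monomials of degree $k$.

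The key step is therefore the monomial norm estimate: I claim $\| e_\alpha^t \|_{F_t^p} \lesssim C^k$ uniformly for $|\alpha| = k$, with $C$ depending only on $p, t, n$ (and in fact one expects $\| e_\alpha^t \|_{F_t^p} \to$ something like a bounded or mildly growing quantity, since $e_\alpha^t$ is $L^2$-normalized). This is a direct computation: $\| e_\alpha^t \|_{F_t^p}^p = \frac{1}{t^{p|\alpha|/2}\alpha!^{p/2}} \left(\frac{p}{2\pi t}\right)^n \int_{\mathbb{C}^n} |z^\alpha|^p e^{-\frac{p}{2t}|z|^2}\, dz$, and the integral factors over coordinates into Gamma-function expressions; comparing with the $p=2$ normalization and using Stirling's formula shows the ratio $\| e_\alpha^t \|_{F_t^p} / \| e_\alpha^t \|_{F_t^2}$ grows at most geometrically in $k = |\alpha|$ (indeed sub-exponentially, roughly like $k^{\text{const}}$ times a constant, once one is careful about the multinomial combinatorics). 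Combined with $\# \{ \alpha : |\alpha| = k \} = \binom{n+k-1}{n-1} = O(k^{n-1})$, we get $\| P_k \|_{\mathcal{N}(\mathbb{F}_t)} \lesssim k^{n-1} C^k$ for some $C$ that can be taken arbitrarily close to what one needs — and critically, for the $\mathbb{F}_t = f_t^\infty$ case one uses the same monomial estimates since $f_t^\infty$ carries the $F_t^\infty$ norm and the relevant rank-one nuclear norms are computed against the dual $F_t^1$.

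The main obstacle I anticipate is making the geometric factor beat the monomial growth: one needs $|1 - t/s| \cdot C < 1$, but $C$ here is a fixed constant (not tunable), so the real content is showing that $\| e_\alpha^t \|_{F_t^p}/\| e_\alpha^t \|_{F_t^2}$ actually grows \emph{polynomially}, not exponentially — i.e.\ that the right constant is $C = 1$ up to polynomial corrections. This is where Stirling has to be applied with care to the ratio $\Gamma(\alpha_j p/2 + 1)^{1/p} / \Gamma(\alpha_j + 1)^{1/2}$ summed appropriately; the worst case is $\alpha = (k,0,\dots,0)$ versus $\alpha$ spread out evenly, and one checks the bound is uniform. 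Once the polynomial bound $\| P_k \|_{\mathcal{N}(\mathbb{F}_t)} \lesssim (1+k)^{M}$ is in hand for some $M = M(n,p)$, the series $\sum_k |1 - t/s|^k (1+k)^M$ converges absolutely for every $s > t/2$, the partial sums form a Cauchy sequence in $\mathcal{N}(\mathbb{F}_t)$, and completeness of the nuclear ideal gives $T_0^{(s)} \in \mathcal{N}(\mathbb{F}_t)$ as claimed.
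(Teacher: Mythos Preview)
Your approach is essentially the paper's: bound $\|P_k\|_{\mathcal N(\mathbb F_t)}$ by $\sum_{|\alpha|=k}\|e_\alpha^t\|_{\mathbb F_t}\|e_\alpha^t\|_{(\mathbb F_t)'}$, control the monomial norms, and sum against $|1-t/s|^k$. The paper's execution of the monomial estimate is sharper and cleaner than what you outline, and it resolves exactly the worry you flag about the constant $C$: rather than estimating $\|e_\alpha^t\|_{F_t^p}$ and $\|e_\alpha^t\|_{F_t^q}$ separately and aiming for polynomial growth, it shows directly that the \emph{product} $\|e_\alpha^t\|_{F_t^p}\|e_\alpha^t\|_{F_t^q}$ is uniformly bounded over all $\alpha\in\mathbb N_0^n$. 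This is done first for the endpoint pair $(p,q)=(1,\infty)$ by an explicit one-variable computation plus Stirling (the tensor structure of the basis reduces to $n=1$), and then for all intermediate $p$ via Littlewood's interpolation inequality $\|f\|_{F_t^p}\lesssim\|f\|_{F_t^1}^{1/p}\|f\|_{F_t^\infty}^{1-1/p}$ applied to both factors. With that uniform bound in hand the series collapses to $\sum_k |1-t/s|^k\binom{k+n-1}{k}$, and the polynomial-versus-exponential bookkeeping you anticipate becomes unnecessary.
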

The proof hinges on the following fact:
\begin{lem}\label{lem:berger_coburn_proof}
Let $p \in [1, \infty]$. If $q\in [1, \infty]$ denotes the conjugate exponent, i.e.\ $1/p + 1/q = 1$, then
\begin{align*}
\sup_{\alpha \in \mathbb N_0^n} \| e_\alpha^t\|_{F_t^p} \| e_\alpha^t\|_{F_t^q} <\infty.
\end{align*}
\end{lem}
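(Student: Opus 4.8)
The plan is to compute $\|e_\alpha^t\|_{F_t^p}$ explicitly (or at least up to constants) as a function of $\alpha$ and $p$, and then to check that the product over the conjugate pair $p, q$ is uniformly bounded in $\alpha$. First I would recall that $e_\alpha^t(z) = (t^{|\alpha|}\alpha!)^{-1/2} z^\alpha$, so by the definition of the $F_t^p$-norm,
\begin{align*}
\|e_\alpha^t\|_{F_t^p}^p = \frac{1}{(t^{|\alpha|}\alpha!)^{p/2}} \left(\frac{p}{2\pi t}\right)^n \int_{\mathbb C^n} |z^\alpha|^p e^{-\frac{p}{2t}|z|^2}~dz.
\end{align*}
The integral factorizes over the $n$ coordinates, and each one-dimensional integral $\int_{\mathbb C} |z_j|^{p\alpha_j} e^{-\frac{p}{2t}|z_j|^2}~dz_j$ is, after passing to polar coordinates, a Gamma integral equal to a constant times $(2t/p)^{p\alpha_j/2 + 1}\,\Gamma(p\alpha_j/2 + 1)$. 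Collecting these and the prefactors gives a closed form of the type
\begin{align*}
\|e_\alpha^t\|_{F_t^p} \simeq t^{|\alpha|/2}\, p^{-|\alpha|/2} \left(\frac{\prod_j \Gamma(p\alpha_j/2 + 1)}{(\alpha!)^{p/2}}\right)^{1/p},
\end{align*}
where the implied constants depend only on $n$ and $p$ (and the cases $p = \infty$, handled via $\|e_\alpha^t\|_{F_t^\infty} = \sup_z |z^\alpha| e^{-|z|^2/(2t)}$, are treated separately by a direct maximization in $|z|$, yielding something like $(t|\alpha|)^{|\alpha|/2} e^{-|\alpha|/2}/\sqrt{\alpha!}$ up to constants).

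Next I would form the product $\|e_\alpha^t\|_{F_t^p}\|e_\alpha^t\|_{F_t^q}$; the powers of $t$ multiply to $t^{|\alpha|}$, matching the $t^{-|\alpha|}$ hidden in $\|K^t\|$-type normalizations, and the task reduces to showing that
\begin{align*}
\left(\frac{\prod_j \Gamma(p\alpha_j/2+1)}{(\alpha!)^{p/2}}\right)^{1/p}\left(\frac{\prod_j \Gamma(q\alpha_j/2+1)}{(\alpha!)^{q/2}}\right)^{1/q}
\end{align*}
(times the explicit powers of $p$ and $q$) stays bounded as $|\alpha|\to\infty$. Since the product over coordinates is a product of one-variable factors, it suffices to bound each factor $\big(\Gamma(pm/2+1)/(m!)^{p/2}\big)^{1/p}\big(\Gamma(qm/2+1)/(m!)^{q/2}\big)^{1/q}$ uniformly in $m \in \mathbb N_0$. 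This is where Stirling's formula enters: $\log\Gamma(pm/2+1) = \frac{pm}{2}\log\frac{pm}{2} - \frac{pm}{2} + O(\log m)$ and $\log m! = m\log m - m + O(\log m)$, and a short computation shows the leading $m\log m$ and $m$ terms cancel exactly between the two conjugate contributions (this cancellation is essentially the statement that the "center of mass" of the Fock norms is self-dual), leaving only an $O(\log m)$ discrepancy — which, after the factor-of-$n$ product, is still only $O(\log|\alpha|)$ in the exponent and would a priori allow polynomial growth.

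So the main obstacle, and the point that needs care rather than just asymptotics, is controlling that residual $O(\log m)$ term precisely enough to get boundedness rather than slow growth. I would handle this by not discarding the lower-order Stirling terms: using the sharp two-sided bound $\sqrt{2\pi}\, m^{m+1/2}e^{-m} \le m! \le e\, m^{m+1/2} e^{-m}$ (and the analogue for $\Gamma$), one tracks the $\sqrt{\cdot}$ factors explicitly. The powers of $p$ and $q$ from the Gaussian integrals are exactly what cancel the $p^{m/2}, q^{m/2}$ scaling coming from the arguments of the Gamma functions, and once everything is assembled the $m$-dependent part of the product is bounded by an absolute constant uniformly in $m$ (with the endpoint case $p=1, q=\infty$ checked directly against the $F_t^\infty$ formula above). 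Taking the product over the $n$ coordinates then gives the claim, the supremum being finite with a bound depending only on $n$. An alternative, cleaner route avoiding Stirling bookkeeping would be to invoke the known norm equivalences between $F_t^p$ spaces via the boundedness of $P_t$ on $L_t^p$ together with an interpolation/duality argument, but I expect the direct Gamma-function computation to be the most transparent and self-contained.
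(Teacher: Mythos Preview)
Your approach is correct and close in spirit to the paper's, but differs in how the intermediate exponents $1<p<\infty$ are handled. Both arguments reduce to $n=1$ via the tensor structure and rely on Stirling. The paper, however, only carries out the explicit Gamma/Stirling computation at the endpoints: it computes $\|e_k^t\|_{F_t^1}$ and $\|e_k^t\|_{F_t^\infty}$ directly, shows their product is bounded via Stirling, and then dispatches all $1<p<\infty$ in one line using the Littlewood-type interpolation inequality $\|f\|_{F_t^p}\le p^{n/p}\|f\|_{F_t^1}^{1/p}\|f\|_{F_t^\infty}^{1-1/p}$ (applied to $e_k^t$ in both $F_t^p$ and $F_t^q$, the exponents recombining via $1/p+1/q=1$). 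Your route---computing $\|e_\alpha^t\|_{F_t^p}$ for general $p$ and tracking full Stirling asymptotics through the conjugate product---also works (keeping the $\sqrt{2\pi x}$ prefactor does indeed cancel the apparent $O(\log m)$ drift and leaves a bounded constant), but involves more bookkeeping. The ``alternative, cleaner route'' you mention at the end is essentially what the paper does, except via Littlewood interpolation rather than $P_t$-boundedness.
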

\begin{proof}
Using the tensor product structure of the standard basis, it suffices to consider the case $n = 1$. For $q = \infty$, basic calculus shows that
\begin{align*}
\| e_k^t\|_{F_t^\infty} &= \sup_{z \in \mathbb C} \frac{1}{\sqrt{k! t^k}} |z|^k e^{-\frac{|z|^2}{2t}}\\
&= \frac{1}{k!t^k} \sup_{r \geq 0} r^k e^{-\frac{r^2}{2t}}\\
&= \frac{1}{k!} k^{k/2} e^{-\frac{k}{2}}.
\end{align*}
For $p = 1$ we obtain
\begin{align*}
\| e_k^t\|_{F_t^1} &= \frac{1}{2\pi t} \int_{\mathbb C} \frac{|z|^k}{\sqrt{k! t^k}} e^{-\frac{|z|^2}{2t}}~dz\\
&= \frac{1}{t\sqrt{k!t^k}} \int_0^\infty r^{k+1} e^{-\frac{r^2}{2t}}~dr \\
&= \frac{2^{k/2}}{\sqrt{k!}} \Gamma \left( \frac{k}{2} +1 \right).
\end{align*}
Thus, we arrive at
\begin{align*}
\| e_k^t\|_{F_t^1} \| e_k^t\|_{F_t^\infty} = \frac{(2k)^{k/2}}{k!} \Gamma\left( \frac{k}{2} + 1 \right) e^{-\frac{k}{2}}.
\end{align*}
Stirling's approximation
\begin{align*}
\Gamma(x) = \sqrt{\frac{2\pi}{x}} \left( \frac{x}{e} \right)^x \left( 1 + \mathcal O \left( \frac{1}{x}\right) \right)\quad \text{as} \quad x \to \infty
\end{align*}
now easily gives
\begin{align*}
\sup_{k \in \mathbb N_0}\| e_k^t\|_{F_t^1} \| e_k^t\|_{F_t^\infty}< \infty.
\end{align*}
Clearly, the case $p = \infty$, $q = 1$ follows the same asymptotic behaviour. For $p \in (1, \infty)$, the uniform bound is immediately obtained from a version of Littlewood's interpolation inequality for Fock spaces.
\end{proof}
For completeness, we give the Fock space version of the interpolation inequality we just mentioned:
\begin{lem}
For $f \in F_t^1$ and $1 < p < \infty$ it holds true that:
\begin{align*}
\| f\|_{F_t^p} \leq p^{\frac{n}{p}} \| f\|_{F_t^1}^{1/p} \| f\|_{F_t^\infty}^{1 - 1/p}.
\end{align*}
\end{lem}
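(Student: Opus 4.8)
The plan is to reduce the stated inequality to the trivial pointwise interpolation bound $g^p \le \|g\|_\infty^{p-1}\, g$ for a nonnegative function $g$, the only genuine issue being the bookkeeping of the $p$-dependent normalization constant in the definition of $\|\cdot\|_{F_t^p}$; that constant is precisely the source of the factor $p^{n/p}$.

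First I would dispose of the degenerate cases. If $f = 0$ there is nothing to prove, and if $\|f\|_{F_t^\infty} = \infty$ then the right-hand side is $+\infty$ unless $\|f\|_{F_t^1} = 0$; but $\|f\|_{F_t^1} = 0$ forces $f \equiv 0$ by analyticity, so no $0 \cdot \infty$ ambiguity occurs. Hence from now on one may assume $0 \ne f \in F_t^1 \cap F_t^\infty$.

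Next, put $F(z) := |f(z)|\, e^{-|z|^2/(2t)}$, so that $\|f\|_{F_t^\infty} = \|F\|_{L^\infty(\mathbb C^n)}$, and, reading off the defining integral formula for the $F_t^p$-norm,
\begin{align*}
\|f\|_{F_t^p}^p = \Big(\frac{p}{2\pi t}\Big)^n \int_{\mathbb C^n} F(z)^p\, dz, \qquad \|f\|_{F_t^1} = \Big(\frac{1}{2\pi t}\Big)^n \int_{\mathbb C^n} F(z)\, dz.
\end{align*}
Since $p > 1$ and $F \ge 0$, I would then use the pointwise bound $F(z)^p = F(z)\, F(z)^{p-1} \le \|F\|_{L^\infty}^{p-1}\, F(z)$ and integrate:
\begin{align*}
\|f\|_{F_t^p}^p \le \Big(\frac{p}{2\pi t}\Big)^n \|F\|_{L^\infty}^{p-1} \int_{\mathbb C^n} F(z)\, dz = p^n\, \|f\|_{F_t^\infty}^{p-1}\, \|f\|_{F_t^1},
\end{align*}
where in the last equality the constants $(p/2\pi t)^n$ and $(2\pi t)^n$ collapse to $p^n$. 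Taking $p$-th roots yields exactly $\|f\|_{F_t^p} \le p^{n/p} \|f\|_{F_t^1}^{1/p}\|f\|_{F_t^\infty}^{1-1/p}$. I do not expect any real obstacle: analyticity enters only to exclude the degenerate case $\|f\|_{F_t^1} = 0 < \|f\|_{F_t^\infty}$, and the whole content is the elementary pointwise estimate together with tracking the $p$-dependent normalization constant.
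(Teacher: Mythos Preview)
Your argument is correct and is exactly the elementary route the paper has in mind: the paper omits the proof, calling it ``a standard consequence of H\"{o}lder's inequality,'' and your pointwise bound $F^p \le \|F\|_\infty^{p-1} F$ is precisely that (the trivial $L^\infty$--$L^1$ H\"{o}lder splitting), with the factor $p^{n/p}$ arising from the normalization constant just as you track it. The degenerate case discussion is unnecessary since $F_t^1 \hookrightarrow F_t^\infty$, but it does no harm.
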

We omit the proof of this inequality, as it is a standard consequence of H\"{o}lder's inequality.
\begin{proof}[Proof of Proposition \ref{prop:conv_t0}]
Recall that the nuclear norm of $A \in \mathcal N(\mathbb F_t)$ is defined as
\begin{align*}
\| A\|_{\mathcal N} := \inf \left \{ \sum_j \| f_j\|_{\mathbb F_t} \| g_j\|_{(\mathbb F_t)'}: ~A = \sum_j f_j \otimes g_j, ~f_j \in \mathbb F_t, ~g_j \in (\mathbb F_t)' \right \}
\end{align*}
As usually, we will identify $(F_t^p)'$ with $F_t^q$, which are identical up to passing to an equivalent norm. Recall that we excluded the case $\mathbb F_t = F_t^\infty$ from our discussion.

We clearly have
\begin{align*}
\sum_{k=0}^\infty \left | 1 - \frac{t}{s}\right |^k \| P_k\|_{\mathcal N} &\leq \sum_{k=0}^\infty \left | 1 - \frac{t}{s}\right |^k \sum_{|\alpha| = k} \| e_{\alpha}^t\|_{\mathbb F_t} \| e_\alpha^t\|_{(\mathbb F_t)'}
\intertext{By Lemma \ref{lem:berger_coburn_proof}, we may uniformly estimate the product of the two norms to obtain}
&\lesssim \sum_{k=0}^\infty \left | 1 - \frac{t}{s}\right |^k  \cdot |\{ \alpha \in \mathbb N_0^n: ~|\alpha| = k\}|
\intertext{Some basic combinatorics shows}
&= \sum_{k=0}^\infty \left | 1 - \frac{t}{s}\right |^k \binom{k-1+n}{k}
\end{align*}
By the quotient test, this series converges. Therefore, the series defining $T_0^{(s)}$ converges absolutely in $\mathcal N(\mathbb F_t)$. As the nuclear ideal is complete, the series therefore converges and $T_0^{(s)} \in \mathcal N(\mathbb F_t)$.
\end{proof}
\begin{prop}
Let $f: \mathbb C^n \to \mathbb C$ such that $fK_z^t \in L_t^2$ for any $z \in \mathbb C^n$. Further, let $s \in (\frac{t}{2}, t)$. If $\alpha_{-z}(T_f^t)$ is bounded on any of the spaces $F_t^p$ or $f_t^\infty$, then
\begin{align*}
\widetilde{f}^{(s)}(z) = \left( \frac{t}{s} \right)^n \tr(T_0^{(s)}\alpha_{-z}(T_f^t)).
\end{align*}
\end{prop}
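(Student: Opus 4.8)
The plan is to reduce to $z=0$, then expand $\tr(T_0^{(s)}T_g^t)$ along the spectral decomposition of $T_0^{(s)}$ and sum the resulting series, identifying it with the Gaussian convolution that defines $\widetilde{g}^{(s)}(0)$. For the reduction, put $g:=\alpha_{-z}(f)$, so $g(w)=f(w+z)$; then $\alpha_{-z}(T_f^t)=T_g^t$, a change of variables in the Gaussian convolution defining the (generalized) Berezin transform gives $\widetilde{g}^{(s)}(0)=\widetilde{f}^{(s)}(z)$, and another change of variables shows $gK_0^t\in L_t^2$ (using the standing hypothesis $fK_z^t\in L_t^2$). Hence it suffices to prove: whenever $gK_0^t\in L_t^2$ and $T_g^t$ is bounded on $\mathbb F_t$, one has $\widetilde{g}^{(s)}(0)=(t/s)^n\,\tr(T_0^{(s)}T_g^t)$. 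Put $h:=g\,e^{-|\cdot|^2/(2t)}\in L^2(\mathbb C^n,dw)$ and $\lambda:=1-t/s$, noting that $s\in(t/2,t)$ is precisely the range in which $\lambda\in(-1,0)$.

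Next I would expand the trace. By Proposition~\ref{prop:conv_t0}, $T_0^{(s)}=\sum_{k\ge0}\lambda^kP_k$ converges in $\mathcal N(\mathbb F_t)$; since $T_g^t$ is bounded and the nuclear trace is continuous on $\mathcal N(\mathbb F_t)$ (the Fock spaces have the approximation property, see Section~\ref{sec:aux}), $\tr(T_0^{(s)}T_g^t)=\sum_{k\ge0}\lambda^k\,\tr(P_kT_g^t)$. With $P_k=\sum_{|\alpha|=k}e_\alpha^t\otimes e_\alpha^t$, a direct computation using the integral formula for $P_t$ and the reproducing property (all integrals absolutely convergent since $h\in L^2$), together with $\sum_{|\alpha|=k}|e_\alpha^t(w)|^2=|w|^{2k}/(t^kk!)$ (multinomial theorem), gives
\[
c_k:=\tr(P_kT_g^t)=\frac{1}{(\pi t)^n}\int_{\mathbb C^n}g(w)\,\frac{|w|^{2k}}{t^kk!}\,e^{-|w|^2/t}\,dw .
\]
By Lemma~\ref{lem:berger_coburn_proof}, $\|P_k\|_{\mathcal N}\le\sum_{|\alpha|=k}\|e_\alpha^t\|_{\mathbb F_t}\|e_\alpha^t\|_{(\mathbb F_t)'}\lesssim\#\{\alpha:|\alpha|=k\}=O(k^{n-1})$, hence $|c_k|\le\|P_k\|_{\mathcal N}\,\|T_g^t\|=O(k^{n-1})$; thus $F(\mu):=\sum_{k\ge0}c_k\mu^k$ is holomorphic on $\{|\mu|<1\}$ and $\tr(T_0^{(s)}T_g^t)=F(\lambda)$.

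Then I would identify $F$ with the Gaussian convolution. Set
\[
G(\mu):=\frac{1}{(\pi t)^n}\int_{\mathbb C^n}g(w)\,e^{(\mu-1)|w|^2/t}\,dw .
\]
Writing $g=h\,e^{|\cdot|^2/(2t)}$ and using Cauchy--Schwarz against $h\in L^2$, the integrand is absolutely integrable exactly when $\operatorname{Re}\mu<1/2$, so $G$ is holomorphic on $\{\operatorname{Re}\mu<1/2\}$; the same estimate shows that for $|\mu|<1/2$ one may expand $e^{(\mu-1)|w|^2/t}=e^{-|w|^2/t}\sum_{k\ge0}(\mu|w|^2/t)^k/k!$ and interchange sum and integral, so $G(\mu)=\sum_{k\ge0}c_k\mu^k=F(\mu)$ there. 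Since $\{|\mu|<1\}\cap\{\operatorname{Re}\mu<1/2\}$ is a connected open set containing the disc $\{|\mu|<1/2\}$, the identity theorem yields $F\equiv G$ on it, and in particular $F(\lambda)=G(\lambda)$ because $\lambda\in(-1,0)$ lies in that set. Finally $\lambda-1=-t/s$ gives $G(\lambda)=\frac{1}{(\pi t)^n}\int g(w)e^{-|w|^2/s}\,dw=(s/t)^n\,\widetilde{g}^{(s)}(0)$, so $(t/s)^n\tr(T_0^{(s)}T_g^t)=(t/s)^nF(\lambda)=(t/s)^nG(\lambda)=\widetilde{g}^{(s)}(0)=\widetilde{f}^{(s)}(z)$.

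The step I expect to be the real obstacle is exactly the interchange of summation and integration relating $\sum_kc_k\mu^k$ to the Gaussian integral at the relevant value $\mu=\lambda=1-t/s$: there the partial sums of $e^{\lambda|w|^2/t}$ have supremum (over the number of terms) of order $e^{|\lambda||w|^2/t}$, while $\int|g(w)|e^{(|\lambda|-1)|w|^2/t}\,dw$ need not be finite under the present hypotheses (it fails already for a mildly growing Gaussian $g$ with $gK_0^t\in L_t^2$), so a naive dominated-convergence argument is not at hand. Routing through holomorphy---carrying out the interchange only on the small disc $\{|\mu|<1/2\}$, where everything converges absolutely, and then analytically continuing---circumvents the issue. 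If one is willing to assume the mild extra integrability $\int|g(w)|e^{-c|w|^2}\,dw<\infty$ for $c=2/t-1/s$ (automatic when $g$ is bounded), the interchange can instead be done directly by dominated convergence, recovering the quicker computation sketched before the statement.
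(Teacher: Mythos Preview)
Your proof is correct and follows the same overall route as the paper: reduce to $z=0$, expand $\tr(T_0^{(s)}T_g^t)$ along the nuclear-norm convergent series $\sum_k\lambda^kP_k$, identify each $\tr(P_kT_g^t)$ as the integral $c_k=(\pi t)^{-n}\int g(w)\,|w|^{2k}/(t^kk!)\,e^{-|w|^2/t}\,dw$, and then sum to the Gaussian integral $(s/t)^n\widetilde g^{(s)}(0)$.

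The one genuine difference is the justification of the interchange. The paper simply invokes dominated convergence, citing the implication $f\in L_t^2\Rightarrow f\in L_s^1$. You instead introduce the auxiliary holomorphic parameter $\mu$, match $F(\mu)=\sum_kc_k\mu^k$ with the integral $G(\mu)$ on the small disc $\{|\mu|<1/2\}$ where absolute convergence is immediate, and analytically continue to $\mu=\lambda$. Your route is the more careful one: the natural dominant $|g(w)|\,e^{|\lambda||w|^2/t}e^{-|w|^2/t}$ is, via Cauchy--Schwarz against $g\in L_t^2$, only integrable when $2/s<3/t$, i.e.\ $s>2t/3$, so the paper's one-line DCT does not obviously cover the full range $(t/2,t)$. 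Your argument covers the whole range and, crucially, uses the boundedness hypothesis on $T_g^t$ (through $|c_k|\le\|P_k\|_{\mathcal N}\|T_g^t\|=O(k^{n-1})$) to get holomorphy of $F$ on $\{|\mu|<1\}$---a hypothesis the paper's DCT line does not exploit. One small caveat: your illustrative ``mildly growing Gaussian'' $g=e^{a|\cdot|^2}$ does satisfy $gK_z^t\in L_t^2$ for all $z$ when $a<1/(2t)$, but for $a>0$ the Toeplitz operator $T_g^t$ is unbounded, so it is not a counterexample to the \emph{proposition}; it only shows (correctly) that the naive dominant cannot be controlled from $g\in L_t^2$ alone.
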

\begin{proof}
Without loss of generality, we may assume $z = 0$. Since the nuclear operators form an ideal, we have $T_0^{(s)}T_f^t \in \mathcal N(\mathbb F_t)$ provided that the Toeplitz operator is bounded. We may compute its nuclear trace as follows:
\begin{align*}
\tr(T_0^{(s)}T_f^t) &= \sum_{k=0}^\infty \left( 1 - \frac{t}{s} \right)^k \sum_{|\alpha| = k} \tr(P_k T_f^t)\\
&= \sum_{k=0}^\infty \left( 1 - \frac{t}{s} \right)^k \sum_{|\alpha| = k} \langle T_f^t e_\alpha^t, e_\alpha^t\rangle_{F_t^2}
\intertext{Note that for any polynomial $q$, $|q|^2$ can be dominated by a linear combination of functions $|K_z^t|^2$. Since $fK_z^t\in L_t^2$, we also get $fq \in L_t^2$, hence $T_f^t(q) = P_t(fq)$, thus:}
&= \sum_{k=0}^\infty \left( 1 - \frac{t}{s} \right)^k \sum_{|\alpha| = k} \langle f e_\alpha^t, e_\alpha^t\rangle_{F_t^2}\\
&= \frac{1}{(\pi t)^n} \sum_{\alpha \in \mathbb N_0^n} \sum_{\mathbb C^n} f(z) \frac{|z_1|^{2\alpha_1} \cdot \dots \cdot |z_n|^{2\alpha_n}}{\alpha!} \left( \frac{1}{t} - \frac{1}{s} \right)^{|\alpha|} e^{-\frac{|z|^2}{t}}~dz
\intertext{It is an easy exercise to show that $f \in L_t^2$ implies $f \in L_s^1$ whenever $s \in (t/2, t)$. Thus, we may apply the Dominated Convergence Theorem to obtain:}
&= \frac{1}{(\pi t)^n} \int_{\mathbb C^n} f(z)\sum_{\alpha \in \mathbb N_0^n}  \frac{|z_1|^{2\alpha_1} \cdot \dots \cdot |z_n|^{2\alpha_n}}{\alpha!} \left( \frac{1}{t} - \frac{1}{s} \right)^{|\alpha|} e^{-\frac{|z|^2}{t}}~dz\\
&= \left( \frac{s}{t} \right)^n \frac{1}{(\pi s)^n} \int_{\mathbb C^n} f(z) e^{-\frac{|z|^2}{s}}~ds\\
&= \left( \frac{s}{t}\right)^n \widetilde{f}^{(s)}(0).\qedhere
\end{align*}
\end{proof}
Combining all the steps, we have now seen that the Berger-Coburn estimates carry over to any Fock space, i.e.\ for $f$ with $fK_z^t \in L_t^2$ we have seen that
\begin{align*}
\| T_f^t\|  \lesssim & ~\| \widetilde{f}^{(s)}\|_\infty, \quad 0 < s < \frac{t}{2},\\
\| \widetilde{f}^{(s)}\|_\infty  \lesssim & ~\| T_f^t\|, \quad \frac{t}{2} < s < 2t,
\end{align*}
where the operator norm can be taken over any of the Fock spaces $F_t^p$, $1\leq p  \leq \infty$, and $f_t^\infty$. The Correspondence Theorem now immediately gives the following improvement of this result:
\begin{thm}\label{thm:berger_coburn_correspondence}
Let $f: \mathbb C^n \to \mathbb C$ such that $fK_z^t \in L_t^2$ for any $z \in \mathbb C^n$. Further, let $\mathcal D_0 \subset \operatorname{BUC}(\mathbb C^n)$ be a closed, $\alpha$-invariant subspace.
\begin{enumerate}[(1)]
\item If $\widetilde{f}^{(s)} \in \mathcal D_0$ for some $0 < s < \frac{t}{2}$, then $T_f^t \in \mathcal T_{lin}^{\mathbb F_t}(\mathcal D_0)$.
\item If $T_f^t \in \mathcal T_{lin}^{\mathbb F_t}(\mathcal D_0)$, then $\widetilde{f}^{(s)} \in \mathcal D_0$ for every $\frac{t}{2} < s < 2t$.
\end{enumerate}
\end{thm}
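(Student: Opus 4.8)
The plan is to read off both implications from the two Berger--Coburn estimates together with the Correspondence Theorem \ref{Correspondencetheorem}, the only extra ingredient being a smoothing observation: \emph{convolution with a Gaussian carries the closed $\alpha$-invariant subspace $\mathcal D_0$ into itself.} Indeed, for $h \in \mathcal D_0 \subset \operatorname{BUC}(\mathbb C^n)$ and any $r > 0$ one has $\widetilde h^{(r)} = g_r \ast h = \int_{\mathbb C^n} g_r(w) \alpha_w(h)\,dw$, a Bochner integral in $\operatorname{BUC}(\mathbb C^n)$ that converges because $w \mapsto \alpha_w(h)$ is bounded and $\|\cdot\|_\infty$-continuous; since $\mathcal D_0$ is $\alpha$-invariant and closed, this integral lies in $\mathcal D_0$. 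Combined with the semigroup identity $\widetilde f^{(s+r)} = g_r \ast \widetilde f^{(s)}$ (valid for our $f$ by Fubini, using $f K_z^t \in L_t^2$), this gives: if $\widetilde f^{(s)} \in \mathcal D_0$, then $\widetilde f^{(s')} \in \mathcal D_0$ for every $s' \geq s$.

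For part 1), assume $\widetilde f^{(s)} \in \mathcal D_0$ for some $s \in (0, t/2)$. Then $\widetilde f^{(s)} \in L^\infty(\mathbb C^n)$, so Theorem \ref{thm:berger_coburn1} gives $T_f^t \in \mathcal L(\mathbb F_t)$, and, imitating the proof of Corollary \ref{cor:toep_c1}, $\|\alpha_z(T_f^t) - T_f^t\| = \|T_{\alpha_z(f) - f}^t\| \leq C \|\widetilde{\alpha_z(f) - f}^{(s)}\|_\infty = C \|\alpha_z(\widetilde f^{(s)}) - \widetilde f^{(s)}\|_\infty \to 0$ as $z \to 0$ by uniform continuity of $\widetilde f^{(s)}$, so $T_f^t \in \mathcal C_1(\mathbb F_t)$. (Here one uses that $\alpha_z(f) K_w^t \in L_t^2$, a routine change of variables from $f K_w^t \in L_t^2$, and the identity $\alpha_z(T_f^t) = T_{\alpha_z(f)}^t$.) Since $s < t/2 < t$, the smoothing remark yields $\widetilde{T_f^t} = \widetilde f^{(t)} \in \mathcal D_0$, and the Correspondence Theorem \ref{Correspondencetheorem} gives $T_f^t \in \mathcal T_{lin}^{\mathbb F_t}(\mathcal D_0)$.

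For part 2), assume $T_f^t \in \mathcal T_{lin}^{\mathbb F_t}(\mathcal D_0)$. Given $\varepsilon > 0$, choose $g \in \mathcal D_0$ with $\|T_f^t - T_g^t\|_{\mathbb F_t \to \mathbb F_t} < \varepsilon$ (possible since $\mathcal D_0$ is a linear subspace, so a finite linear combination of Toeplitz operators with symbols in $\mathcal D_0$ is again of the form $T_g^t$ with $g \in \mathcal D_0$). As $g \in L^\infty(\mathbb C^n)$ we have $(f - g) K_z^t \in L_t^2$ for every $z$, so the second Berger--Coburn estimate gives, for every $s \in (t/2, 2t)$, $\|\widetilde f^{(s)} - \widetilde g^{(s)}\|_\infty = \|\widetilde{f - g}^{(s)}\|_\infty \leq C \|T_f^t - T_g^t\| < C\varepsilon$; by the smoothing remark $\widetilde g^{(s)} \in \mathcal D_0$, and letting $\varepsilon \to 0$ with $\mathcal D_0$ closed gives $\widetilde f^{(s)} \in \mathcal D_0$.

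The main obstacle is entirely bookkeeping around unbounded symbols: one must check that the identities familiar for $f \in L^\infty$ persist under the weaker hypothesis $f K_z^t \in L_t^2$, namely the intertwining $\alpha_z(T_f^t) = T_{\alpha_z(f)}^t$ (so that membership in $\mathcal C_1(\mathbb F_t)$ can be tested through the symbol), the stability of the condition $f K_z^t \in L_t^2$ under translation, the relation $\widetilde{T_f^t} = \widetilde f^{(t)}$, and $\widetilde f^{(t)} = g_{t-s} \ast \widetilde f^{(s)}$. Each of these is a short computation; granting them, both implications are immediate consequences of the Berger--Coburn estimates and the Correspondence Theorem.
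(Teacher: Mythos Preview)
Your argument is correct. For part 1) you follow exactly the paper's route, only making explicit the smoothing step $\widetilde f^{(s)}\in\mathcal D_0\Rightarrow\widetilde f^{(t)}=g_{t-s}\ast\widetilde f^{(s)}\in\mathcal D_0$ that the paper suppresses behind the sentence ``Now, apply the Correspondence Theorem.''

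For part 2) your approach is genuinely different and in fact more direct. The paper argues on the operator side: it identifies $g_s\ast T_f^t=T_{\widetilde f^{(s)}}^t$, concludes $T_{\widetilde f^{(s)}}^t\in\mathcal T_{lin}^{\mathbb F_t}(\mathcal D_0)$, applies the Correspondence Theorem to get $\widetilde f^{(s+t)}\in\mathcal D_0$, and then invokes a \emph{reverse} smoothing lemma (for $g\in\operatorname{BUC}$, $g\in\mathcal D_0$ iff $\widetilde g^{(r)}\in\mathcal D_0$), whose proof requires the Wiener approximation argument once more. You instead approximate $T_f^t$ by $T_g^t$ with $g\in\mathcal D_0$, feed the difference $f-g$ into the second Berger--Coburn estimate to get $\|\widetilde f^{(s)}-\widetilde g^{(s)}\|_\infty\lesssim\|T_f^t-T_g^t\|$, and use only \emph{forward} smoothing ($g\in\mathcal D_0\Rightarrow\widetilde g^{(s)}\in\mathcal D_0$) plus closedness of $\mathcal D_0$. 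This is cleaner: it uses the second Berger--Coburn inequality exactly once, in the natural ``continuity'' way, and avoids both the operator-level convolution identity and the Wiener-type reverse step. The paper's route, on the other hand, yields along the way the independent fact that membership of a $\operatorname{BUC}$ function in $\mathcal D_0$ is detected by any heat regularization, which is of some interest in its own right.
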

\begin{proof}
\begin{enumerate}[(1)]
\item By the Berger-Coburn estimate, $T_f^t$ is a bounded operator. As in the proof of Corollary \ref{cor:toep_c1}, $\widetilde{f}^{(s)} \in \operatorname{BUC}(\mathbb C^n)$ implies even $T_f^t \in \mathcal C_1(\mathbb F_t)$. Now, apply the Correspondence Theorem.
\item Recall that $g_s$ denotes the standard Gaussian $g_s(z) = \frac{1}{(\pi t)^n} e^{-\frac{|z|^2}{s}}$. By comparing Berezin transforms, it is not difficult to see that $g_s \ast T_f^t = T_{\widetilde{f}^{(s)}}^t$, where the star denotes the $L^1(\mathbb C^n)$ module structure of $\mathcal C_1(\mathbb F_t)$. Since $\mathcal T_{lin}^{\mathbb F_t}(\mathcal D_0)$ is, by definition, closed and $\alpha$-invariant, we get $T_{\widetilde{f}^{(s)}}^t \in \mathcal T_{lin}^{\mathbb F_t}(\mathcal D_0)$. Note that one can prove the following: A function $g \in \operatorname{BUC}(\mathbb C^n)$ is contained in $\mathcal D_0$ if and only if $\widetilde{g}^{(t)}$ is contained in $\mathcal D_0$ for some $t > 0$. This can be shown by mimicking the prove of the Correspondence Theorem, i.e.\ use that convolution by $g_s$ is an approximate identity on $\operatorname{BUC}(\mathbb C^n)$ and then, for suitable constants $c_j \in \mathbb C, ~z_j \in \mathbb C^n$, show that
\begin{align*}
\| g - \sum_j c_j \alpha_{z_j}(\widetilde{g}^{(t)})\|_\infty
\end{align*}
can be made arbitrarily small. Now, in our situation $T_{\widetilde{f}^{(s)}}^t \in \mathcal T_{lin}^{\mathbb F_t}(\mathcal D_0)$ implies that $\widetilde{f}^{(s+t)} = \widetilde{\widetilde{f}^{(s)}}^{(t)} \in \mathcal D_0$. Since $\widetilde{f}^{(s)}$ is in $\operatorname{BUC}(\mathbb C^n)$ by the Berger-Coburn estimates, the previous comment imply that $\widetilde{f}^{(s)} \in \mathcal D_0$.\qedhere
\end{enumerate}
\end{proof}
\begin{rem}
We want to emphasize that, after this theorem has been presented in the author's thesis, Wu and Zheng independently proved the first part of the theorem for $p = 2$ in their recent preprint \cite{Wu_Zheng2021}. Their methods differ significantly from our proof. For $\mathcal D_0 = C_0(\mathbb C^n)$ and $p = 2$, this result is already contained in \cite{Bauer_Coburn_Isralowitz2010}.
\end{rem}
The Berger-Coburn estimates naturally lead to the Berger-Coburn conjecture, which asks if $T_f^t$ is bounded if and only if $\widetilde{f}^{(t/2)}$ is bounded (with $f$ under the assumptions of the Berger-Coburn estimates). So far, this conjecture is only resolved for certain classes of symbols, most notably non-negative symbols and symbols of bounded mean oscillation (see e.g.\ \cite{Coburn_Isralowitz_Li2011} for some results concerning symbols of bounded mean oscillation). For such symbols, we have the following well-known fact:
\begin{lem}
Let $f: \mathbb C^n \to \mathbb C$ such that $fK_z^t \in L_t^2$ for any $z \in \mathbb C^n$. Further, assume that either $f \geq 0$ or $f \in \operatorname{BMO}(\mathbb C^n)$. Then, $\widetilde{f}^{(t)}$ is bounded for one $t > 0$ if and only if it is bounded for all $t > 0$.
\end{lem}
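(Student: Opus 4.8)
The plan is to reduce the statement to a one-sided assertion and then handle the two classes of symbols by elementary estimates on the Gaussians $g_s$. The only facts I use are the identification $\widetilde{f}^{(s)} = g_s \ast f$ together with the semigroup law $g_s \ast g_u = g_{s+u}$, which gives $\bigl(\widetilde{f}^{(s)}\bigr)^{\sim(u)} = \widetilde{f}^{(s+u)}$. The first consequence, requiring no assumption on $f$ at all beyond boundedness of some $\widetilde{f}^{(t_0)}$, is that the implication is trivial in the direction of increasing the parameter: for $s > t_0$ one has $\widetilde{f}^{(s)} = g_{s-t_0} \ast \widetilde{f}^{(t_0)}$, hence $\|\widetilde{f}^{(s)}\|_\infty \le \|g_{s-t_0}\|_{L^1(\mathbb C^n)}\,\|\widetilde{f}^{(t_0)}\|_\infty = \|\widetilde{f}^{(t_0)}\|_\infty$. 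Thus the whole problem reduces to showing that boundedness of $\widetilde{f}^{(t_0)}$ forces boundedness of $\widetilde{f}^{(s)}$ for $0 < s < t_0$, and this is the step where the sign condition or the $\operatorname{BMO}$ condition enters.

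The elementary input for the downward direction is the pointwise Gaussian domination: for $0 < s < t$ one computes $g_s(w)/g_t(w) = (t/s)^n e^{-|w|^2(1/s - 1/t)} \le (t/s)^n$, i.e.\ $g_s \le (t/s)^n g_t$ everywhere. If $f \ge 0$ this already finishes the argument, since then $\widetilde{f}^{(s)} = g_s \ast f \le (t_0/s)^n\, g_{t_0} \ast f = (t_0/s)^n \widetilde{f}^{(t_0)}$ pointwise; combined with the first paragraph, the nonnegative case is complete, and in fact the hypothesis $fK_z^t \in L_t^2$ is superfluous there. For $f \in \operatorname{BMO}(\mathbb C^n)$, where $f$ need not have a sign, I would instead exploit that $g_s - g_t$ has vanishing integral and center: for $0 < s < t$,
\begin{align*}
\widetilde{f}^{(s)}(z) - \widetilde{f}^{(t)}(z) &= \int_{\mathbb C^n} \bigl(f(w) - \widetilde{f}^{(t)}(z)\bigr)\,(g_s - g_t)(w - z)\,dw,
\end{align*}
so that, using $g_s \le (t/s)^n g_t$ and then Cauchy--Schwarz together with $\int g_t = 1$,
\begin{align*}
|\widetilde{f}^{(s)}(z) - \widetilde{f}^{(t)}(z)| &\le \Bigl(1 + (t/s)^n\Bigr) \int_{\mathbb C^n} |f(w) - \widetilde{f}^{(t)}(z)|\, g_t(w - z)\,dw \\
&\le \Bigl(1 + (t/s)^n\Bigr)\bigl(\widetilde{|f|^2}^{(t)}(z) - |\widetilde{f}^{(t)}(z)|^2\bigr)^{1/2}.
\end{align*}
Taking the supremum over $z$, the right-hand side is a constant times $\sup_z\bigl(\widetilde{|f|^2}^{(t)}(z) - |\widetilde{f}^{(t)}(z)|^2\bigr)^{1/2}$, which is one of the standard descriptions of the $\operatorname{BMO}$ seminorm of $f$ at scale $t$ and is finite for every $t > 0$ precisely because the Fock-space $\operatorname{BMO}$ class does not depend on that scale (see \cite{Zhu2012}). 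Hence $\|\widetilde{f}^{(s)} - \widetilde{f}^{(t)}\|_\infty < \infty$ for all $0 < s < t$, which together with the first paragraph yields the asserted equivalence.

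The remaining points are bookkeeping about finiteness of the Berezin integrals appearing above. From $fK_z^t \in L_t^2$ one gets $\widetilde{|f|^2}^{(t)}(z) < \infty$ for every $z$, and the Gaussian domination propagates finiteness to every parameter $\le t$, which is all the downward argument uses; for the upward direction with $f \ge 0$ one checks $\widetilde{f}^{(s)}(z) = \int g_{s-t_0}(v)\widetilde{f}^{(t_0)}(z+v)\,dv < \infty$ by Tonelli, and for $f \in \operatorname{BMO}$ one uses that $\operatorname{BMO}$ functions have local $L^2$-averages growing at most logarithmically, so $\widetilde{|f|^2}^{(s)}(z) < \infty$ for all $s, z$.

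The part I expect to be the genuine obstacle — in the sense that it cannot be done by hand within this note and must be quoted from the literature — is the parameter-independence of Fock-space $\operatorname{BMO}$ used for the $\operatorname{BMO}$ symbols. Everything else, and in particular the entire nonnegative case, is a one-line consequence of the pointwise comparison $g_s \le (t/s)^n g_t$.
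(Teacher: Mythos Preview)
The paper does not actually prove this lemma: it is introduced with ``For such symbols, we have the following well-known fact'' and stated without argument. So there is no paper proof to compare against; your proposal is an original, self-contained justification of a result the paper merely cites.

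Your argument is correct. The upward direction via the heat semigroup $\widetilde{f}^{(s)}=g_{s-t_0}\ast\widetilde{f}^{(t_0)}$ and Young's inequality is standard. For the downward direction, the pointwise domination $g_s\le (t/s)^n g_t$ for $0<s<t$ handles the case $f\ge 0$ in one line, and for $f\in\operatorname{BMO}$ your subtraction of the constant $\widetilde{f}^{(t)}(z)$ followed by Cauchy--Schwarz against the probability measure $g_t(\cdot-z)\,dw$ yields exactly the mean-oscillation quantity $\bigl(\widetilde{|f|^2}^{(t)}(z)-|\widetilde{f}^{(t)}(z)|^2\bigr)^{1/2}$, whose supremum is the Fock-space $\operatorname{BMO}$ seminorm at scale $t$. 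You are right that the one external input is the $t$-independence of that seminorm (equivalently, of the class $\operatorname{BMO}(\mathbb C^n)$), which is in the standard reference you cite. Taking $t=t_0$ in your estimate then gives $\|\widetilde{f}^{(s)}\|_\infty\le\|\widetilde{f}^{(t_0)}\|_\infty+C_{s,t_0}\|f\|_{\operatorname{BMO}}<\infty$, as desired.

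Two small remarks. First, in the $\operatorname{BMO}$ step you should make explicit that you apply the estimate with $t=t_0$, the parameter at which boundedness is assumed; as written it reads as if you use an arbitrary $t$. Second, your parenthetical about the hypothesis $fK_z^t\in L_t^2$ being superfluous in the nonnegative case is fine as an aside, but the hypothesis is what guarantees a priori that the integrals defining $\widetilde{f}^{(s)}$ are finite rather than possibly $+\infty$; your domination argument would still go through formally, but it is cleaner to keep the hypothesis and not dwell on this.
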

For this class of symbols, we now get the following correspondence statement:
\begin{thm}
Let $f: \mathbb C^n \to \mathbb C$ such that $fK_z^t \in L_t^2$ for any $z \in \mathbb C^n$ and let $\mathcal D_0$ be a closed, $\alpha$-invariant subspace of $\operatorname{BUC}(\mathbb C^n)$. Further, assume that either $f \geq 0$ or $f \in \operatorname{BMO}(\mathbb C^n)$. Then, the following are equivalent:
\begin{enumerate}
\item $T_f^t \in \mathcal T_{lin}^{\mathbb F_t}(\mathcal D_0)$;
\item $\widetilde{f}^{(t)} \in \mathcal D_0$;
\item $\widetilde{f}^{(s)} \in \mathcal D_0$ for any $s > 0$.
\end{enumerate}
\end{thm}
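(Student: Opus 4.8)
The plan is to route the whole equivalence through Theorem~\ref{thm:berger_coburn_correspondence}, the preceding lemma on the symbol class, and one auxiliary fact isolated inside the proof of Theorem~\ref{thm:berger_coburn_correspondence}(2): a function $g \in \operatorname{BUC}(\mathbb C^n)$ lies in $\mathcal D_0$ if and only if $\widetilde g^{(u)} \in \mathcal D_0$ for \emph{some} $u > 0$. Two elementary facts will be used repeatedly: the semigroup identity $\widetilde{\widetilde f^{(s)}}^{(u)} = \widetilde f^{(s+u)}$ (Gaussians compose under convolution), and the observation that $\mathcal D_0$, being closed and $\alpha$-invariant, is stable under convolution with every $g_u$, since $g_u \ast h = \int_{\mathbb C^n} g_u(z)\,\alpha_z(h)\,dz$ lies in the closed span of $\{\alpha_z(h) : z \in \mathbb C^n\} \subset \mathcal D_0$.

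The implication (3)$\Rightarrow$(2) is trivial. For (2)$\Rightarrow$(1), I would begin from $\widetilde f^{(t)} \in \mathcal D_0$; in particular $\widetilde f^{(t)}$ is bounded, so the symbol-class lemma ($f \geq 0$ or $f \in \operatorname{BMO}(\mathbb C^n)$) forces $\widetilde f^{(s)}$ to be bounded, hence (being a Gaussian convolution of a bounded function) to lie in $\operatorname{BUC}(\mathbb C^n)$, for \emph{every} $s > 0$. Taking $s = t/4 < t/2$ and writing $\widetilde f^{(t)} = \widetilde{\widetilde f^{(t/4)}}^{(3t/4)}$, the auxiliary fact applied to $g = \widetilde f^{(t/4)}$ with $u = 3t/4$ gives $\widetilde f^{(t/4)} \in \mathcal D_0$. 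Since $t/4 \in (0, t/2)$, Theorem~\ref{thm:berger_coburn_correspondence}(1) then yields $T_f^t \in \mathcal T_{lin}^{\mathbb F_t}(\mathcal D_0)$.

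For (1)$\Rightarrow$(3), Theorem~\ref{thm:berger_coburn_correspondence}(2) supplies $\widetilde f^{(s_0)} \in \mathcal D_0$ for a fixed $s_0 \in (t/2, 2t)$; as above, boundedness of $\widetilde f^{(s_0)}$ together with the symbol-class lemma makes every $\widetilde f^{(s)}$ bounded, hence a member of $\operatorname{BUC}(\mathbb C^n)$. Now fix an arbitrary $s > 0$. If $s \geq s_0$, then $\widetilde f^{(s)} = g_{s-s_0} \ast \widetilde f^{(s_0)} \in \mathcal D_0$ by the convolution-stability of $\mathcal D_0$. If $s < s_0$, then $\widetilde{\widetilde f^{(s)}}^{(s_0 - s)} = \widetilde f^{(s_0)} \in \mathcal D_0$, and since $\widetilde f^{(s)} \in \operatorname{BUC}(\mathbb C^n)$, the auxiliary fact with $u = s_0 - s$ gives $\widetilde f^{(s)} \in \mathcal D_0$. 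This establishes (3) and closes the cycle (1)$\Rightarrow$(3)$\Rightarrow$(2)$\Rightarrow$(1).

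I expect the one genuinely delicate ingredient to be the symbol-class hypothesis: it is precisely what lets boundedness at a single scale $s_0$ propagate to all scales, which must be known before one can even speak of membership in $\operatorname{BUC}(\mathbb C^n) \supset \mathcal D_0$ at smaller scales — without it, $\widetilde f^{(s)}$ could fail to be bounded for small $s$, breaking the argument. The secondary obstacle is the ``downward'' passage $s < s_0$, where Gaussian convolution cannot be inverted and one is forced to invoke the characterization $g \in \mathcal D_0 \iff \widetilde g^{(u)} \in \mathcal D_0$ for some $u>0$ rather than argue by a direct convolution.
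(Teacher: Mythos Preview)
Your proof is correct and follows essentially the same approach as the paper: both arguments hinge on the symbol-class lemma to propagate boundedness (hence $\operatorname{BUC}$-membership) of $\widetilde f^{(s)}$ across all scales, use convolution stability of $\mathcal D_0$ for the upward direction, the auxiliary characterization $g\in\mathcal D_0 \Leftrightarrow \widetilde g^{(u)}\in\mathcal D_0$ for the downward direction, and close with Theorem~\ref{thm:berger_coburn_correspondence}. The only cosmetic difference is that the paper first proves (2)$\Leftrightarrow$(3) and then invokes Theorem~\ref{thm:berger_coburn_correspondence} for (1), whereas you run the cycle (1)$\Rightarrow$(3)$\Rightarrow$(2)$\Rightarrow$(1).
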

\begin{proof}
As we have already mentioned, the property of $\widetilde{f}^{(s)}$ being bounded is independent of $s> 0$ for such symbols. Therefore the property of $\widetilde{f}^{(s)}$ being in $\operatorname{BUC}(\mathbb C^n)$ is independent of $s > 0$ (simply write $\widetilde{f}^{(s)} = g_\varepsilon \ast \widetilde{f}^{(s-\varepsilon)}$). Now, $\widetilde{f}^{(t)} \in \mathcal D_0$ clearly implies $g_{s-t} \ast \widetilde{f}^{(t)} = \widetilde{f}^{(s)} \in \mathcal D_0$ for $s > t$. For $s < t$, we have seen in the proof of Theorem \ref{thm:berger_coburn_correspondence} that $\widetilde{f}^{(s)} \in \mathcal D_0$ if and only if $\widetilde{f}^{(s+t)} \in \mathcal D_0$. Summarizing, membership of $\widetilde{f}^{(s)}$ in $\mathcal D_0$ is independent of $s>0$. Thus, an application of Theorem \ref{thm:berger_coburn_correspondence} finishes the proof.
\end{proof}

\section{Correspondence of algebras}
In \cite{Fulsche2020}, we started the investigation of the following question: Let $\mathcal D_0$ be a closed, $\alpha$-invariant subspace of $\operatorname{BUC}(\mathbb C^n)$. Is there a relation between $\mathcal D_0$ being an algebra and $\mathcal T_{lin}^{\mathbb F_t}(\mathcal D_0)$ being an algebra?

As the author already noted in \cite[Corollary 3.9]{Fulsche2020}, the property of $\mathcal T_{lin}^{\mathbb F_t}(\mathcal D_0)$ being an algebra is independent of the particular choice of $\mathbb F_t$ (there, it was stated for $\mathbb F_t = F_t^p$ with $1 < p < \infty$, but this is true for every possible $\mathbb F_t$ by the same proof). Our investigations led to \cite[Theorem 3.13]{Fulsche2020}, which we shall not formulate here. Instead, we want to note that parts of the proof of said theorem needed some additional structure on $\mathcal D_0$, namely that it is self-adjoint and $\beta$-invariant ($\beta$ is the operator $\beta f(z) = f(-z)$). The assumption of $\beta$-invariance can be overcome within the frame of our initial proof, but our proof strictly hinges on the self-adjointness of $\mathcal D_0$. Nevertheless, in their recent preprint \cite{Wu_Zhao2021}, S.\ Wu and X.\ Zhao could give a better statement and proof in this particular direction, not using self-adjointness. Combining now all the available results, we arrive at the following theorem:
\begin{thm}[{\cite{Fulsche2020, Wu_Zhao2021}}]\label{corr:algebras}
Let $\mathcal D_0$ be a closed, $\alpha$-invariant subspace of $\operatorname{BUC}(\mathbb C^n)$. Then, the following are equivalent:
\begin{enumerate}[(1)]
\item $\mathcal D_0$ is an algebra;
\item $\mathcal T_{lin}^{\mathbb F_t}(\mathcal D_0)$ is an algebra for every $\mathbb F_t$ and every $t > 0$;
\item $\mathcal T_{lin}^{\mathbb F_t}(\mathcal D_0)$ is an algebra for some $\mathbb F_t$ and every $t > 0$.
\end{enumerate}
If $\mathcal D_0$ satisfies (1)-(3) above and $\mathcal I_0$ is a closed and $\alpha$-invariant subspace of $\mathcal D_0$, then the following are equivalent:
\begin{enumerate}[(1*)]
\item $\mathcal I_0$ is an ideal of $\mathcal D_0$;
\item $\mathcal T_{lin}^{\mathbb F_t}(\mathcal I_0)$ is a (left- or right-)ideal of $\mathcal T_{lin}^{\mathbb F_t}(\mathcal D_0)$ for every $\mathbb F_t$ and every $t > 0$;
\item $\mathcal T_{lin}^{\mathbb F:t}(\mathcal I_0)$ is a (left- or right-)ideal of $\mathcal T_{lin}^{\mathbb F_t}(\mathcal D_0)$ for some $\mathbb F_t$ and every $t > 0$.
\end{enumerate}
\end{thm}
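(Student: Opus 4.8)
The plan is to reduce every implication to one computation: a closed formula for the Berezin transform of a product of two Toeplitz operators that is uniform in the base point. Starting from $\widetilde{T_f^t T_g^t}(z) = \langle T_f^t T_g^t k_z^t, k_z^t\rangle_t$, using $k_z^t = W_z^t \mathbf 1$ together with $W_{-z}^t T_f^t T_g^t W_z^t = T_{\alpha_{-z} f}^t T_{\alpha_{-z} g}^t$, and writing out $T_g^t \mathbf 1 = P_t g$, one gets for any $f, g \in L^\infty(\mathbb C^n)$
\begin{align*}
\widetilde{T_f^t T_g^t}(z) &= \int_{\mathbb C^n \times \mathbb C^n} f(w+z)\, g(u+z)\, d\nu_t(u,w), \\
d\nu_t(u,w) &= \frac{1}{(\pi t)^{2n}}\, e^{\frac{w \cdot \overline u}{t} - \frac{|u|^2}{t} - \frac{|w|^2}{t}}\, du\, dw.
\end{align*}
Here $\nu_t$ is a fixed finite complex measure on $\mathbb C^{2n}$; one checks that the modulus of its density equals $e^{-\frac{1}{2t}(|u-w|^2 + |u|^2 + |w|^2)}$, so that $|\nu_t|(\mathbb C^{2n}) \le C_n < \infty$, that $\nu_t(\mathbb C^{2n}) = 1$, and that $|\nu_t|$ concentrates at the origin as $t \to 0$. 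Together with the Correspondence Theorem — applicable since $T_f^t T_g^t \in \mathcal C_1(\mathbb F_t)$, because $\mathcal C_1(\mathbb F_t)$ is an algebra containing all Toeplitz operators by Corollary \ref{cor:toep_c1} — this is all one needs; note also that both the formula and the membership $T_f^t T_g^t \in \mathcal C_1(\mathbb F_t)$ are insensitive to which $\mathbb F_t$ is chosen.

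For $(1) \Rightarrow (2)$ I would fix $t > 0$ and $\mathbb F_t$ and take $f, g \in \mathcal D_0$. Truncating $\nu_t$ to a large ball and then forming a Riemann sum over a fine measurable partition $\{R_k\}$ — with mesh chosen from the uniform continuity of $f$ and $g$ — approximates $\widetilde{T_f^t T_g^t}$ \emph{uniformly in $z$} by finite sums $\sum_k \nu_t(R_k)\, (\alpha_{-w_k} f)\, (\alpha_{-u_k} g)$; uniformity holds because $f, g$ are bounded and uniformly continuous and $|\nu_t|$ is finite. Since $\mathcal D_0$ is $\alpha$-invariant and an algebra, each such sum lies in $\mathcal D_0$, and $\mathcal D_0$ is closed, so $\widetilde{T_f^t T_g^t} \in \mathcal D_0$; the Correspondence Theorem then gives $T_f^t T_g^t \in \mathcal T_{lin}^{\mathbb F_t}(\mathcal D_0)$, and bilinearity plus norm-continuity of multiplication extends this to the whole closed space $\mathcal T_{lin}^{\mathbb F_t}(\mathcal D_0)$. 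The implication $(2) \Rightarrow (3)$ is trivial. For $(3) \Rightarrow (1)$ I would use that, for every $t > 0$, $T_f^t T_g^t \in \mathcal T_{lin}^{\mathbb F_t}(\mathcal D_0)$, hence $\widetilde{T_f^t T_g^t} \in \mathcal D_0$ by the Correspondence Theorem; since $\nu_t(\mathbb C^{2n}) = 1$ and $|\nu_t|$ concentrates at the origin, the formula and uniform continuity of $f, g$ yield $\| \widetilde{T_f^t T_g^t} - fg\|_\infty \to 0$ as $t \to 0$, so $fg \in \mathcal D_0$.

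The ideal statements run along the same lines with $\mathcal I_0$ in place of one factor. For $(1^*) \Rightarrow (2^*)$, with $g \in \mathcal D_0$ and $h \in \mathcal I_0$, the Riemann-sum terms for $\widetilde{T_g^t T_h^t}$ (or $\widetilde{T_h^t T_g^t}$) are $\nu_t(R_k)(\alpha_{-w_k} g)(\alpha_{-u_k} h)$, which lie in $\mathcal I_0$ because $\mathcal I_0$ is $\alpha$-invariant and an ideal of $\mathcal D_0$; hence $\widetilde{T_g^t T_h^t} \in \mathcal I_0$ and $T_g^t T_h^t \in \mathcal T_{lin}^{\mathbb F_t}(\mathcal I_0)$, showing $\mathcal T_{lin}^{\mathbb F_t}(\mathcal I_0)$ is an ideal — two-sided, as $\mathcal D_0$ is commutative — of $\mathcal T_{lin}^{\mathbb F_t}(\mathcal D_0)$. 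For $(3^*) \Rightarrow (1^*)$, if $\mathcal T_{lin}^{\mathbb F_t}(\mathcal I_0)$ is a left (or right) ideal for every $t$, then $T_g^t T_h^t$ (resp. $T_h^t T_g^t$) lies in $\mathcal T_{lin}^{\mathbb F_t}(\mathcal I_0)$, so $\widetilde{T_g^t T_h^t} \in \mathcal I_0$, and letting $t \to 0$ gives $gh \in \mathcal I_0$; commutativity of $\mathcal D_0$ upgrades this to two-sidedness.

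The hard part is the first paragraph: deriving the kernel formula and verifying the three properties of $\nu_t$ (finite total variation, unit total mass, concentration), and then making the Riemann-sum approximation genuinely uniform in $z$ using only boundedness and uniform continuity of the symbols. This is precisely the step where the proof in \cite{Fulsche2020}, routed through the operator-convolution formalism, needed the extra hypotheses that $\mathcal D_0$ be self-adjoint and $\beta$-invariant; handling $\widetilde{T_f^t T_g^t}$ directly — in the spirit of \cite{Wu_Zhao2021} — dispenses with any such structure. Everything else is bookkeeping, and the uniformity of the construction over the various $\mathbb F_t$ follows from the $\mathbb F_t$-independence of the formula for $\widetilde{T_f^t T_g^t}$, of the membership $T_f^t T_g^t \in \mathcal C_1(\mathbb F_t)$, and of the validity of the Correspondence Theorem.
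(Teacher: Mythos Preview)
Your proposal is correct. Note, however, that the paper does not actually give its own proof of this theorem: it is recorded there as a known result, assembled from \cite{Fulsche2020} (which proved the equivalences under the extra hypotheses that $\mathcal D_0$ be self-adjoint and $\beta$-invariant, via the operator-convolution formalism) together with the subsequent work of Wu and Zhao \cite{Wu_Zhao2021} (which removed those hypotheses). Your argument is precisely an implementation of the Wu--Zhao idea the paper alludes to: express $\widetilde{T_f^t T_g^t}(z)$ as a single integral against a fixed finite complex measure $\nu_t$ on $\mathbb C^{2n}$, then approximate uniformly in $z$ by Riemann sums built from products $(\alpha_{-w_k} f)(\alpha_{-u_k} g)$. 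The three properties of $\nu_t$ you isolate --- total variation bounded by a constant independent of $t$, $\nu_t(\mathbb C^{2n}) = 1$, and concentration of $|\nu_t|$ at the origin as $t \to 0$ --- are exactly what is required, and your derivations of them are sound (the scaling $u = \sqrt{t}\,u'$, $w = \sqrt{t}\,w'$ makes the $t$-uniformity of $|\nu_t|(\mathbb C^{2n})$ transparent). The appeal to the Correspondence Theorem is legitimate because $T_f^t T_g^t \in \mathcal C_1(\mathbb F_t)$, and the observation that the entire computation is insensitive to which $\mathbb F_t$ is used is what yields the uniformity over $\mathbb F_t$. The ideal statements follow by the same mechanism, as you indicate.

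In short, there is no proof in the paper to compare against; your sketch reconstructs the route through \cite{Wu_Zhao2021} that the paper cites, and does so correctly. The only remark worth adding is that your density argument for $(1)\Rightarrow(2)$ --- extending from $T_f^t T_g^t$ to arbitrary products $AB$ with $A, B \in \mathcal T_{lin}^{\mathbb F_t}(\mathcal D_0)$ --- indeed goes through because multiplication in $\mathcal L(\mathbb F_t)$ is jointly continuous on bounded sets and $\mathcal T_{lin}^{\mathbb F_t}(\mathcal D_0)$ is by definition the closed linear span of the $T_f^t$.
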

We will add another contribution to this particular question. For this, we introduce the notation $\delta_\lambda f(z) = f(\lambda z)$ for any $\lambda > 0$. It is easy to see that $\delta_\lambda$ leaves $\operatorname{BUC}(\mathbb C^n)$ invariant for every $\lambda > 0$. For typographic reasons, we will reserve $\delta$ for the action on symbols and write $C_\lambda f(z) = f(\lambda z)$ for the operator on Fock space elements. Then, \cite[Lemma 2.6]{Zhu2012} shows that the linear operator $C_\lambda \in \mathcal L(\mathbb F_t, \mathbb F_{t/\lambda^2})$ is an isometric isomorphism.

The following statement is certainly well-known. Nevertheless, it seems that it is not contained in the standard reference \cite{Zhu2012}, hence we provide the simple proof.
\begin{lem}\label{lem:dil}
\begin{enumerate}[(1)]
\item Let $f \in L^\infty(\mathbb C^n)$. Then, holds true that
\begin{align*}
C_{1/\lambda} T_f^t C_\lambda = T_{\delta_{1/\lambda} f}^{t\lambda^2}.
\end{align*}
\item Let $A \in \mathcal L(\mathbb F_t)$ and $\lambda > 0$. Then,
\begin{align*}
\mathcal B(C_{1/\lambda} A C_\lambda) = \delta_{1/\lambda}(\mathcal B(A)).
\end{align*}
\end{enumerate}
\end{lem}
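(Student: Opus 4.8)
The strategy is purely computational: both statements should reduce to unwinding the definitions of $C_\lambda$, $\delta_\lambda$, the Toeplitz operator, the Bergman projection $P_t$, and the normalized reproducing kernels, together with a change of variables in the defining integrals. For part 1), I would start from the integral formula for $P_t$ and compute $C_{1/\lambda} T_f^t C_\lambda g$ directly on an arbitrary $g \in \mathbb F_{t\lambda^2}$. Writing out $(C_{1/\lambda} T_f^t C_\lambda g)(z) = (T_f^t(C_\lambda g))(z/\lambda) = P_t(f \cdot (C_\lambda g))(z/\lambda)$, one expands $P_t$ as the Gaussian integral against $e^{(z/\lambda)\cdot\overline w/t}$, substitutes $w = \lambda u$, and tracks how the Gaussian weight $e^{-|w|^2/t}$ becomes $e^{-|u|^2/(t/\lambda^2)\cdot(1/\lambda^2)\cdot\lambda^2}$... more carefully, $e^{-|\lambda u|^2/t} = e^{-|u|^2/(t/\lambda^2)}$, the reproducing-kernel exponent becomes $e^{(z/\lambda)\cdot\overline{\lambda u}/t} = e^{z\cdot\overline u/(t\lambda^2)}$, the Jacobian $d w = \lambda^{2n} du$ cancels against the $(\pi t)^{-n}$ versus $(\pi t\lambda^2)^{-n}$ mismatch, and $f(\lambda u) = (\delta_\lambda f)(u)$ appears. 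One recognizes the result as $P_{t\lambda^2}\big((\delta_\lambda f)\cdot g\big)(z) = T_{\delta_\lambda f}^{t\lambda^2} g(z)$ — so in fact one should double check whether the conclusion is $T_{\delta_{1/\lambda}f}^{t\lambda^2}$ or $T_{\delta_\lambda f}^{t\lambda^2}$ and adjust the substitution direction accordingly; the bookkeeping of which power of $\lambda$ goes where is the only real place to slip.

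For part 2), I would simply apply the Berezin transform (denoted $\mathcal B$) to the operator $C_{1/\lambda} A C_\lambda \in \mathcal L(\mathbb F_{t\lambda^2})$, i.e. compute $\mathcal B(C_{1/\lambda}AC_\lambda)(z) = \langle C_{1/\lambda}AC_\lambda k_z^{t\lambda^2}, k_z^{t\lambda^2}\rangle_{t\lambda^2}$. The key sub-step is the identity $C_\lambda k_z^{t\lambda^2} = c\, k_{\lambda z}^t$ up to a unimodular constant (or exactly, depending on normalization conventions): this follows from $k_z^t(w) = e^{w\cdot\overline z/t - |z|^2/(2t)}$ by plugging in $\lambda w$ and comparing parameters. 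Since $C_\lambda$ is an isometric isomorphism $\mathbb F_{t\lambda^2}\to\mathbb F_t$ and its inverse is $C_{1/\lambda}$ (with the adjoint/isometry relation converting the $F_{t\lambda^2}^2$ pairing into the $F_t^2$ pairing), the inner product collapses to $\langle A k_{\lambda z}^t, k_{\lambda z}^t\rangle_t = \mathcal B(A)(\lambda z) = (\delta_\lambda \mathcal B(A))(z)$. Again one must be careful whether the final answer is $\delta_\lambda$ or $\delta_{1/\lambda}$; the statement says $\delta_{1/\lambda}$, so the kernel identity must come out as $C_\lambda k_z^{t\lambda^2}$ being proportional to $k_{z/\lambda}^t$ — I would verify this carefully from the explicit exponentials rather than trust a sign.

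The main (and essentially only) obstacle is the constant-chasing: keeping straight the three interacting normalizations — the $(\pi t)^{-n}$ in $P_t$, the $|z|^2/(2t)$ in $k_z^t$, and the change of the Fock parameter $t \mapsto t\lambda^2$ under $C_\lambda$ — and confirming that all stray powers of $\lambda$ cancel so that the clean identities in the statement hold on the nose. Since $C_\lambda$ is asserted (via \cite[Lemma 2.6]{Zhu2012}) to be an isometric isomorphism, no boundedness or density subtleties arise, and the computation for part 1) on polynomials (which are dense and on which $T_f^t$ acts as an honest integral) extends to all of $\mathbb F_{t\lambda^2}$ by continuity if one prefers to avoid Fubini on the full space. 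I expect the whole proof to be under half a page.
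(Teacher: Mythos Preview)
Your plan is correct and follows essentially the same route as the paper: a direct change of variables $u=\lambda w$ in the integral kernel of $P_t$ for part 1), and the kernel identity $C_\lambda K_z^{t\lambda^2}=K_{z/\lambda}^t$ together with the isometry of $C_\lambda$ for part 2). Your initial guesses on the substitution direction and on whether $k_{\lambda z}^t$ or $k_{z/\lambda}^t$ appears are off, but you already flagged exactly these points as the places to verify from the explicit exponentials, and once done correctly the computation matches the paper line for line.
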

Here, we write the Berezin transform $\widetilde{A}$ for typographic reasons as $\mathcal B(A)$.
\begin{proof}
\begin{enumerate}[(1)]
\item The equality follows by direct computations:
\begin{align*}
C_{1/\lambda} T_f^t C_\lambda g(z) &= T_f^t C_\lambda g(z/\lambda)\\
&= \left( \frac{1}{\pi t} \right)^n \int_{\mathbb C^n} g(\lambda w) f(w) e^{\frac{z/\lambda \cdot \overline{w}}{t}} e^{-\frac{|w|^2}{t}}~dw
\intertext{We use the substitution $\lambda w = u, dw = \frac{1}{\lambda^2}du$:}
&= \left( \frac{1}{\pi t \lambda^2} \right)^n \int_{\mathbb C^n} g(u) f\left (\frac{u}{\lambda} \right ) \exp\left ( \frac{z \cdot \overline{u}}{t \lambda^2}-\frac{|u|^2}{t\lambda^2} \right )~du\\
&= T_{\delta_{1/\lambda} f}^{t\lambda^2} g(z)
\end{align*}
\item Recall that $C_{1/\lambda} A C_\lambda \in \mathcal L(\mathbb F_{t\lambda^2})$. Thus,
\begin{align*}
\mathcal B(C_{1/\lambda} A C_\lambda)(z) &= \langle C_{1/\lambda} A C_\lambda k_z^{t\lambda^2}, k_z^{t\lambda^2}\rangle_{F_{t\lambda^2}^2}\\
&= e^{-\frac{|z|^2}{t\lambda^2}} \langle A C_\lambda K_z^{t\lambda^2}, C_\lambda K_z^{t\lambda^2}\rangle_{F_t^2}.
\intertext{Using $C_\lambda K_z^{t\lambda^2} = K_{z/\lambda}^t$, which is readily verified, we obtain}
&= e^{-\frac{|z|^2}{t\lambda^2}} \langle A K_{z/\lambda}^t, K_{z/\lambda}^t\rangle_{F_t^2}\\
&= e^{-\frac{|z|^2}{t\lambda^2}} e^{\frac{|z|^2}{t\lambda^2}} \mathcal B(A)(z/\lambda)\\
&= \delta_{1/\lambda} \mathcal B(A)(z).\qedhere
\end{align*}
\end{enumerate}
\end{proof}
Adding $\delta$-invariance of $\mathcal D_0$, we can now fully characterize the property of $\mathcal T_{lin}^{\mathbb F_t}(\mathcal D_0)$ being an algebra. We say that a subset of $\operatorname{BUC}(\mathbb C^n)$ is $\delta$-invariant if it is invariant under $\delta_\lambda$ for every $\lambda > 0$.
\begin{thm}
Let $\mathcal D_0 \subset \operatorname{BUC}(\mathbb C^n)$ be $\alpha$-invariant and $\delta$-invariant. Then, the following are equivalent:
\begin{enumerate}[(1)]
\item $\mathcal D_0$ is a Banach algebra;
\item $\mathcal T_{lin}^{\mathbb F_t}(\mathcal D_0)$ is a Banach algebra for every $t > 0$;
\item $\mathcal T_{lin}^{\mathbb F_{t_0}}(\mathcal D_0)$ is a Banach algebra for one $t_0 > 0$.
\end{enumerate}
If $\mathcal D_0$ satisfies (1)-(3) and $\mathcal I_0 \subset \mathcal D_0$ is closed and both $\alpha$ and $\delta$-invariant, then the following are equivalent:
\begin{enumerate}[(1*)]
\item $\mathcal I_0$ is an ideal of $\mathcal D_0$;
\item $\mathcal T_{lin}^{\mathbb F_t}(\mathcal I_0)$ is a (left- or right-)ideal of $\mathcal T_{lin}^{\mathbb F_t}(\mathcal D_0)$ for every $t > 0$;
\item $\mathcal T_{lin}^{\mathbb F_{t_0}}(\mathcal I_0)$ is a (left- or right-)ideal of $\mathcal T_{lin}^{\mathbb F_{t_0}}(\mathcal D_0)$ for some $t_0 > 0$.
\end{enumerate}
\end{thm}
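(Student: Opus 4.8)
The plan is to derive this theorem from the already-established Theorem \ref{corr:algebras} by exploiting the dilation identities of Lemma \ref{lem:dil}; the only genuinely new point is that, under the additional hypothesis of $\delta$-invariance, it suffices to test the algebra property for a \emph{single} value $t_0 > 0$ rather than for all $t > 0$ simultaneously. Note first that, since $\mathcal D_0$ is closed and $\mathcal T_{lin}^{\mathbb F_t}(\mathcal D_0)$ is closed by definition, ``algebra'' and ``Banach algebra'' mean the same thing for both objects, so I will not distinguish them. The implication $(1) \Rightarrow (2)$ is contained in Theorem \ref{corr:algebras}, and $(2) \Rightarrow (3)$ is trivial; it remains to prove $(3) \Rightarrow (1)$.

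So assume $\mathcal T_{lin}^{\mathbb F_{t_0}}(\mathcal D_0)$ is a Banach algebra for some $t_0 > 0$. Fix $\lambda > 0$ and consider the conjugation $\Phi_\lambda(A) := C_{1/\lambda} A C_\lambda$. Since $C_\lambda \colon \mathbb F_{t_0\lambda^2} \to \mathbb F_{t_0}$ is an isometric isomorphism with inverse $C_{1/\lambda}$, the map $\Phi_\lambda$ is an isometric algebra isomorphism of $\mathcal L(\mathbb F_{t_0})$ onto $\mathcal L(\mathbb F_{t_0\lambda^2})$. By Lemma \ref{lem:dil}(1), $\Phi_\lambda(T_f^{t_0}) = T_{\delta_{1/\lambda}f}^{t_0\lambda^2}$ for every $f \in L^\infty(\mathbb C^n)$, and since $\Phi_\lambda$ is a linear homeomorphism it carries closed linear spans to closed linear spans. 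Hence
\begin{align*}
\Phi_\lambda\bigl(\mathcal T_{lin}^{\mathbb F_{t_0}}(\mathcal D_0)\bigr) = \mathcal T_{lin}^{\mathbb F_{t_0\lambda^2}}(\delta_{1/\lambda}\mathcal D_0) = \mathcal T_{lin}^{\mathbb F_{t_0\lambda^2}}(\mathcal D_0),
\end{align*}
where the last equality uses $\delta$-invariance of $\mathcal D_0$. As $\Phi_\lambda$ is an algebra isomorphism and the left-hand side is a Banach algebra, so is $\mathcal T_{lin}^{\mathbb F_{t_0\lambda^2}}(\mathcal D_0)$. Letting $\lambda$ range over $(0,\infty)$, the parameter $t_0\lambda^2$ ranges over all of $(0,\infty)$, so $\mathcal T_{lin}^{\mathbb F_t}(\mathcal D_0)$ is a Banach algebra for this (fixed type of) Fock space and every $t > 0$. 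Now condition (3) of Theorem \ref{corr:algebras} applies and yields that $\mathcal D_0$ is an algebra, proving $(3) \Rightarrow (1)$.

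The statement about ideals is proven in exactly the same manner: $(1^*) \Rightarrow (2^*)$ is contained in Theorem \ref{corr:algebras}, $(2^*) \Rightarrow (3^*)$ is trivial, and for $(3^*) \Rightarrow (1^*)$ one applies $\Phi_\lambda$ to the one-sided ideal $\mathcal T_{lin}^{\mathbb F_{t_0}}(\mathcal I_0) \subset \mathcal T_{lin}^{\mathbb F_{t_0}}(\mathcal D_0)$ and uses the $\delta$-invariance of both $\mathcal I_0$ and $\mathcal D_0$ together with the fact that an algebra isomorphism preserves the property of being a left- or right-ideal. This gives the ideal property for every $t > 0$, and condition $(3^*)$ of Theorem \ref{corr:algebras} then yields that $\mathcal I_0$ is an ideal of $\mathcal D_0$.

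The argument is essentially bookkeeping once Lemma \ref{lem:dil} is in place; the only step requiring a little care — and the closest thing to a genuine obstacle — is the identity $\Phi_\lambda(\mathcal T_{lin}^{\mathbb F_{t_0}}(\mathcal D_0)) = \mathcal T_{lin}^{\mathbb F_{t_0\lambda^2}}(\mathcal D_0)$, which hinges on the continuity of $\Phi_\lambda$ (so that closures are respected) and on the $\delta$-invariance hypothesis. One should also note that Lemma \ref{lem:dil} and the isometry of $C_\lambda$ are valid for every admissible $\mathbb F_t$, so no Fock space type needs to be excluded here.
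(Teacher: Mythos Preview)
Your proof is correct and follows the same overall strategy as the paper: use the dilation operators $C_\lambda$ from Lemma \ref{lem:dil} to transport the algebra property from the single parameter $t_0$ to every $t = t_0\lambda^2 > 0$, and then invoke Theorem \ref{corr:algebras}. The execution differs slightly, however. The paper works on generators: it computes $\mathcal B(T_{f_1}^{t_0\lambda^2}\cdots T_{f_k}^{t_0\lambda^2}) = \delta_{1/\lambda}\mathcal B(T_{f_1}^{t_0}\cdots T_{f_k}^{t_0})$ via both parts of Lemma \ref{lem:dil}, concludes this lies in $\mathcal D_0$ by the algebra assumption and $\delta$-invariance, and then applies the Correspondence Theorem to place the product in $\mathcal T_{lin}^{\mathbb F_{t_0\lambda^2}}(\mathcal D_0)$. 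You instead argue globally, observing that $\Phi_\lambda = C_{1/\lambda}(\cdot)C_\lambda$ is an isometric algebra isomorphism which, by Lemma \ref{lem:dil}(1) and $\delta$-invariance, carries $\mathcal T_{lin}^{\mathbb F_{t_0}}(\mathcal D_0)$ onto $\mathcal T_{lin}^{\mathbb F_{t_0\lambda^2}}(\mathcal D_0)$; this bypasses the Berezin transform computation and the appeal to the Correspondence Theorem entirely. Your route is a bit more streamlined, while the paper's route makes the role of the Berezin transform and the Correspondence Theorem more explicit; both lead to exactly the same reduction to Theorem \ref{corr:algebras}, and the ideal statement is handled identically in both.
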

\begin{proof}
We show that (3) $\Rightarrow$ (2). Then, Theorem \ref{corr:algebras} will show the equivalence of (1), (2) and (3). Assume $\mathcal T_{lin}^{\mathbb F_{t_0}}(\mathcal D_0)$ is a Banach algebra for some fixed $t_0> 0$. Let $f_1, \dots, f_k \in \mathcal D_0$. Then, 
\begin{align*}
\mathcal B(T_{f_1}^{t_0\lambda^2} \dots T_{f_k}^{t_0\lambda^2}) &= \mathcal B(C_{1/\lambda} T_{f_1}^{t_0} C_\lambda C_{1/\lambda} \dots \dots C_{1/\lambda} T_{f_k}^{t_0} C_\lambda)\\
&= \mathcal B(C_{1/\lambda} T_{f_1}^{t_0} T_{f_2}^{t_0} \dots T_{f_k}^{t_0} C_\lambda)
\intertext{Using Lemma \ref{lem:dil} we get:}
&= \delta_{1/\lambda} \mathcal B(T_{f_1}^{t_0} \dots \mathcal T_{f_k}^{t_0}).
\end{align*}
Since we assumed that $\mathcal T_{lin}^{\mathbb F_{t_0}}(\mathcal D_0)$ is an algebra, we obtain
\begin{align*}
\mathcal B(T_{f_1}^{t_0} \dots \mathcal T_{f_k}^{t_0}) \in \mathcal D_0.
\end{align*}
Since $\mathcal D_0$ is $\delta$-invariant, we get
\begin{align*}
\delta_{1/\lambda} \mathcal B(T_{f_1}^{t_0} \dots \mathcal T_{f_k}^{t_0}) \in \mathcal D_0,
\end{align*}
which is now of course the same as
\begin{align*}
\mathcal B(T_{f_1}^{t_0\lambda^2} \dots T_{f_k}^{t_0\lambda^2}) \in \mathcal D_0.
\end{align*}
By the Correspondence Theorem, 
\begin{align*}
T_{f_1}^{t_0\lambda^2} \dots T_{f_k}^{t_0\lambda^2} \in \mathcal T_{lin}^{\mathbb F_{t_0\lambda^2}}(\mathcal D_0)
\end{align*}
for every $\lambda > 0$. Hence, $\mathcal T_{lin}^{\mathbb F_{s}}(\mathcal D_0)$ is a Banach algebra for every $s > 0$.

Using literally the same reasoning, one proves that (3*) implies (2*) and then concludes the proof.
\end{proof}

\section{On the Fredholm property}

As is well-known by now, the Fredholm property of operators from $\mathcal T^{\mathbb F_t}$ can be characterized in terms of \emph{limit operators}, at least for $F_t^p$ with $1 < p < \infty$ \cite{Fulsche_Hagger}. Unfortunately, the method of proof from \cite{Fulsche_Hagger} has an important drawback: It does not carry over to the non-reflexive cases. We will now discuss another approach to the problem, which goes through a better understanding of the limit structures, i.e.\ the \emph{algebra of limit operators}.

We want to start by briefly recalling the definition of limit functions and limit operators. We note that theory of limit operators has a long history on sequence spaces, see e.g.~\cite{Lindner2006}. The moral ancestors of the limit operators we use were first considered in \cite{Suarez2004} for the Bergman space over the complex ball and in \cite{Bauer_Isralowitz2012} over (reflexive) Fock spaces. Since the limit operators are really at the heart of the Fredholm theory, we describe them in some detail.

By $\mathcal M(\operatorname{BUC})$ we denote the maximal ideal space of the unital $C^\ast$ algebra $\operatorname{BUC}(\mathbb C^n)$. As is well-known, maximal ideal spaces of unital $C^\ast$ subalgebras of $C_b(\mathbb C^n)$ (the bounded continuous functions), which separate the points of $\mathbb C^n$ and contain $C_0(\mathbb C^n)$, are in 1:1 correspondence with compactifications of $\mathbb C^n$. This can be achieved by mapping each $z \in \mathbb C^n$ to the functional of point evaluation $\delta_z$. Then, $\{ \delta_z: ~z \in \mathbb C^n\}$ forms a dense subspace of $\mathcal M(\operatorname{BUC})$. 

In particular, any $f \in \operatorname{BUC}(\mathbb C^n)$ can be considered as a function $\Gamma(f) \in C(\mathcal M(\operatorname{BUC}))$ ($\Gamma$ being the Gelfand transform). In an abuse of notation, we will not distinguish between $f$ and $\Gamma(f)$ (identifying $f(z)$ with $\Gamma(f)(\delta_z)$) and will also write $f(x) := \Gamma(f)(x)$ for $x \in \mathcal M(\operatorname{BUC})$. For any $x \in \mathcal M(\operatorname{BUC})$ and $z \in \mathbb C^n$ we write
\begin{align*}
f_x(z) := x(\alpha_z(Uf)) = x(f(z - \cdot)).
\end{align*}
Let $(z_\gamma)_\gamma$ be a net in $\mathbb C^n$ converging to $x \in \mathcal M(\operatorname{BUC})$. Then, since $\mathcal M(\operatorname{BUC})$ is considered with the $w^\ast$ topology, we have
\begin{align*}
\alpha_{z_\gamma}(f)(w) = f(w -z_\gamma) = \delta_{z_\gamma}(f(w - \cdot)) \overset{\gamma}{\longrightarrow} f_x(w)
\end{align*}
pointwise and independently of the precise choice of the net $(z_\gamma)_\gamma$. An easy application of the Arzel\`{a}-Ascoli Theorem shows that this convergence is indeed uniform on compact subsets of $\mathbb C^n$ and further $f_x \in \operatorname{BUC}(\mathbb C^n)$ for any $x \in \mathcal M(\operatorname{BUC})$.

Given a net $(z_\gamma)_\gamma$ in $\mathbb C^n$ converging to $x \in \mathcal M(\operatorname{BUC})$, using the fact that $\alpha_z(T_f^t) = T_{\alpha_z(f)}^t$ it is easily seen that 
\begin{align*}
\alpha_{z_\gamma}(T_f^t) \overset{\gamma}{\longrightarrow} T_{f_x}^t
\end{align*}
in strong operator topology, at least for $\mathbb F_t \neq F_t^\infty$: If $q$ a holomorphic polynomial on $\mathbb C^n$, $\alpha_{z_\gamma}(T_f^t)q \to T_{f_x}^t q$ is readily established. Then, use that polynomials are dense and $\alpha_{z_\gamma} (T_f^t)$ is uniformly bounded in operator norm.

From this, it follows that for every $A \in \mathcal C_1(\mathbb F_t)$ and every $x \in \mathcal M(\operatorname{BUC})$, there exist $A_x$ such that $\alpha_{z_\gamma}(A) \to A_x$ in strong operator topology when $z_\gamma \to x$: Simply define $A_x$ as the limit of $T_{(f_k)_x}^t$ where the $f_k$ are such that $T_{f_k}^t \to A$ in operator norm. This all works flawless for $\mathbb F_t \neq F_t^\infty$. For $A \in \mathcal C_1(F_t^\infty)$, define $A_x$ as $((A|_{f_t^\infty})_x)^{\ast \ast}$.

Note that the group action of $\mathbb C^n$ induces an operation on $\mathcal M(\operatorname{BUC})$. Since the symbol $\alpha$ is reserved for the action on functions and operators, we denote this new action by $\tau$ and define it by letting $\tau_z(x)(f) := x(\alpha_z(f))$. Then, $\tau_z(x)$ is clearly again a multiplicative functional, i.e.\ $\tau_z(x) \in \mathcal M(\operatorname{BUC})$. Further, the group action $\beta f(w) = f(-w)$ induces the action $\nu$ on $\mathcal M(\operatorname{BUC})$ in the same way, i.e.\ by letting $\nu x(f) = x(\beta f)$. These group actions on $\mathcal M(\operatorname{BUC})$ are well-behaved in the sense that $\alpha_z(f_x) = f_{\tau_{-z}(x)}$ and $\alpha_z(A_x) = A_{\tau_{-z}(x)}$.

In \cite{Fulsche_Hagger}, we obtained the following characterization of the Fredholm property:
\begin{thm}[{\cite[Theorem 28]{Fulsche_Hagger}}]
Let $\mathbb F_t = F_t^p$ with $1 < p < \infty$ and $t > 0$. Then, $A \in \mathcal T^{\mathbb F_t}$ is Fredholm if and only if for every $x \in \mathcal M(\operatorname{BUC}) \setminus \mathbb C^n$, the limit operator $A_x$ is invertible.
\end{thm}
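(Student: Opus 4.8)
The strategy is to combine the preceding theorem characterising the Fredholm property (``$A\in\mathcal T^{\mathbb F_t}$ is Fredholm if and only if $A$ is invertible in $\mathcal T^{\mathbb F_t}$ modulo $\mathcal T\mathcal K$'') with a careful study of the \emph{Banach algebra of limit operators}. Write $\partial\mathbb C^n:=\mathcal M(\operatorname{BUC})\setminus\mathbb C^n$ and set
\[
\mathcal B:=\overline{\big\{(A_x)_{x\in\partial\mathbb C^n}:A\in\mathcal T^{\mathbb F_t}\big\}}\subseteq\ell^\infty\big(\partial\mathbb C^n,\mathcal L(\mathbb F_t)\big),
\]
a unital Banach algebra under entrywise operations and the supremum norm. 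First I would check that $\Phi(A+\mathcal T\mathcal K):=(A_x)_{x}$ is a well-defined unital homomorphism $\mathcal T^{\mathbb F_t}/\mathcal T\mathcal K\to\mathcal B$: limit operators of elements of $\mathcal T\mathcal K=\mathcal T_{lin}^{\mathbb F_t}(C_0(\mathbb C^n))$ vanish at boundary points (for $f\in C_0(\mathbb C^n)$ one has $f_x\equiv 0$, hence $(T_f^t)_x=T_{f_x}^t=0$, and this passes to closures because $\|(K)_x\|\le\|K\|$), while $(AB)_x=A_xB_x$ for $A,B\in\mathcal C_1(\mathbb F_t)$ follows from the defining strong convergences together with the uniform norm bound $\|\alpha_z(\cdot)\|=\|\cdot\|$.

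The key point is that $\Phi$ is a \emph{topological isomorphism onto $\mathcal B$}. One inequality is free, $\sup_{x}\|A_x\|\le\|A+\mathcal T\mathcal K\|$, since $(A+K)_x=A_x$ for $K\in\mathcal T\mathcal K$; for the reverse estimate $\|A+\mathcal T\mathcal K\|\lesssim\sup_x\|A_x\|$ I would rerun the approximation from the proof of the Correspondence Theorem \ref{Correspondencetheorem}: modulo an error $\|A-g_{t/N}\ast A\|+\tfrac1N\|A\|\to 0$, $A$ is a finite linear combination of translates of $T_{\widetilde A}^t$, so --- translates acting isometrically on $\mathcal L(\mathbb F_t)/\mathcal T\mathcal K$ --- one only has to bound $\|T_f^t+\mathcal T\mathcal K\|$ for $f=\widetilde A\in\operatorname{BUC}(\mathbb C^n)$; and there $\|T_f^t+\mathcal T\mathcal K\|\le\|P_t\|\operatorname{dist}_\infty\!\big(f,C_0(\mathbb C^n)\big)$, with $\operatorname{dist}_\infty(f,C_0(\mathbb C^n))=\operatorname{dist}_\infty(\widetilde f^{(t)},C_0(\mathbb C^n))$ by slow oscillation of $f$, which is $\le\sup_x\|\widetilde{f_x}^{(t)}\|_\infty\le\sup_x\|T_{f_x}^t\|=\sup_x\|(T_f^t)_x\|$ --- the Berger--Coburn estimate \ref{thm:berger_coburn1} absorbing any change of Berezin parameter one wishes to make along the way. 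Since $\Phi$ is also surjective onto $\mathcal B$ by construction, it is a topological isomorphism; in particular $\{(A_x)_x:A\in\mathcal T^{\mathbb F_t}\}$ is already closed.

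Now the theorem itself. If $A$ is Fredholm, choose $B\in\mathcal T^{\mathbb F_t}$ with $AB-\operatorname{Id},BA-\operatorname{Id}\in\mathcal T\mathcal K$; applying $\Phi$ and using $\operatorname{Id}_x=\operatorname{Id}$ gives $A_xB_x=\operatorname{Id}=B_xA_x$, so every $A_x$ is invertible. Conversely, assume every $A_x$ is invertible; writing $A=\lim_kT_{f_k}^t$ with $f_k\in\operatorname{BUC}(\mathbb C^n)$ shows $A_x=\lim_kT_{(f_k)_x}^t\in\mathcal T^{\mathbb F_t}$, whence $A_x^{-1}\in\mathcal T^{\mathbb F_t}$ by inverse closedness (Corollary \ref{cor:inv_closed}). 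By the isomorphism $\Phi$, $A$ is Fredholm precisely when $(A_x)_x$ is invertible \emph{in $\mathcal B$}, so everything reduces to the implication: if every $A_x$ ($x\in\partial\mathbb C^n$) is invertible in $\mathcal L(\mathbb F_t)$, then $(A_x)_x$ is invertible in $\mathcal B$, i.e.\ $\sup_x\|A_x^{-1}\|<\infty$ \emph{and} the section $x\mapsto A_x^{-1}$ again lies in $\mathcal B$. This is the main obstacle: the map $x\mapsto A_x$ is only strong-operator continuous --- not norm continuous, already for $T_f^t$ with $f$ not almost periodic --- and $x\mapsto\|A_x^{-1}\|$ is merely lower semicontinuous, so mere compactness of $\partial\mathbb C^n$ does not suffice. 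The route I would pursue is to exploit that $\mathcal B$ is itself of ``limit-operator type'': the $\mathbb C^n$-action descends to $\mathcal B$ through $\tau$ via $\alpha_z\big((A_x)_x\big)=(A_{\tau_{-z}(x)})_x$, and the limit operators of $(A_x)_x$ along this action are again sections arising from limit operators of $A$. Combining this self-similarity with the collective compactness-at-infinity encoded in the compactness characterisation (Theorem \ref{compactnesschar}) and the Berger--Coburn estimates, one should be able --- in the spirit of the Lindner--Seidel uniform-boundedness theorem for band-dominated operators --- to upgrade pointwise invertibility over $\partial\mathbb C^n$ to uniform invertibility and to identify the inverse section as a limit-operator section, thereby closing the argument. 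The cases $\mathbb F_t=F_t^1,F_t^\infty$ are handled exactly as in the reduction theorem, passing to $f_t^\infty$ resp.\ $F_t^\infty$ via pre-adjoints and bi-adjoints (Lemma \ref{prop:duality}); for $1<p<\infty$ one recovers \cite[Theorem 28]{Fulsche_Hagger}.
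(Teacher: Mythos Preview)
Your overall architecture matches the paper's: establish the Banach algebra isomorphism $\mathcal T^{\mathbb F_t}/\mathcal T\mathcal K\cong\mathfrak{lim}\,\mathcal T^{\mathbb F_t}$, and then reduce Fredholmness to invertibility of the limit-operator section. But two points need to be flagged.

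First, your argument for the reverse estimate $\|A+\mathcal T\mathcal K\|\lesssim\sup_x\|A_x\|$ contains an error. The claimed equality $\operatorname{dist}_\infty(f,C_0)=\operatorname{dist}_\infty(\widetilde f^{(t)},C_0)$ is false for general $f\in\operatorname{BUC}(\mathbb C^n)$: take $f(z)=e^{i\operatorname{Re}z}$ on $\mathbb C$, for which $\widetilde f^{(t)}=e^{-t/4}f$, so the two distances differ by the factor $e^{-t/4}$. The paper does not attempt an independent proof of this inequality; it quotes it from \cite[Theorem 31]{Fulsche_Hagger} (whose proof, unlike some other parts of that paper, goes through verbatim for $p=1$) and then transfers it to $f_t^\infty$ and $F_t^\infty$ by duality and the bounded approximation property.

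Second, and more seriously, the heart of the converse direction is left as a programme rather than a proof. You correctly identify the obstacle --- that pointwise invertibility of $A_x$ must be upgraded to $\sup_x\|A_x^{-1}\|<\infty$ and to membership of $(A_x^{-1})_x$ in $\mathcal B$ --- but ``in the spirit of Lindner--Seidel'' is not an argument. The paper resolves this in two concrete steps. For uniform boundedness of the inverses it again invokes \cite{Fulsche_Hagger}. For membership in the limit algebra it does \emph{not} work inside your $\mathcal B\subset\ell^\infty(\partial\mathbb C^n,\mathcal L(\mathbb F_t))$; instead it axiomatises the range of $\Phi$ as the set of \emph{compatible families of limit operators} (continuity of $x\mapsto\langle\gamma(x)1,1\rangle$, shift covariance $\alpha_z\gamma(x)=\gamma(\tau_{-z}x)$, uniform norm bound, and uniform equicontinuity of $z\mapsto\alpha_z\gamma(x)$) and proves a reconstruction theorem: every such family arises as $(A_x)_x$ for some $A\in\mathcal T^{\mathbb F_t}$. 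One then checks directly, via the second resolvent identity and a Neumann series estimate, that $\gamma(x)=A_x^{-1}$ satisfies these axioms once all $A_x$ are invertible with uniformly bounded inverses. This is the missing idea in your sketch: the ``self-similarity'' you allude to is made precise through the equicontinuity condition and the reconstruction theorem, not through an abstract Lindner--Seidel mechanism.
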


As already mentioned, we aim to generalize this result to any of the spaces $\mathbb F_t$ by different means. In the following, we will usually abbreviate $\mathcal M := \mathcal M(\operatorname{BUC})$ and $\partial \mathbb C^n := \mathcal M \setminus \mathbb C^n$.

\subsection{Some auxiliary facts}\label{sec:aux}
Recall that for $\mathbb F_t \neq F_t^\infty$, an operator $A \in \mathcal L(\mathbb F_t)$ is called \emph{sufficiently localized} (cf.\ \cite{Xia_Zheng,Isralowitz_Mitkovski_Wick}) if there are constants $C > 0$, $\beta > 2n$:
\begin{align}\label{eq:sufflocal}
|\langle Ak_z^t, k_w^t\rangle| \leq \frac{C}{(1 + |z-w|)^\beta}.
\end{align}
It is not hard to see, for $\mathbb F_t \neq F_t^\infty$ and $A \in \mathcal L(\mathbb F_t)$, $A$ is given by the integral expression
\begin{align}\label{eq:integralkernel}
Af(z) = \int_{\mathbb C^n} f(w) \langle A k_w^t, k_z^t\rangle ~d\mu_t(w).
\end{align}
Over $F_t^\infty$, we say that $A \in \mathcal L(F_t^\infty)$ is sufficiently localized if it satisfies both \eqref{eq:sufflocal} and \eqref{eq:integralkernel} (where the second condition is not automatically true, as the Berezin transform is not injective). As is well-known, Toeplitz operators with bounded symbols are sufficiently localized. Therefore, the Banach algebra $\mathcal A_{sl}^{\mathbb F_t}$ generated by sufficiently localized operators contains $\mathcal T^{\mathbb F_t}$. It was first proven in \cite{Xia} for $p = 2$ that $\mathcal A_{sl}^{F_t^2} = \mathcal T^{F_t^2}$. In \cite{Hagger2021}, the result was extended to $F_t^p$ for $1 < p < \infty$. Indeed, the proof given in \cite{Hagger2021} does not at all depend on the reflexivity and can verbatim be used in the case $p = 1$. Further, if $A$ is sufficiently localized over some $\mathbb F_t$, it is not hard to show that the integral operator \eqref{eq:integralkernel} defines is a bounded operator on each $\mathbb F_t$. This is a consequence of Young's inequality, cf.\ \cite{Hagger2021}. Since its formal adjoint, the integral operator
\begin{align*}
Bf(z) = \int_{\mathbb C^n} f(w) \overline{\langle Ak_z^t, k_w^t\rangle}~d\mu_t(w),
\end{align*}
is again sufficiently localized and therefore bounded on each $\mathbb F_t$, a standard duality argument generalizes the following result from \cite{Hagger2021} to each $\mathbb F_t$:
\begin{thm}\label{thm:suffloc}
For every $\mathbb F_t$ it holds true that $\mathcal T^{\mathbb F_t} = \mathcal A_{sl}^{\mathbb F_t}$.
\end{thm}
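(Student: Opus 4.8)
The plan is to reduce everything to the already-established case $p=1$. The inclusion $\mathcal T^{\mathbb F_t}\subseteq\mathcal A_{sl}^{\mathbb F_t}$ is immediate, since Toeplitz operators with bounded symbols are sufficiently localized and $\mathcal A_{sl}^{\mathbb F_t}$ is a Banach algebra. For the reverse inclusion it suffices --- because $\mathcal T^{\mathbb F_t}$ is itself a Banach algebra and will contain all sufficiently localized operators once each of them individually is known to lie in it --- to show that an arbitrary sufficiently localized $A$ belongs to $\mathcal T^{\mathbb F_t}$. For $1<p<\infty$ this is \cite{Hagger2021}, and, as noted above, the proof there applies verbatim when $p=1$. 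So I would only need to handle the two non-reflexive endpoint spaces $f_t^\infty$ and $F_t^\infty$, and I would do so by transporting the $p=1$ statement across $F_t^2$-duality.

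Concretely, I would fix a sufficiently localized $A$, write $\kappa(z,w)=\langle Ak_w^t,k_z^t\rangle$ for its kernel, so that $Af(z)=\int_{\mathbb C^n}f(w)\,\kappa(z,w)\,d\mu_t(w)$ (automatic on $f_t^\infty$ by \eqref{eq:integralkernel}, and built into the definition of sufficient localization over $F_t^\infty$). The formal adjoint $B$, i.e.\ the integral operator with kernel $\overline{\kappa(w,z)}$, satisfies the same off-diagonal decay, hence is sufficiently localized and, by the Young-type estimate recalled above, bounded on each of the Fock spaces; in particular $B\in\mathcal A_{sl}^{F_t^1}=\mathcal T^{F_t^1}$, so there are $f_k\in\operatorname{BUC}(\mathbb C^n)$ with $\|B-T_{f_k}^t\|_{\mathcal L(F_t^1)}\to 0$. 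The crucial step I would then carry out --- a Fubini interchange legitimized by the kernel decay --- is the identification of $B$ with the genuine $F_t^2$-(pre)adjoint of $A$: on $f_t^\infty$ this reads $A^\ast=B\in\mathcal L(F_t^1)$, while on $F_t^\infty$ it reads $B^\ast=A\in\mathcal L(F_t^\infty)$. For the latter, the integral-representation clause in the definition of sufficient localization over $F_t^\infty$ is precisely what excludes the pathological operators with vanishing Berezin transform and forces $B^\ast=A$.

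Passing to adjoints then finishes both cases, using $(T_{f_k}^t)^\ast=T_{\overline{f_k}}^t$ and the isometry of $F_t^2$-adjunction. Over $F_t^\infty$ one has $\|A-T_{\overline{f_k}}^t\|_{\mathcal L(F_t^\infty)}=\|B-T_{f_k}^t\|_{\mathcal L(F_t^1)}\to 0$, hence $A\in\mathcal T_{lin}^{F_t^\infty}(\operatorname{BUC}(\mathbb C^n))\subseteq\mathcal T^{F_t^\infty}$. Over $f_t^\infty$, from $A^\ast=B$ one gets $A=B^\ast|_{f_t^\infty}$, and since $B^\ast$ and each $T_{\overline{f_k}}^t$ leave $f_t^\infty$ invariant while restriction to an invariant subspace does not increase the norm, $\|A-T_{\overline{f_k}}^t\|_{\mathcal L(f_t^\infty)}\le\|B^\ast-T_{\overline{f_k}}^t\|_{\mathcal L(F_t^\infty)}=\|B-T_{f_k}^t\|_{\mathcal L(F_t^1)}\to 0$, so again $A\in\mathcal T_{lin}^{f_t^\infty}(\operatorname{BUC}(\mathbb C^n))\subseteq\mathcal T^{f_t^\infty}$.

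The step I expect to be the main obstacle is the middle one: verifying that the formal adjoint $B$ really is the $F_t^2$-(pre)adjoint of $A$ on these non-reflexive spaces. One must make the Fubini interchange rigorous from the off-diagonal decay and --- crucially for $F_t^\infty$, whose dual is too large to argue with directly --- route the duality through a \emph{pre}adjoint living on $F_t^1$ rather than an adjoint on $(F_t^\infty)'$, which is exactly where the integral-representation part of the definition of sufficient localization is indispensable. Everything else (boundedness of the adjoint kernel on all the Fock spaces, the $\operatorname{BUC}$-approximation of elements of $\mathcal T^{F_t^1}$, and the isometry of adjunction) is quoted from earlier in the paper or from \cite{Hagger2021}.
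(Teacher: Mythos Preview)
Your approach is essentially the paper's own: the cases $1\le p<\infty$ are handled by \cite{Hagger2021} (with the observation that reflexivity is not used for $p=1$), and the endpoint spaces $f_t^\infty$, $F_t^\infty$ are treated by passing to the formal adjoint $B$, which is again sufficiently localized and hence lies in $\mathcal T^{F_t^1}$, then pulling the Toeplitz approximation back via duality. One small correction: the $F_t^2$-adjunction between $\mathcal L(F_t^1)$ and $\mathcal L(F_t^\infty)$ is not literally isometric (the paper stresses that the dual pairings only hold up to equivalence of norms), so your equalities $\|A-T_{\overline{f_k}}^t\|_{\mathcal L(F_t^\infty)}=\|B-T_{f_k}^t\|_{\mathcal L(F_t^1)}$ should be ``$\lesssim$''; this does not affect the argument.
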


Let us recall a few facts on certain approximation properties. A Banach space $X$ is said to have the \emph{approximation property} (AP) if for each compact subset $K \subset X$ and each $\varepsilon > 0$ there exists an operator $T \in \mathcal L(X)$ of finite rank with
\begin{align*}
\| Tx - x\| \leq \varepsilon, \quad x \in K.
\end{align*}
This implies in particular that each compact operator can be approximated by finite rank operators in uniform topology.

$X$ has the $\lambda$-bounded approximation property ($\lambda$-BAP), where $\lambda \geq 1$, if there is a net $T_\gamma$ of finite rank operators on $X$, $\| T_\gamma\| \leq \lambda$ for every $\gamma$, with $T_\gamma \to \operatorname{Id}$ in SOT. Finally, $X$ has the $\lambda$-bounded compact approximation property ($\lambda$-BCAP) if there is a net $K_\gamma$ of compact operators on $X$, $\| K_\gamma \| \leq \lambda$, with $K_\lambda \to \operatorname{Id}$ in SOT.

An immediate consequence of these definitions is:
\begin{lem}
If $X$ has \emph{(AP)} and \emph{($\lambda$-BCAP)}, then it has \emph{($\lambda+\varepsilon$-BAP)} for every $\varepsilon > 0$.
\end{lem}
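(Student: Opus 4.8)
The plan is to combine the two hypotheses by running the standard averaging/convolution trick, but now using the net of \emph{compact} operators witnessing ($\lambda$-BCAP) and smoothing each member out into a finite rank operator using (AP). Concretely, let $(K_\gamma)_\gamma$ be the net of compact operators with $\|K_\gamma\| \le \lambda$ and $K_\gamma \to \operatorname{Id}$ in SOT, as guaranteed by ($\lambda$-BCAP). Since each $K_\gamma$ is compact, its image is relatively compact on the unit ball; more precisely, for a fixed compact set $K \subset X$, the set $\overline{K_\gamma(K)}$ is compact for each $\gamma$. So (AP) applied to this compact set produces, for any $\varepsilon > 0$, a finite rank operator $R_{\gamma,\varepsilon}$ with $\|R_{\gamma,\varepsilon} y - y\| \le \varepsilon$ for all $y \in \overline{K_\gamma(K)}$.

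First I would fix a compact set $K \subset X$, an $\varepsilon > 0$, and aim to produce a finite rank operator $T$ with $\|T\| \le \lambda + \varepsilon$ and $\|Tx - x\| \le \varepsilon$ for all $x \in K$. Pick $\gamma$ large enough that $\|K_\gamma x - x\| \le \varepsilon/3$ for all $x \in K$ (possible since $K_\gamma \to \operatorname{Id}$ in SOT and $K$ is compact — SOT convergence is uniform on compact sets once you use equiboundedness $\|K_\gamma\| \le \lambda$ together with an $\varepsilon$-net of $K$). Then apply (AP) to the compact set $K_\gamma(K)$ to get a finite rank $R$ with $\|R y - y\| \le \varepsilon'$ for $y \in K_\gamma(K)$, where $\varepsilon'$ is to be chosen. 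Set $T := R K_\gamma$, which is finite rank. For $x \in K$, $\|Tx - x\| \le \|R K_\gamma x - K_\gamma x\| + \|K_\gamma x - x\| \le \varepsilon' + \varepsilon/3$, so choosing $\varepsilon' \le \varepsilon/3$ handles the approximation.

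The one genuinely delicate point is the \emph{norm bound}: I need $\|T\| \le \lambda + \varepsilon$, not merely $\|T\| \le \|R\|\,\|K_\gamma\|$, since (AP) gives no control on $\|R\|$. The fix is to apply (AP) not to $K_\gamma(K)$ but to the compact set $K_\gamma(B_X)$ — wait, that need not be compact if $X$ is infinite-dimensional, so instead one applies (AP) to $\overline{K_\gamma(B_X)}$, which \emph{is} compact precisely because $K_\gamma$ is a compact operator. Then (AP) yields a finite rank $R$ with $\|R y - y\| \le \eta$ for all $y$ in this compact set, and hence $\|(R - \operatorname{Id}) K_\gamma\| \le \eta$, so $\|R K_\gamma\| \le \|K_\gamma\| + \eta \le \lambda + \eta$. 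Choosing $\eta \le \varepsilon$ gives both the norm bound and (a fortiori, since $K \subset B_X$ up to scaling, or by absorbing the scalar) the approximation estimate on $K$. So the single operator $T = R K_\gamma$ simultaneously satisfies $\|T\| \le \lambda + \varepsilon$ and $\|Tx - x\| \le \varepsilon$ on $K$, which is exactly ($\lambda + \varepsilon$-BAP).

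I expect the main obstacle to be purely bookkeeping: making sure that the set to which (AP) is applied is genuinely a \emph{compact} subset of $X$ (one must take the closure of the image of the bounded set under the compact operator, and recall that compactness of $K_\gamma$ is what makes $\overline{K_\gamma(B_X)}$ compact), and verifying that the resulting net, indexed by pairs $(\gamma, \varepsilon)$ with the product order, does converge to $\operatorname{Id}$ in SOT with the uniform norm bound $\lambda + \varepsilon$ degrading appropriately — i.e. one really gets ($\lambda + \varepsilon$-BAP) for every $\varepsilon > 0$ rather than a single net of bound $\lambda + \varepsilon_0$. None of this is hard, and the whole argument is a one-paragraph proof once the compact-set choice is pinned down.
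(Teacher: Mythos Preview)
Your argument is correct and is exactly the standard way to establish this fact; the paper itself gives no proof at all, merely declaring the lemma an ``immediate consequence of these definitions''. The key observation you identify --- applying (AP) not to $K_\gamma(K)$ but to the compact set $\overline{K_\gamma(B_X)}$ so that the resulting estimate $\|(R - \operatorname{Id})K_\gamma\| \le \eta$ simultaneously controls the operator norm of $T = R K_\gamma$ --- is precisely the point that makes the argument work, and you have located it cleanly. The only minor tidying needed is the scaling remark: since a compact $K$ lies in $M B_X$ for some $M$, the approximation error on $K$ is $M\eta$ rather than $\eta$, so one chooses $\eta \le \min(\varepsilon, \delta/(2M))$; this is routine and you already flagged it.
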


Since the classical $L^p$ spaces ($1 \leq p \leq \infty$) all have ($1$-BAP) (cf.\ \cite[Proposition 42]{Grothendieck1966}), it follows readily that $F_t^p$ ($1 \leq p \leq \infty$) has ($\| P_t\|$-BAP). $F_t^1 = (f_t^\infty)^\ast$ having (AP) implies that $f_t^\infty$ has (AP) (see e.g. \cite[Proposition 36]{Grothendieck1966}). In particular, all the Fock spaces have (AP). It will be important that $f_t^\infty$ also has the bounded approximation property - simultaneously, we will improve the respective constants:
\begin{prop}
Let $\mathbb F_t \neq F_t^\infty$. Then, $\mathbb F_t$ has \emph{($(1+\varepsilon)$-BAP)} for each $\varepsilon > 0$.
\end{prop}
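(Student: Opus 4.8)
The plan is to reduce everything to the \emph{($1$-BCAP)}: I will produce a net of \emph{compact} operators on $\mathbb F_t$ of norm at most $1$ converging to $\operatorname{Id}$ in the strong operator topology, and then combine this with the fact — recorded above — that every Fock space has \emph{(AP)}; the preceding lemma then yields \emph{($(1+\varepsilon)$-BAP)} for every $\varepsilon>0$. Since $\mathbb F_t\neq F_t^\infty$, the operators I would use are the dilations $\Phi_r$, $0<r\le 1$, defined on the standard basis by $\Phi_r e_\alpha^t=r^{|\alpha|}e_\alpha^t$; equivalently $\Phi_r=\sum_{k=0}^\infty r^kP_k$, and (testing on reproducing kernels, using $\Phi_r K_w^t=K_{rw}^t$) $\Phi_r f(z)=f(rz)$.

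Three facts then have to be checked. First, \emph{$\Phi_r$ is a contraction on $\mathbb F_t$}: on the sup-type norm of $f_t^\infty$ this is immediate from $|f(rz)|e^{-|z|^2/(2t)}\le|f(rz)|e^{-|rz|^2/(2t)}\le\|f\|_{f_t^\infty}$; on $F_t^p$ with $1\le p<\infty$, the substitution $u=rz$ turns $\|\Phi_r f\|_{F_t^p}$ into $\|f\|_{F_{tr^2}^p}$, so the claim is exactly the contractivity of the inclusion $F_t^p\hookrightarrow F_{tr^2}^p$, i.e.\ the (standard) monotonicity of the Fock norm in its parameter, which itself follows from the fact that the $L^p$ polyradial means of an entire function are nondecreasing in the radii. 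Second, \emph{$\Phi_r$ is compact for $r<1$}: each $P_k$ is of finite rank, and by Lemma~\ref{lem:berger_coburn_proof} one has $\|P_k\|_{\mathbb F_t\to\mathbb F_t}\le\sum_{|\alpha|=k}\|e_\alpha^t\|_{\mathbb F_t}\|e_\alpha^t\|_{(\mathbb F_t)'}\lesssim|\{\alpha:|\alpha|=k\}|$, which is polynomial in $k$; hence $\sum_{k>N}r^kP_k\to0$ in operator norm and $\Phi_r$ is a norm-limit of finite-rank operators. Third, \emph{$\Phi_r\to\operatorname{Id}$ in SOT as $r\to1^-$}: this is clear on holomorphic polynomials, and polynomials are dense in $\mathbb F_t$ while $\|\Phi_r\|\le1$ uniformly, so an $\varepsilon/3$-argument extends it to all of $\mathbb F_t$.

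Granting these, $\{\Phi_r\}_{0<r<1}$ witnesses \emph{($1$-BCAP)}, and the preceding lemma together with \emph{(AP)} gives the claim. (Alternatively one can bypass the lemma altogether: by the first two facts the finite-rank truncations $\sum_{k=0}^N r^kP_k$ have norm at most $1+\varepsilon$ once $N$ is large relative to $r$, and by the third fact they still converge to $\operatorname{Id}$ in SOT, which is \emph{($(1+\varepsilon)$-BAP)} directly.) The one point that is not purely formal is the contractivity of $\Phi_r$ in the $L^p$-norms for $p\neq2$; everything else is soft. Note also that the hypothesis $\mathbb F_t\neq F_t^\infty$ enters precisely in the third fact — the dilations converge strongly to the identity on $f_t^\infty$, but not on $F_t^\infty$.
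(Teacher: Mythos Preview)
Your argument is correct and follows the same overall architecture as the paper's proof: use the dilations $\Phi_r f(z)=f(rz)$ to obtain a bounded compact approximation, then combine with (AP) via the preceding lemma. The details, however, differ in two places worth noting.

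For the norm bound on $F_t^p$, the paper only proves $\|\Phi_r\|_{F_t^p\to F_t^p}\le r^{-2n/p}$ by the crude pointwise estimate $e^{-p|w|^2/(2tr^2)}\le e^{-p|w|^2/(2t)}$ after the substitution, yielding merely $((1+\varepsilon)$-BCAP); your subharmonicity/monotonicity argument (that $s\mapsto\int|f|^p\,d\mu_s$ is nondecreasing, hence $\|f\|_{F_{tr^2}^p}\le\|f\|_{F_t^p}$) gives the sharper $\|\Phi_r\|\le 1$ and hence $(1$-BCAP) outright. For compactness, the paper takes a more roundabout route: it verifies that $\Phi_r$ is sufficiently localized, hence lies in $\mathcal T^{\mathbb F_t}$ by Theorem~\ref{thm:suffloc}, and then computes $\widetilde{\Phi_r}\in C_0(\mathbb C^n)$ in order to invoke the compactness characterization of Theorem~\ref{compactnesschar}. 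Your direct appeal to the norm-convergent series $\sum_k r^k P_k$, with $\|P_k\|$ bounded polynomially via Lemma~\ref{lem:berger_coburn_proof}, is more elementary and avoids this machinery entirely.
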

\begin{proof}
We will show that $\mathbb F_t$ has ($(1+\varepsilon)$-BCAP) for every $\varepsilon > 0$. Together with (AP) and the previous lemma, this implies the desired result.

For $s < 1$ set $K_s f(z) = f(sz)$. Then, for $1 \leq p < \infty$:
\begin{align*}
\| K_s f\|_{F_t^p}^p &= \left(\frac{p}{2\pi t}\right)^n \int_{\mathbb C^n} |f(sz)|^p e^{-\frac{p|z|^2}{2t}}~dz\\
&= \frac{1}{s^{2n}} \left(\frac{p}{2\pi t}\right)^n \int_{\mathbb C^n} |f(w)|^p e^{-\frac{p|w|^2}{2ts^2}}~dw\\
&\leq \frac{1}{s^{2n}} \left(\frac{p}{2\pi t}\right)^n \int_{\mathbb C^n} |f(w)|^p e^{-\frac{p|w|^2}{2t}}~dw\\
&= \frac{1}{s^{2n}} \| f\|.
\end{align*}
Hence, $\| K_s\| \leq \left(\frac{1}{s^{2n}}\right)^{1/p}$ over $F_t^p$. Another simple estimate shows that $\| K_s\| \leq 1$ over $F_t^\infty$ (hence also over $f_t^\infty$).

We will show that $K_s$ is compact. For this, we will show that $K_s \in \mathcal T^{\mathbb F_t}$. It is then an easy exercise to compute $\widetilde{K_s}(z) = e^{-\frac{|z|^2}{t} + \frac{s|z|^2}{t}} \in C_0(\mathbb C^n)$, which shows the compactness.

Membership of $K_s$ in $\mathcal T^{\mathbb F_t}$ will be established by showing that $K_s$ is sufficiently localized. Then, the statement is a consequence of Theorem \ref{thm:suffloc}. We compute:
\begin{align*}
|\langle K_s k_z^t, k_w^t\rangle| &= e^{-\frac{|z|^2 + |w|^2}{2t}} |\langle K_{sz}^t, K_w^t\rangle| = e^{-\frac{|z|^2 + |w|^2}{2t}} e^{\frac{\re(s w\cdot \overline{z})}{t}}\\
&= e^{-\frac{1}{s}\frac{|\sqrt{s}z|^2 + |\sqrt{s}w|^2}{2t}} e^{\frac{\re((\sqrt{s}w) \cdot (\overline{\sqrt{s}z})}{t}}\\
&\leq e^{-\frac{|\sqrt{s}z|^2 + |\sqrt{s}w|^2}{2t}} e^{\frac{\re((\sqrt{s}w) \cdot (\overline{\sqrt{s}z})}{t}}\\
&= e^{-\frac{|\sqrt{s}z - \sqrt{s}w|^2}{2t}} = e^{-s\frac{|z-w|^2}{2t}}\\
&\lesssim \frac{1}{(1+|z-w|)^\beta}
\end{align*}
Finally, note that $K_s \to \operatorname{Id}$ in SOT as $s \uparrow 1$. This is well-known, as e.g.\ shown in \cite[Proposition 2.9]{Zhu2012} for $F_t^p$, $1 \leq p < \infty$. Over $f_t^\infty$, this is an easy exercise. Therefore, $(K_s)_{1 > s \geq s_0}$ is a net as needed in the definition of ($(1+ \varepsilon)$-BCAP) for $s_0 \in (0,1)$ appropriate.
\end{proof}

\subsection{The algebra of limit operators}\label{subsec:alglim}
\begin{defn}
A \emph{compatible family of limit operators} is a map $\gamma: \partial \mathbb C^n \to \mathcal T^{\mathbb F_t}$, satisfying the following properties:
\begin{enumerate}[(1)]
\item $x \mapsto \langle \gamma(x)1, 1\rangle$ is continuous.
\item For every $x \in \partial \mathbb C^n$ and $z \in \mathbb C^n$:
\begin{align*}
\alpha_z(\gamma(x)) = \gamma(\tau_{-z}(x)).
\end{align*}
\item We have $\sup_{x\in \partial \mathbb C^n} \| \gamma(x) \| < \infty$.
\item The family of operator-valued functions
\begin{align*}
\{ \mathbb C^n \ni z \mapsto \alpha_z(\gamma(x)): ~x \in \partial \mathbb C^n \}
\end{align*}
is uniformly equicontinuous with respect to the operator norm.
\end{enumerate}
\end{defn}
Here, we say that a family of operator-valued functions $(f_i)_{i\in I}$, where $f_i: \mathbb C^n \to \mathcal L(\mathbb F_t)$, is uniformly equicontinuous, if for each $\varepsilon > 0$ there exist $\delta > 0$ such that for $z, w \in \mathbb C^n$ with $|w| < \delta$ we have
\begin{align*}
\| f_i(z) - f_i(z-w)\|_{\mathbb F_t \to \mathbb F_t} < \varepsilon
\end{align*}
for every $i \in I$.

Of course, limit operators give a compatible family of limit operators:
\begin{lem}
Let $A \in \mathcal T^{\mathbb F_t}$. Then, $\gamma(x) = A_x$, $x \in \partial \mathbb C^n$, is a compatible family of limit operators.
\end{lem}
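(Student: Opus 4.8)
The plan is to check the four defining properties in turn, with conditions 1)--3) being quick and condition 4) carrying the real content. Condition 2) is nothing but the compatibility relation $\alpha_z(A_x) = A_{\tau_{-z}(x)}$ already recorded above for operators in $\mathcal C_1(\mathbb F_t)$. For condition 3), I would fix a net $z_\gamma \to x$ in $\mathbb C^n$; since $\alpha_{z_\gamma}(A) = W_{z_\gamma}^t A W_{-z_\gamma}^t$ and the Weyl operators are isometries, $\|\alpha_{z_\gamma}(A)\| = \|A\|$ for every $\gamma$, and since $A_x$ is the strong operator limit of $(\alpha_{z_\gamma}(A))_\gamma$ (for $\mathbb F_t \neq F_t^\infty$; for $F_t^\infty$ one passes through $A_x = ((A|_{f_t^\infty})_x)^{\ast\ast}$ and Lemma \ref{prop:duality}, using that biduals preserve the operator norm), lower semicontinuity of the norm under strong convergence gives $\|A_x\| \le \|A\|$, so $\sup_{x\in\partial\mathbb C^n}\|A_x\| \le \|A\| < \infty$. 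The same reasoning applied to $A-B$ yields $\|A_x - B_x\| \le \|A-B\|$ for $A,B \in \mathcal T^{\mathbb F_t}$, a fact I will reuse for condition 4).

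For condition 1), I would compare Berezin transforms along a net $z_\gamma \to x$: on one side $\langle \alpha_{z_\gamma}(A)1,1\rangle = \widetilde{\alpha_{z_\gamma}(A)}(0) = \widetilde A(-z_\gamma) = \alpha_{z_\gamma}(\widetilde A)(0) \to (\widetilde A)_x(0)$ by the definition of the limit function (recall $\widetilde A \in \operatorname{BUC}(\mathbb C^n)$ since $A \in \mathcal C_1(\mathbb F_t)$); on the other side, strong convergence of $\alpha_{z_\gamma}(A)$ to $A_x$ and boundedness of the $F_t^2$-pairing against $1 \in (\mathbb F_t)'$ give $\langle \alpha_{z_\gamma}(A)1,1\rangle \to \langle A_x 1,1\rangle$. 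Hence $\langle A_x 1,1\rangle = (\widetilde A)_x(0)$. Since for fixed $w$ the function $\zeta \mapsto \widetilde A(w-\zeta)$ lies in $\operatorname{BUC}(\mathbb C^n)$, the map $x \mapsto (\widetilde A)_x(w) = x(\zeta\mapsto \widetilde A(w-\zeta))$ is a restriction of a Gelfand transform, hence continuous on $\mathcal M$; specializing to $w=0$ gives condition 1).

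Condition 4) is the heart of the matter. Because $\alpha_{z-w}$ is an isometry of $\mathcal L(\mathbb F_t)$ and $\alpha_z(A_x) - \alpha_{z-w}(A_x) = \alpha_{z-w}(\alpha_w(A_x) - A_x)$, the required uniform equicontinuity is equivalent to $\sup_{x\in\partial\mathbb C^n}\|\alpha_w(A_x) - A_x\| \to 0$ as $w\to 0$. I would first treat a single Toeplitz operator $A = T_f^t$ with $f \in \operatorname{BUC}(\mathbb C^n)$, where $A_x = T_{f_x}^t$ and $\alpha_w(A_x) - A_x = T_{\alpha_w(f_x)-f_x}^t$. Fixing any $s \in (0,t/2)$, the Berger--Coburn estimate (Theorem \ref{thm:berger_coburn1}) together with $\widetilde h^{(s)} = h \ast g_s$ and $\|g_s\|_{L^1}=1$ gives $\|\alpha_w(A_x)-A_x\| \le C\|\alpha_w(f_x)-f_x\|_\infty$ with $C = C(n,s,t)$ \emph{independent of $x$ and $f$}; and since $f_x$ is a pointwise limit of the translates $\alpha_{z_\gamma}(f)$ it has the same modulus of continuity $\omega_f$ as $f$, so $\|\alpha_w(f_x)-f_x\|_\infty \le \omega_f(|w|)$ uniformly in $x$. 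Linearity extends this to finite sums $B = \sum_j T_{f_j}^t$, and for a general $A \in \mathcal T^{\mathbb F_t} = \mathcal T_{lin}^{\mathbb F_t}(\operatorname{BUC}(\mathbb C^n))$ I would pick such a $B$ with $\|A-B\|$ small, invoke $\|A_x - B_x\| \le \|A-B\|$ from the first paragraph, and split $\|\alpha_w(A_x)-A_x\|$ into $\|\alpha_w(A_x-B_x)\| + \|\alpha_w(B_x)-B_x\| + \|B_x-A_x\|$ to conclude. The case $\mathbb F_t = F_t^\infty$ is then obtained by transferring all of the above from $f_t^\infty$ via $A_x = ((A|_{f_t^\infty})_x)^{\ast\ast}$, norm-preservation under biduals, and $\widetilde{A|_{f_t^\infty}} = \widetilde A$ together with Lemma \ref{prop:duality}.

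The main obstacle is precisely the uniformity in $x$ in condition 4): a priori one only knows that each map $w \mapsto \alpha_w(A_x)$ is norm continuous (as $A_x \in \mathcal C_1(\mathbb F_t)$), with no control on its modulus. What rescues the uniform bound is the conjunction of two features of the setting --- that the Berger--Coburn constant depends only on $n,s,t$ and not on the symbol, and that limit functions $f_x$ inherit the modulus of continuity of $f$ --- so that the modulus of continuity of $w \mapsto \alpha_w(A_x)$ at the origin is dominated, uniformly in $x$, by a fixed function of $|w|$ read off from an operator-norm approximation of $A$ by Toeplitz operators with $\operatorname{BUC}$ symbols.
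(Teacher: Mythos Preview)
Your argument is correct, but for condition 4) the paper takes a much shorter route than you do. Observe that the shifts $\alpha_w$ commute with the limit construction: since the group $(\alpha_z)_z$ is abelian, for any net $z_\gamma \to x$ one has $\alpha_{z_\gamma}(\alpha_w(A)-A)=\alpha_w(\alpha_{z_\gamma}(A))-\alpha_{z_\gamma}(A)$, hence $(\alpha_w(A)-A)_x=\alpha_w(A_x)-A_x$. Together with the bound $\|B_x\|\le\|B\|$ you already proved for condition 3), this gives immediately
\[
\|\alpha_w(A_x)-A_x\|=\|(\alpha_w(A)-A)_x\|\le\|\alpha_w(A)-A\|,
\]
uniformly in $x$, and the right-hand side tends to $0$ as $w\to 0$ since $A\in\mathcal T^{\mathbb F_t}=\mathcal C_1(\mathbb F_t)$. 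No Berger--Coburn estimate, no modulus-of-continuity bookkeeping for limit functions, and no approximation by finite Toeplitz sums is needed.

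Your approach does work and has the virtue of making the modulus of continuity completely explicit in terms of symbols, but the paper's one-line identity shows that uniform equicontinuity of the family $\{z\mapsto\alpha_z(A_x)\}_x$ is a direct structural consequence of $A\in\mathcal C_1(\mathbb F_t)$ plus the contractivity of $B\mapsto B_x$, and nothing special to Toeplitz operators is required.
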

\begin{proof}
Continuity of the map $x \mapsto \langle A_x 1, 1\rangle$ follows from continuity in strong operator topology, which is obtained directly from the construction of the limit operators (for $\mathbb F_t =F_t^\infty$, note that $\langle A_x 1, 1\rangle = \langle (A|_{f_t^\infty})_x 1, 1\rangle$). Also, we have (using that the Weyl operators are isometries),
\begin{align*}
\| A_x\| &= \sup_{\| f\| = 1} \| A_x f\| = \sup_{\| f\| = 1} \| \lim_{\gamma} \alpha_{z_\gamma}(A)f\|\\
&= \sup_{\| f\| = 1} \limsup_{\gamma} \| AW_{-z_\gamma} f\| \leq \sup_{\| f\| = 1} \limsup_{\gamma} \| A\| \| W_{-z_\gamma} f\| = \| A\|.
\end{align*}
If $(w_\gamma)$ is a net in $\mathbb C^n$ converging to $x \in \partial \mathbb C^n$, we have
\begin{align*}
\alpha_z(A_x) &= \alpha_z(\lim_{\gamma} \alpha_{w_\gamma}(A)) = \lim_{\gamma} \alpha_{z}(\alpha_{w_\gamma}(A))\\
&= \lim_\gamma \alpha_{w_\gamma + z}(A) = A_{\alpha_{-z}(x)},
\end{align*}
where in the last step we possibly need to pass to a subnet. Finally, uniform equicontinuity follows from the estimate
\begin{align*}
\| \alpha_w(A_x) - A_x\| = \| (\alpha_z(A) - A)_x\| \leq \| \alpha_w(A) - A\|
\end{align*}
and $\| \alpha_z(A_x) - \alpha_{z-w}(A_x)\| = \| A_x - \alpha_{-w}(A_x)\|$.
\end{proof}
By $\mathfrak{lim} \mathcal T^{\mathbb F_t}$ we will denote the set of all compatible families of limit operators. Endowed with the pointwise sum, this obviously turns into a complex vector space. Indeed, it even turns into an algebra upon endowed with the pointwise product, but this is not clear a priori. What is clear is that the pointwise adjoint and pre-adjoint of a compatible family of limit operators is again a compatible family of limit operators. We endow $\mathfrak{lim}\mathcal T^{\mathbb F_t}$ with the norm
\begin{align*}
\| \gamma\|_{\mathfrak{lim}} = \sup_{x \in \partial \mathbb C^n} \| \gamma(x)\|.
\end{align*}
The following fact is now immediate:
\begin{lem}\label{lem:limit_alg}
The range of the map
\begin{align*}
\mathcal T^{\mathbb F_t} \ni A \mapsto [\gamma_A(x) = A_x] \in \mathfrak{lim} \mathcal T^{\mathbb F_t}
\end{align*}
is a normed algebra. The map is homomorphism of normed algebras onto its range, which also is a contraction with kernel $\mathcal T\mathcal K$.
\end{lem}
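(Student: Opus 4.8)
## Proof plan for Lemma \ref{lem:limit_alg}

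The plan is to verify each assertion in turn: first that the map $A \mapsto \gamma_A$ is a linear contraction, then that it is multiplicative, then that its range is closed under the algebra operations, and finally that its kernel is exactly $\mathcal T\mathcal K$. Linearity is immediate from the construction of limit operators: if $\alpha_{z_\gamma}(A) \to A_x$ and $\alpha_{z_\gamma}(B) \to B_x$ in SOT along a net $z_\gamma \to x$, then $\alpha_{z_\gamma}(\lambda A + B) \to \lambda A_x + B_x$ in SOT, so $(\lambda A + B)_x = \lambda A_x + B_x$ (in the case $\mathbb F_t = F_t^\infty$ one first restricts to $f_t^\infty$, uses the statement there, then applies the $(\cdot)^{\ast\ast}$ construction, which is linear). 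The contraction property $\|\gamma_A\|_{\mathfrak{lim}} \le \|A\|$ is precisely the estimate $\|A_x\| \le \|A\|$ already carried out in the preceding lemma (using that the Weyl operators are isometries), taking the supremum over $x \in \partial\mathbb C^n$.

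For multiplicativity, the key point is that if $z_\gamma \to x$ is a net realizing both $A_x$ and $B_x$ as SOT-limits, then $\alpha_{z_\gamma}(AB) = \alpha_{z_\gamma}(A)\alpha_{z_\gamma}(B)$, and since SOT-convergence is preserved under products of uniformly bounded operators (here $\|\alpha_{z_\gamma}(A)\| = \|A\|$ is bounded), we get $\alpha_{z_\gamma}(AB) \to A_x B_x$ in SOT, hence $(AB)_x = A_x B_x$. For $F_t^\infty$ one uses $(AB)|_{f_t^\infty} = (A|_{f_t^\infty})(B|_{f_t^\infty})$ together with the fact (from Lemma \ref{prop:duality}) that such operators leave $f_t^\infty$ invariant, and that the bitranspose is multiplicative. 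Thus $\gamma_{AB} = \gamma_A \gamma_B$ pointwise. Combined with linearity this shows the range is a subalgebra (it is automatically closed under sums and products of its elements since $\gamma_A + \gamma_B = \gamma_{A+B}$ and $\gamma_A \gamma_B = \gamma_{AB}$), hence a normed algebra under $\|\cdot\|_{\mathfrak{lim}}$, and $A \mapsto \gamma_A$ is a contractive algebra homomorphism onto it.

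It remains to identify the kernel. The inclusion $\mathcal T\mathcal K \subset \ker$ follows because a compact operator $K \in \mathcal C_1(\mathbb F_t)$ has $\alpha_{z_\gamma}(K) = W_{z_\gamma}^t K W_{-z_\gamma}^t \to 0$ in SOT whenever $z_\gamma \to x \in \partial\mathbb C^n$: indeed $W_{-z_\gamma}^t f \to 0$ weakly (the normalized reproducing kernels, and hence $W_{-z_\gamma}^t f$, tend weakly to $0$ as $|z_\gamma| \to \infty$), and a compact operator maps weakly null nets to norm null nets; then $W_{z_\gamma}^t$ is isometric, so $\alpha_{z_\gamma}(K)f \to 0$ in norm. (For $F_t^\infty$, pass to $f_t^\infty$ as usual.) For the reverse inclusion, suppose $A \in \mathcal T^{\mathbb F_t}$ has $A_x = 0$ for every $x \in \partial\mathbb C^n$. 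The main obstacle is to deduce compactness of $A$ from the vanishing of all its limit operators using only tools available in the excerpt — one cannot invoke the full Fredholm machinery of \cite{Fulsche_Hagger} here, since establishing that is the ultimate goal of the section. I would argue as follows: $A \in \mathcal T^{\mathbb F_t} = \mathcal C_1(\mathbb F_t)$, so $g_{t/N}\ast A \to A$ in operator norm; writing $A$ as a norm-limit of Toeplitz operators $T_{f_k}^t$ with $f_k \in \operatorname{BUC}(\mathbb C^n)$, the hypothesis $A_x = 0$ forces the ``boundary values'' of the Berezin transform $\widetilde A \in \operatorname{BUC}(\mathbb C^n)$ to vanish, i.e. $\widetilde A \in C_0(\mathbb C^n)$; then, however, one needs more than just $\widetilde A \in C_0$ to conclude $A$ compact — one needs that all limit operators vanish, which via the sufficiently-localized description (Theorem \ref{thm:suffloc}) and a quantitative localization estimate on the integral kernel $\langle A k_w^t, k_z^t\rangle$ shows that this kernel decays at infinity, forcing $A \in \mathcal T\mathcal K = \mathcal T_{lin}^{\mathbb F_t}(C_0(\mathbb C^n))$. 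I expect this last implication — that vanishing of all limit operators of a sufficiently localized operator implies membership in $\mathcal T\mathcal K$ — to require the bulk of the work, and it is presumably the content of a subsequent result in the paper that this lemma is quietly relying on; if so, I would simply cite that and record the kernel computation as immediate.
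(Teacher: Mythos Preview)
Your treatment of linearity, multiplicativity, and the contraction estimate is fine and matches what the paper has in mind --- indeed, the paper offers no proof at all, declaring the lemma ``immediate'' from what precedes it. The inclusion $\mathcal T\mathcal K \subset \ker$ is also handled correctly.

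The gap is in the reverse inclusion. You correctly observe that $A_x = 0$ for every $x \in \partial\mathbb C^n$ forces $\widetilde A \in C_0(\mathbb C^n)$: since $\widetilde A(z) = \langle \alpha_{-z}(A)1,1\rangle$, letting $z_\gamma \to x$ gives $\widetilde A(z_\gamma) \to \langle A_{\nu(x)}1,1\rangle = 0$, so the Gelfand extension of $\widetilde A \in \operatorname{BUC}(\mathbb C^n)$ vanishes on $\partial\mathbb C^n$. But then you write that ``one needs more than just $\widetilde A \in C_0$ to conclude $A$ compact'' and launch into speculation about sufficiently localized kernels and ``a subsequent result in the paper.'' This is not necessary: the compactness characterization, Theorem~\ref{compactnesschar}, has \emph{already} been established in Section~4 and says precisely that for $A \in \mathcal T^{\mathbb F_t}$ (with the appropriate preadjoint/invariance hypotheses in the non-reflexive cases, which hold automatically by Lemma~\ref{prop:duality} since $A \in \mathcal C_1(\mathbb F_t)$), the condition $\widetilde A \in C_0(\mathbb C^n)$ is equivalent to $A \in \mathcal T\mathcal K$. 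So the kernel computation is a one-line application of that theorem, not something requiring new machinery. This is exactly why the paper calls the lemma immediate.
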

By passing to the quotient $\mathcal T^{\mathbb F_t}/\mathcal T \mathcal K$, we obtain an injective map
\begin{align*}
\mathcal T^{\mathbb F_t}/\mathcal T \mathcal K \ni A + \mathcal T\mathcal K \mapsto [\gamma_A(x) = A_x] \in \mathfrak{lim} \mathcal T^{\mathbb F_t},
\end{align*}
which is again a contractive normed algebra homomorphism.
\begin{prop}\label{prop:essnorm}
Let $\mathbb F_t \neq f_t^\infty, F_t^\infty$. Then, we have:
\begin{align*}
\sup_{x \in \partial \mathbb C^n} \| A_x\| \leq \| A + \mathcal T\mathcal K\|_{\mathcal T^{\mathbb F_t}/\mathcal T \mathcal K} \leq \sup_{x \in \partial \mathbb C^n} \| A_x\| \| P_t\|.
\end{align*}
\end{prop}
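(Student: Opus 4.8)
The statement splits into a lower and an upper bound, and only the upper one requires real work. For the \emph{lower bound} $\sup_{x\in\partial\mathbb C^n}\|A_x\|\le\|A+\mathcal T\mathcal K\|_{\mathcal T^{\mathbb F_t}/\mathcal T\mathcal K}$ nothing new is needed: by Lemma~\ref{lem:limit_alg} the map $A\mapsto(A_x)_{x\in\partial\mathbb C^n}$ is a contractive algebra homomorphism $\mathcal T^{\mathbb F_t}\to\mathfrak{lim}\,\mathcal T^{\mathbb F_t}$ with kernel $\mathcal T\mathcal K$, so it descends to a contraction on the quotient $\mathcal T^{\mathbb F_t}/\mathcal T\mathcal K$, and since $\|(A_x)_x\|_{\mathfrak{lim}}=\sup_x\|A_x\|$ this is precisely the desired inequality.

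For the \emph{upper bound} one must show: given $\varepsilon>0$, there is $K\in\mathcal T\mathcal K$ with $\|A-K\|_{\mathbb F_t\to\mathbb F_t}\le\|P_t\|\sup_x\|A_x\|+\varepsilon$. First I would reduce to band operators. By Theorem~\ref{thm:suffloc}, $\mathcal T^{\mathbb F_t}=\mathcal A_{sl}^{\mathbb F_t}$, so $A$ is a norm limit of finite sums of finite products of sufficiently localized operators; each such operator has a kernel $k_A(z,w)=\langle Ak_w^t,k_z^t\rangle$ with a uniform polynomial off-diagonal decay, and truncating the kernel to the band $\{|z-w|\le\omega\}$ yields an operator $A_\omega$ which is still bounded on every $\mathbb F_t$ by the Young-inequality argument recalled before Theorem~\ref{thm:suffloc}, and satisfies $\|A-A_\omega\|\to0$ as well as $\sup_x\|(A_\omega)_x-A_x\|\to0$ as $\omega\to\infty$. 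Hence we may assume $A$ has band width $\omega$.

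Next I would localize in the $L_t^p$-picture. Fix $L\gg\omega$ and a generic translate of the grid of cubes of side $L$, so that the $\omega$-neighbourhoods $Q_j^\omega$ of the cubes $Q_j$ overlap only thinly, and recall that $A$ extends to a bounded operator $\widehat A$ on $L_t^p$ via its kernel. For $f\in\mathbb F_t$ the band property gives $\mathbf 1_{Q_j}\widehat Af=\mathbf 1_{Q_j}\widehat A(\mathbf 1_{Q_j^\omega}f)$, so, writing $A_j:=M_{\mathbf 1_{Q_j}}\widehat A M_{\mathbf 1_{Q_j^\omega}}$ and using that the cubes partition $\mathbb C^n$,
\begin{align*}
\|Af\|_{\mathbb F_t}^p=\|\widehat Af\|_{L_t^p}^p=\sum_j\|A_jf\|_{L_t^p}^p\le\sum_j\|A_j\|^p\,\|\mathbf 1_{Q_j^\omega}f\|_{L_t^p}^p.
\end{align*}
Since $k_{\alpha_{-c_j}(A)}(z,w)=\theta_j(z,w)\,k_A(z+c_j,w+c_j)$ with $|\theta_j|\equiv1$, where $c_j$ is the centre of $Q_j$, the norm $\|A_j\|$ equals that of the analogous compression of $\alpha_{-c_j}(A)$ to a cube of fixed size around the origin. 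The key claim is that for every $\varepsilon>0$ there is $R>0$ with $\|A_j\|\le\sup_x\|A_x\|+\varepsilon$ whenever $|c_j|\ge R$. Indeed, otherwise one produces $|c_{j_m}|\to\infty$ and unit vectors $g_m\in\mathbb F_t$ whose mass is concentrated in a fixed ball, with $\|\alpha_{-c_{j_m}}(A)g_m\|\ge\sup_x\|A_x\|+\varepsilon$; passing to a subnet, $c_{j_m}\to x\in\partial\mathbb C^n$, so $\alpha_{-c_{j_m}}(A)\to A_{x'}$ in strong operator topology for the corresponding boundary point $x'$, while by Montel's theorem a norm-bounded family of entire functions with uniformly concentrated mass is relatively compact in $\mathbb F_t$, so $g_m\to g$ along a further subnet, giving $\|A_{x'}g\|\ge\sup_x\|A_x\|+\varepsilon>\|A_{x'}\|$, a contradiction.

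Finally I would split $\sum_j$ into the finitely many near cubes ($|c_j|<R$) and the far cubes. The far part is bounded by $(\sup_x\|A_x\|+\varepsilon)^p\sum_{j\text{ far}}\|\mathbf 1_{Q_j^\omega}f\|_p^p$, and the near part equals $\|\mathbf 1_B\widehat Af\|_p^p$ where $B$ is the union of the near cubes; the operator $f\mapsto\mathbf 1_B\widehat Af$ is compact from $\mathbb F_t$ into $L_t^p$ (restricting a norm-bounded family of entire functions to $B$ is compact, again by Montel), and a short identification turns this into a genuine $K\in\mathcal T\mathcal K$ for which $\|A-K\|$ is controlled by the far part. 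The main obstacle is precisely the quantitative bookkeeping in the $L_t^p$-picture: the passage between the norm of $A$ on $\mathbb F_t$ and the norm of its kernel extension $\widehat A$ on $L_t^p$, together with the thin but nonzero overlap of the $Q_j^\omega$ for $p\ne 2$, are what force the single factor $\|P_t\|$ in the estimate (for $p=2$ the decomposition is orthogonal and $\|P_t\|=1$, whereupon the two inequalities coincide and yield $\|A+\mathcal T\mathcal K\|=\sup_x\|A_x\|$); the compactness inputs and the lower bound are comparatively routine.
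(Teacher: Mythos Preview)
The paper's own argument is a one-line deferral: the first inequality is declared clear, and the second is said to follow from the localization machinery of \cite{Fulsche_Hagger} (Theorem~31 there), with the sole remark that the relevant estimates go through for $p=1$. Your sketch is visibly an attempt to reconstruct that machinery---band truncation, cube localization in the ambient $L_t^p$, a limit-operator bound on far cubes, and a compact remainder on near cubes---so strategically you are aiming at exactly what the paper invokes.

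That said, the sketch has a genuine gap in the bookkeeping, and it sits precisely where the constant $\|P_t\|$ lives. Your ``key claim'' asserts $\|A_j\|\le\sup_x\|A_x\|+\varepsilon$ for far cubes, but this is false as stated: the kernel extension is $\widehat A=AP_t$ on $L_t^p$, so after shifting, $A_j$ is conjugate to $M_{\mathbf 1_{Q_0}}\,\alpha_{-c_j}(A)\,P_t\,M_{\mathbf 1_{Q_0^\omega}}$, whose norm is controlled by $\|A_x\|\,\|P_t\|$, not by $\|A_x\|$ alone. Correspondingly, the vectors witnessing a large $\|A_j\|$ lie in $L_t^p$ (compactly supported), not in $\mathbb F_t$, so the Montel step cannot be applied to ``$g_m\in\mathbb F_t$'' directly; one must first pass through $P_t$, and that is exactly where the factor $\|P_t\|$ enters. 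Once $\|P_t\|$ is correctly placed in the key claim, your overlap term $\sum_j\|\mathbf 1_{Q_j^\omega}f\|_p^p$ still carries a combinatorial constant bounded away from $1$, so without further work (a partition of unity, or an averaging over grid translates as in \cite{Fulsche_Hagger}) you would end up with an extra factor beyond $\|P_t\|$. Finally, the last line---turning the near-cube contribution into an honest $K\in\mathcal T\mathcal K$ rather than merely a compact map $\mathbb F_t\to L_t^p$---again requires composing with $P_t$ and checking that the result lands in $\mathcal T_{lin}^{\mathbb F_t}(C_0(\mathbb C^n))$. In short: the architecture is right and matches the reference the paper cites, but the location of the single factor $\|P_t\|$ is misidentified, and fixing it forces you to redo the overlap estimate more carefully.
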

\begin{proof}
The first inequality is clear.

The second inequality actually follows from the methods presented in \cite{Fulsche_Hagger}, at least for $1 \leq p < \infty$. That paper was only written on the reflexive case and indeed some of its methods do not carry over to the general case (not even $p = 1$). Nevertheless, the proof of Theorem 31 (and all the statements on which its proof is based) extend naturally to $p= 1$, showing that
\begin{align*}
\| A + \mathcal T \mathcal K\| \leq \sup_{x \in \partial \mathbb C^n} \| A_x\| \| P_t\|
\end{align*}
for $\mathbb F_t=  F_t^p$ with $1 \leq p < \infty$.
\end{proof}
\begin{cor}\label{cor:essnorm}
For each $A \in \mathcal T^{\mathbb F_t}$ we have
\begin{align*}
\sup_{x \in \partial \mathbb C^n} \| A_x\| \simeq \| A + \mathcal T \mathcal K\|_{\mathcal T^{\mathbb F_t}/\mathcal T \mathcal K} .
\end{align*}
\end{cor}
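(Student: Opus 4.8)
\textbf{Proof plan for Corollary \ref{cor:essnorm}.}

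The plan is to reduce the cases $\mathbb F_t = f_t^\infty$ and $\mathbb F_t = F_t^\infty$, which are not covered by Proposition \ref{prop:essnorm}, to the case $\mathbb F_t = F_t^1$ (which \emph{is} covered, since $1 \le p < \infty$ there). The lower bound $\sup_{x \in \partial \mathbb C^n} \| A_x\| \leq \| A + \mathcal T\mathcal K\|$ holds for all Fock spaces, exactly as in the first (clear) inequality of Proposition \ref{prop:essnorm}: for any $K \in \mathcal T\mathcal K$ one has $\|A_x\| = \|(A+K)_x\| \le \|A+K\|$, using that $K_x = 0$ by Lemma \ref{lem:limit_alg} and that the limit-operator map is contractive. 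So the whole issue is the upper bound $\| A + \mathcal T\mathcal K\| \lesssim \sup_{x} \|A_x\|$ for the two missing spaces.

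First I would handle $f_t^\infty$. Given $A \in \mathcal T^{f_t^\infty}$, by Lemma \ref{prop:duality}(a) (applied after passing through the identifications $(f_t^\infty)' \cong F_t^1$) the adjoint $A^\ast$ lies in $\mathcal L(F_t^1)$, and since $A \in \mathcal C_1(f_t^\infty) = \mathcal T^{f_t^\infty}$ we get $A^\ast \in \mathcal C_1(F_t^1) = \mathcal T^{F_t^1}$; moreover $A = (A^\ast)^\ast$ by the $\lambda$-BAP / reflexivity-of-the-bidual-embedding type argument used elsewhere in the paper, so $A^{\ast\ast}|_{f_t^\infty} = A$. The key point is then that the essential norms of $A$ on $f_t^\infty$ and of $A^\ast$ on $F_t^1$ agree up to the constant $\|P_t\|$: on one hand $\mathcal T\mathcal K$ on $f_t^\infty$ and on $F_t^1$ correspond under taking (pre)adjoints, since $\mathcal T\mathcal K = \mathcal T_{lin}(C_0(\mathbb C^n))$ is defined symbolically and taking adjoints of Toeplitz operators only conjugates the symbol; on the other hand, taking adjoints is isometric up to the norm-equivalence constant between $\|\cdot\|_{f_t^\infty}$ and $\|\cdot\|_{(f_t^\infty)''}$, which is controlled by $\|P_t\|$. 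Likewise the limit operators satisfy $(A^\ast)_x = (A_x)^\ast$ (the limit-operator construction commutes with adjoints, as remarked before Lemma \ref{lem:limit_alg}), so $\sup_x \|A_x\| \simeq \sup_x \|(A^\ast)_x\|$. Chaining these with Proposition \ref{prop:essnorm} applied on $F_t^1$ gives $\|A + \mathcal T\mathcal K\|_{f_t^\infty} \lesssim \sup_x \|(A^\ast)_x\| \simeq \sup_x \|A_x\|$.

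For $\mathbb F_t = F_t^\infty$ I would do the analogous reduction but one step longer: by Lemma \ref{prop:duality}(b) every $A \in \mathcal C_1(F_t^\infty) = \mathcal T^{F_t^\infty}$ leaves $f_t^\infty$ invariant and $A = (A|_{f_t^\infty})^{\ast\ast}$, and $A|_{f_t^\infty} \in \mathcal T^{f_t^\infty}$. Since taking (double) adjoints maps $\mathcal T\mathcal K$ to $\mathcal T\mathcal K$ and maps the limit operator $(A|_{f_t^\infty})_x$ to $A_x = ((A|_{f_t^\infty})_x)^{\ast\ast}$ (this is precisely how $A_x$ was \emph{defined} for $F_t^\infty$), the essential norm and the quantity $\sup_x\|A_x\|$ are each comparable to the corresponding quantities for $A|_{f_t^\infty}$ on $f_t^\infty$, up to constants depending only on $\|P_t\|$. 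Then apply the $f_t^\infty$ case just established. The main obstacle I anticipate is bookkeeping the norm-equivalence constants through the non-isometric duality identifications $(f_t^\infty)' \cong F_t^1$, $(F_t^1)' \cong F_t^\infty$ — one has to make sure that the $\simeq$-constants stay finite and do not depend on $A$ — and verifying carefully that $\mathcal T\mathcal K$ is preserved under these adjoint operations (this is where using the symbolic description $\mathcal T\mathcal K = \mathcal T_{lin}^{\mathbb F_t}(C_0(\mathbb C^n))$, rather than the description as $\mathcal K \cap \mathcal T^{\mathbb F_t}$, is essential, since compactness on $F_t^\infty$ is not preserved by restriction in an obvious way). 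Everything else is routine once the $F_t^1$ estimate from Proposition \ref{prop:essnorm} is in hand.
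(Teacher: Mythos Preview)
Your plan is correct and follows the same overall strategy as the paper: handle the two missing cases $f_t^\infty$ and $F_t^\infty$ by passing via (pre-)adjoints to $F_t^1$, where Proposition~\ref{prop:essnorm} applies, and use that limit operators and $\mathcal T\mathcal K$ are compatible with taking adjoints. Two small remarks: your invocation of Lemma~\ref{prop:duality}(a) for $A\in\mathcal T^{f_t^\infty}$ is not quite right (that lemma is about $\mathcal C_1(F_t^\infty)$), but the fact you need, namely $A^\ast\in\mathcal C_1(F_t^1)$, is immediate from $\alpha_z(A^\ast)=(\alpha_z(A))^\ast$ and the norm equivalence under duality; and the identity $A=(A^\ast)^\ast|_{f_t^\infty}$ is elementary and needs no BAP.

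There is one genuine difference in execution worth noting. The paper routes the comparison of quotient norms through the \emph{full} essential norms $\|\cdot+\mathcal K\|$: it uses that $\mathcal T\mathcal K=\mathcal K$ on $f_t^\infty$, and then invokes the Axler--Jewell--Shields theorem (which is why the $(1+\varepsilon)$-BAP for the Fock spaces was established in Section~\ref{sec:aux}) to compare $\|A+\mathcal K(f_t^\infty)\|$, $\|A^\ast+\mathcal K(F_t^1)\|$ and $\|A^{\ast\ast}+\mathcal K(F_t^\infty)\|$. Your route is more direct: you stay entirely within the $\mathcal T\mathcal K$-quotients, observing that the symbolic description $\mathcal T\mathcal K=\mathcal T_{lin}^{\mathbb F_t}(C_0(\mathbb C^n))$ makes it transparent that taking (pre-)adjoints or restricting from $F_t^\infty$ to $f_t^\infty$ carries $\mathcal T\mathcal K$ to $\mathcal T\mathcal K$, so the quotient norms are comparable simply because operator norms are comparable under the (non-isometric) duality. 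This avoids the external reference and the BAP machinery entirely, at the cost of the small bookkeeping you anticipated. Both arguments are valid; yours is more self-contained.
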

\begin{proof}
In the reflexive cases, the statement is contained in the previous proposition. Over $f_t^\infty$, we of course have $\mathcal T \mathcal K = \mathcal K$, hence
\begin{align*}
\| A + \mathcal K\| = \| A + \mathcal T \mathcal K\|_{\mathcal T^{\mathbb F_t}/\mathcal T \mathcal K}.
\end{align*}
Since every operator from $\mathcal T^{F_t^1}$ has a pre-adjoint in $\mathcal T^{f_t^\infty}$, we have
\begin{align*}
\| A^\ast + \mathcal T\mathcal K\|_{\mathcal T^{F_t^1}/\mathcal T \mathcal K} \simeq \| A + \mathcal T \mathcal K\|_{\mathcal T^{f_t^\infty}/\mathcal T \mathcal K}
\end{align*}
and similarly
\begin{align*}
\| A^{\ast \ast} + \mathcal T \mathcal K\|_{\mathcal T^{F_t^\infty}/\mathcal T \mathcal K} \simeq \| A + \mathcal T \mathcal K\|_{\mathcal T^{f_t^\infty}/\mathcal T \mathcal K}
\end{align*}
for $A \in \mathcal T^{f_t^\infty}$. Further, since $F_t^1$ and $F_t^\infty$ have the ($\lambda$-BAP) for every $\lambda > 1$, \cite[Theorem 3]{Axler_Jewell_Shields1980} shows that
\begin{align*}
\| A + \mathcal K(f_t^\infty)\|_{\mathcal L(f_t^\infty)/\mathcal K(f_t^\infty)} &\simeq \| A^\ast + \mathcal K(F_t^1)\|_{\mathcal L(F_t^1)/\mathcal K(F_t^1)}\\
&\simeq \| A^{\ast \ast} + \mathcal K(F_t^\infty)\|_{\mathcal L(F_t^\infty)/\mathcal K(F_t^\infty)}
\end{align*}
Now, an application of the previous proposition for the case $p = 1$ finishes the proof.
\end{proof}
\subsection{Reconstructing operators from limit operators}
Given a function $f \in \operatorname{BUC}(\mathbb C^n)$, it is not hard to see that any other $g \in \operatorname{BUC}(\mathbb C^n)$ with $g|_{\partial \mathbb C^n} = f|_{\partial \mathbb C^n}$ is of the form $g = f + h$ for some $h \in C_0(\mathbb C^n)$. Let us reformulate this simple fact and fix it as a lemma:
\begin{lem}\label{lem:reconstr_funct}
Given $f \in C(\partial \mathbb C^n)$, there is some $f_0 \in \operatorname{BUC}(\mathbb C^n)$ with $f_0|_{\partial \mathbb C^n} = f$. Further, $f_0$ is unique modulo $C_0(\mathbb C^n)$.
\end{lem}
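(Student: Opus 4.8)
The plan is to carry the whole argument over to the compactification $\mathcal M = \mathcal M(\operatorname{BUC})$, exploiting the Gelfand identification $\operatorname{BUC}(\mathbb C^n) \cong C(\mathcal M)$ together with the density of $\{\delta_z : z \in \mathbb C^n\}$ in $\mathcal M$. The first point to nail down is that $\partial \mathbb C^n = \mathcal M \setminus \mathbb C^n$ is compact, equivalently that $\mathbb C^n$ is open in $\mathcal M$. For this I would use that $\operatorname{BUC}(\mathbb C^n)$ contains $C_0(\mathbb C^n)$: if $x \in \partial \mathbb C^n$ and $(z_\gamma)$ is a net in $\mathbb C^n$ with $\delta_{z_\gamma} \to x$ in the $w^\ast$-topology, then necessarily $|z_\gamma| \to \infty$ (otherwise a subnet converges in $\mathbb C^n$, forcing $x \in \mathbb C^n$), whence $\Gamma(h)(x) = \lim_\gamma h(z_\gamma) = 0$ for every $h \in C_0(\mathbb C^n)$. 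Consequently, for $z_0 \in \mathbb C^n$ and a fixed $h \in C_c(\mathbb C^n)$ with $h(z_0) = 1$, the set $\{x \in \mathcal M : \Gamma(h)(x) > 1/2\}$ is an open neighbourhood of $\delta_{z_0}$ which is disjoint from $\partial \mathbb C^n$; hence $\mathbb C^n$ is open and $\partial \mathbb C^n$ is closed, thus compact.

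For the existence of $f_0$ I would simply invoke the Tietze extension theorem: $\mathcal M$ is compact Hausdorff, hence normal, and $\partial \mathbb C^n$ is a closed subset, so the given $f \in C(\partial \mathbb C^n)$ extends to some $F \in C(\mathcal M)$. Under the Gelfand isomorphism $F$ corresponds to a function $f_0 \in \operatorname{BUC}(\mathbb C^n)$, namely $f_0 = F|_{\mathbb C^n}$, and since $\mathbb C^n$ is dense in $\mathcal M$ the Gelfand transform $\Gamma(f_0)$ is exactly $F$; in particular $f_0|_{\partial \mathbb C^n} = F|_{\partial \mathbb C^n} = f$, as required.

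For the uniqueness statement the key claim is that, under the identification $\operatorname{BUC}(\mathbb C^n) \cong C(\mathcal M)$, the ideal $C_0(\mathbb C^n)$ is precisely $\{F \in C(\mathcal M) : F|_{\partial \mathbb C^n} = 0\}$. One inclusion is the computation from the first paragraph (every $h \in C_0(\mathbb C^n)$ has Gelfand transform vanishing on $\partial \mathbb C^n$); the reverse inclusion follows either from the standard fact that a closed ideal of a commutative C\*-algebra equals the set of elements whose Gelfand transform vanishes on its hull (and the hull of $C_0(\mathbb C^n)$ is exactly $\partial \mathbb C^n$, since points of $\mathbb C^n$ are separated from $0$ by $C_c$-functions), or by a direct net-to-infinity argument analogous to the one above. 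Granting this, if $f_0, g_0 \in \operatorname{BUC}(\mathbb C^n)$ both restrict to $f$ on $\partial \mathbb C^n$, then $\Gamma(f_0 - g_0)$ vanishes on $\partial \mathbb C^n$, so $f_0 - g_0 \in C_0(\mathbb C^n)$; conversely, adding any element of $C_0(\mathbb C^n)$ to $f_0$ leaves the restriction to $\partial \mathbb C^n$ unchanged.

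I do not expect a genuine obstacle here: once one recalls that $\mathcal M$ is a compactification of $\mathbb C^n$ whose defining algebra contains $C_0(\mathbb C^n)$, both the openness of $\mathbb C^n$ in $\mathcal M$ and the description of $C_0(\mathbb C^n)$ as the functions vanishing on $\partial \mathbb C^n$ are routine Gelfand-theoretic facts, and Tietze supplies the extension. The only mild care needed is in these two topological observations; everything else is formal.
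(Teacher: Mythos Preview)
Your proof is correct and follows essentially the same route as the paper: Tietze extension on the compact Hausdorff space $\mathcal M$ together with the Gelfand identification $\operatorname{BUC}(\mathbb C^n)\cong C(\mathcal M)$, and uniqueness via the identification of $C_0(\mathbb C^n)$ with the functions vanishing on $\partial\mathbb C^n$. You supply more detail than the paper does (in particular the openness of $\mathbb C^n$ in $\mathcal M$ and the hull--kernel description of $C_0$), but the underlying argument is the same.
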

\begin{proof}
$\partial \mathbb C^n$ is a closed subspace of $\mathcal M$, which is compact and Hausdorff, hence normal. Thus, Tietze's extension theorem allows us to extend $f \in C(\partial \mathbb C^n)$ to $f_0 \in C(\mathcal M)$. But any function in $C(\mathcal M)$ is in $\operatorname{BUC}(\mathbb C^n)$. The uniqueness modulo $C_0(\mathbb C^n)$ has already been mentioned above.
\end{proof}
Note that, in the notation of the lemma, $f$ contains all the information about the limit functions of $f_0$: $(f_0)_x(w)$ is simply $\beta f( \tau_w(x))$. Hence, the above lemma says that we can reconstruct functions from $\operatorname{BUC}(\mathbb C^n)$ from their limit functions, at least modulo $C_0(\mathbb C^n)$. The same is indeed true for operators from $\mathcal T^{\mathbb F_t}$, nevertheless the proof needs more explanation.

The principal result in the discussion of compatible families of limit operators is the following:
\begin{thm}\label{thm:reconstr_limit}
Let $\gamma$ be a compatible family of limit operators. Then, there exists $A \in \mathcal T^{\mathbb F_t}$ such that $\gamma(x) = A_x$ for every $x \in \partial \mathbb C^n$. $A$ is unique modulo $\mathcal T \mathcal K$.
\end{thm}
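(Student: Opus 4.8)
The plan is to reconstruct $A$ by integrating the given family $\gamma$ against a Gaussian and then lifting the resulting ``symbol on the boundary'' to an operator via the correspondence machinery. Concretely, consider the scalar function $b \in C(\partial \mathbb C^n)$ defined by $b(x) := \langle \gamma(x) 1, 1 \rangle$; this is continuous by property (1) of a compatible family. By Lemma \ref{lem:reconstr_funct} there is some $b_0 \in \operatorname{BUC}(\mathbb C^n)$ with $b_0|_{\partial \mathbb C^n} = b$, unique modulo $C_0(\mathbb C^n)$. The compatibility condition (2), $\alpha_z(\gamma(x)) = \gamma(\tau_{-z}(x))$, should force $\langle \gamma(\tau_{-z}(x)) 1, 1\rangle = \widetilde{\gamma(x)}(z)$ in an appropriate sense, so that $b_0$ really encodes the full Berezin data of the family along each orbit, not just the value at the origin. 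The candidate operator is then $A := T_{b_0}^t$, which lies in $\mathcal T^{\mathbb F_t}$ by definition; one must check $A_x = \gamma(x)$ for all $x \in \partial \mathbb C^n$, and uniqueness modulo $\mathcal T\mathcal K$ will follow since $A_x = A'_x$ for all $x \in \partial \mathbb C^n$ implies $A - A' \in \mathcal T\mathcal K$ (the kernel computed in Lemma \ref{lem:limit_alg}).

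The key steps, in order: (i) verify that $(T_{b_0}^t)_x = T_{(b_0)_x}^t$, which is immediate from the construction of limit operators for Toeplitz symbols described in the excerpt (strong-operator convergence on polynomials plus uniform boundedness; for $F_t^\infty$ pass through $f_t^\infty$); (ii) show $(b_0)_x$, as a function in $\operatorname{BUC}(\mathbb C^n)$, satisfies $T^t_{(b_0)_x} = \gamma(x)$. For (ii), the natural route is again the Correspondence Theorem applied \emph{fiberwise}: fix $x$, set $B := \gamma(x) \in \mathcal C_1(\mathbb F_t)$ (it lies in $\mathcal T^{\mathbb F_t} \subset \mathcal C_1(\mathbb F_t)$ by Corollary \ref{cor:toep_c1}), and compute $\widetilde{B}$. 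Property (2) gives $\widetilde{B}(z) = \langle \alpha_{-z}(\gamma(x)) 1, 1\rangle = \langle \gamma(\tau_z(x)) 1, 1\rangle = b(\tau_z(x)) = \beta b_0(\tau_z(x)) = (b_0)_x(z)$ up to the reflection bookkeeping noted after Lemma \ref{lem:reconstr_funct}. Since $g_t \ast B = T_{\widetilde{B}}^t$ for any $B \in \mathcal C_1(\mathbb F_t)$, and since one shows (mimicking the proof of the Correspondence Theorem, using Wiener's approximation theorem and property (4) to control the convergence $g_{t/N} \ast B \to B$ \emph{uniformly in $x$}) that $B$ is recovered from $T^t_{\widetilde{B}}$, we conclude $\gamma(x) = T^t_{(b_0)_x} = (T_{b_0}^t)_x$.

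The main obstacle I anticipate is step (ii), specifically the passage ``from the scalar Berezin data $b$ back to the operator $\gamma(x)$'' uniformly over the boundary. The Correspondence Theorem gives, for a single fixed $B \in \mathcal C_1(\mathbb F_t)$, that $B \in \mathcal T_{lin}^{\mathbb F_t}(\mathcal D_0)$ iff $\widetilde B \in \mathcal D_0$; but here I need the reconstruction $B = \lim_N \sum_j c_j^N \alpha_{z_j^N}(T_{\widetilde B}^t)$ to hold with error bounds controlled by $\|B - g_{t/N} \ast B\|$, and the whole point is that this quantity must be handled uniformly in $x$ — this is exactly what hypothesis (4), uniform equicontinuity of $\{z \mapsto \alpha_z(\gamma(x))\}$, is designed to provide, since $\|B - g_s \ast B\| \le \sup_{|w| \le r}\|\alpha_w(B) - B\| + 2\|B\|\int_{|w|>r} g_s$. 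Making this estimate precise, and confirming that $b_0$ (hence $T_{b_0}^t$) is genuinely independent modulo $C_0$ of the Tietze extension chosen, so that $A$ is well-defined modulo $\mathcal T\mathcal K$, is where the real work lies; the rest is bookkeeping with the group actions $\tau, \nu, \beta$ and the already-established identities $\alpha_z(A_x) = A_{\tau_{-z}(x)}$ and $g_t \ast A = T_{\widetilde A}^t$.
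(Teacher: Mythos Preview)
Your overall strategy---extract the scalar function $b(x) = \langle \gamma(x)1,1\rangle$, lift it to $b_0 \in \operatorname{BUC}(\mathbb C^n)$ via Tietze, then use the Wiener approximation scheme with uniform control coming from hypothesis (4)---matches the paper's. But there is a genuine gap in your choice of candidate operator.

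You set $A := T_{b_0}^t$ and then claim $\gamma(x) = T_{(b_0)_x}^t$. This is false in general. What you correctly compute is $\widetilde{\gamma(x)} = (b_0)_x$ (up to the $\beta,\nu$ bookkeeping), and the identity $g_t \ast B = T_{\widetilde B}^t$ then gives only $T_{(b_0)_x}^t = g_t \ast \gamma(x)$, not $\gamma(x)$ itself. So your candidate $A = T_{b_0}^t$ has limit operators $(T_{b_0}^t)_x = g_t \ast \gamma(x)$, which is the wrong family. Your own third paragraph implicitly concedes this: you write the reconstruction as $B = \lim_N \sum_j c_j^N \alpha_{z_j^N}(T_{\widetilde B}^t)$, which is an infinite approximation procedure, not the single-step identity $B = T_{\widetilde B}^t$ that your second paragraph invokes.

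The missing step is this: the uniform (in $x$) convergence of $\sum_j c_j^N \alpha_{z_j^N}(T_{(b_0)_x}^t) \to \gamma(x)$ that you obtain from hypothesis (4) does \emph{not} directly produce an operator $A$; it only tells you that the \emph{limit-operator families} of the sequence $A_N := \sum_j c_j^N \alpha_{z_j^N}(T_{b_0}^t)$ are uniformly Cauchy. To lift this to a Cauchy sequence in $\mathcal T^{\mathbb F_t}/\mathcal T\mathcal K$ you must invoke Corollary \ref{cor:essnorm}, namely $\|A + \mathcal T\mathcal K\| \simeq \sup_{x \in \partial \mathbb C^n}\|A_x\|$. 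Then $(A_N + \mathcal T\mathcal K)_N$ converges in the quotient to some $A_0 + \mathcal T\mathcal K$, and \emph{this} $A_0$ (after a reflection) is the operator you want. The paper's proof does exactly this; the appeal to Corollary \ref{cor:essnorm} is the step your proposal never makes.
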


The proof will make use of the following lemma:
\begin{lem}
Let $S \subset \mathcal T^{\mathbb F_t}$ be a norm bounded subset such that the family of functions
\begin{align*}
\{ z\mapsto \alpha_z(A): ~A \in S\}
\end{align*}
is uniformly equicontinuous with respect to the operator norm, i.e.\ for each $\varepsilon> 0$ there exist $\delta > 0$ such that
\begin{align*}
\sup_{A \in S} \| \alpha_z(A) - A\| < \varepsilon
\end{align*} 
for $|z| < \delta$. For each $N \in \mathbb N$, let $c_j^N \in \mathbb C$, $z_j^N \in \mathbb C^n$ be finitely many constants such that
\begin{align*}
\| g_{t/N} - \sum_j c_j^N \alpha_{z_j^N}(g_t)\|_{L^1} \leq \frac{1}{N}.
\end{align*}
Then, the convergence 
\begin{align*}
\| A - \sum_j c_j^N \alpha_{z_j^N}(T_{\widetilde{A}}^t)\| \to 0, \quad N \to \infty
\end{align*}
is uniform in $A \in S$.
\end{lem}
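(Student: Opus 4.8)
\textit{Proof idea (plan).} The plan is to re-run the approximate-identity estimate from the proof of the Correspondence Theorem \ref{Correspondencetheorem}, only this time keeping explicit track of uniformity over $S$. Since $S \subset \mathcal T^{\mathbb F_t} = \mathcal C_1(\mathbb F_t)$, the identity $g_t \ast A = T_{\widetilde A}^t$ applies to every $A \in S$, and by the module properties recorded above, $\alpha_{z_j^N}(T_{\widetilde A}^t) = \alpha_{z_j^N}(g_t \ast A) = \alpha_{z_j^N}(g_t) \ast A$. Hence
\begin{align*}
A - \sum_j c_j^N \alpha_{z_j^N}(T_{\widetilde A}^t) = (A - g_{t/N} \ast A) + \Bigl(g_{t/N} - \sum_j c_j^N \alpha_{z_j^N}(g_t)\Bigr) \ast A,
\end{align*}
and the second summand has norm at most $\frac1N \|A\| \le \frac1N \sup_{B\in S}\|B\|$, which tends to $0$ uniformly in $A \in S$ because $S$ is norm bounded. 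Everything therefore reduces to proving that $\sup_{A\in S}\|A - g_{t/N}\ast A\| \to 0$ as $N \to \infty$.

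For this, I would use that $\int_{\mathbb C^n} g_s(z)\,dz = 1$ to write $A - g_s \ast A = \int_{\mathbb C^n} g_s(z)\,(A - \alpha_z(A))\,dz$, so that
\begin{align*}
\|A - g_s \ast A\| \le \int_{\mathbb C^n} g_s(z)\,\|A - \alpha_z(A)\|\,dz.
\end{align*}
Fix $\varepsilon > 0$. By the uniform equicontinuity hypothesis there is $\delta > 0$ with $\sup_{A\in S}\|A - \alpha_z(A)\| < \varepsilon$ whenever $|z| < \delta$. Splitting the integral at $|z| = \delta$, the inner part is $\le \varepsilon$ for every $A \in S$, while the outer part is $\le 2\sup_{B\in S}\|B\| \int_{|z|\ge\delta} g_s(z)\,dz$. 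Since the Gaussians $g_s$ concentrate at the origin, $\int_{|z|\ge\delta} g_s(z)\,dz \to 0$ as $s \to 0$, so for all sufficiently small $s$ (in particular for $s = t/N$ with $N$ large) the outer part is also $< \varepsilon$, uniformly in $A \in S$. This yields $\sup_{A\in S}\|A - g_{t/N}\ast A\| \to 0$ and finishes the argument.

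I do not expect a genuine obstacle here: the statement is precisely the uniform refinement of the approximate-identity step already used for the Correspondence Theorem. The only point requiring care is to invoke the hypothesis of uniform equicontinuity of the family $\{z \mapsto \alpha_z(A): A \in S\}$ (rather than mere continuity for each fixed $A$) exactly where $\|A - \alpha_z(A)\|$ is estimated for small $z$; the norm boundedness of $S$ then simultaneously controls the Gaussian tail and the Wiener-approximation error term.
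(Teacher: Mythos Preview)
Your proposal is correct and follows essentially the same approach as the paper: both split $A - \sum_j c_j^N \alpha_{z_j^N}(T_{\widetilde A}^t)$ into the approximate-identity term $A - g_{t/N}\ast A$ and the Wiener-approximation term, control the latter by $\tfrac1N\sup_{B\in S}\|B\|$, and handle the former by the integral bound $\int g_{t/N}(z)\|A-\alpha_z(A)\|\,dz$, splitting at $|z|=\delta$ to exploit uniform equicontinuity near the origin and the Gaussian tail outside. The only cosmetic difference is that the paper writes the final bound as $\varepsilon(1+2\|A\|)$ while you phrase it as two separate $\varepsilon$-contributions.
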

\begin{proof}
As in the proof of the Correspondence Theorem, we have
\begin{align*}
\| A - \sum_j c_j^N \alpha_{z_j^N}(T_{\widetilde{A}}^t)\| \leq \| A - g_{t/N} \ast A\| + \| g_{t/N} - \sum_j c_j \alpha_{z_j}(g_t)\|_{L^1} \| A\|.
\end{align*}
The second term clearly converges uniformly to $0$, as $\sup_{A \in S} \| A\| < \infty$. For the first term, observe that
\begin{align*}
\| A - g_{t/N} \ast A\| \leq \int_{\mathbb C^n} g_{t/N}(w) \| A -  \alpha_w(A)\| ~dw
\end{align*}
Now, for $\varepsilon > 0$ let us choose $\delta > 0$ such that
\begin{align*}
\| A - \alpha_w(A)\| < \varepsilon
\end{align*}
for every $A \in S$. Pick $N$ large enough such that
\begin{align*}
\int_{B(0, \delta)^c} g_{t/N}(w) ~dw < \varepsilon.
\end{align*}
Then, 
\begin{align*}
\int_{\mathbb C^n} g_{t/N}(w) \| A -  \alpha_w(A)\| ~dw &\leq 2\varepsilon\| A\| + \int_{B(0, \delta)} g_{t/N}(w) \| A - \alpha_w(A)\|~dw\\
&\leq 2\varepsilon\| A\| + \varepsilon \int_{B(0, \delta)} g_{t/N}(w)~dw\\
&\leq \varepsilon(1 + 2\| A\|).
\end{align*}
This proves uniform convergence of the approximation scheme.
\end{proof}
\begin{proof}[Proof of Theorem \ref{thm:reconstr_limit}]
Consider the function $f(x) = \langle \gamma(x)1, 1\rangle$. Then, $f \in C(\partial \mathbb C^n)$. Note that 
\begin{align*}
\widetilde{\gamma(x)}(z) = \langle \alpha_{-z}(\gamma(x)) 1, 1\rangle = \langle \gamma(\tau_{z}(x)) 1, 1\rangle = f(\tau_z(x)).
\end{align*}
Pick a function $f_0 \in \operatorname{BUC}(\mathbb C^n)$ according to Lemma \ref{lem:reconstr_funct} such that $f_0|_{\partial \mathbb C^n} = f$. Let the constants $c_j^N$ and $z_j^N$ be as in the previous lemma. Note that the limit operators of
\begin{align*}
\sum_j c_j^N \alpha_{z_j^N}(T_{f_0}^t)
\end{align*}
are given by
\begin{align*}
\left[ \sum_j c_j^N \alpha_{z_j^N}(T_{f_0}^t) \right]_x = \sum_{j} c_j^N \alpha_{z_j^N} T_{(f_0)_x}^t,
\end{align*}
where the limit functions are 
\begin{align*}
(f_0)_x(w) = f(\nu(\tau_w(x)) = f(\tau_{-w}(\nu (x))).
\end{align*}
Since $\{ z \mapsto \alpha_z(\gamma(x)):~x \in \partial \mathbb C^n\}$ is uniformly equicontinuous, it is no problem to verify that $\{ z \mapsto \alpha_{-z}(\gamma(x)): ~x \in \partial \mathbb C^n\}$ is also uniformly equicontinuous. Therefore, the previous lemma implies that
\begin{align*}
\sup_{x \in \partial \mathbb C^n} &\| \beta(\gamma(\nu (x))) - \sum_j c_j^N \alpha_{z_j^N}(T_{\beta \widetilde{\gamma(\nu(x))}}^t)\| \\
&= \sup_{x \in \partial \mathbb C^n} \| \beta(\gamma(\nu(x))) - \sum_j c_j^N \alpha_{z_j^N} (T_{f(\tau_{-(\cdot)}(\nu (x)))}^t)\|\\
&= \sup_{x \in \partial \mathbb C^n} \| \beta(\gamma(\nu(x))) - \sum_j c_j^N \alpha_{z_j^N} (T_{(f_0)_x}^t)\|\\
&\to 0, \quad N \to \infty.
\end{align*}
Hence, the family of sequences
\begin{align*}
\{ (\sum_j c_j^N \alpha_{z_j^N} (T_{(f_0)_x}^t))_{N \in \mathbb N}: ~x \in \partial \mathbb C^n\}
\end{align*}
is uniformly Cauchy. 
Now, this implies by Corollary \ref{cor:essnorm} that the sequence
\begin{align*}
(\sum_j c_j^N \alpha_{z_j^N} (T_{f_0}^t) + \mathcal T \mathcal K)_{N\in \mathbb N}
\end{align*}
is Cauchy in $\mathcal T^{\mathbb F_t}/\mathcal T \mathcal K$. Let $A_0 + \mathcal T \mathcal K$ be the limit with some representative $A_0 \in \mathcal T^{\mathbb F_t}$. Then,
\begin{align*}
(A_0)_x = \lim_N \sum_j c_j^N \alpha_{z_j^N}(T_{(f_0)_x}^t) = \beta(\gamma(\nu(x)),
\end{align*}
hence for any net $z_\gamma \to x$:
\begin{align*}
\alpha_{z_\gamma}(\beta(A_0)) = \beta(\alpha_{-z_\gamma}(A_0)) \overset{\gamma}{\longrightarrow} \beta(\beta( \gamma(x))) = \gamma(x),
\end{align*}
i.e.\ $\gamma(x) = A_x$ with $A = \beta(A_0)$.
\end{proof}
The following is now an immediate consequence of Theorem \ref{thm:reconstr_limit}.
\begin{thm}\label{cor:quotientalgebra}
$\mathfrak{lim} \mathcal T^{\mathbb F_t}$ is a Banach algebra. The map
\begin{align*}
\mathcal T^{\mathbb F_t}/\mathcal T \mathcal K \ni A + \mathcal T \mathcal  K \mapsto [\gamma_A(x) = A_x] \in \mathfrak{lim} \mathcal T^{\mathbb F_t}
\end{align*}
is an isomorphism of unital Banach algebras.
\end{thm}
\begin{proof}
In Lemma \ref{lem:limit_alg} we already established that the map is an injective homomorphism of normed algebras onto its range. By Theorem \ref{thm:reconstr_limit}, the map is onto $\mathfrak{lim} \mathcal{T}^{\mathbb F_t}$. By Proposition \ref{prop:essnorm}, it is continuous with continuous inverse. Since $\mathcal T^{\mathbb F_t}/\mathcal T \mathcal K$ is complete, we also obtain that $\mathfrak{lim} \mathcal T^{\mathbb F_t}$ is complete.
\end{proof}

\subsection{Fredholmness through invertibility of limit operators}
Before coming to the Fredholm property, observe the following:
\begin{prop}
Let $A \in \mathcal T^{\mathbb F_t}$ with $A_x$ invertible for each $x \in \partial \mathbb C^n$. Then, it holds true that $\sup_{x\in \partial \mathbb C^n} \| A_x^{-1}\| < \infty$.
\end{prop}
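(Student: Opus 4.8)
The plan is to argue by contradiction, using a localization procedure to manufacture a nonzero vector in the kernel of one of the limit operators $A_y$ — which is impossible, since every $A_y$ is assumed invertible and hence injective. So suppose $\sup_{x \in \partial \mathbb C^n}\|A_x^{-1}\| = \infty$. The case $\mathbb F_t = F_t^\infty$ reduces to $\mathbb F_t = f_t^\infty$: we have $A_x = ((A|_{f_t^\infty})_x)^{\ast\ast}$, and a bounded operator on a Banach space is invertible (with the same norm of the inverse) iff its bi-adjoint is; so we assume $\mathbb F_t \neq F_t^\infty$. Since each $A_x$ is invertible, $\|A_x^{-1}\|^{-1} = \inf_{\|f\| = 1}\|A_x f\|$, so there exist $x_k \in \partial \mathbb C^n$ and unit vectors $f_k \in \mathbb F_t$ with $\|A_{x_k} f_k\| \to 0$.

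\emph{Localization.} By Theorem \ref{thm:suffloc}, $A \in \mathcal A_{sl}^{\mathbb F_t}$; since $W_{-z}^t k_w^t = k_{w-z}^t$ up to a unimodular scalar, the kernels $\langle A_x k_w^t, k_z^t\rangle$ decay off the diagonal uniformly in $x \in \partial \mathbb C^n$, i.e. $\{A_x : x \in \partial \mathbb C^n\}$ is uniformly sufficiently localized. Fix a partition of unity $\{\chi_\lambda\}_{\lambda \in \Lambda}$ subordinate to a covering of $\mathbb C^n$ by unit balls about a lattice $\Lambda$. With $f_{k,\lambda} := T_{\chi_\lambda}^t f_k$ we have $f_k = \sum_\lambda f_{k,\lambda}$, and the standard Fock-space atomic decomposition gives $\sum_\lambda \|f_{k,\lambda}\|^p \simeq \|f_k\|^p = 1$ (for $\mathbb F_t = f_t^\infty$, $\sup_\lambda \|f_{k,\lambda}\| \simeq 1$), with each $f_{k,\lambda}$ concentrated near $\lambda$ with tails uniform in $k, \lambda$. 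A Schur-type estimate built on the uniform localization of $A_{x_k}$ then yields $\sum_\lambda \|A_{x_k} f_{k,\lambda}\|^p \lesssim \|A_{x_k} f_k\|^p + o(1) \to 0$. Comparing the two sums produces $\lambda_k$ with $\|A_{x_k} g_k\| \to 0$, where $g_k := f_{k,\lambda_k}/\|f_{k,\lambda_k}\|$ is a unit vector concentrated near $\lambda_k$ uniformly in $k$. Put $h_k := W_{-\lambda_k}^t g_k$ and $y_k := \tau_{\lambda_k}(x_k)$; then $y_k \in \partial \mathbb C^n$ (translation preserves $\partial \mathbb C^n$), and from $W_{-\lambda}^t A_x W_{\lambda}^t = A_{\tau_\lambda(x)}$ and the fact that Weyl operators are isometries, $h_k$ is a unit vector concentrated near $0$ uniformly in $k$ with $\|A_{y_k} h_k\| = \|A_{x_k} g_k\| \to 0$.

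\emph{Extraction.} Each $h_k$ is holomorphic and $\mathbb F_t$-bounded, so a subnet converges uniformly on compacts to a holomorphic $h$, and we may also assume $y_k \to y \in \partial \mathbb C^n$. Uniform concentration near $0$ forces $h \in \mathbb F_t$ (Fatou; for $f_t^\infty$ the concentration passes to the limit, so $h e^{-|\cdot|^2/(2t)} \in C_0(\mathbb C^n)$) and $h \neq 0$ (the $L^p$-mass of $h$ on a fixed ball stays bounded below). Finally, using the uniform localization of the (also sufficiently localized) adjoints $A_{y_k}^\ast$ together with strong-operator convergence $A_{y_k} \to A_y$, one checks that $A_{y_k} h_k \to A_y h$ weakly; since $\|A_{y_k} h_k\| \to 0$, this gives $A_y h = 0$, contradicting injectivity of $A_y$.

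\emph{Main obstacle.} The crux is the localization step: establishing that $\{A_x : x \in \partial \mathbb C^n\}$ is uniformly sufficiently localized and deriving the Schur-type inequality $\sum_\lambda \|A_{x_k} f_{k,\lambda}\|^p \lesssim \|A_{x_k} f_k\|^p + o(1)$. These estimates parallel those of \cite{Fulsche_Hagger}; as with the other results of this section, they use nothing about reflexivity and transfer to $p = 1, \infty$ and to $f_t^\infty$ (with $\ell^p$-sums replaced by suprema in the latter).
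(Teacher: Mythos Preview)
Your strategy coincides with the paper's: both rest on the limit-operator machinery of \cite{Fulsche_Hagger}. The paper's proof is in fact terser than your sketch --- it simply observes that the argument of \cite{Fulsche_Hagger}, written for $1<p<\infty$, carries over verbatim to $p=1$, and then deduces the cases $f_t^\infty$ and $F_t^\infty$ by duality from $F_t^1$ via $\|A_x^{-1}\| \simeq \|(A_x^\ast)^{-1}\|$. You instead attempt $f_t^\infty$ directly with suprema in place of $\ell^p$-sums; this is feasible but the duality route is cleaner and avoids having to rework the Schur-type estimate in a new norm.

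One step in your sketch deserves a caveat: from $A \in \mathcal A_{sl}^{\mathbb F_t} = \mathcal T^{\mathbb F_t}$ you cannot infer that the kernels $\langle A_x k_w^t, k_z^t\rangle$ decay off the diagonal. Membership in $\mathcal A_{sl}^{\mathbb F_t}$ only says $A$ lies in the \emph{closure} of the algebra generated by sufficiently localized operators; it does not furnish the bound \eqref{eq:sufflocal} for $A$ itself. The remedy is standard --- approximate $A$ in norm by a finite sum of products of sufficiently localized operators $B$, which \emph{is} genuinely sufficiently localized, and absorb the uniformly small $\|A_x - B_x\| \leq \|A-B\|$ into the error terms --- and this is precisely what the details in \cite{Fulsche_Hagger} supply, so your deferral to that reference is appropriate. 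A smaller remark: in the extraction step, uniform concentration of the $h_k$ near $0$ together with locally uniform convergence actually forces $h_k \to h$ in the $\mathbb F_t$-norm, so $A_{y_k} h_k \to A_y h$ in norm directly; the detour through weak convergence and localization of the adjoints is unnecessary.
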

\begin{proof}
For $1 < p < \infty$, the statement is contained in \cite{Fulsche_Hagger}. Again note that, while not every proof of that paper carries over to $p = 1$, all the ingredients needed for this particular statement can be proven verbatim. Hence, the same proof yields the proposition over $F_t^1$. Now, using the fact that $A_x$ is invertible if and only if $A_x^\ast$ is, and further $\| A_x^{-1}\| \simeq \| (A_x^{-1})^\ast\| = \| (A_x^\ast)^{-1}\|$, the statement follows also over $f_t^\infty$ and $F_t^\infty$.
\end{proof}
\begin{thm}
Let $A \in \mathcal T^{\mathbb F_t}$. Then, $A$ is Fredholm iff $A_x$ is invertible for every $x \in \partial \mathbb C^n$.
\end{thm}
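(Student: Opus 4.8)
The plan is to read the Fredholm property off the Banach-algebra isomorphism $\mathcal T^{\mathbb F_t}/\mathcal T\mathcal K \cong \mathfrak{lim}\mathcal T^{\mathbb F_t}$, $A + \mathcal T\mathcal K \mapsto [\gamma_A(x) = A_x]$, established above. By the earlier Fredholm criterion (an operator in $\mathcal T^{\mathbb F_t}$ is Fredholm if and only if it is invertible in $\mathcal T^{\mathbb F_t}$ modulo $\mathcal T\mathcal K$), $A$ is Fredholm iff $A + \mathcal T\mathcal K$ is invertible in $\mathcal T^{\mathbb F_t}/\mathcal T\mathcal K$, hence, transporting along the isomorphism, iff $\gamma_A$ is invertible in the Banach algebra $\mathfrak{lim}\mathcal T^{\mathbb F_t}$. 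Since the product there is the pointwise product, the whole statement reduces to showing that $\gamma_A$ is invertible in $\mathfrak{lim}\mathcal T^{\mathbb F_t}$ if and only if $A_x$ is invertible for every $x \in \partial\mathbb C^n$.

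The forward implication is immediate: if $\gamma$ is the inverse family of $\gamma_A$, then $\gamma(x)A_x = \operatorname{Id} = A_x\gamma(x)$ for each $x$, so $A_x$ is invertible. Spelled out, this is the ``only if'' direction of the theorem: a Fredholm regularizer $B \in \mathcal T^{\mathbb F_t}$ of $A$, which exists by the Fredholm criterion, satisfies $AB - \operatorname{Id}, BA - \operatorname{Id} \in \mathcal T\mathcal K$, and since $\mathcal T\mathcal K$ is the kernel of $A \mapsto \gamma_A$ by Lemma \ref{lem:limit_alg} (a homomorphism) and $\gamma_{\operatorname{Id}}(x) = \operatorname{Id}$, we get $A_xB_x = \operatorname{Id} = B_xA_x$ for all $x$.

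For the converse, assume every $A_x$ is invertible. First I would invoke the preceding proposition to fix $M := \sup_{x \in \partial\mathbb C^n}\|A_x^{-1}\| < \infty$; together with inverse-closedness of $\mathcal T^{\mathbb F_t}$ (Corollary \ref{cor:inv_closed}) and $A_x \in \mathcal T^{\mathbb F_t}$, this gives $A_x^{-1} \in \mathcal T^{\mathbb F_t}$. The goal is then to check that $\gamma(x) := A_x^{-1}$ is itself a compatible family of limit operators. Condition (2) holds because conjugation by the isometry $W_z^t$ is an algebra automorphism, so $\alpha_z(A_x^{-1}) = \alpha_z(A_x)^{-1} = A_{\tau_{-z}(x)}^{-1} = \gamma(\tau_{-z}(x))$; condition (3) is just $\sup_x\|\gamma(x)\| = M < \infty$. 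For condition (4) I would use the resolvent identity
\begin{align*}
\alpha_z(A_x^{-1}) - \alpha_w(A_x^{-1}) = \alpha_z(A_x)^{-1}\bigl(\alpha_w(A_x) - \alpha_z(A_x)\bigr)\alpha_w(A_x)^{-1}
\end{align*}
together with $\|\alpha_z(A_x)^{-1}\| = \|A_{\tau_{-z}(x)}^{-1}\| \le M$, which turns the uniform equicontinuity of $\{z \mapsto \alpha_z(A_x)\}_x$ (condition (4) for $\gamma_A$) into uniform equicontinuity of $\{z \mapsto \alpha_z(A_x^{-1})\}_x$.

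The one genuinely nontrivial point is condition (1), continuity of $x \mapsto \langle A_x^{-1}1, 1\rangle$. I plan to derive it from the fact, built into the construction of the limit operators, that $x \mapsto A_x$ is continuous from $\partial\mathbb C^n$ into $\mathcal L(\mathbb F_t)$ with the strong operator topology: for a net $x_\gamma \to x$ in $\partial\mathbb C^n$ and $f \in \mathbb F_t$,
\begin{align*}
\|A_{x_\gamma}^{-1}f - A_x^{-1}f\| = \|A_{x_\gamma}^{-1}(A_x - A_{x_\gamma})A_x^{-1}f\| \le M\,\|(A_x - A_{x_\gamma})(A_x^{-1}f)\| \longrightarrow 0,
\end{align*}
so $x \mapsto A_x^{-1}$ is strong-operator continuous as well, and in particular $x \mapsto \langle A_x^{-1}1, 1\rangle$ is continuous. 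With all four conditions verified, Theorem \ref{thm:reconstr_limit} produces $B \in \mathcal T^{\mathbb F_t}$ with $B_x = A_x^{-1}$ for every $x$; then $(AB - \operatorname{Id})_x = A_xB_x - \operatorname{Id} = 0$ and likewise $(BA - \operatorname{Id})_x = 0$, so $AB - \operatorname{Id}$ and $BA - \operatorname{Id}$ lie in the kernel $\mathcal T\mathcal K \subset \mathcal K(\mathbb F_t)$ of $A \mapsto \gamma_A$, whence $A$ is Fredholm. The main obstacle is precisely this continuity step: it is where the uniform bound $\sup_x\|A_x^{-1}\| < \infty$ from the preceding proposition and the strong-operator continuity of $x \mapsto A_x$ are essential, since together they promote pointwise inversion to a continuous --- hence ``compatible'' --- operation on limit families; everything else is bookkeeping with the correspondence already in place.
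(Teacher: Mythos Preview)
Your proof is correct and follows essentially the same approach as the paper: reduce Fredholmness to invertibility modulo $\mathcal T\mathcal K$, then verify that $\gamma(x) = A_x^{-1}$ is a compatible family of limit operators (using inverse-closedness, the uniform bound $\sup_x\|A_x^{-1}\|<\infty$, the resolvent identity for SOT continuity, and equicontinuity of the inverses) and invoke Theorem~\ref{thm:reconstr_limit}. The only cosmetic difference is that the paper derives condition~(4) from a Neumann-series estimate rather than your direct resolvent identity, and you phrase the overall argument through the Banach-algebra isomorphism $\mathcal T^{\mathbb F_t}/\mathcal T\mathcal K \cong \mathfrak{lim}\,\mathcal T^{\mathbb F_t}$, but the substance is identical.
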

\begin{proof}
We already know that $A$ being Fredholm is equivalent to $A$ being invertible in $\mathcal T^{\mathbb F_t}$ modulo $\mathcal T \mathcal K$. Clearly, invertibility in $\mathcal T^{\mathbb F_t}$ modulo $\mathcal T \mathcal K$ of $A$ implies that $A_x$ is invertible for every $x \in \partial \mathbb C^n$. Only the other implication needs a proof. 

Let $A_x$ be invertible for every $x \in \partial \mathbb C^n$. Then, all the inverses lie in $\mathcal T^{\mathbb F_t}$. Since $\alpha_z(A_x^{-1})$ is an inverse to $\alpha_z(A_x)$, we obtain $\alpha_z(A_x^{-1}) = A_{\tau_{-z}(x)}^{-1}$. Further, $x \mapsto A_x^{-1}$ is continuous in strong operator topology: This is a consequence of the second resolvent identity
\begin{align*}
A_x^{-1} - A_y^{-1} = A_x^{-1}(A_y - A_x) A_y^{-1}
\end{align*}
and the uniform boundedness of the $A_y^{-1}$, which follows from the previous proposition. Finally, note that $\{ z \mapsto \alpha_z(A_x^{-1}): ~x \in \partial \mathbb C^n\}$ is uniformly equicontinuous. This is a consequence of the standard estimate
\begin{align*}
\| \alpha_z(A_x^{-1}) - A_x^{-1}\| \leq \frac{\| A_x^{-1}\| \| \alpha_z(A_x) - A_x\|}{1 - \| \alpha_z(A_x) - A_x\| \| A_x^{-1}\|},
\end{align*}
following from the standard Neumann series argument, using $(\alpha_z(A_x))^{-1} = \alpha_z(A_x^{-1})$, and is valid for each $z$ with $\| \alpha_z(A_x) - A_x\| < \frac{1}{\sup_{y \in \partial \mathbb C^n} \| A_y^{-1}\|}$.

These facts show that $\gamma(x) = A_x^{-1}$ is a compatible family of limit operators. Therefore, Theorem \ref{thm:reconstr_limit} shows that there exists $B \in \mathcal T^{\mathbb F_t}$ with $B_x = A_x^{-1}$. Hence, $B_x A_x = A_x B_x = I$ for every $x \in \partial \mathbb C^n$, the compactness characterization for operators in $\mathcal T^{\mathbb F_t}$ yields that $A$ is Fredholm.
\end{proof}
Here are two consequences, which were already presented in \cite{Fulsche_Hagger} in the reflexive cases. The statements can now be obtained for arbitrary $\mathbb F_t$ with identical proofs.
\begin{cor}
Let $A \in \mathcal T^{\mathbb F_t}$. Then,
\begin{align*}
\sigma_{ess}(A) = \bigcup_{x \in \partial \mathbb C^n} \sigma (A_x).
\end{align*}
\end{cor}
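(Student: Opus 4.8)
The plan is to deduce this immediately from the Fredholm characterization established in the preceding theorem, so the whole argument is essentially a bookkeeping of what ``Fredholm'' means spectrally. Recall that $\lambda \in \sigma_{ess}(A)$ precisely when $A - \lambda \operatorname{Id}$ fails to be Fredholm. Since $\operatorname{Id} = T_1^t \in \mathcal T^{\mathbb F_t}$, the operator $A - \lambda \operatorname{Id}$ again belongs to $\mathcal T^{\mathbb F_t}$, so the Fredholm theorem is applicable to it for every $\lambda \in \mathbb C$.

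The first step is to compute the limit operators of $A - \lambda \operatorname{Id}$. Because $\alpha_z(\operatorname{Id}) = \operatorname{Id}$ for every $z \in \mathbb C^n$, one has $(\operatorname{Id})_x = \operatorname{Id}$ for every $x \in \partial \mathbb C^n$; and since the assignment $B \mapsto B_x$ is linear (immediate from the construction of $B_x$ as a strong-operator limit of the $\alpha_{z_\gamma}(B)$ for $\mathbb F_t \neq F_t^\infty$, and from linearity of restriction and double adjoint on $F_t^\infty$), we obtain $(A - \lambda \operatorname{Id})_x = A_x - \lambda \operatorname{Id}$ for every $x \in \partial \mathbb C^n$.

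The second step is then a chain of equivalences: $\lambda \notin \sigma_{ess}(A)$ iff $A - \lambda \operatorname{Id}$ is Fredholm iff, by the previous theorem, $A_x - \lambda \operatorname{Id}$ is invertible for every $x \in \partial \mathbb C^n$ iff $\lambda \notin \sigma(A_x)$ for every $x$ iff $\lambda \notin \bigcup_{x \in \partial \mathbb C^n} \sigma(A_x)$. Passing to complements gives the asserted equality.

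I do not expect a genuine obstacle here; the only points that need a word of care are that $A - \lambda\operatorname{Id}$ still lies in the Toeplitz algebra (so the Fredholm theorem applies) and that the limit-operator construction commutes with adding the scalar multiple $-\lambda\operatorname{Id}$, and both are routine. One may optionally remark that $\bigcup_{x \in \partial \mathbb C^n}\sigma(A_x)$ is automatically closed, since it coincides with the closed set $\sigma_{ess}(A)$; this is a byproduct of the equality rather than an ingredient of its proof.
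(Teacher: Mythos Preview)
Your proof is correct and is precisely the intended argument: the paper does not spell out a proof but states that the corollary follows with the identical proof as in \cite{Fulsche_Hagger}, which is exactly this reduction to the Fredholm theorem via $(A-\lambda\operatorname{Id})_x = A_x - \lambda\operatorname{Id}$. Nothing further is needed.
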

Here, $\sigma_{ess}$ and $\sigma$ denote the essential spectrum and the spectrum, respectively.
\begin{cor} The following statements are true over any $\mathbb F_t$:
\begin{enumerate}[(1)]
\item Let $f \in \operatorname{VO}_\partial(\mathbb C^n)$. Then, 
\begin{align*}
\sigma_{ess}(T_f^t) = f(\partial \mathbb C^n).
\end{align*}
\item Let $f \in \operatorname{VMO}_\partial (\mathbb C^n)$. Then,
\begin{align*}
\sigma_{ess}(T_f^t) = \widetilde{f}^{(t)}(\partial \mathbb C^n).
\end{align*}
\end{enumerate}
\end{cor}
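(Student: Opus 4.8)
The plan is to deduce both identities from the preceding corollary $\sigma_{ess}(A) = \bigcup_{x \in \partial\mathbb C^n}\sigma(A_x)$, which is now available over every $\mathbb F_t$, by computing the limit operators of the Toeplitz operators involved; since this computation lives entirely on the symbol and Berezin side, it runs exactly as in the reflexive case of \cite{Fulsche_Hagger}.

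For (1) I would first note $\operatorname{VO}_\partial(\mathbb C^n) \subset \operatorname{BUC}(\mathbb C^n)$, so $T_f^t \in \mathcal T^{\mathbb F_t}$ and the essential spectrum formula applies. The key point is that the limit function $f_x$ is constant for each $x \in \partial\mathbb C^n$: for fixed $w$ the function $z \mapsto f(w-z) - f(-z)$ lies in $C_0(\mathbb C^n)$ (this is precisely the $\operatorname{VO}_\partial$ condition, after chaining from unit balls to the ball of radius $|w|$), and every $x \in \partial\mathbb C^n$ annihilates $C_0(\mathbb C^n)$; hence $f_x(w) = x(\beta f) = (\nu x)(f)$ is independent of $w$. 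Consequently $(T_f^t)_x = T_{f_x}^t = (\nu x)(f)\,\operatorname{Id}$ and $\sigma\bigl((T_f^t)_x\bigr) = \{(\nu x)(f)\}$. Taking the union over $x \in \partial\mathbb C^n$ and using that $\nu$ restricts to a homeomorphism of $\partial\mathbb C^n$ onto itself yields $\sigma_{ess}(T_f^t) = \{f(y): y \in \partial\mathbb C^n\} = f(\partial\mathbb C^n)$.

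For (2) I would invoke the standard structure theory of vanishing mean oscillation at infinity: since $\operatorname{VMO}_\partial(\mathbb C^n) \subset \operatorname{BMO}(\mathbb C^n)$, the Berger--Coburn estimate (Theorem \ref{thm:berger_coburn1}) together with the $t$-independence of boundedness of $\widetilde{f}^{(s)}$ for $\operatorname{BMO}$ symbols shows $T_f^t$ is bounded, and the operator-norm convergence $T_{\widetilde{f}^{(s)}}^t \to T_f^t$ as $s \downarrow 0$ (again from Theorem \ref{thm:berger_coburn1}) shows $T_f^t \in \mathcal T^{\mathbb F_t}$; moreover $\widetilde{f}^{(t)} \in \operatorname{VO}_\partial(\mathbb C^n)$, and $T_f^t - T_{\widetilde{f}^{(t)}}^t \in \mathcal K(\mathbb F_t)$. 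The last point follows from the compactness characterization (Theorem \ref{compactnesschar}), since the difference lies in $\mathcal T^{\mathbb F_t}$ and has Berezin transform $\widetilde{f}^{(t)} - (\widetilde{f}^{(t)})^{\sim(t)} \in C_0(\mathbb C^n)$ (this membership holding because $\widetilde{f}^{(t)} \in \operatorname{VO}_\partial(\mathbb C^n)$). An alternative is to argue directly on the level of limit operators: every limit function of a $\operatorname{VMO}_\partial$ symbol has zero mean oscillation over every ball, hence is a.e.\ constant with value $(\widetilde{f}^{(t)})_x(0) = (\nu x)(\widetilde{f}^{(t)})$, so $T_f^t$ and $T_{\widetilde{f}^{(t)}}^t$ have identical limit operators and therefore differ by an element of $\mathcal T\mathcal K \subset \mathcal K(\mathbb F_t)$. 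In either case, invariance of the essential spectrum under compact perturbations together with part (1) applied to $\widetilde{f}^{(t)}$ gives $\sigma_{ess}(T_f^t) = \sigma_{ess}(T_{\widetilde{f}^{(t)}}^t) = \widetilde{f}^{(t)}(\partial\mathbb C^n)$.

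The only step needing any real attention — beyond quoting the reflexive argument of \cite{Fulsche_Hagger} — is the treatment of the possibly unbounded $\operatorname{VMO}_\partial$ symbols in (2), namely verifying that $T_f^t$ is again an operator in $\mathcal T^{\mathbb F_t}$ over every $\mathbb F_t$ and that it differs from $T_{\widetilde{f}^{(t)}}^t$ by a compact operator. All the tools this requires — the Berger--Coburn estimates, the identity $g_t \ast A = T_{\widetilde{A}}^t$, the description $\widetilde{T_f^t} = \widetilde{f}^{(t)}$ valid whenever $fK_z^t \in L_t^2$, and the compactness characterization — have been established above for arbitrary $\mathbb F_t$, so the remaining bookkeeping is routine; I expect it to be the main, if minor, obstacle.
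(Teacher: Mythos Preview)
Your proposal is correct and follows precisely the approach the paper indicates: the paper gives no independent proof but simply notes that the arguments from \cite{Fulsche_Hagger} carry over verbatim once the essential-spectrum formula $\sigma_{ess}(A) = \bigcup_{x \in \partial\mathbb C^n}\sigma(A_x)$ is available over every $\mathbb F_t$, and you have spelled out exactly those arguments. Your treatment of the $\operatorname{VMO}_\partial$ case via the compactness of $T_f^t - T_{\widetilde{f}^{(t)}}^t$ (equivalently, via identical limit operators) is the standard route and uses only tools already established in the present paper.
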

We want to emphasize that results concerning $\sigma_{ess}(T_f)$ with $f \in \operatorname{VMO}_\partial(\mathbb C^n)$ are available for a significantly larger class of Fock spaces, e.g.\ non-reflexive Fock spaces with non-Gaussian weights. We refer the interested reader to the literature, see e.g.\ \cite{Hu_Virtanen2020, Hu_Virtanen2022}.

\section{Discussion}

The above results clearly lead to some imminent questions, some of which we will briefly discuss now. First of all, having established formula for the essential spectra of operators from $\mathcal T^{F_t^p}$, one can wonder to which extend such spectral data depend on the value $p \in [1, \infty]$. More precisely, we consider Toeplitz operators: Given $f \in L^\infty(\mathbb C^n)$, $T_f^t$ acts continuously on $F_t^p$ for each $p \in [1, \infty]$, so one can wonder if $\sigma(T_f^t)$ or $\sigma_{ess}(T_f^t)$ depend on the choice of the space $F_t^p$ on which the operator is realized. Clearly, the results for symbols of vanishing oscillation show that $\sigma_{ess}(T_f^t)$ is independent of $p$ in that particular case. Indeed, there is an algebra $\mathcal W_t$ of linear operators, containing Toeplitz operators with bounded symbols, which is densely contained in $\mathcal T^{F_t^p}$ for each $p \in [1, \infty]$, such that for each $A \in \mathcal W_t$ it holds true that $\sigma(A)$, $\sigma_{ess}(A)$ and $\operatorname{ind}(A)$ (provided the latter exists) are independent of the choice of the parameter $p$. Proving this relies on the results of the present work, as well as some other deep results. Details can be found in \cite{Fulsche2023}. In particular, these results show that the study of (essential) spectra as well as Fredholm indices of Toeplitz operators reduce to the Hilbert space case.

In recent years, Fock spaces $F^p(\varphi)$ with non-Gaussian weights $\varphi$ and operator theory on them have been in the focus of quite a number of publications. In general, one should not expect the methods of the present paper to carry over to such spaces: At the heart of our method lies the CCR relation $W_z^t W_z^t = e^{-i\sigma(z,w)/t}W_{z+w}$, satisfied by the Weyl operators. Of course, this relation is a consequence of the rather special form of the (normalized) reproducing kernel functions, which one loses when changing to other weights. Nevertheless, it seems not entirely unlikely that there exists a class of non-Gaussian weights such that at least parts of the methods presented here carry over to a more general setting. Investigating this should be an interesting problem for future work.

Besides Toeplitz operators, Hankel operators are certainly among the best understood operators on Fock spaces. Hankel operators have been investigated on a vast class of Fock spaces, see e.g.\ the recent results in \cite{Hu_Virtanen2022a, Hu_Virtanen2022a_corrigendum, Hu_Virtanen2023}. Nevertheless, investigations of Hankel operators on Fock spaces $F^p(\varphi)$ usually rule out the case $p = \infty$. It could interesting to check if any of the methods provided in this paper are suitable to obtain results on Hankel operators on $F_t^\infty$. As the paper \cite{Hagger_Virtanen2021} showed, this is not entirely hopeless, as limit operator methods are indeed suitable to study properties of Hankel operators.

Last but not least, we want to mention the analogous problems on the Bergman space (say, on the unit ball $\mathbb B_n$ of $\mathbb C^n$). Indeed, many of the results we have proven here have analogous results in that setting  - at least in the Hilbert space case and to some extend also on the reflexive Bergman spaces $A_\nu^p(\mathbb B_n)$ with standard weights (cf.\ \cite{Hagger2017, Mitkovski_Suarez_Wick2013, Xia}). While having similar results to ours, the present methods so far could not be adapted to the case of the Bergman space in a satisfactory way. Obtaining any progress in this direction would be desirable.

\section{Acknowledgement}
The author wishes to thank Raffael Hagger for a useful discussion on the Fredholm property.

\bibliographystyle{amsplain}
\bibliography{References}

\providecommand{\bysame}{\leavevmode\hbox to3em{\hrulefill}\thinspace}
\providecommand{\MR}{\relax\ifhmode\unskip\space\fi MR }
\providecommand{\MRhref}[2]{%
  \href{http://www.ams.org/mathscinet-getitem?mr=#1}{#2}
}
\providecommand{\href}[2]{#2}
\begin{thebibliography}{10}

\bibitem{Axler_Jewell_Shields1980}
S.~Axler, N.~Jewell, and A.~Shields, \emph{{The essential norm of an operator
  and its adjoint}}, Trans. Amer. Math. Soc (1980), no.~261, 159--167.

\bibitem{Bauer_Coburn_Isralowitz2010}
W.~Bauer, L.~A. Coburn, and J.~Isralowitz, \emph{{Heat flow, BMO and the
  compactness of Toeplitz operators}}, J. Funct. Anal. \textbf{259} (2010),
  57--78.

\bibitem{Bauer_Fulsche2020}
W.~Bauer and R.~Fulsche, \emph{{Berger-Coburn theorem, localized operators, and
  the Toeplitz algebra}}, {Operator Algebras, Toeplitz Operators and Related
  Topics} (W.~Bauer, R.~Duduchava, S.~Grudsky, and M.~A. Kaashoek, eds.),
  {Operator Theory: Advances and Applications}, vol. 279, Birkh\"auser, 2020.

\bibitem{Bauer_Isralowitz2012}
W.~Bauer and J.~Isralowitz, \emph{{C}ompactness characterization of operators
  in the {T}oeplitz algebra of the {F}ock space {$F_\alpha^p$}}, J. Funct.
  Anal. \textbf{263} (2012), 1323--1355.

\bibitem{Berger_Coburn1987}
C.~L. Berger and L.~A. Coburn, \emph{{Toeplitz operators on the Segal-Bargmann
  space}}, Trans. Amer. Math. Soc. \textbf{301} (1987), 813--829.

\bibitem{Berger_Coburn1994}
\bysame, \emph{{Heat Flow and Berezin-Toeplitz Estimates}}, Amer. J. Math.
  \textbf{116} (1994), 563--590.

\bibitem{Cho_Zhu2012}
H.~R. Cho and K.~Zhu, \emph{{Fock-Sobolev spaces and their Carleson measures}},
  J. Funct. Anal. \textbf{263} (2012), 2483–2506.

\bibitem{Coburn_Isralowitz_Li2011}
L.~A. Coburn, J.~Isralowitz, and B.~Li, \emph{{Toeplitz operators with BMO
  symbols on the Segal-Bargmann space}}, Trans. Amer. Math. Soc. \textbf{363}
  (2011), 3015--3030.

\bibitem{Folland1995}
G.~B. Folland, \emph{{A course in abstract harmonic analysis}}, Studies in
  advanced mathematics, CRC Press, 1995.

\bibitem{Fulsche2023}
R~Fulsche, \emph{{A Wiener algebra for Fock space operators}}, preprint
  available at arXiv:2311.11859.

\bibitem{Fulsche2020}
R.~Fulsche, \emph{{Correspondence theory on $p$-Fock spaces with applications
  to Toeplitz algebras}}, J. Funct. Anal. \textbf{279} (2020), 108661.

\bibitem{Fulsche_Hagger}
R.~Fulsche and R.~Hagger, \emph{{Fredholmness of Toeplitz Operators on the Fock
  Space}}, Complex Anal. Oper. Theory \textbf{13} (2019), 375--403.

\bibitem{Grothendieck1966}
A.~Grothendieck, \emph{{Produits Tensoriels Topologiques et Espaces
  Nucl\'{e}aires}}, American Mathematical Society, 1966.

\bibitem{Hagger2017}
R.~Hagger, \emph{{The Essential Spectrum of Toeplitz Operators on the Unit
  Ball}}, Integr. Equ. Oper. Theory \textbf{89} (2017), 519--556.

\bibitem{Hagger2021}
\bysame, \emph{{Essential commutants and characterizations of the Toeplitz
  algebra}}, J. Operator Theory \textbf{86} (2021), 125--143.

\bibitem{Hagger_Virtanen2021}
R.~Hagger and J.~A. Virtanen, \emph{{Compact Hankel operators with bounded
  symbols}}, J. Operator Theory \textbf{86} (2021), 317–329.

\bibitem{Hu_Lv2014}
Z.~Hu and X.~Lv, \emph{{Toeplitz operators on {F}ock spaces $F^p(\varphi)$}},
  Integr. Equ. Oper. Theory \textbf{80} (2014), 33–59.

\bibitem{Hu_Virtanen2020}
Z.~Hu and J.~A. Virtanen, \emph{{Fredholm Toeplitz operators with VMO symbols
  and the duality of generalized Fock spaces with small exponents}}, Proc. Roy.
  Soc. Edinburgh Sect. A \textbf{150} (2020), 3163--3186.

\bibitem{Hu_Virtanen2022}
\bysame, \emph{{Fredholm Toeplitz operators on doubling Fock spaces}}, J. Geom.
  Anal. \textbf{32} (2022), 106.

\bibitem{Hu_Virtanen2022a}
\bysame, \emph{{Schatten class Hankel operators on the Segal-Bargmann space and
  the Berger-Coburn phenomenon}}, Trans. Amer. Math. Soc. \textbf{375} (2022),
  3733–3753.

\bibitem{Hu_Virtanen2022a_corrigendum}
\bysame, \emph{{Corrigendum to ``Schatten class Hankel operators on the
  Segal-Bargmann space and the Berger-Coburn phenomenon''}}, Trans. Amer. Math.
  Soc. \textbf{376} (2023), 6011–6014.

\bibitem{Hu_Virtanen2023}
\bysame, \emph{{IDA and Hankel operators on Fock spaces}}, Anal. PDE
  \textbf{16} (2023), 2041–2077.

\bibitem{Isralowitz_Mitkovski_Wick}
J.~Isralowitz, M.~Mitkovski, and B.~D. Wick, \emph{{Localization and
  compactness in Bergman and Fock spaces}}, Indiana Univ. Math. J. \textbf{64}
  (2015), 1553--1573.

\bibitem{Janson_Peetre_Rochberg1987}
S.~Janson, J.~Peetre, and R.~Rochberg, \emph{{H}ankel {F}orms and the {F}ock
  {S}pace}, Rev. Mat. Iberoamericana \textbf{3} (1987), no.~1, 61--138.

\bibitem{Lindner2006}
M.~Lindner, \emph{{Infinite Matrices and their Finite Sections: An Introduction
  to the Limit Operator Method}}, Birkh\"{a}user Verlag, 2006.

\bibitem{Luef_Skrettingland2021}
F.~Luef and E.~Skrettingland, \emph{{A Wiener Tauberian theorem for operators
  and functions}}, J. Funct. Anal. (2021), no.~280, 108883.

\bibitem{Mitkovski_Suarez_Wick2013}
M.~Mitkovski, D.~Su\'{a}rez, and B.~D. Wick, \emph{{The Essential Norm of
  Operators on $A_\alpha^p(\mathbb B_n)$}}, Integr. Equ. Oper. Theory
  \textbf{75} (2013), 197--233.

\bibitem{Suarez2004}
D.~Su\'{a}rez, \emph{{Approximation and symbolic calculus for Toeplitz algebras
  on the Bergman space}}, Rev. Mat. Iberoamericana \textbf{20} (2004),
  563--610.

\bibitem{Werner1984}
R.~Werner, \emph{{Quantum harmonic analysis on phase space}}, J. Math. Phys.
  \textbf{25} (1984), 1404--1411.

\bibitem{Wu_Zheng2021}
S.~Wu and Zheng. D., \emph{{Toeplitz operators on the Fock space via the
  Fourier transform}}, arXiv:2103.03458, 2021.

\bibitem{Wu_Zhao2021}
S.~Wu and X.~Zhao, \emph{{Toeplitz algebras over Fock and Bergman spaces}},
  arXiv:2105.03950, 2021.

\bibitem{Xia}
J.~Xia, \emph{{Localization and the Toeplitz algebra on the Bergman space}}, J.
  Funct. Anal. \textbf{269} (2015), 781--814.

\bibitem{Xia_Zheng}
J.~Xia and D.~Zheng, \emph{{Localization and Berezin transform on the Fock
  space}}, J. Funct. Anal. \textbf{264} (2013), 97--117.

\bibitem{Zhu2012}
K.~Zhu, \emph{Analysis on {F}ock {S}paces}, Graduate Texts in Mathematics, vol.
  263, Springer US, New York, 2012.

\end{thebibliography}

\bigskip

\noindent
Robert Fulsche\\
\href{fulsche@math.uni-hannover.de}{\Letter fulsche@math.uni-hannover.de}
\\ 

\noindent
Institut f\"{u}r Analysis\\
Leibniz Universit\"at Hannover\\
Welfengarten 1\\
30167 Hannover\\
GERMANY

\end{document}